\date{\today}
\definecolor{r}{rgb}{.9,0.1,.3}
\numberwithin{equation}{section}
\newtheorem{Theorem}{Theorem}[section]
\newtheorem{Corollary}[Theorem]{Corollary}
\newtheorem{Proposition}[Theorem]{Proposition}
\newtheorem{Definition}[Theorem]{Definition}
\newtheorem{Claim}[Theorem]{Claim}
\newtheorem{Lemma}[Theorem]{Lemma}
\newtheorem*{lema*}{Lemma}
\newtheorem{Remark}[Theorem]{Remark}
\newcommand{\R}{\mathbb{R}}
\newcommand{\n}{\mathbb{N}}
\newcommand{\z}{\mathbb{Z}}
\newcommand{\sph}{\mathbb{S}}
\newcommand{\ee}{\mathbf e}
\newcommand{\p}{\partial}
\newcommand{\dist}{\operatorname{dist}}
\newcommand{\graph}{\operatorname{Graph}}
\newcommand{\width}{\operatorname{width}}
\definecolor{pp}{rgb}{.5,0,.8}
\definecolor{rr}{rgb}{.8,0,.3}
\definecolor{caca}{rgb}{0.137,0.3,0.541}
\newcommand{\Ff}{\mathcal F}
  \newcommand{\Ss}{\mathbf{S}}
 \newcommand{\RR}{\mathbf{R}}  
 \newcommand{\ZZ}{\mathbf{Z}}  
 \renewcommand{\SS}{\mathbf{S}}  
\def\begfig {
\begin{figure}
\small }
\def\endfig {
\normalsize
\end{figure}
}
\keywords{Mean curvature flow, translators, self-translating solitons, entropy, minimal surfaces, Morse-Rad\'o theory.}
\author[E.S. Gama]{E.S. Gama}
\address[Eddygledson Souza Gama]{
  Departamento de Matem\'atica, Centro de Ci\^encias Exatas e da Natureza,
  Universidade Federal de Pernambuco, 50670-901 Recife, Pernambuco, Brazil.
}
\email{eddygledson.gama@ufpe.br}
\author[F. Mart\'in]{F. Mart\'\i{}n}
\address[Francisco Mart\'in]{
  Departamento de Geometr\'\i{}a y Topolog\'\i{}a, 
  Instituto de Matem\'aticas IMAG,
  Universidad de Granada,
  18071 Granada, Spain.
}
\email{fmartin@ugr.es} 
\author[N.M. M{\o}ller]{N.M. M{\o}ller}
\address[Niels Martin M{\o}ller]{
  Copenhagen Center for Geometry and Topology (GeoTop), Department of Mathematical Sciences,
  University of Copenhagen,
  DK-2100 Copenhagen, Denmark.
}
\email{nmoller@math.ku.dk}
\subjclass[2020]{53A10, 53E10 (49Q05, 53C42).}
\thanks{F. Mart\'in is supported by the MICINN grant PID2020-116126-I00,  by the Regional Government of Andalusia and ERDEF grant PY20-01391 and by the IMAG--Maria de Maeztu grant CEX2020-001105-M/AEI/10.13039/501100011033.\\\indent{}N.M. M\o{}ller is supported by DFF Sapere Aude 7027-00110B, Carlsberg Foundation CF21-0680 and Danmarks Grundforskningsfond CPH-GEOTOP-DNRF151.}
\title[Finite entropy translating solitons in slabs]{Finite entropy translating\\ solitons in slabs}
\begin{document}
\maketitle

\begin{abstract}
We study translating solitons for the mean curvature flow, $\Sigma^2\subseteq\R^3$ which are contained in slabs, and are of finite genus and finite entropy.

As a first consequence of our results, we can enumerate connected components of slices to define asymptotic invariants $\omega^\pm(\Sigma)\in\mathbb{N}$, which count the numbers of ``wings''. Analyzing these, we give  a method for computing the entropies $\lambda(\Sigma)$ via a simple formula involving the wing numbers, which in particular shows that for this class of solitons the entropy is quantized into integer steps.

Finally, combining the concept of wing numbers with Morse theory for minimal surfaces, we prove the uniqueness theorem that if $\Sigma$ is a complete embedded simply connected translating soliton contained in a slab with entropy $\lambda(\Sigma)=3$ and containing a vertical line, then $\Sigma$ is one of the translating pitchforks of Hoffman-Mart\'i{}n-White \cite{HMW22-1}. 
\end{abstract}

\section{Introduction}

A smooth orientable surface $\Sigma^2$ immersed in $\R^3$ is called a translating soliton of the mean curvature flow if the mean curvature vector satisfies
\begin{equation}\label{TS-Equation-1}
\vec{H}=\ee_3^\perp,
\end{equation}
meaning that it moves ``upwards'' in the $\ee_3$-direction at unit speed. Consequently, if $N:\Sigma\to\mathbb{S}^2$ denotes the Gauss map of $\Sigma,$ then
\begin{equation}\label{TS-Equation-2}
H=-\langle\ee_3,N\rangle,
\end{equation}
where $H$ denotes the scalar mean curvature of $\Sigma.$ 

In this paper, we will be studying complete embedded translating solitons in $\R^3$ of finite entropy, so let us first recall the definition of the entropy of a surface in $\R^3$.

Following \cite{CM12} (See also \cite{Chi20}[Appendix A]), given $\Sigma^2\subseteq\R^3$ any surface, $x_0\in\R^3$ and $s_0>0$ we define
\[
F_{x_0,s_0}[\Sigma]=\frac{1}{4\pi s_0}\int_\Sigma e^{-\frac{|x-x_0|^2}{4s_0}}{\rm d}\Sigma \in (0,+\infty].
\]

\begin{Definition}
The entropy of $\Sigma^2\subseteq\R^3$, denoted by $\lambda(\Sigma)$, is: 
\[
\lambda(\Sigma)=\sup_{x_0\in\R^3,s_0>0}F_{x_0,s_0}[\Sigma]\in (0,+\infty].
\]

We say that a surface $\Sigma$ has finite entropy if $\lambda(\Sigma)<+\infty.$
\end{Definition}
A complete immersed translator with finite entropy is proper in $\R^3$ \cites{Chi20, KP22}.
Recent investigations  \cites{AW94, CSS07, DDN17, HMW22-1, HMW22-2, HMW24, KKM18, Ngu09}  indicate that the number of families of complete translators with finite entropy is vast. This means that in order to obtain any classification results, we have to introduce additional hypotheses to further restrict the class of complete translators under consideration. 

Thus, in this paper we are going to consider the case of complete translating solitons with finite entropy which are confined to slabs of $\R^3$. This geometric condition is first of all natural in the light of the convex hull classification theorem due to F. Chini and N. M\o{}ller \cite{CM19}-\cite{CM21}, from which we know that the projection in the $\ee_3$-direction of the convex hull of $\Sigma$ must be either a straight line, a strip between two parallel lines, a halfplane in $\R^2$, or the entire plane $\R^2$. The slab condition is also closely related to the notion of (non)collapsedness for mean convex flows. In fact, in the case of translating solitons with $H>0$, the property of being collapsed is equivalent to the slab condition, as a trivial consequence of the classification of complete translating graphs \cite{HMW22-1}. In this paper we therefore also view the slab condition as a version of collapsedness which is well-defined in our setting of not necessarily mean convex flows.
 
 The classification of the projection of the convex hull  allows, a priori, the possibility that translating solitons could be contained in slanted slabs. However, an easy first application of the maximum principle rules this out (see Proposition \ref{prop:vertical}.)

Up to rotations around the $z$-axis and horizontal translations (all of them preserve the velocity vector of the flow), we can thus assume that the slab is
\[
\mathcal{S}_w:= \{(x,y,z) \in \R^3 \; :\; |x|<w\}, \quad \mbox{for some $w>0$.}
\]
\begin{Definition}[Width]
If $\Sigma^2\subseteq\mathbb{R}^3$ is a complete connected translator contained in a slab, then we define the width of $\Sigma$ as the infimum of the numbers $2w$, where $w$ is given as in the previous paragraph. 
Then we write $\width (\Sigma)=2 w.$

If $\Sigma$ is not contained in any slab, we define
$\width (\Sigma)=+\infty.$
\end{Definition}

\begin{Remark}
From the preceding definitions it is clear that having $\width(\Sigma)=0$ implies $\Sigma=\{x=0\}$. In that special case, all of the results of this paper hold trivially true, and thus we shall from now on always assume that $\width(\Sigma)>0.$
\end{Remark}

If we impose the extra hypothesis of having finite genus, then we can prove that the entropy must always be a positive integer. 
Let $\Sigma$ be a complete, embedded translator with $\lambda(\Sigma)<\infty$ and ${\rm genus}(\Sigma)<\infty.$ Assume $\Sigma \subset \mathcal{S}_w$, for some $w>0.$ Then we 
 are also able to prove that outside a vertical cylinder (which contains the topology of $\Sigma$) $\Sigma$ consists of a disjoint union of finitely many non-compact, simply connected translators with boundary, that we call the wings of $\Sigma.$
 
 We shall demonstrate that there are two kinds of wings: the ones that are graphs over the $(y,z)$-plane and the ones that are bi-graphs over the $(y,z)$-plane. Wings of the first kind are called {\em planar wings} and wings of the second kind are called {\em wings of grim reaper type.}
 \begin{Theorem}\label{lambda-formula}
Let $\Sigma^2\subseteq\mathbb{R}^3$ be a complete embedded translator so that the width, the entropy and the genus are finite. If $\omega_P(\Sigma)$ represents the number of planar wings and $\omega_G(\Sigma)$ represents the number of wings of grim reaper type, then $\omega_P(\Sigma)+2\, \omega_G(\Sigma)$ is even and 
 \begin{equation}
     \lambda(\Sigma)= \frac 12 \left( \omega_P(\Sigma)+2 \, \omega_G(\Sigma)\right).
 \end{equation}
\end{Theorem}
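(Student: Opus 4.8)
The plan is to compute the entropy by combining the theorem of Colding--Minicozzi type (entropy is achieved as a limit of Gaussian densities at the ends, or via a monotonicity-type argument) with the explicit description of the wings already established in the excerpt. First I would recall that for a complete properly embedded translator $\Sigma$ the entropy $\lambda(\Sigma)$ equals the supremum of $F_{x_0,s_0}[\Sigma]$, and that — since outside a large vertical cylinder $\mathcal{C}$ the surface $\Sigma$ is a disjoint union of finitely many planar wings and grim-reaper-type wings — the Gaussian integral localizes: as one sends $s_0\to 0$ with $x_0$ on a wing one detects the multiplicity $1$ of a smooth point, while as one sends $s_0\to\infty$ (or, after the parabolic rescaling that is standard for translators, passes to the limit flow) the wings converge to planes. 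The key computation is therefore to identify the tangent flow / blow-down of $\Sigma$ and count sheets.

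The main steps, in order, would be: (1) Show that a planar wing is asymptotic to a single vertical halfplane $\{y = \text{const-ish}\}$-type region parallel to $\{x=0\}$ — more precisely a graph over the $(y,z)$-plane whose blow-down is a multiplicity-one halfplane — so it contributes $\tfrac12$ to the entropy (half a plane $\Rightarrow$ Gaussian density $\tfrac12$ in the rescaled limit, since $\lambda(\text{plane})=1$). (2) Show that a grim-reaper-type wing, being a bi-graph over the $(y,z)$-plane, is asymptotic to two such halfplanes (the two sheets of the bigraph, which are the two "sides" of a grim reaper cylinder opening up), hence contributes $2\cdot\tfrac12 = 1$. (3) Sum: the blow-down of $\Sigma$ is a union of $\omega_P + 2\omega_G$ halfplanes through the axis, so by the lower-semicontinuity/upper-semicontinuity of entropy under the relevant limits and the fact that the compact part inside $\mathcal{C}$ contributes nothing extra in the limit, $\lambda(\Sigma) = \tfrac12(\omega_P + 2\omega_G)$. (4) For the parity statement, observe that the blow-down is a closed (in the sphere-at-infinity sense) one-dimensional set — a union of great-circle arcs / a geodesic net in $\mathbb{S}^1\times\R$ or equivalently a collection of halflines in a cross-sectional disk whose endpoints pair up because $\Sigma$ is a surface without boundary — so the total number of halfplane-ends $\omega_P + 2\omega_G$ must be even; equivalently each wing exits the cylinder $\mathcal{C}$ along curves whose count on the two bounding planes $\{x = \pm w\}$ must balance. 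One clean way: the intersection of $\Sigma$ with a generic slice $\{z = t\}$ for $t$ large consists of $\omega_P + 2\omega_G$ proper arcs in the disk $\{|x|<w\}$, and the number of boundary points of a compact $1$-manifold-with-boundary is even.

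The main obstacle I expect is Step (3): rigorously passing from "outside a cylinder $\Sigma$ looks like a fixed finite union of wings" to "the entropy is exactly the sum of the wing contributions." The upper bound $\lambda(\Sigma)\le \tfrac12(\omega_P+2\omega_G)$ requires ruling out concentration of Gaussian mass on the compact core — this should follow because the core is compact and smooth, so its $F_{x_0,s_0}$-contribution is bounded by that of a large but fixed ball, which is $<\varepsilon$ after the rescaling that pushes mass to infinity, or alternatively by a direct monotonicity/White-type stratification argument; but care is needed since translators are not self-shrinkers, so one must use the correct (translator-adapted) weighted integral or Huisken-type monotonicity along the flow generated by $\Sigma$. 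The lower bound is more robust: test $F_{x_0,s_0}$ with $x_0$ far out on the wings and $s_0$ comparable, or use that entropy is monotone under limits and the blow-down contains $\omega_P+2\omega_G$ halfplanes. I would also need the (presumably already-proved) fact that there are no other ends — that every end is one of these two wing types — which the excerpt asserts, so I may invoke it. Finally, the parity claim via the slice argument is elementary but I would double-check that for generic large $t$ the slice $\Sigma\cap\{z=t\}$ is a properly embedded compact $1$-manifold with boundary on $\partial\mathcal{S}_w$, with no closed components escaping the count (closed components would be fine — they contribute an even number of nothing — the point is only the arcs, whose endpoints are even in number).
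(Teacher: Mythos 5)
Your overall strategy --- compute the entropy by identifying a blow-down limit and counting sheets, with planar wings contributing one sheet and grim-reaper wings two --- is essentially the same as the paper's (its Theorem \ref{entropy-TGS}), but two steps need more care, one of which is a genuine gap.

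First, the sheet count. Your heuristic ``each wing contributes a halfplane, entropy $=\tfrac12\times(\text{number of halfplanes})$'' silently requires that the number of right halfplanes equal the number of left halfplanes, i.e.\ $\omega^+_P+2\omega^+_G=\omega^-_P+2\omega^-_G$; otherwise the union of halfplanes is not stationary and the Gaussian density at the axis does not equal the supremum of Gaussian densities. You nowhere establish this balance. The paper avoids assuming it by running two \emph{distinct} limit arguments: for the upper bound it parabolically rescales $\tfrac{1}{\sqrt{-t}}\Sigma-\sqrt{-t}\ee_3$, identifies the blow-down as a multiplicity-$m$ copy of $\pi_x(0)$, bounds $m\le\min\{\omega^-_P+2\omega^-_G,\ \omega^+_P+2\omega^+_G\}$ by intersecting with horizontal lines, and applies Huisken monotonicity; for the lower bound it translates $\Sigma-\tau_n\ee_3$ (no rescaling), counts the resulting vertical planes to get $\lambda\ge\max\{\omega^-_P+2\omega^-_G,\ \omega^+_P+2\omega^+_G\}$ by lower semicontinuity. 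Combining forces the $\min$ and the $\max$ to coincide, which is exactly the left/right balance you would need and then yields the stated formula.

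Second, your parity argument has a concrete gap: for generic large $t$, the slice $\Sigma\cap\{z=t\}$ is a properly embedded one-manifold in the \emph{strip} $\{|x|<w,\,z=t\}$, which is non-compact, and its components are proper arcs escaping to infinity in $y$ --- it is not a compact $1$-manifold with boundary, so ``the number of boundary points is even'' does not apply. To fix it you would have to cut off in $y$ as well (say, intersect with $\{|y|\le R\}$ for $R$ large and generic), so that the truncated slice is compact with boundary on the two segments $\{y=\pm R\}$, and then recognize that the number of boundary points on $\{y=R\}$ is $\omega^+_P+2\omega^+_G$ and on $\{y=-R\}$ is $\omega^-_P+2\omega^-_G$ for $t$ large. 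In the paper the parity claim is a trivial consequence of the entropy identity ($\omega_P+2\omega_G=2\lambda$), with the separate equality $\omega^+_P=\omega^-_P$ obtained not by slicing but by sending $z\to-\infty$ (Proposition \ref{weakly-infinity-limit} and Corollary \ref{Wings-counting}): only planar wings survive as full planes in that limit.

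You correctly identify the main analytic obstacle (ruling out extra concentration from the compact core, and making the passage from the decomposition into wings to an exact entropy count rigorous), but your proposed resolution remains at the level of a sketch; the paper's concrete upper-bound mechanism is the intersection count with horizontal lines in the rescaled picture, which is the missing ingredient.
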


In the course of proving Theorem \ref{lambda-formula}, we establish that the translated surfaces $\Sigma + t \mathbf{e}_3$ admit subsequential limits as $t_k \to \pm\infty$, see Proposition \ref{weakly-infinity-limit} below. These limits are as follows:\begin{enumerate}[a)]
    \item As $t_k \to -\infty$, the limits consist of $\lambda(\Sigma)$ vertical planes parallel to the boundary of the slab.
    \item As $t_k \to +\infty$, the limits consist of a collection of $\frac{1}{2}\omega_P(\Sigma)$ vertical planes, also parallel to the boundary of the slab.
\end{enumerate}
In both cases, the planes could appear with multiplicity, see Corollary \ref{components-estimate} below. In the forthcoming work \cite{GMM25}, the authors demonstrate (among other results) that the limits in a) and b) are not merely subsequential, but that in fact this convergence happens as true smooth limits, independently of the chosen subsequences. That is, as $t \to \pm \infty$, the translated surfaces $\Sigma + t \mathbf{e}_3$ converge smoothly in compact subset of $\R^3$ to the respective collections of vertical planes, with multiplicity.

F. Chini \cite{Chi19}, \cite{Chi20} proved that if   $\Sigma^2\subseteq\mathbb{R}^3$ is a complete, embedded translator  which is simply connected, contained in a slab and $\lambda(\Sigma)<3$, then $\Sigma$ is mean convex. In particular, by the classification in \cite{HIMW19} (see Theorem \ref{Classification} below), $\Sigma$ is either a vertical plane, a $\Delta$-wing or a tilted grim reaper. As a first demonstration of the techniques of the present paper, in Section \ref{l<3} we give a new proof of Chini's theorem within our framework.

The main result of this paper consists in extending the previous classification results to the case $\lambda(\Sigma)<4$ under additional hypotheses, as follows:
\begin{Theorem}
 Let $\Sigma^2\subseteq\mathbb{R}^3$ be a simply connected complete embedded translator of finite width and $\lambda(\Sigma)=3.$ Assume that $\Sigma$  contains a vertical line.     
Then $\Sigma$ is one of the pitchfork translators constructed by Hoffman, Mart\'i{}n and White \cite{HMW22-1} (see Fig.
 \ref{fig:pitchfork}).
\end{Theorem}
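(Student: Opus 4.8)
The plan is to combine the entropy-genus quantization formula (Theorem~\ref{lambda-formula}) with Morse–Radó theory for minimal surfaces, following the strategy that Chini used to classify the case $\lambda<3$, but now pushed one level up. First I would invoke the classification of graphical and mean convex translators (Theorem~\ref{Classification} below, from \cite{HIMW}): the $\Delta$-wings and tilted grim reapers are the only simply connected mean convex translators in a slab, and they have entropy $2$. Since we assume $\lambda(\Sigma)\in[3,4)$, the soliton $\Sigma$ cannot be mean convex; in particular $H$ changes sign on $\Sigma$. By Theorem~\ref{lambda-formula}, with $\Sigma$ simply connected so $\mathrm{genus}(\Sigma)=0$, the entropy equals $\frac12(\omega_P+2\omega_G)$, so $\omega_P+2\omega_G\in\{6,7\}$; but $\omega_P+2\omega_G$ is even, hence $\lambda(\Sigma)=3$ exactly and $(\omega_P,\omega_G)$ is one of $(6,0)$, $(4,1)$, $(2,2)$, $(0,3)$.

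The second step is to cut down the possible wing configurations. A simply connected translator with a grim-reaper-type wing must accommodate the bi-graphical behavior of that wing; I expect a Morse-theoretic count on the height function $z|_\Sigma$ (or better, on the linear function $y|_\Sigma$ restricted to a slice, à la Radó) together with the embeddedness and the slab condition to force strong restrictions on how wings of the two types can be attached to a single disk. Concretely, I would analyze the level sets $\Sigma\cap\{x=t\}$ for $t\in(-w,w)$: each is a union of properly embedded curves in the plane $\{x=t\}$, the number of noncompact ends of these curves as $t\to\pm w$ is governed by the wing numbers, and the simple connectivity of $\Sigma$ bounds the number of times these curves can merge or split. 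The conclusion I want is that the only configuration compatible with a topological disk, embeddedness, finite width, sign-changing $H$, and $\lambda=3$ is a single planar wing on one side and a grim-reaper wing on the other — i.e. $(\omega_P,\omega_G)=(2,1)$ up to relabeling — which is precisely the asymptotic structure of the Hoffman–Martín–White pitchfork. Wait: $(2,1)$ gives $\omega_P+2\omega_G=4$, not $6$; so I must be careful — the pitchfork in fact has $\lambda=3$, which forces $(\omega_P,\omega_G)=(4,1)$ (one grim-reaper wing and four planar wings, two on each side, or the appropriate split), and the Morse–Radó analysis must single this out among $(6,0),(4,1),(2,2),(0,3)$, rejecting the others because a topological disk in a slab cannot have that many planar or grim-type wings without either creating genus, violating embeddedness, or forcing $H$ to not change sign.

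The third and final step is the rigidity: once the wing data and the asymptotic geometry are pinned down to match the pitchfork, I would run a maximum-principle / unique-continuation argument to conclude $\Sigma$ is congruent to an actual pitchfork of \cite{HMW-1}. The natural tool is a moving-planes or Alexandrov-type reflection in vertical planes parallel to the slab walls, using the pitchfork's own symmetry as the comparison surface and sliding it until first contact; the translator equation is a quasilinear elliptic PDE, so the strong maximum principle and Hopf lemma at a boundary-at-infinity contact point give $\Sigma$ equals the comparison pitchfork. Alternatively one invokes the uniqueness already proved in \cite{HMW-1} for translators with the prescribed wing asymptotics and the given width. The main obstacle, I expect, is Step~2: controlling the global topology of $\Sigma$ from the local wing data and the slab constraint, i.e. proving the Morse–Radó-type inequality that rules out $(6,0)$, $(2,2)$, and $(0,3)$ and shows a disk must realize exactly $(\omega_P,\omega_G)=(4,1)$ with the grim wing's bi-graphical part forcing the pitchfork's characteristic ``handle-free fork'' shape. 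The delicate point there is that wings are only eventually graphical/bi-graphical outside a large cylinder, so one has to transport the asymptotic Morse data inward across the cylinder containing all the topology — this is where embeddedness and the absence of genus do the real work, and where I anticipate needing the sharpest form of the Radó-type no-interior-critical-point phenomenon for the relevant linear height functions on $\Sigma$.
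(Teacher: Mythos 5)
Your overall orientation is right — reduce to $\lambda(\Sigma)=3$, compute wing numbers, and invoke Morse--Rad\'o theory — but the execution has both factual errors and missing arguments where the paper does the real work.

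First, your wing enumeration is off. By Corollary~\ref{Wings-counting} both $\omega_P$ and $\omega_G$ are \emph{individually} even (equal numbers on the left and right), so from $\omega_P+2\omega_G=6$ only $(\omega_P,\omega_G)\in\{(6,0),(2,2)\}$ survive, and Corollary~\ref{exist-g} (at least two grim-reaper wings for a one-ended soliton) kills $(6,0)$. You instead list $(4,1)$ and $(0,3)$ as live possibilities and then — more seriously — assert that the pitchfork itself has $(\omega_P,\omega_G)=(4,1)$, which is wrong: the pitchfork has one planar and one grim-reaper wing on each side, i.e.\ $(\omega_P,\omega_G)=(2,2)$. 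This is not a minor miscount, since identifying the correct asymptotic type is a prerequisite for any rigidity argument.

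Second, your Step~2 is stated as an expectation rather than an argument. The paper's actual route is quite specific: Lemma~\ref{four_components} shows via Theorem~4.1 of \cite{rado-type} that each linear Rad\'o function $f_\nu$, $\nu\in E\setminus\{\pm\ee_1\}$, has \emph{exactly one} critical point, so $N|_{\mathcal H}$ is a diffeomorphism onto a half-equator and for each $q\in\mathcal H$ the plane $T_q\Sigma$ separates $\Sigma$ into exactly four components. From this and a pair of maximum-principle/Lemma~\ref{cylinder} arguments one forces every tangent plane $T_q\Sigma$, $q\in\mathcal H$, to pass through a fixed point $p_0$ and conversely, which pins $\mathcal H$ down to a single vertical line $\{(x_0,0)\}\times\R$. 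This computation is the heart of the proof and is absent from your proposal.

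Third, your Step~3 proposes either moving planes or an appeal to a uniqueness theorem of \cite{HMW-1}. Neither works as stated: the paper explicitly flags (Remark~\ref{pitchfork-uniqueness}) that the uniqueness of solutions to \eqref{eq:pitchfork} is an open conjecture, so you cannot invoke it; and a moving-planes/Alexandrov scheme in this non-compact, non-mean-convex setting would require substantial new barrier analysis you do not supply. What the paper actually does is much cleaner: once $\mathcal H$ is a vertical line contained in $\Sigma$, Schwarz reflection makes it a symmetry axis, each half of $\Sigma\setminus\mathcal H$ is shown to be a vertical graph, and then the classification of semigraphical translators (Theorem~\ref{semigraphical-theorem} of \cite{HMW-2}) finishes the argument after discarding the other cases on grounds of periodicity or infinite width/entropy. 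That final appeal to the semigraphical classification — rather than any PDE comparison — is the ingredient you are missing.

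Finally, a small logical hiccup in your Step~1: you deduce $\lambda=3$ from the parity of $\omega_P+2\omega_G$, but Theorem~\ref{entropy-TGS} already gives integrality of $\lambda$ directly, which is both the cleaner and the intended route.
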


Owing to \cite{KS19} (see \cite{IR19} as well), this also classifies all stable, w.r.t the ambient metric in \eqref{g-Ilmanen}, translators of finite width and low entropy, under additional mild assumptions:

\begin{Corollary}
The pitchforks are the unique complete embedded and stable translators of finite width and $\lambda(\Sigma)=3$ which contain a vertical line.
\end{Corollary}

Another interesting related type of translators that we are going to study in this paper is the class of graphs in general direction contained in slabs, which we will soon show is in fact a subclass of the more general translating solitons which we will be considering, namely having also finite entropy and, being simply connected, representing a special case of finite genus. The examples from this class also play an important role in motivating the more general theorems which we will be proving.

\begin{Definition}\label{def:graphs}
Fix any unit vector $v\in\R^3$. Let $\Omega\subseteq[v]^\bot$ be a planar domain. We say that a smooth function $u:\Omega\to\R$ is a translator provided that
\[
\Sigma:={\rm Graph}_{(v,\Omega)}[u]:=\left\{p+u(p) \, v:\;p\in\Omega\right\}\subseteq \R^3
\]
is a translating soliton as in \eqref{TS-Equation-1}. We will refer to such $\Sigma$ as a translating graph.
\end{Definition}
 Our basic assumption in this paper is that the translating graphs ${\rm Graph}[u]$ are complete in $\R^3$, and by a slight abuse of language we will then say that $u$ is complete.

If we imposed that the direction of the graph $v$ coincided with the velocity of the flow $\mathbf{e}_3$, there would be nothing to prove, as such complete graphs have already been completely classified. Namely, when $v=\mathbf{e}_3$, the complete connected translating graphs in the direction of $v$ are: the bowl soliton, the family of tilted grim reaper surfaces and the family of $\Delta$-wings (see Theorem \ref{Classification}.)

However, if $v\neq \mathbf{e}_3$, as we have already mentioned,  recent investigations  \cites{HMW22-1, HMW22-2, HMW24}  indicate that the number of families of complete graphs is very large. This means that in order to obtain any classification results, we have to introduce additional hypotheses to further restrict the class of complete graphs under consideration.

If $\Sigma=\left\{p+u(p) \, v:\;p\in\Omega\right\}$, for $\Omega$ an open planar domain in $[v]^\perp$, is a graph contained in the slab $\mathcal{S}_w$, then we are going to distinguish two cases:
\textbf{Case I}: $v\notin\Ss^2\cap\{x=0\}$; \textbf{Case II}: $v \in \Ss^2 \cap \{x=0\}.$

Case I, which at face value might seem to deal with a larger set of examples, is in fact easier than Case II, and will be handled already by Proposition \ref{Plane-Case} below, which asserts that any connected translating graph in Case I must be a plane $\{x=x_0\}$.
Case II, meaning $v\in\Ss^2\cap\{x=0\}$, is not so elementary and the number of known examples is very large \cites{HMW22-1, HMW22-2, HMW24}. In Section \ref{Background}, we will in particular prove that any complete translating graph contained in a slab is simply connected and has finite entropy. As a consequence of Theorem \ref{lambda-formula}, the entropy is also always a positive integer for such graphs. We would like to point out that the scherkenoids constructed by Hoffman-Mart\'in-White \cite{HMW22-1} are examples of complete connected translating graphs which do not lie in any slab and they have infinite entropy. On the other hand, the bowl soliton constructed by Altschuler-Wu \cite{AW94} (see also Clutterbuck-Schn{\"u}rer-Schulze in \cite{CSS07}) is an example of a complete connected vertical translating graph that does not lie in a slab and it has finite entropy, which is no longer an integer, but has the value $\lambda(\Sigma) = \lambda(\mathbb{S}^1\times \R) = \sqrt{2\pi/e}$.

We furthermore conjecture that the plane and the bowl soliton (a vertical graph) are the unique examples of translating graphs of finite entropy whose domains of definition are not contained in planar strips.

As a consequence of the classification due to \cite[Theorem 1]{Chi20} and Corollary \ref{entropy-estimate}, we also classify the graphs of small width, in Corollary \ref{Classification-slab-3pi} in Section \ref{l<3}.

The main techniques used in this paper come from the fact, observed first by T. Ilmanen \cite{Ilm94}, that translators can be seen as minimal surfaces in $\R^3$ endowed with the conformal metric
\begin{equation}\label{g-Ilmanen}
g=e^z \langle \cdot,\cdot \rangle_{\R^3}.
\end{equation}

Firstly, this means that we are able to use White's general compactness results for minimal $2$-surfaces in Riemannian $3$-manifolds under uniform bounds on genus and area \cite{Whi18}.

We can also make use of the so-called Morse-Rad\'o theory for minimal surfaces \cite{HMW23}. Roughly speaking, Morse-Rad\'o theory studies the way in which a foliation by minimal surfaces intersects a fixed minimal surface $M$ inside a Riemannian $3$-manifold. The structure of the resulting family of curves in $M$ is determined by the topology of $M$ and the behaviour of the mentioned foliation along $\partial M$. Given a  complete translator $\Sigma$, contained in a slab, with finite genus and finite entropy, the Morse-Rad\'o theory allows us to understand the finer nature of the set $$\mathcal{H}:=\{p \in \Sigma \;: \; H(p)=0\}, $$
and its image under the Gauss map (Section \ref{H-section}). This detailed insight will be crucial in the classification results of Section \ref{pitchforks-section}, as well as in Section \ref{sec:chini}, where we reprove the already known result by Chini, that finite width, 1-connectedness and $\lambda(\Sigma) < 3$ together imply that $\mathcal{H}=\varnothing$.

{\bf Acknowledgements.} We thank D. Hoffman and B. White for valuable conversations and suggestions about this work. Part of this work was done when E. S. G. was a professor at Universidade Federal Rural do Semi-\'Arido, Cara\'ubas, and he thanks the institution for the fruitful working environment.

\section{Examples of translating graphs}\label{Examples}

The purpose of this part is to introduce the key examples of translating graphs that will be used throughout the paper. The more advanced examples presented here were constructed in \cite{HIMW19}, \cite{HMW22-1} and \cite{Ngu09}. 

\subsection*{Tilted grim reaper surfaces} The first nontrivial examples of translating graphs, apart from the planes (i.e. planes parallel to $\ee_3$), are given by the family of tilted grim reaper surfaces $\{\mathcal{G}_\zeta\}_{\zeta\in[0,\pi/2)\cup(\pi/2,\pi)}$ defined by
\[
\mathcal{G}_\zeta=\left\{
\begin{array}{cc}
 & x\in\left(-\frac{\pi}{2|\cos\zeta|},\frac{\pi}{2|\cos\zeta|}\right)\ \\
\Big(x,y,-\displaystyle\frac{\log\cos(x\cos\zeta)}{\cos^2\zeta}-y\tan\zeta\Big): & {\rm and} \\
 &y\in\R \\
\end{array}
\right\}.
\]

Indeed, for all $\eta\neq\zeta$, $\mathcal{G}_\zeta$ is a graph of a smooth function in the direction of $v_\eta=\sin\eta\ee_3+\cos\eta\ee_2.$
The surface $\mathcal{G}_{0}$ is called the standard grim reaper surface. 

\subsection*{$\Delta$-wings}
Based on previous works of Wang \cite{Wan11} and Spruck and Xiao \cite{SX20}, Hoffman et al. \cite{HIMW19} classified the complete translating graphs over domains in $(x,y)-$plane
as follows:

\begin{Theorem}[Classification of complete vertical graphs, \cite{HIMW19}]\label{Classification}
Given $w>\pi/2$, there is a unique (up to translations, and rotations preserving the direction of translation) complete, strictly convex translator 
$
   u^w: \left(-w,w\right)\times\R\to\R.
$
Up to translations, and rotations preserving the direction of translation, the only other complete vertical translating graphs in $\R^3$ are the tilted grim reaper surfaces, and the bowl soliton.
\end{Theorem}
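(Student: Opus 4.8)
This classification is due to Hoffman, Ilmanen, Mart\'in and White \cite{HIMW}, and the plan is to recapitulate their strategy, which organizes the problem according to the domain of the graph. Write $\Sigma=\graph[u]$ with $u\colon\Omega\to\R$ complete; then the translator equation \eqref{TS-Equation-1} is equivalent to the quasilinear elliptic equation
\begin{equation}\label{vertical-graph-pde}
\Div\!\left(\frac{\nabla u}{\sqrt{1+|\nabla u|^{2}}}\right)=\frac{1}{\sqrt{1+|\nabla u|^{2}}}\qquad\text{on }\Omega ,
\end{equation}
so in particular $H=-(1+|\nabla u|^{2})^{-1/2}<0$ everywhere, i.e.\ $\mathcal H=\emptyset$ for vertical graphs. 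The first step is to pin down $\Omega$: by the convex hull theorem of Chini--M\o{}ller \cite{Chini-Moller} the region $\Omega$ lies in a slab or a halfplane, and by comparing $\Sigma$ with suitably positioned grim reaper cylinders (a barrier argument exploiting that grim reapers and $\Sigma$ solve the same equation \eqref{vertical-graph-pde}) one shows that, after a horizontal rigid motion and a vertical translation, $\Omega$ is either the whole plane $\R^{2}$ or a strip $(-w,w)\times\R$ with $2w\ge\pi$; a strip of width $<\pi$, and a halfplane, support no complete solution of \eqref{vertical-graph-pde}.

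The second step is a curvature dichotomy. By the convexity estimates for complete translating graphs --- Spruck--Xiao \cite{Spruck-Xiao} in the entire case, with the extension to graphs over strips carried out in \cite{HIMW} --- the Gaussian curvature of $\Sigma$ satisfies $K\ge0$, and then the strong maximum principle, applied to the elliptic equation governing the second fundamental form of a translator, forces either $K\equiv0$ or $K>0$ on all of $\Sigma$. If $K\equiv0$ then $\Sigma$ is flat, hence developable and therefore ruled; since every ruling of $\Sigma$ projects to a complete line inside the strip, the rulings are parallel to the $y$-axis in projection, a short computation then shows they are genuinely parallel, so $\Sigma$ is a cylinder over a planar translating curve. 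As the planar translating curves are, up to similarity, the grim reaper curve, $\Sigma$ is one of the tilted grim reaper surfaces $\mathcal G_{\zeta}$, the tilt being fixed by $\pi/|\cos\zeta|=2w$. In particular, over the strip of width exactly $\pi$ the alternative $K>0$ is impossible and $\Sigma=\mathcal G_{0}$.

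It remains to treat the strictly convex case $K>0$. If $\Omega=\R^{2}$, then by X.-J.\ Wang's rigidity theorem for convex entire translators \cite{Wang} --- there being no ``flying wing'' entire translating graphs in $\R^{3}$ --- the surface $\Sigma$ is the bowl soliton. If $\Omega=(-w,w)\times\R$ with $2w>\pi$, one first produces a strictly convex complete solution $u^{w}$ by solving the Dirichlet problem for \eqref{vertical-graph-pde} on an exhaustion of the strip by bounded convex domains with boundary values tending to $+\infty$, extracting a limit through interior Schauder estimates together with the grim reaper barrier, and verifying that the limit is complete, strictly convex, and symmetric under $x\mapsto-x$ and $y\mapsto-y$; this is the $\Delta$-wing $u^{w}$. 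Uniqueness of the strictly convex solution over a fixed strip is the crux: one shows that any such surface is asymptotic, as $|y|\to\infty$, to a pair of grim-reaper-type wings whose asymptotic direction is determined by $w$, and then a sliding/comparison argument between $u$, its vertical translates, and $u^{w}$ upgrades this asymptotic matching to $u=u^{w}$ up to a vertical translation.

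The main obstacle is exactly this last uniqueness step: controlling the asymptotic geometry of a complete strictly convex translating graph over a strip tightly enough that the width becomes a complete invariant, and then running a comparison principle ``at infinity''. Morally this is the same structural phenomenon --- that a translator in a slab is governed by its asymptotic wings --- that the present paper will study in the far more flexible finite-genus setting by combining White's compactness theorem \cite{White-18} with Morse--Rad\'o theory \cite{rado-type}; in the graphical case, however, convexity makes the wings rigid enough that a direct argument suffices. The remaining ingredients --- the reduction of $\Omega$, the Bernstein-type statement over $\R^{2}$, the curvature dichotomy, and the construction of the $\Delta$-wings --- are by comparison routine applications of the maximum principle and of linear elliptic theory.
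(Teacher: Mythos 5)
The paper does not prove this theorem: it is quoted from Hoffman--Ilmanen--Mart\'in--White \cite{HIMW} and used throughout as a black box (note the remark immediately following the statement, which invokes \cite{Spruck-Xiao} only to upgrade mean convexity to strict convexity), so there is no proof in the present paper against which to compare yours. Taken on its own terms as a summary of the HIMW argument, your sketch is essentially on target: you correctly identify the decisive simplification available to vertical graphs that the present paper must do without --- the Gauss map never reaches the equator, so $\mathcal{H}=\varnothing$ and Spruck--Xiao gives convexity for free --- and you correctly single out uniqueness of the $\Delta$-wing over a fixed strip as the genuine crux. Two imprecisions are worth flagging. First, the Chini--M\o{}ller convex hull theorem constrains $\Pi_z(\Sigma)$, not $\Omega$ directly, and it allows a halfplane; eliminating the halfplane domain in the vertical case is a separate argument (due to \cite{Wang}, \cite{Spruck-Xiao}, \cite{HIMW}, as the present paper itself notes after Lemma \ref{domain}), and in fact remains an open conjecture for the slanted graph directions treated elsewhere in this paper. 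Second, the dichotomy ``$K\equiv 0$ or $K>0$'' rests on Hamilton's tensor strong maximum principle applied to the second fundamental form of the translator, not on the scalar strong maximum principle. Finally, your closing observation --- that convexity makes the asymptotic wings rigid enough to be pinned down by a direct sliding comparison at infinity --- is precisely the rigidity the present paper gives up: Sections \ref{Wing}--\ref{pitchforks-section} replace convexity by finite genus and entropy, and replace the comparison at infinity by the wing-counting and Morse--Rad\'o machinery.
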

\begin{Remark}
The graph of the function $u^w$ is called the $\Delta$-wing of width $2w$. We also note that by \cite{SX20} the assumption of strict convexity in Theorem \ref{Classification} is a direct consequence of being mean convex, which by \eqref{TS-Equation-2} is automatic for vertical graphs.
\end{Remark}

The next two subsections are devoted to describing some families of complete, simply connected, translators introduced by Hoffman, Mart\'i{}n and White in \cite{HMW22-1}. By Theorem \ref{Classification}, they cannot be graphs in the direction of translation, however they will be graphs into other directions.

\subsection{Pitchforks} \label{subsec:pitchfork}
 Pitchforks were introduced by Hoffman, Mart\'i{}n and White in \cite{HMW22-1}. In this case, a fundamental piece of the surface is a graph over the strip in the $(x,y)$-plane
 $$u: \left( 0, w\right) \times \RR \rightarrow \RR, \quad \mbox{where $w \geq \pi.$}$$
 with the following boundary values:
\begin{equation}
    \label{eq:pitchfork}
u(x,y)=
\begin{cases}
-\infty &\text{for $x=0$ and $y<0$, } \\
\infty &\text{for $x=0$ and $y>0$, } \\
\infty &\text{for $x=w$}.
\end{cases}
\end{equation}
Then $\graph[u]$ is a translator with boundary which contains the $z$-axis. By reflecting this graph
about the $z$-axis we obtain a complete (without boundary), simply connected, translating soliton (see Fig. \ref{fig:pitchfork})
of finite entropy. 
\begin{Remark} \label{pitchfork-uniqueness}
   Mart\'in, S\'aez, Tsiamis and White \cite{MSTW24} have proved that any two functions solving \eqref{eq:pitchfork} differ by a constant.
\end{Remark}

\begin{figure}[htbp]
\begin{center}
\includegraphics[height=.29\textheight]{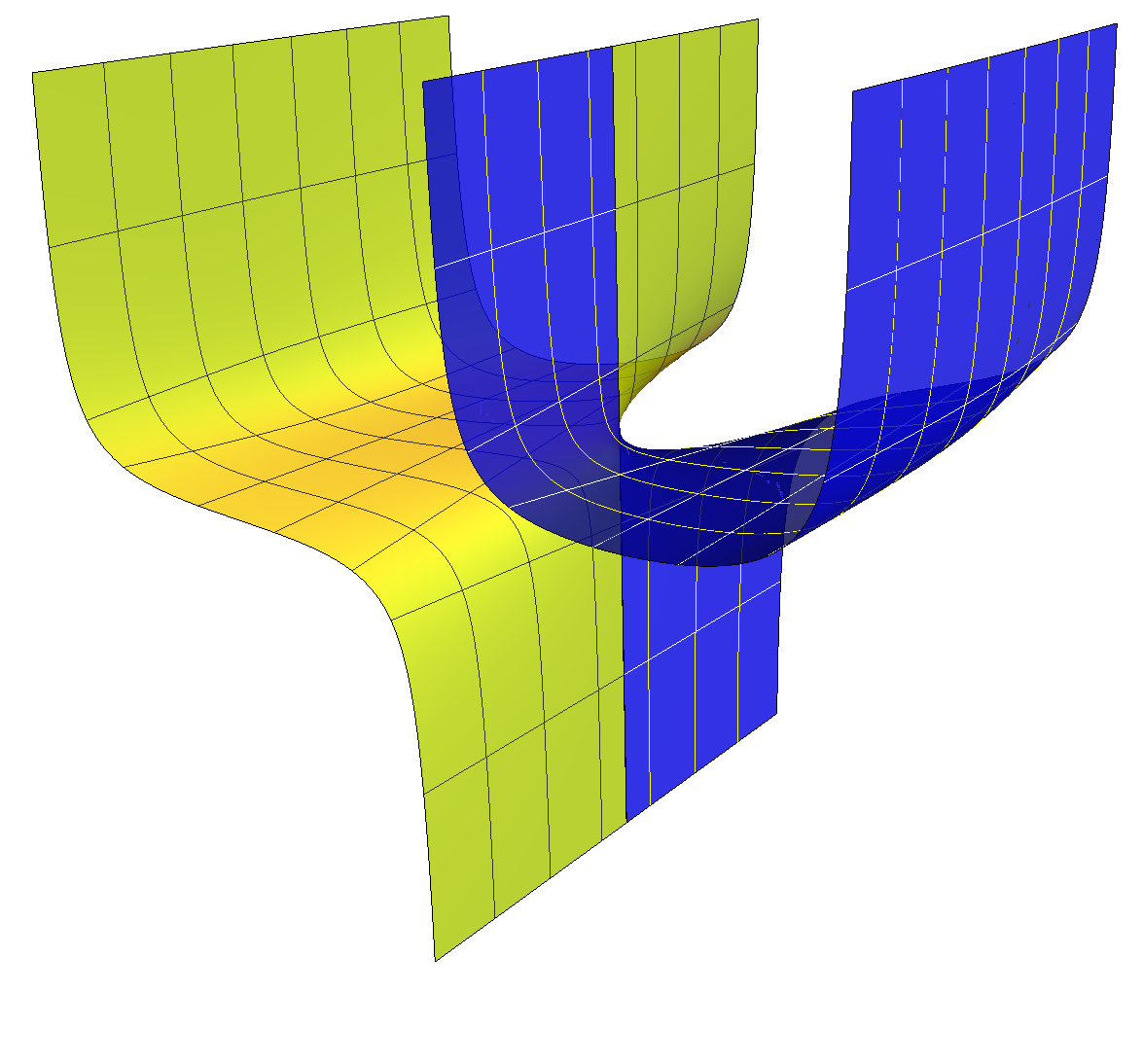}
\end{center}
\caption{A fundamental piece  (in blue) of width $\pi$ of the pitchfork of width $2\pi$. 
The whole surface is obtained by reflecting the fundamental piece with respect to the $z$-axis (left).} \label{fig:pitchfork}
\end{figure}

If $w>\pi$, the complete pitchfork of width $2w$ is a graph over a strip in the $(x,z)$-plane:
$$ \Omega = \left(-w, w \right) \times \{0\}\times \R.$$
This can also be visualized in Figure \ref{fig:pitchfork}, since the normal never points along the $(x,z)$-plane (which is roughly in the plane of the paper) and a rigorous proof of this can be found in Theorem 12.1 in \cite{HMW22-1}.

In the case $w=\pi$, we only that know that the corresponding pitchfork is a graph over some subdomain of the $(x,z)$-plane
$$ \Omega \subseteq \left(- \pi, \pi \right) \times \{0\}\times \R.$$
However, we do not know whether $\Omega$ coincides with the whole strip or if it is a proper subdomain.

\subsection{Semigraphical Translators}\label{semigraphical-section} When you remove the symmetry axis from pitchforks, they are graphs over a strip of the $(x,y)$-plane. This motivates the following 
\begin{Definition}

A translator is $M$ is called semigraphical if 
\begin{enumerate}[\upshape (1)]
\item $M$ is a smooth, connected, properly embedded submanifold (without boundary) in $\R^3$.
\item $M$ contains a nonempty, discrete collection of vertical lines.
\item $M\setminus L$ is a graph, where $L$ is the union of the vertical lines in $M$.
\end{enumerate} \end{Definition}
Suppose $M$ is a  semigraphical translator.  We may suppose w.l.o.g.
that $M$ contains the $z$-axis $Z$. Note that $M$ is invariant under $180^\circ$ rotation about each line in $L$,
from which it follows that $L\cap\{z=0\}$ is an additive subgroup of $\R^2$. 
Semigraphical translators have been classified, in the following sense
\begin{Theorem}[\cite{HMW22-2}*{Theorem 34} and \cite{MSTW24}*{Theorem 1}]\label{semigraphical-theorem}
A semigraphical translator $M$ in $\R^3$ is one of the following:
\begin{enumerate}[(1)]
\item a (doubly periodic) Scherk translator;
\item a (singly periodic) Scherkenoid;
\item a (singly periodic) 
  translating helicoid;
\item a pitchfork; 
\item a (singly periodic) trident.
\end{enumerate}
Up to isometries of $\R^3$, 
\begin{enumerate}[\upshape (i)]
\item 
translating helicoids and 
 pitchforks 
are uniquely determined by 
a single parameter, the width of a
fundamental region,
\item
tridents are uniquely determined by 
a single parameter, the period, and
\item
Scherk translators and Scherkenoids are uniquely determined by two parameters, the width and an angle.
\end{enumerate}

\end{Theorem}

\begin{figure}[h]
{\includegraphics[width=3.8cm]{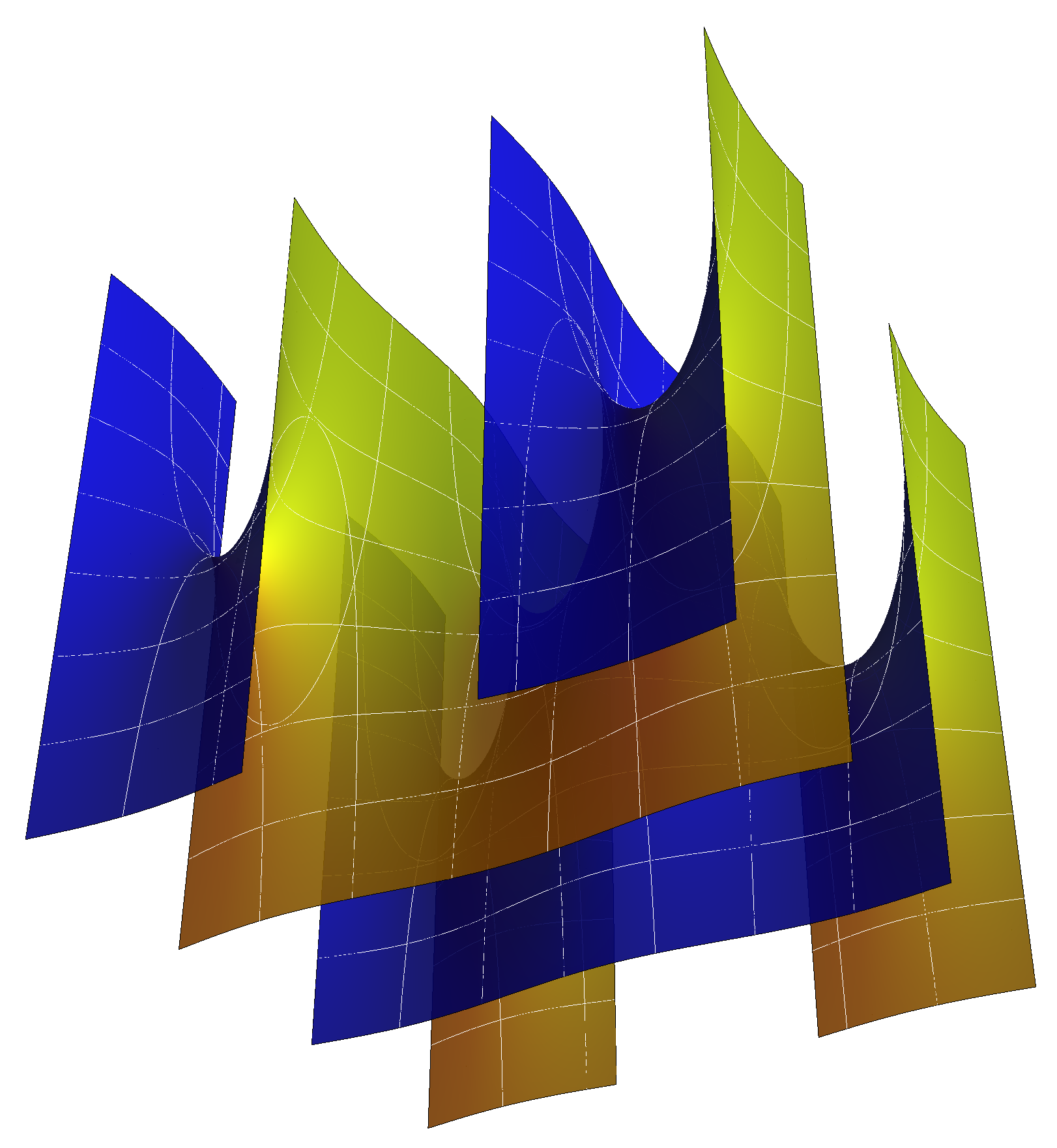}}
{\includegraphics[width=4cm]{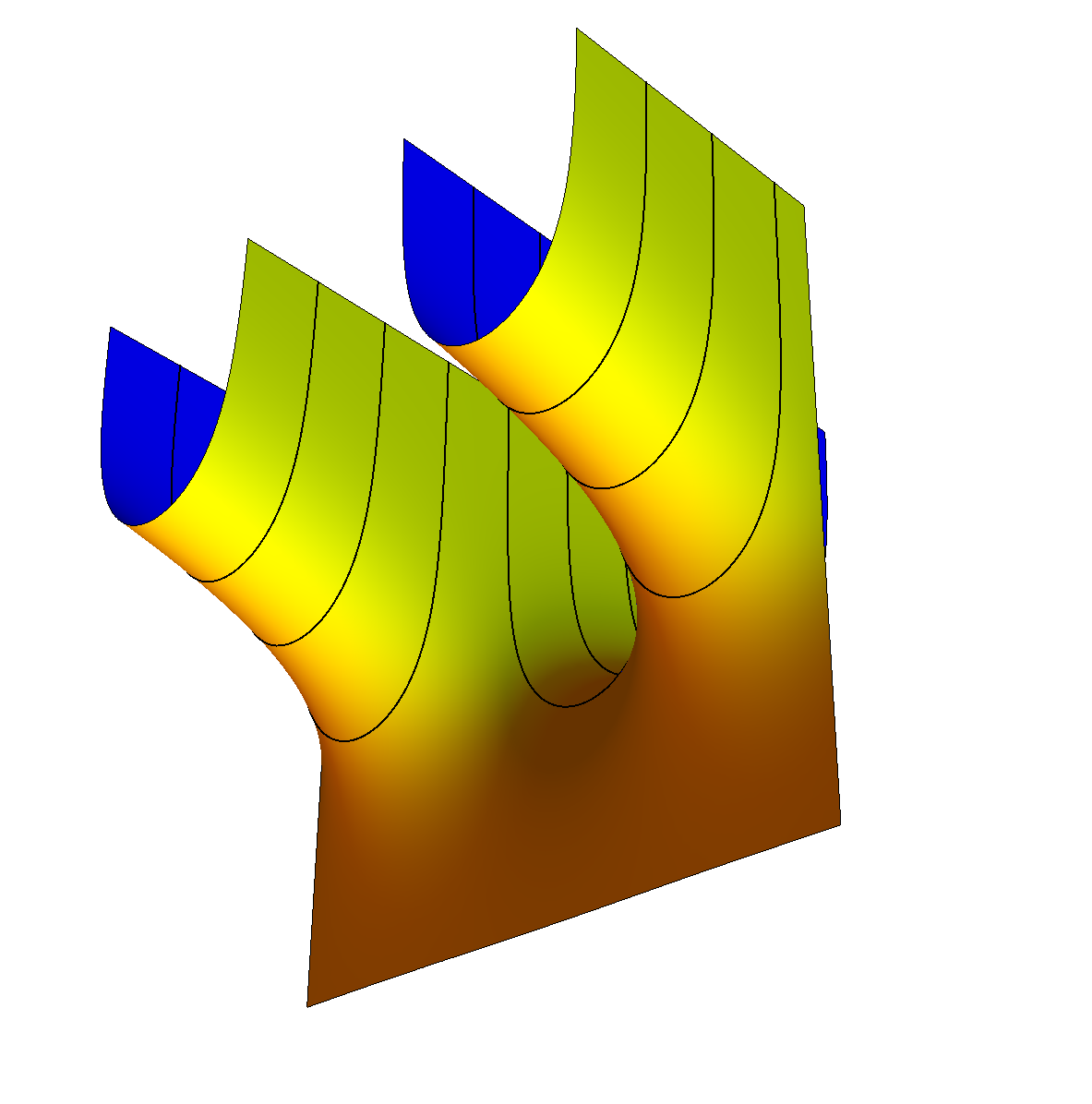}}
{\includegraphics[width=4cm]{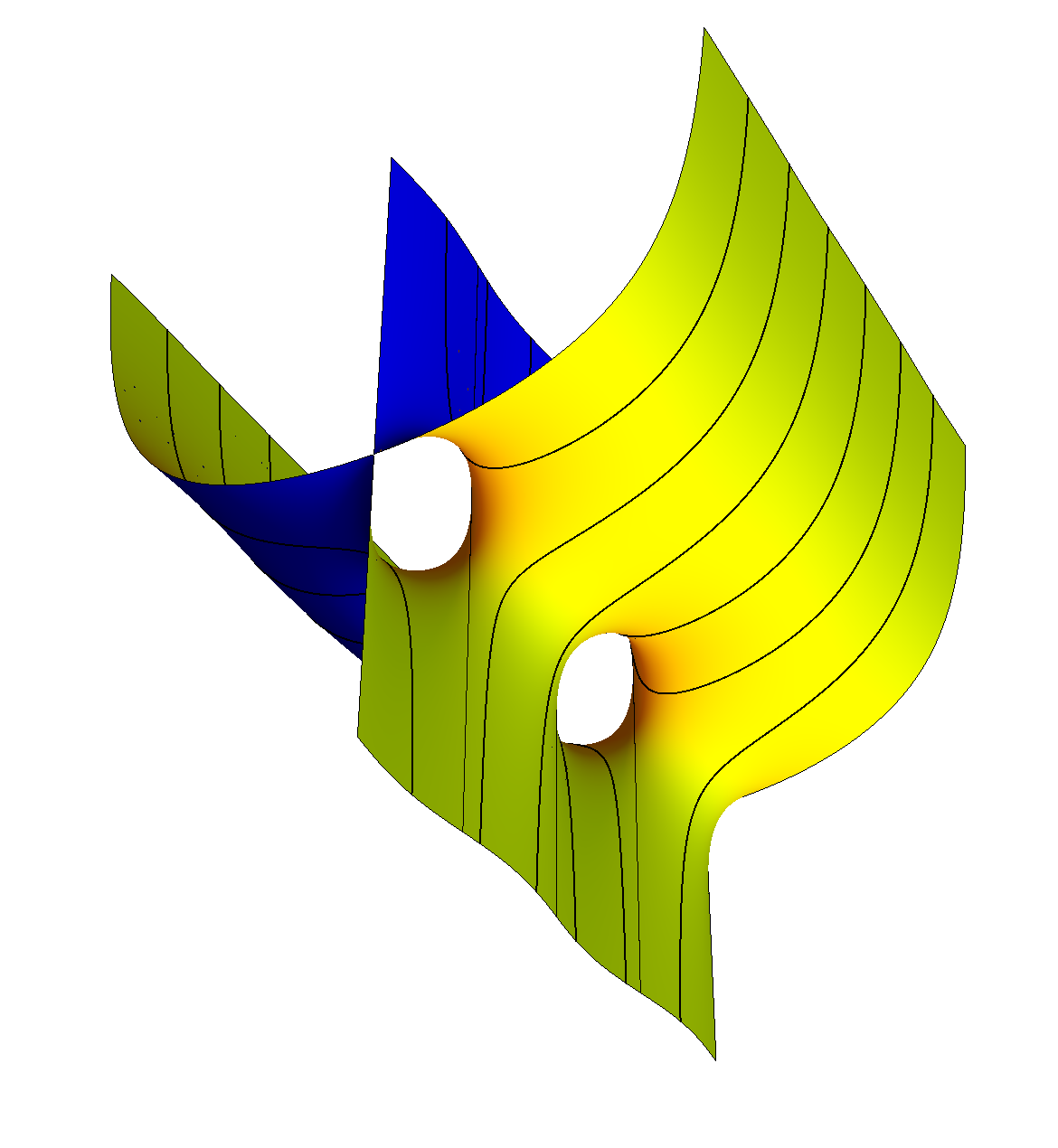}}
{\includegraphics[width=3.8cm]{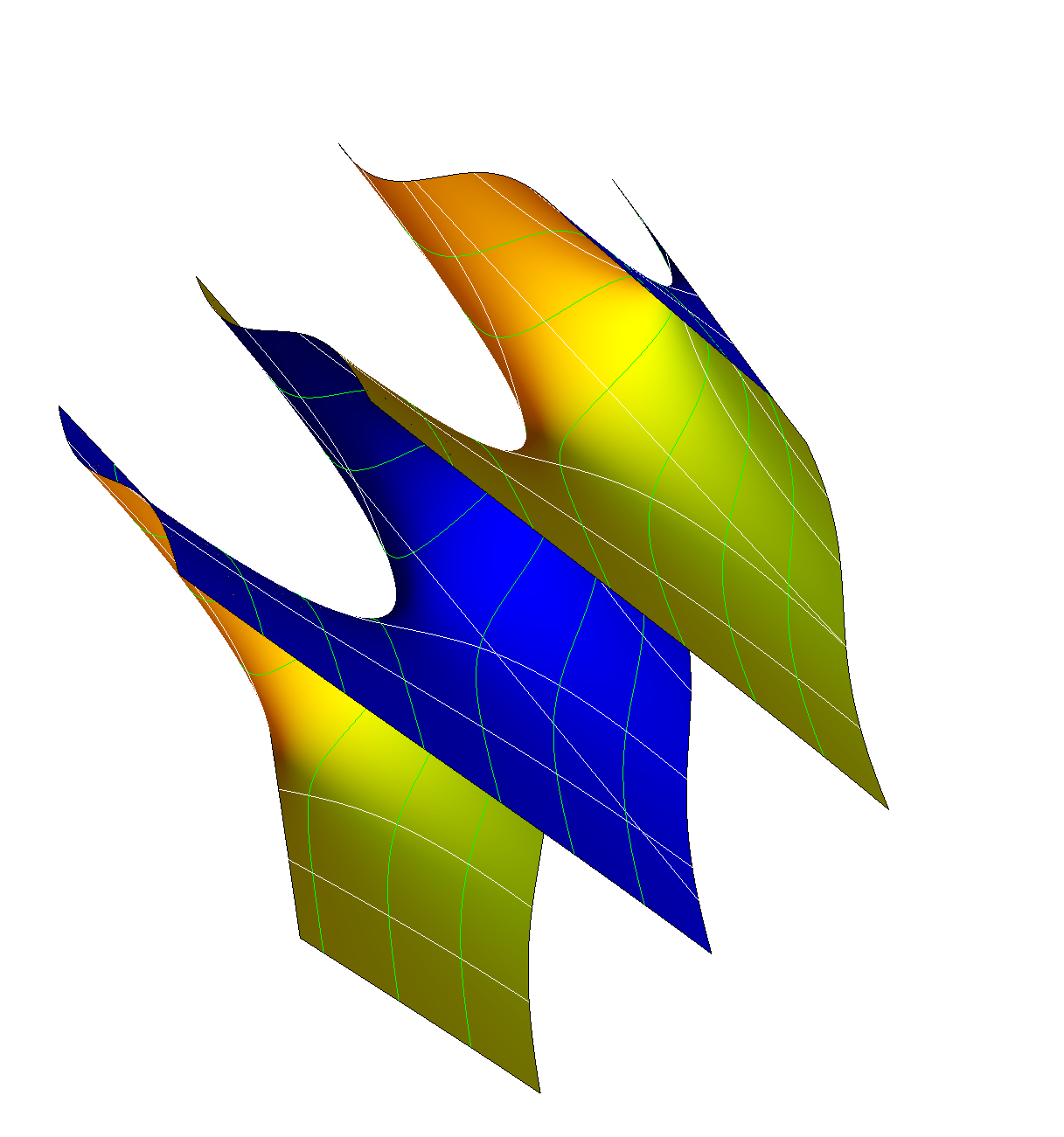}}
{\includegraphics[width=4cm]{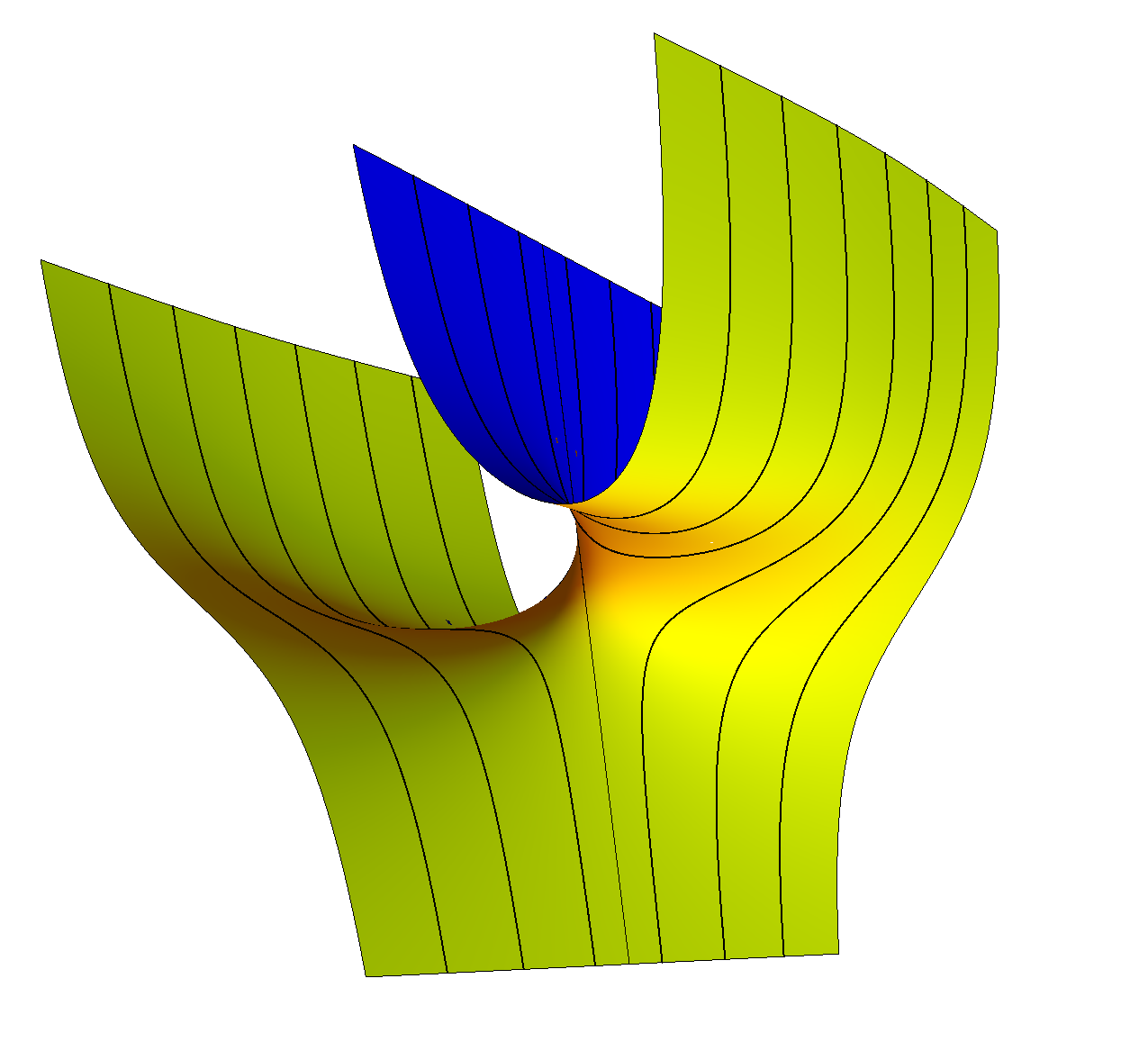}}
\caption{Semigraphical translators.
Row 1 (left to right): Scherk translator, scherkenoid, and trident.
Row 2: Helicoid-like translator and pitchfork.}
\label{sequence2}
\end{figure}

\subsection*{Notation}\label{sec:notation}
Throughout the paper, we are going to use the following notations related to slicing and decomposing the surfaces: For a given set $A$ in $\R^3,$ we define the sets:
\[
A_{+}(t):=\{(x,y,z)\in A\;:\;x\geq t\},A_{-}(t):=\{(x,y,z)\in A\;:\;x\leq t\},
\] 
\[
A^{+}(t):=\{(x,y,z)\in A\;:\;y\geq t\},A^-(t):=\{(x,y,z)\in A\;:\;y\leq t\},
\] 
\[
A(s,t):=\{(x,y,z)\in A\;:\;s<y< t\}.
\]
and
\[
A[s,t]:=\{(x,y,z)\in A\;:\;s\leq y\leq t\}.
\]

We also make extensive use of the three families of planes:
\begin{eqnarray*}
\pi_x(t) &:= &\{(x,y,z) \in \R^3 \; : \;x=t\}\\ 
\pi_y(t) &:= &\{(x,y,z) \in \R^3 \; : \;y=t\}\\
\pi_z(t) &:= &\{(x,y,z) \in \R^3 \; : \;z=t\}.\end{eqnarray*}
 and the maps
 \begin{eqnarray*}
\Pi_x(x,y,z) &:= &(y,z)\\ 
\Pi_y(x,y,z) &:= &(x,z)\\
\Pi_z(x,y,z) &:= &(x,y).
\end{eqnarray*}

\section{Preliminaries and Background}\label{Background}

In this section we compile some of the general results about translating solitons which we will be needing throughout the paper. See also Appendix \ref{Results-g-area}, where we collect some more of their consequences and technical parts of the proofs.

As we mentioned in the introduction, that translators are minimal surfaces with respect to Ilmanen's metric $g=e^{z}\langle\cdot,\cdot\rangle_{\R^3}$ was first proven in \cite{Ilm94}.

\begin{Definition}
From now on, we will therefore also refer to translators as $g$-minimal surfaces. The associated geometric quantities will be indexed accordingly, e.g. $\mathcal{H}^2_g(\cdot)$ will denote the two-dimensional Hausdorff measure associated to Ilmanen's metric on $\R^3$.
\end{Definition}

\begin{Definition}
Let $\Omega$ be a subset of $P_v=[v]^\perp.$ The set \[\Omega\times_v\R:=\{p+tv\;:\; p\in \Omega\ {\rm and}\ t\in\R \}\] is called the cylinder over $\Omega$ in the direction of $v.$ 
\end{Definition}

Stability of translating vertical graphs was observed in Shahriyari paper \cite[Theorem 2.5]{Sha15}. In fact we will be making extensive use of the stronger $g$-area minimizing properties of graphs in general direction via the next lemma, which is inspired in a general result by  B. Solomon \cite{Sol89}.
\begin{Lemma}\label{Area-minimizing}
Let $\Sigma={\rm Graph}_{(v,\Omega)}[u]$ be a complete connected translating graph. Then, $\Sigma$ is $g-$area minimizing in $\R^3.$
\end{Lemma}
\begin{proof}
See \cite[Corollary 1.1]{Sol89} or \cite[Appendix A]{HMW22-1} for the proof.
\end{proof}

Along the paper we will make use of the following compactness theorem which shall be a fundamental tool in the proof of several results.

\begin{Theorem}[Compactness] \label{theorem-compactness} Let $\Sigma$ be a complete, embedded  translating soliton with finite genus and finite entropy and let $\{p_i\}$ be a sequence in $\R^3$.

Define $\Sigma_i:= \Sigma-p_i$.  Then,
after  passing to a subsequence, $\{\Sigma_{i}\}_{i\in\n}$ has a limit which is either empty or a
smooth properly embedded self-translating surface $\Sigma_{\infty}$.
If the limit is not empty, the convergence is smooth
away from a discrete set denoted by $\operatorname{Sing}\subseteq \Sigma_\infty$. Moreover, for each connected
component $\Sigma'$ of $\Sigma_{\infty}$, either
\begin{enumerate}
\item [\rm (a)] the convergence to $\Sigma'$ is smooth everywhere with multiplicity $1$, or
\smallskip
\item [\rm (b)] the convergence is smooth, with some multiplicity greater than one, away
from $\Sigma'\cap\operatorname{Sing}$.
\end{enumerate}
\end{Theorem}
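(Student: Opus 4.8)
The plan is to deduce the statement from White's compactness theorem for minimal surfaces in Riemannian $3$-manifolds \cite{White-18}, whose hypotheses are uniform bounds on genus and on area over compact sets, after reinterpreting the translated surfaces $\Sigma_i=\Sigma-p_i$ as $g$-minimal surfaces for Ilmanen's metric $g=e^{z}\langle\cdot,\cdot\rangle_{\R^{3}}$; the content of the proof is the verification of those two bounds. The reduction to a sequence of $g$-minimal surfaces is immediate: the soliton equation \eqref{TS-Equation-1} is invariant under \emph{every} translation of $\R^{3}$, since a translate $\Sigma-p_i$ has tangent planes parallel to those of $\Sigma$, hence the same unit normal and mean curvature vector at corresponding points, while $\ee_3^{\perp}$ depends only on the tangent plane. (Equivalently, in Ilmanen's picture a horizontal translation is a $g$-isometry and a vertical translation rescales $g$ by a positive constant, so both preserve $g$-minimality.) Thus each $\Sigma_i$ is again a complete, embedded translator, i.e.\ a $g$-minimal surface, with $\operatorname{genus}(\Sigma_i)=\operatorname{genus}(\Sigma)<\infty$ and, since the entropy is manifestly translation-invariant, $\lambda(\Sigma_i)=\lambda(\Sigma)<\infty$.

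Next I would extract uniform local area bounds from the finiteness of the entropy. Testing $F_{x,r^{2}}$ against $\Sigma$ and discarding the part of the integral outside the Euclidean ball $B(x,r)$ gives the standard Euclidean area-ratio bound
\[
\mathcal{H}^{2}\!\big(\Sigma\cap B(x,r)\big)\ \le\ 4\pi e^{1/4}\,\lambda(\Sigma)\,r^{2}\qquad\text{for all }x\in\R^{3},\ r>0
\]
(compare \cite{CM} and Appendix \ref{Results-g-area}); being translation-invariant, this holds for every $\Sigma_i$ with the same constant. Since on the Euclidean ball $B(0,R)$ one has $e^{-R}\langle\cdot,\cdot\rangle\le g\le e^{R}\langle\cdot,\cdot\rangle$, it follows that
\[
\mathcal{H}^{2}_{g}\!\big(\Sigma_i\cap B(0,R)\big)\ \le\ e^{R}\,\mathcal{H}^{2}\!\big(\Sigma_i\cap B(0,R)\big)\ \le\ 4\pi e^{1/4}\,\lambda(\Sigma)\,e^{R}R^{2},
\]
a bound uniform in $i$ for each fixed $R$.

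Finally I would apply White's theorem \cite{White-18} to the $g$-minimal surfaces $\{\Sigma_i\}$ inside each open Euclidean ball $B(0,k)$, $k\in\n$ — a region on which $g$ is a fixed smooth metric and the genus and area are uniformly bounded by the two preceding steps — obtaining, after passing to a subsequence, convergence of $\Sigma_i\cap B(0,k)$ to a smooth embedded $g$-minimal surface, smooth away from a finite set, with the component-wise multiplicity alternatives (a)/(b). A diagonal argument over $k\to\infty$ produces a single subsequence converging on all of $\R^{3}$; the exceptional finite sets assemble into a discrete subset $\Sing\subseteq\Sigma_\infty$, the limit $\Sigma_\infty$ is a smooth, properly embedded $g$-minimal surface (hence a translator), and the multiplicity dichotomy holds component by component. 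If on the other hand the $g$-areas over every Euclidean ball tend to $0$ — equivalently, the $\Sigma_i$ eventually leave every compact set — the limit is empty, which is the first alternative in the statement.

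The step requiring the most care is the area bound: the entropy is a global, Gaussian-weighted quantity, and one must ensure that it produces \emph{uniform} local area bounds that survive the translations $p_i$, which may run off to infinity in $\R^{3}$. This works precisely because the entropy, and hence the Euclidean area-ratio bound, is translation-invariant. One must also be careful to apply White's theorem on Euclidean balls, on which $g$ is uniformly comparable to the flat metric, rather than on $g$-balls: the latter are not Euclidean-bounded — a $g$-ball of sufficiently large radius contains points at every horizontal distance from its centre once one descends far enough in the $z$-direction — so the geometry there degenerates and such area bounds would be of no use.
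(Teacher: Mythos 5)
Your proof is correct and takes essentially the same route as the paper: finite entropy $\Rightarrow$ uniform Euclidean area-ratio bounds (the paper cites \cite[Theorem~9.1]{White-21} where you derive the bound directly from the definition of $F_{x_0,s_0}$), combined with finite genus, and then White's compactness theorem from \cite{White-18} applied to the translates viewed as $g$-minimal surfaces. The extra care you spend — checking translation-invariance of the translator equation, converting Euclidean area bounds to $g$-area bounds on Euclidean balls, and running the diagonal argument — is all implicit in the paper's one-paragraph proof and is a reasonable fleshing-out rather than a different approach.
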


\begin{proof}
As the entropy $\lambda(\Sigma)<\infty,$ then $\Sigma$ has quadratic area growth (see, for instance \cite[Theorem 9.1]{Whi21}.) Then the sequence $\Sigma_i$ has uniform area bounds on compact subsets of $\R^3.$ Moreover, as the genus of $\Sigma$ is finite
then the genus of $\Sigma_i$ is also locally uniformly bounded. Hence, the proof of the theorem is a direct consequence of Theorem 1 in \cite{Whi18}.
\end{proof}

Next, we want to prove that if $\Sigma$ is a complete translating graph contained in a slab $\mathcal{S}_w$, then $\Sigma$ has finite entropy. 

We start by observing that Lemma \ref{Locally-Area} allows us to conclude the uniform estimate for the area of a complete connected translating graph inside a cube of length $w.$
\begin{Lemma}\label{cube-estimate}
Let $\Sigma$ be a complete connected translating graph of width $2w$ and $Q$ be a cube in $\R^3$ of side length $w.$ Then, there exists a constant $C>0$ (independent of $Q$ and $\Sigma$) such that 
\[{\rm Area}(Q\cap\Sigma)\leq C,\]
where ${\rm Area}(\cdot)$ indicates the area functional in $\R^3$ endowed with the Euclidean metric. 
\end{Lemma}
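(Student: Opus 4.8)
The plan is to exploit the translation invariance of the translator equation to reduce the statement to a single ball of bounded height, where Ilmanen's conformal factor $e^z$ is pinched between two positive constants depending only on $w$, and then to feed the problem into the local $g$-area estimate of Lemma~\ref{Locally-Area}. Concretely, let $p$ be the center of the cube $Q$, and replace $\Sigma$ by $\Sigma':=\Sigma-p$ and $Q$ by $Q':=Q-p$. Since every translation of $\R^3$ preserves $\ee_3$, the surface $\Sigma'$ is again a complete connected translating graph (the domain $\Omega$ is shifted inside $[v]^\perp$ and $u$ is modified by an additive constant, none of which affects \eqref{TS-Equation-1}); moreover the width is a translation invariant, so $\width(\Sigma')=2w$. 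As the Euclidean area is also translation invariant, ${\rm Area}(Q\cap\Sigma)={\rm Area}(Q'\cap\Sigma')$, and since $Q'$ is centered at the origin with side length $w$ we have $Q'\subseteq B_w(0)$. Thus it suffices to bound ${\rm Area}(\Sigma'\cap B_w(0))$ by a constant depending only on $w$.

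On $B_w(0)$ one has $|z|\le w$, hence $e^{-w}\le e^{z}\le e^{w}$; therefore for Euclidean and $g$-areas of subsets of $B_w(0)$ one gets ${\rm Area}(A)\le e^{w}\,\mathcal H^2_g(A)$, and for lengths of segments inside $B_w(0)$ the factor is $e^{w/2}$, so $B_w(0)\subseteq \overline{B}^{\,g}_{e^{w/2}w}(0)$. Consequently
\[
{\rm Area}(Q'\cap\Sigma')\;\le\; e^{w}\,\mathcal H^2_g\big(\Sigma'\cap B_w(0)\big)\;\le\; e^{w}\,\mathcal H^2_g\big(\Sigma'\cap B^{\,g}_{e^{w/2}w}(0)\big).
\]
Now I apply Lemma~\ref{Locally-Area}: since $\Sigma'$ is a translating graph, it is $g$-area minimizing in the cylinder $\Omega'\times_v\R$ by Lemma~\ref{Area-minimizing}, and the local $g$-area bound of Lemma~\ref{Locally-Area} gives $\mathcal H^2_g\big(\Sigma'\cap B^{\,g}_{r}(0)\big)\le C_0\,r^2$ with $C_0$ universal. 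Taking $r=e^{w/2}w$ and combining the displayed inequalities yields ${\rm Area}(Q\cap\Sigma)\le C_0\,e^{2w}w^2=:C(w)$, which is independent of $Q$ and of the particular graph $\Sigma$ of width $2w$, as claimed.

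The step I expect to be the main obstacle is the clean application of Lemma~\ref{Locally-Area}: one must make sure the local $g$-area bound is genuinely available at the scale $r=e^{w/2}w$ around the (translated) origin. This is exactly where the $g$-area-minimizing property from Lemma~\ref{Area-minimizing} is used, and it may be necessary to combine it with the confinement $\Sigma\subseteq\mathcal{S}_w$ to guarantee that the relevant $g$-ball sits inside the cylinder of minimization $\Omega'\times_v\R$ (or, when $\Omega'$ is a strip, to invoke the global minimality in $\R^3$ directly). Apart from this, the remaining points are routine: verifying that horizontal and vertical translations indeed preserve "complete connected translating graph of width $2w$'' up to an additive constant in $u$, and the elementary metric comparison on $B_w(0)$.
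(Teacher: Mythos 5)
Your overall strategy is the right one and is what the paper intends: translate so that the cube is centered at the origin (this preserves the class of complete connected translating graphs of width $2w$, since all translations preserve \eqref{TS-Equation-1}), trap $Q'$ in $B_w(0)$, observe that the conformal factor $e^z$ is pinched between $e^{-w}$ and $e^w$ there, and then invoke the local $g$-area estimate of Lemma~\ref{Locally-Area}. The first half of your argument is correct.

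The flaw is in the final step, and it is not innocent. Lemma~\ref{Locally-Area} is stated for \emph{Euclidean} balls $B_r(q)$ and gives the bound $\mathcal{H}_g^2(B_r(q)\cap\Sigma)\le\mathcal{H}_g^2(\partial B_r(q))$; it says nothing about $g$-metric balls and provides no estimate of the form $\mathcal{H}_g^2(\Sigma\cap B^g_r)\le C_0 r^2$. Worse, no such universal quadratic bound for $g$-balls can hold for large $r$: the metric $g=e^z\langle\cdot,\cdot\rangle$ is incomplete in the $-\ee_3$ direction (the $g$-length of the ray $t\mapsto(0,0,-t)$, $t\ge 0$, is $\int_0^\infty e^{-t/2}\,dt=2$), so once $r>2$ the $g$-ball $B^g_r(0)$ already contains the entire lower half-space $\{z<0\}\cap\mathcal{S}_w$, which is Euclidean-unbounded. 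In particular for $r=e^{w/2}w$ this happens as soon as $w\gtrsim 2$, so the intermediate step $\mathcal{H}_g^2\big(\Sigma'\cap B^{g}_{e^{w/2}w}(0)\big)\le C_0\,(e^{w/2}w)^2$ has no justification and is in general false.

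The detour through $g$-balls is also unnecessary. Apply Lemma~\ref{Locally-Area} directly to the Euclidean ball $B_w(0)$ and to the translated surface $\Sigma'$:
\[
\mathcal{H}_g^2\big(\Sigma'\cap B_w(0)\big)\;\le\;\mathcal{H}_g^2\big(\partial B_w(0)\big)\;=\;\int_{\partial B_w(0)}e^{z}\,dA\;\le\;e^{w}\cdot 4\pi w^2,
\]
where the last inequality uses $|z|\le w$ on $\partial B_w(0)$. Combining with your (correct) comparison ${\rm Area}(A)\le e^{w}\,\mathcal{H}_g^2(A)$ for $A\subseteq B_w(0)$ gives
\[
{\rm Area}(Q\cap\Sigma)={\rm Area}(Q'\cap\Sigma')\le e^{w}\,\mathcal{H}_g^2\big(\Sigma'\cap B_w(0)\big)\le 4\pi\,e^{2w}w^2=:C(w),
\]
which depends only on $w$. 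This is the clean version of the argument you were aiming for; the translation to the origin is exactly what makes $\mathcal{H}_g^2(\partial B_w(0))$ a controlled, $\Sigma$- and $Q$-independent quantity.
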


As an application of this result, we are going to prove that complete connected translating graphs have finite entropy. The ideas used in the proof of the next proposition are inspired by \cites{HMW24, Whi22}.
\begin{Proposition}\label{finiteentropy}
Let $\Sigma$ be a complete connected translating graph of width $2w$. Then $\Sigma$ has quadratic area growth. In particular, the entropy of $\Sigma$ is finite.
\end{Proposition}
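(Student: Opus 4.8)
The plan is to show that a complete connected translating graph $\Sigma$ of width $2w$ has quadratic area growth, i.e. there exists $C>0$ so that $\Area(\Sigma\cap B_R(0))\le C R^2$ for all $R\ge 1$; once this is established, the finiteness of the entropy follows immediately from the standard fact (e.g.\ \cite[Theorem 9.1]{White-21}, or \cite{Sun}) that quadratic area growth implies finite Gaussian density ratios, hence finite entropy. So the work is entirely in the area bound.

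First I would exploit the slab condition: since $\Sigma\subseteq\mathcal{S}_w=\{|x|<w\}$, the ball $B_R(0)$ meets $\Sigma$ only inside the slab, and we can cover $\mathcal{S}_w\cap B_R(0)$ by a controlled number of axis-parallel cubes of side length $w$. Concretely, $\mathcal{S}_w\cap B_R(0)$ is contained in the box $(-w,w)\times(-R,R)\times(-R,R)$, which can be covered by at most $N(R)\le c\,(R/w+1)^2$ cubes of side $w$ (the $x$-direction needs only a bounded number of such cubes since the slab has bounded width). Then by subadditivity of area and Lemma \ref{cube-estimate}, which gives the uniform bound $\Area(Q\cap\Sigma)\le C_0$ for \emph{every} cube $Q$ of side length $w$ with a constant $C_0$ independent of $Q$ and $\Sigma$, we get
\[
\Area(\Sigma\cap B_R(0))\le \sum_{j=1}^{N(R)}\Area(Q_j\cap\Sigma)\le N(R)\,C_0\le c\,C_0\,(R/w+1)^2,
\]
which is the desired quadratic growth (with a constant depending on $w$, but that is harmless for finiteness of entropy). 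I would then invoke the cited area-growth--to--entropy implication to conclude $\lambda(\Sigma)<+\infty$.

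The only real content is Lemma \ref{cube-estimate}, which is quoted just above the Proposition and which in turn rests on Lemma \ref{Locally-Area}; the $g$-area minimizing property of graphs from Lemma \ref{Area-minimizing} is what powers it, since for an area (or $g$-area) minimizing hypersurface one controls the area in a ball by comparison with the area of a ``cone'' or a projection to the boundary, and the bounded width of the slab makes that comparison surface have controlled size. So within the scope of this Proposition the argument is genuinely short: set up the covering, sum, cite. The main obstacle — if one wanted to be self-contained — would be establishing the uniform per-cube estimate of Lemma \ref{cube-estimate} without the slab being used as a crutch for the boundary comparison; but since we are permitted to assume Lemma \ref{cube-estimate}, the proof reduces to the elementary covering and summation argument sketched above, together with the standard conversion of quadratic area growth into finite entropy.
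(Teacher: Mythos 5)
Your argument is essentially the same as the paper's: cover $\Sigma\cap B_R(p)$ by $O((R/w)^2)$ cubes of side $w$, apply Lemma~\ref{cube-estimate} to each, sum, and cite the standard conversion from quadratic area growth to finite entropy. Two small points you should make explicit, as the paper does: the bound must be uniform over \emph{all} centers $p\in\R^3$ (not just the origin), since the entropy supremum ranges over all $x_0$; and for radii $R<w$ the covering gives nothing useful on its own, so one additionally invokes the monotonicity formula together with Lemma~\ref{cube-estimate} to control the small-scale density ratios.
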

\begin{proof}
Firstly, we assume that $R\geq w.$ Consider $p=(p_1,p_2,p_3)\in\R^3.$ Let $n\in\mathbb{N}$  so that
\begin{equation}\label{A-estimate}
    (n+1)w>R\geq nw.
\end{equation}
Then
\[
B_R(p)\subset\prod_{j=1}^3[p_j-(n+1)w,p_j+(n+1)w].
\]
since $\Sigma\subseteq \mathcal{S}_w,$ one obtains: 
\[
\Sigma\cap B_R(p)\subseteq\Sigma\cap \{[p_1-w,p_1+w]\times\prod_{j=2}^3[p_j-(n+1)w,p_j+(n+1)w]\}.
\]
Let $\Tilde{Q}=[p_1-w,p_1+w]\times\displaystyle\prod_{j=2}^3[p_j-(n+1)w,p_j+(n+1)w]$ and notice that this set is the union of $8(n+1)^2$ cubes of side length $w$. Therefore, Lemma \ref{cube-estimate} implies
\[
{\rm Area}(\Sigma\cap B_R(p))\leq  8(n+1)^2C
\]
and hence by \eqref{A-estimate}  one gets
\[
\frac{{\rm Area}(\Sigma\cap B_R(p))}{\pi R^2}\leq \frac{8(n+1)^2}{\pi n^2w^2}C=:\widetilde C,
\]
where $\widetilde C$ depends only on $w$.
Now, if $R<w,$ then Lemma \ref{cube-estimate} and the monotonicity formula ensure that 
\[
C_1=\sup_{\begin{array}{c}
p\in\R^3  \\
R\in(0,w)  \\
\end{array}
}\frac{{\rm Area}(B_R(p))}{\pi R^2}<+\infty.
\]
This completes the proof of the first part. The second part follows from \cite[Theorem 9.1]{Whi21}.
\end{proof}

The next result is essentially due to M. Eichmair and J. Metzger \cite[Appendix B]{EM16} and L. Shahriyari \cite{Sha15}. It describes the structure of the domains of complete translating graphs (see also  \cite{CM21}) and guarantees in particular that any complete translating graph is simply connected.
 
\begin{Lemma}\label{domain}
Let $u:\Omega\to\R$ be a complete connected translating graph in the sense of Definition \ref{def:graphs}, not necessarily contained in a slab. Then each connected component of $\partial \Omega\times_v\R$ is either a tilted grim reaper (thus not a graph in the $v$-direction) or a vertical plane containing $\ee_3$. Thus, $\Omega$ is unbounded and simply connected, and there are at most two parallel lines contained in $\partial \Omega$. 
\end{Lemma}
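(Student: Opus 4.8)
The plan is to use the $g$-area minimizing property of the graph (Lemma \ref{Area-minimizing}) together with a translation/blow-down argument to identify each component of the cylinder $\Omega\times_v\R$ with a cylindrical translator, and then to deduce the global statements about $\Omega$ formally. The starting observation is that completeness forces $u$ to blow up at $\partial\Omega$: along every sequence $p_i\to q\in\partial\Omega$ one has $|u(p_i)|\to\infty$, since otherwise $p_i+u(p_i)v$ would subconverge to a finite point of $\R^3$ lying over $q\notin\Omega$, which the graph structure together with completeness excludes (this is the point made in \cite[Appendix B]{Eichmair-Metzger} and \cite{Shahriyari}). In particular, for each $q\in\partial\Omega$ we may fix $p_i\to q$ with, after passing to a subsequence, $u(p_i)\to+\infty$ (the case $-\infty$ being symmetric).

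Now the key step. Because the translator equation $\vec H=\ee_3^\perp$ is invariant under ambient translations, each $\Sigma_i:=\Sigma-u(p_i)v=\mathrm{Graph}_{(v,\Omega)}[u-u(p_i)]$ is again a complete translator, and it is still $g$-area minimizing in $\Omega\times_v\R$, since translation by multiples of $v$ preserves this cylinder and rescales $g$ only by a positive constant. On compact subsets of $\mathrm{int}\,\Omega$ the graph functions $u-u(p_i)$ tend uniformly to $-\infty$, so the $\Sigma_i$ escape to $v=-\infty$ over the interior of $\Omega$, while $p_i\in\Sigma_i$ with $p_i\to q$. Passing to a subsequential limit (as integral varifolds, using Lemma \ref{Area-minimizing} and the monotonicity formula for the required local mass bounds near $q$), one obtains a $g$-stationary $\Sigma_\infty$ that lies in $\overline\Omega\times_v\R$, carries no mass over $\mathrm{int}\,\Omega$, hence is supported in $(\partial\Omega)\times_v\R$, and is nonempty near $q$. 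By Allard regularity (cf.\ the convergence in Theorem \ref{theorem-compactness}), $\Sigma_\infty$ is, near $q$, a smooth translator contained in $(\partial\Omega)\times_v\R$; since its image under the orthogonal projection onto $[v]^\perp$ lies in $\partial\Omega$, a set with empty interior, $v$ must be tangent to $\Sigma_\infty$ at every regular point, so $\Sigma_\infty$ is locally a cylinder $C\times_v\R$ over a smooth curve $C\subseteq\partial\Omega$. Being a translator, $C$ solves the grim-reaper-type ODE in the plane $[v]^\perp$ that characterizes curves whose $v$-cylinder is a translator, whose maximal solutions are exactly the straight lines (cylinders $=$ vertical planes, necessarily containing $\ee_3$) and the grim-reaper curves (cylinders $=$ tilted grim reapers). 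Carrying this out at every point of $\partial\Omega$ and invoking unique continuation for that ODE, one concludes that $\partial\Omega$ is a properly embedded $1$-manifold each of whose components is a complete line or a complete grim-reaper curve; equivalently, each connected component of $\partial\Omega\times_v\R$ is a vertical plane containing $\ee_3$ or a tilted grim reaper — and the latter, being a cylinder in the $v$-direction, is not a graph in the $v$-direction.

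The remaining assertions follow at once. Lines and grim-reaper curves are properly embedded copies of $\R$, so every component of $\partial\Omega$ is non-compact; hence $\partial\Omega$ is either empty (and then $\Omega=[v]^\perp\cong\R^2$) or non-compact, and in either case $\Omega$ is unbounded. If $\Omega$ were not simply connected it would have a bounded complementary component, and the topological boundary of that component would be a nonempty compact union of components of $\partial\Omega$, which is impossible since those are all non-compact; thus $\Omega$ is simply connected. Finally, distinct lines in $\partial\Omega$ are disjoint, being distinct components of a planar $1$-manifold, hence parallel; and $\Omega$ cannot be adjacent to three of them, for being connected and adjacent to the two outer lines it would have to contain points on both sides of the middle one, which itself lies in $\partial\Omega$ — a contradiction. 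So $\partial\Omega$ contains at most two parallel lines.

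The step I expect to be the real obstacle is producing the limit $\Sigma_\infty$ nonempty and regular on the boundary cylinder. Since $q$ sits on the boundary of the region $\Omega\times_v\R$ in which the $\Sigma_i$ minimize, the naive monotonicity estimate for $\mathrm{Area}(\Sigma_i\cap B_\rho(q))$ is not uniform in $i$; handling this properly requires the $g$-area minimizing property of Lemma \ref{Area-minimizing} together with Allard-type regularity for stationary varifolds in the Ilmanen metric, and is precisely the delicate content of \cite[Appendix B]{Eichmair-Metzger} and \cite{Shahriyari}. In practice it may well be cleaner to avoid the varifold limit entirely and argue by barriers instead, comparing $u$ near $\partial\Omega$ against vertical-plane and grim-reaper super- and subsolutions.
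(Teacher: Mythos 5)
Your argument follows the paper's route exactly: translate $\Sigma$ along $v$ to obtain a limit supported on $(\partial\Omega)\times_v\R$ (the paper simply cites Eichmair--Metzger's Appendix B for this), identify each component of the limit as a ruled translator and hence a vertical plane or tilted grim reaper (the paper cites Mart\'in--Savas-Halilaj--Smoczyk, Corollary 2.1, whereas you re-derive this via the one-dimensional grim-reaper ODE), and then finish with elementary topology of $\R^2$. You correctly flag the boundary area-bound/regularity step as the genuinely delicate point that the cited references handle, and your final topological deductions (unboundedness, simple connectivity, at most two parallel lines) are sound, so the proposal is correct and essentially coincides with the paper's proof.
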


\begin{proof}
 By \cite[Appendix B]{EM16} we know that the sequence $\{\Sigma\,\pm\, n v\}_{n\in\mathbb{N}}$ converges smoothly on compact subsets, after passing to a subsequence, to $\Sigma^{\pm}_\infty$, where $\Sigma:={\rm Graph}_{(v,\Omega)}[u]$  and $\Sigma_\infty^{-}\cup\Sigma_\infty^{+}=\partial\Omega\times_v\R$. Thus each component of the limit translator is a connected ruled translator, which were classified in \cite[Corollary 2.1]{MSHS15}; they are either tilted grim reapers or vertical planes. 
 
Once we know that every connected component of $\partial\Omega$ is either an entire line or a complete grim reaper curve, which in particular are unbounded curves, it also follows from basic topological properties of $\R^2$ that the connected domain $\Omega$ is simply connected.
\end{proof}

\begin{Remark}
We would like to point out that all known examples in Lemma \ref{domain} are graphs over strips in $P_v$ or over the entire plane $P_v$.

We conjecture that the strips and planes are the unique examples of the planar domains which occur, so that also the half-plane is ruled out in Lemma \ref{domain}, as in case $v=\ee_3$ where it was already proven in \cites{HIMW19,SX20,Wan11}.
\end{Remark}

As a consequence of Theorem \ref{theorem-compactness} and Proposition \ref{finiteentropy}, we get the following extra information in the case of graphs.

\begin{Corollary}\label{Limit-graphs}
Let $\Sigma^2\subseteq\R^3$ be a complete translating graph in the direction of $v\in \sph^2$ and $\{p_k\}$ be a sequence in $\R^3$. 

Define $\Sigma_k:=\Sigma\, -\, p_k$. Then, up to passing to a subsequence, $\{\Sigma_k\}$ has a smooth limit on compact subsets of $\R^3$; $\Sigma_\infty = \Sigma_\infty^2\subseteq\mathbb{R}^3$, possibly empty. If $\Sigma_\infty$ is not empty, then each connected component of $\Sigma_\infty$ is either:
\begin{itemize}
    \item a translating graph in the direction of $v$, or
    \item a grim reaper cylinder tangent to $v$, or
    \item a vertical plane (i.e. containing $\ee_3$).
\end{itemize}
\end{Corollary}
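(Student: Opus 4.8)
The plan is to apply the Compactness Theorem~\ref{theorem-compactness} and then analyse the limit surface one connected component at a time, transporting to each component the two features that characterise a graph in the direction $v$: that the unit normal is nowhere orthogonal to $v$, and that the surface is disjoint from its own $v$-translates. First I would verify the hypotheses of Theorem~\ref{theorem-compactness}: since $\Sigma$ is a complete translating graph, Lemma~\ref{domain} gives that it is simply connected, so $\operatorname{genus}(\Sigma)=0$, and Proposition~\ref{finiteentropy} gives $\lambda(\Sigma)<\infty$; moreover each $\Sigma_k=\Sigma-p_k$ is an ambient translate of $\Sigma$, hence again a complete translating graph in the direction $v$, with the same genus and entropy, because the property of being a translator (equation~\eqref{TS-Equation-1}), the genus, and the entropy are all invariant under translations of $\R^3$. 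Theorem~\ref{theorem-compactness} then applies to $\{\Sigma_k\}$: after passing to a subsequence, $\Sigma_k\to\Sigma_\infty$ smoothly on compact subsets away from a discrete set $\Sing$, with $\Sigma_\infty$ a (possibly empty) properly embedded $g$-minimal surface, that is, a translator. If $\Sigma_\infty=\emptyset$ there is nothing to prove; otherwise I fix a connected component $\Sigma'$ of $\Sigma_\infty$ and track what survives in the limit.

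Orienting each $\Sigma_k$ so that $\langle N_k,v\rangle>0$ (possible since $\Sigma_k$ is a $v$-graph) and passing to the limit, $\Sigma'$ carries a consistent orientation with $f:=\langle N',v\rangle\ge 0$. Writing $\Sigma_k=\graph_{(v,\Omega_k)}[u_k]$, one has $\Sigma_k+tv=\graph_{(v,\Omega_k)}[u_k+t]$, so $\Sigma_k$ is disjoint from $\Sigma_k+tv$ for every $t\ne 0$, and for $t>0$ the surface $\Sigma_k+tv$ lies ``above'' $\Sigma_k$ in the $v$-direction; both of these features pass to the limit. Next, since $\Sigma'$ is a translator, so is $\Sigma'+tv$ for every $t\in\R$; hence $f$ lies in the kernel of the Jacobi operator $L_{\Sigma'}$ of $\Sigma'$, i.e. it solves a linear elliptic equation $L_{\Sigma'}f=0$. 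As $f\ge 0$, the strong maximum principle gives the dichotomy: either $f>0$ everywhere on $\Sigma'$, or $f\equiv 0$.

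In the first case, $v$ is transverse to $\Sigma'$ at every point, so the orthogonal projection onto $P_v=[v]^\perp$ restricts to a local diffeomorphism of $\Sigma'$ onto an open subset of $P_v$. I would then show that $\Sigma'$ is disjoint from $\Sigma'+tv$ for all $t\ne 0$: were they to meet, the meeting point could be chosen outside the discrete set $\Sing\cup(\Sing+tv)$, and there the strong maximum principle applied to two embedded translators that are limits of disjoint, $v$-ordered ones would force $\Sigma'+tv=\Sigma'$, which is impossible since it would make $v$ tangent to $\Sigma'$, against $f>0$. Disjointness from all nonzero $v$-translates makes the projection injective on $\Sigma'$, so $\Sigma'=\graph_{(v,\Omega')}[u']$ over its image $\Omega'$ in $P_v$: a translating graph in the direction $v$. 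In the second case, $v$ is everywhere tangent to $\Sigma'$; since $\Sigma'$ is closed in $\R^3$, the whole line $p+\R v$ lies in $\Sigma'$ for each $p$, so $\Sigma'=\gamma\times_v\R$ is a ruled translator with planar profile $\gamma=\Sigma'\cap P_v$. By \cite[Corollary~2.1]{MSHS} a connected ruled translator is either a tilted grim reaper or a vertical plane; moreover here the rulings point in the direction $v$, so the grim reaper cylinder is tangent to $v$, and the plane, being spanned by $v$ and $\ee_3-\langle\ee_3,v\rangle v$, contains $\ee_3$. (Directly: the translator equation forces $\gamma$ to satisfy $\kappa_\gamma=\langle\,\ee_3-\langle\ee_3,v\rangle v,\ \mathbf n_\gamma\rangle$, the line/grim-reaper ODE.) This exhausts the three possibilities.

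The hard part will be the first case above: upgrading ``$\Sigma'$ is locally a $v$-graph'' to ``$\Sigma'$ is globally a $v$-graph'', i.e. ruling out a limit component that is a partial graph but is periodic or multi-sheeted in the $v$-direction. This is exactly where one has to combine, with care at the discrete singular set $\Sing$, the structure of $\Sigma'$ as a limit of embedded, pairwise $v$-disjoint and $v$-ordered translators with the strong maximum principle at interior (non-singular) contact points, and, if necessary, the $g$-area-minimizing property of the $\Sigma_k$ in their cylinders from Lemma~\ref{Area-minimizing}, which passes to the limit.
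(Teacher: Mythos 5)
Your proposal takes essentially the same approach as the paper: apply Theorem~\ref{theorem-compactness} (with Lemma~\ref{domain} for simple connectedness and Proposition~\ref{finiteentropy} for finite entropy), consider the nonnegative Jacobi function $f=\langle N,v\rangle$ on the limit, apply the strong maximum principle to get the dichotomy $f>0$ or $f\equiv 0$, and in the ruled case invoke \cite[Corollary~2.1]{MSHS}. The only difference is that you spell out a local-to-global argument (via disjointness from $v$-translates and the strong maximum principle) for the step ``$f>0$ implies $\Sigma'$ is a $v$-graph,'' which the paper simply asserts; your caution there is warranted, since the published proof is terse at exactly that point.
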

\begin{proof} The convergence statement is a consequence of Theorem \ref{theorem-compactness} (taking into account that $\Sigma_k$ is simply connected, by Lemma \ref{domain}).

On the other hand, if we denote by $N_k$ the Gauss map of $\Sigma_k$, then $f_k=\langle N_k, v\rangle$ is a positive Jacobi function over $\Sigma_k$. If we label $f_\infty=\lim_k f_k$, then $f_\infty \geq 0$
and it is a Jacobi function over $\Sigma_\infty$. 

Consider $\Sigma'$ a connected component of $\Sigma_\infty$. Hence, by a local application of the strong maximum principle for the stability operator either $f_\infty|_{\Sigma'}>0$ everywhere or $f_\infty|_{\Sigma'} \equiv 0$. In the first case, $\Sigma'$ is a graph in the direction of $v$. In the second case, we have that $v$ is tangent to $\Sigma'$ and its Gauss curvature $K \equiv 0$. By Corollary 2.1 in \cite{MSHS15} we have that $\Sigma'$ is either a vertical plane, or a grim reaper cylinder (tangent to $v$). This concludes the proof.
\end{proof}

\subsection{Chini-type results}
In this part we collect some results from \cite{Chi20} which will be used throughout the paper.

A fundamental fact in the proof of several results in this paper will be the following theorem by F. Chini \cite{Chi19}, \cite[Theorem 10 and Corollary 11]{Chi20}. This theorem provides an important description of the asymptotic behavior of a complete translator of finite width $2 w>0$ when it approaches the boundary planes $\{x=\pm w\}$.
\begin{Theorem}{$($\cite[Th. 10]{Chi20}$)$} \label{Chini-theorem}
 Let $\Sigma^2$ be a properly immersed translator of $\R^3$ such that $\partial( {\rm Conv}(\Pi_z(\Sigma))) \neq \varnothing.$
 
 Then for every $q\in \partial ({\rm Conv}(\Pi_z(\Sigma)))$ and for every $\rho>0$ we have that
 $$ \sup_{\Sigma \cap B(q,\rho)} z =+\infty.$$
\end{Theorem}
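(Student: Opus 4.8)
The plan is to argue by contradiction, using grim reaper cylinders as barriers together with the strong maximum principle for $g$-minimal surfaces. First, a reduction. Since $\Pi_z$ is linear, ${\rm Conv}(\Pi_z(\Sigma))=\Pi_z({\rm Conv}(\Sigma))$; pick a line supporting this planar convex set at the given $q$. Rotations about the $z$-axis and horizontal and vertical translations preserve the translator equation, so after such a rigid motion I may assume $q=(0,0)$, that $\Sigma\subseteq\{x\le0\}$ with $\{x=0\}$ the supporting line at $q$, and — negating the conclusion — that $M:=\sup\{z:\ p\in\Sigma,\ |\Pi_z(p)|<\rho\}<+\infty$, which after one further vertical translation we may take to be $0$. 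Because $\{x=0\}$ supports but does not separate ${\rm Conv}(\Pi_z(\Sigma))$, one has $\sup\{x:(x,y,z)\in\Sigma\}=0$, so $\Sigma$ contains points with $x$-coordinate increasing to $0$. Also, if $\Sigma$ met the plane $\{x=0\}$ at an interior point then the strong maximum principle would force $\Sigma$ to equal that vertical plane, for which the statement is immediate; so we may assume $x<0$ throughout $\Sigma$.

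\emph{The barrier.} For $a,c\in\R$ let $B_{a,c}$ be the grim reaper cylinder with rulings in the $\ee_2$-direction and slab $\{a-\pi<x<a\}$, that is, up to the vertical translation by $c$,
\[
\Big\{\big(x,y,\,-\log(-\sin(x-a))+c\big):\ a-\pi<x<a,\ y\in\R\Big\};
\]
each $B_{a,c}$ is a complete translator, hence $g$-minimal, is invariant under $\ee_2$-translations, and is asymptotic to the vertical planes $\{x=a\}$ and $\{x=a-\pi\}$, along which its height tends to $+\infty$. For $a\ge\pi$ the slab sits in $\{x\ge0\}$, so $B_{a,c}$ is disjoint from $\Sigma$; for $a=0$ the surface is asymptotic to the supporting plane $\{x=0\}$. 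Decreasing $a$ from $\pi$ to $0$ (and adjusting $c$) so as to move $B_{a,c}$ against $\Sigma$ and examining the first configuration in which they meet, the strong maximum principle — translators are real-analytic minimal surfaces for the real-analytic metric $g$ — gives two cases: at an interior contact $\Sigma$ must contain an open piece of a grim reaper cylinder, hence by unique continuation be one, and then the statement holds directly; otherwise the contact ``runs off to infinity.'' In the latter case the candidate contact points remain in a bounded $x$-slab and, by the $\ee_2$-invariance of $B_{a,c}$, may be taken with bounded $y$, so they can escape only in $z$; since $B_{a,c}$ is asymptotically vertical exactly along $\{x=a\}$ and $\{x=a-\pi\}$, and since $\Sigma$ itself accumulates on $\{x=0\}$ (because $\sup x=0$), one extracts from this points of $\Sigma$ with $x$-coordinate $\to0$ and $z\to+\infty$. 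If these lie over a bounded range of $y$, then for large indices their projections lie in $B(q,\rho)$ and $z\to+\infty$ contradicts $M<+\infty$.

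\emph{The main obstacle} is exactly the localization in $y$: a priori $q$ could be a \emph{phantom} boundary point of ${\rm Conv}(\Pi_z(\Sigma))$, i.e.\ one that $\Sigma$ does not itself come near, produced only by convex combinations of points of $\Pi_z(\Sigma)$ escaping in the $\ee_2$-direction. To handle this I would invoke Carath\'eodory's theorem in $\R^2$: $q=(0,0)$ is a limit of convex combinations of at most three points of $\Pi_z(\Sigma)$; the constraint $x\le0$ forces every such point carrying bounded-below weight to have $x$-coordinate $\to0$, while convergence of the combination to $(0,0)$ forces at least one of them to have $|y|\to+\infty$. Translating $\Sigma$ along $\ee_2$ to bring such a point near $\{y=0\}$ and using the $\ee_2$-invariance of the barriers $B_{a,c}$ (so that the contact analysis above is uniform under these translations) then localizes the infinite-height behaviour at $q$; the same invariance, together with the fact that each boundary component of ${\rm Conv}(\Pi_z(\Sigma))$ is a full line, propagates the conclusion to every $q\in\partial({\rm Conv}(\Pi_z(\Sigma)))$ and every $\rho>0$. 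I expect the genuinely delicate step to be carrying out this escape-and-limit argument with properness alone, since the natural compactness tool (as in Theorem~\ref{theorem-compactness}) relies on the genus and entropy bounds, which are not assumed in the generality stated here.
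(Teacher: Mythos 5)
First, note that the paper does not supply a proof of this statement at all: Theorem~\ref{Chini-theorem} is quoted verbatim from \cite{Chini}*{Theorem 10} and the label is a citation, so there is no ``paper's own proof'' to compare against. Your reconstruction does pick the right strategy for what Chini actually does there, namely normalize the supporting half-plane, use grim reaper cylinders tangent to it as $g$-minimal barriers, slide them against $\Sigma$, and invoke the strong maximum principle at first contact — so the skeleton is sound. But the argument you give has genuine gaps, and you in fact flag the largest one yourself.

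The real gap is precisely the localization in $y$ and the ``first contact exists'' step, and your Carath\'eodory paragraph does not close it. The assertion that a convex combination converging to $q=(0,0)$ ``forces at least one of them to have $|y|\to+\infty$'' is false in general (all three Carath\'eodory points may stay bounded, in which case there is nothing to prove); and when some of them do escape in $y$, ``translating $\Sigma$ along $\ee_2$'' is a change of coordinates that moves $q$ with it, so it does not localize the contact near the original disk $B(q,\rho)$. More fundamentally, the contradiction hypothesis only bounds $z$ on $\Sigma\cap(B(q,\rho)\times\R)$, not globally, whereas the grim reaper cylinder $B_{a,c}$ is $\ee_2$-invariant and infinite; a first contact could therefore occur over an unbounded $y$-range, where you have no height bound, and nothing in the sketch prevents this. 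You need either a barrier that is bent or truncated so that the contact set is forced into a compact region over $B(q,\rho)$, or an escape-and-limit argument to handle the ``contact runs off to infinity'' alternative — and here you explicitly concede that you do not know how to carry out the limiting step with properness alone, since the paper's compactness tool (Theorem~\ref{theorem-compactness}) presupposes the genus and entropy bounds that the statement of Theorem~\ref{Chini-theorem} does not assume. As written, the proposal is an outline with the central step missing rather than a complete proof.
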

\begin{Theorem}{$($\cite[Cor. 11]{Chi20}$)$.}\label{Chini-result}
Let $\Sigma^2\subseteq\R^3$ be a connected properly immersed translator of width $2w>0$. Then, for any plane $\pi$ parallel to $\ee_3$ and not parallel to $\ee_2$, there exist two distinct sequences $\{p_1^k\},\{p_2^k\}\subseteq\Sigma\cap\pi$ satisfying the following properties:
\begin{itemize}
    \item[a.]\(\displaystyle\lim_{k\to+\infty} \langle p^k_1,\ee_3\rangle=\lim_{k\to+\infty} \langle p^k_2,\ee_3\rangle=+\infty;\)
    \item[b.]\(\displaystyle\lim_{k\to+\infty} {\rm dist}(p^k_1,L_1)=\lim_{k\to+\infty}{\rm dist}(p^k_2,L_2)=0.\)
\end{itemize}
where the lines are $L_1:=\pi\cap\{x=-w\}$ and $L_2:=\pi\cap\{x=w\}$
\end{Theorem}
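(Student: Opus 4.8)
The plan is to put $\pi$ in a normal form, apply Chini's Theorem~\ref{Chini-theorem} at the two boundary lines of the slab to produce points of $\Sigma$ climbing to infinity near $L_1$ and near $L_2$, and then upgrade those to genuine points of $\Sigma\cap\pi$ by a connectedness argument. First I would normalize: since $\pi$ is parallel to $\ee_3$ but not to $\ee_2$, its horizontal trace is a non-vertical line in the $(x,y)$-plane, and after a horizontal translation along $\ee_2$ (which preserves the translator equation, the slab, and connectedness) I may assume $\pi=\{(x,y,z):y=mx\}$ for some $m\in\R$. I would then set $\phi:=y-mx$, so that $\pi=\{\phi=0\}$ while $\R^3\setminus\pi$ is the disjoint union of the open half-spaces $\{\phi>0\}$ and $\{\phi<0\}$. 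By the Chini--M\o{}ller convex hull theorem \cite{Chini-Moller}, together with Proposition~\ref{prop:vertical} and the definition of width, $\mathrm{Conv}(\Pi_z(\Sigma))$ is the strip bounded by the lines $\{x=-w\}$ and $\{x=w\}$; hence $q_1:=(-w,-mw)$ and $q_2:=(w,mw)$ lie on $\partial(\mathrm{Conv}(\Pi_z(\Sigma)))$, and $L_1$, $L_2$ are precisely the vertical lines in $\R^3$ over $q_1$ and $q_2$. Since $\dist((x,mx,z),L_1)=|x+w|\sqrt{1+m^2}$ for $(x,mx,z)\in\pi$, it is enough to produce $\{p_1^k\}\subseteq\Sigma\cap\pi$ with $z(p_1^k)\to+\infty$ and $x(p_1^k)\to-w$; the sequence near $L_2$ comes from the identical argument at $q_2$, and $\{p_1^k\}$, $\{p_2^k\}$ are automatically distinct because $L_1\ne L_2$.

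The next step is to extract approximating points. For each small $\eps>0$, applying Theorem~\ref{Chini-theorem} at the boundary point $(-w,-mw+\eps)$ gives, for every $\rho>0$, points of $\Sigma$ of arbitrarily large height $z$ lying within horizontal distance $\rho$ of the vertical line over $(-w,-mw+\eps)$; any such point has $x$ near $-w$ and $y$ near $-mw+\eps$, hence $\phi$ near $+\eps$, so $\phi>0$ once $\eps$ and $\rho$ are chosen suitably. Thus for all $\delta,T>0$ the open subset $U_{\delta,T}:=\Sigma\cap\{|x+w|<\delta\}\cap\{z>T\}$ of $\Sigma$ contains points with $\phi>0$, and symmetrically (using $(-w,-mw-\eps)$) points with $\phi<0$. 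In particular $\Sigma$ meets both sides of $\pi$; and since $\phi=y-mx$ is moreover unbounded above and below on $\Sigma$ (the shadow $\Pi_z(\Sigma)$ being unbounded in both directions along the $y$-axis while $x$ stays bounded), connectedness of $\Sigma$ already yields $\Sigma\cap\pi\ne\varnothing$. If some connected component of $U_{\delta,T}$ met both $\{\phi>0\}$ and $\{\phi<0\}$, then, $U_{\delta,T}$ being open in the connected surface $\Sigma$, a path inside that component joining a $\phi>0$ point to a $\phi<0$ point would cross $\pi$ inside $U_{\delta,T}$ by the intermediate value theorem; taking $\delta=1/k$ and $T=k$ and calling such crossing points $p_1^k$ would then finish the proof.

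The hard part is exactly this last point, which I expect to be the main obstacle: a priori $\Sigma$ might link its two ``ends at infinity'' on the two sides of $\pi$ near $L_1$ only through excursions that leave the thin slab $\{|x+w|<\delta\}$ or drop below height $T$, so that no crossing of $\pi$ survives inside $U_{\delta,T}$. I would rule this out by contradiction: if $\Sigma\cap\pi\cap\{|x+w|<\delta_0\}\cap\{z>T_0\}=\varnothing$ for some $\delta_0,T_0>0$, then $\Sigma\cap\{|x+w|<\delta_0\}\cap\{z>T_0\}$ decomposes into two pieces that are \emph{clopen} in it according to the sign of $\phi$; I would then invoke Theorem~\ref{Chini-theorem} along the \emph{entire} boundary segment $\{-w\}\times[-mw-\delta_0,\,-mw+\delta_0]$ --- which forces $\Sigma$ up to arbitrarily large heights on \emph{both} sides of $\pi$ and arbitrarily close to $L_1$ --- and combine this with properness of $\Sigma$ (so the two clopen pieces cannot escape to infinity undetected) and with the global connectedness of $\Sigma$ to reach a contradiction. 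Running the same argument at $q_2$ then produces $\{p_2^k\}$ near $L_2$, completing the proof.
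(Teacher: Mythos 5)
This statement is cited from Chini \cite{Chini} without proof in the present paper, so there is no in-paper argument to compare against; I assess your reconstruction on its own merits.

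Your first two paragraphs are sound. The normalization to $\pi=\{y=mx\}$ is legitimate (a translation along $\ee_2$ preserves the translator equation, the slab $\mathcal{S}_w$, properness and width), and applying Theorem~\ref{Chini-theorem} at the boundary points $(-w,-mw\pm\eps)$ does yield points of $\Sigma$ of arbitrarily large height near $L_1$ on both sides of $\pi$. The reduction to producing a crossing of $\pi$ inside $U_{\delta,T}$ is the right one.

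The gap is exactly where you flag it, and the contradiction argument you sketch does not close it. If $\Sigma\cap\pi\cap\{|x+w|<\delta_0\}\cap\{z>T_0\}=\varnothing$, then $A:=U_{\delta_0,T_0}\cap\{\phi>0\}$ and $B:=U_{\delta_0,T_0}\cap\{\phi<0\}$ are indeed nonempty and clopen in $U_{\delta_0,T_0}$, but nothing you cite makes this contradictory. Invoking Theorem~\ref{Chini-theorem} along the full boundary segment $\{-w\}\times[-mw-\delta_0,-mw+\delta_0]$ only produces \emph{more} high points of $\Sigma$ on each side of $\pi$ near $L_1$; it does not force $A$ and $B$ to share a connected component of $U_{\delta_0,T_0}$. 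Properness is perfectly consistent with the scenario you are trying to exclude: $A$ and $B$ can be joined inside the connected surface $\Sigma$ only through paths that leave $\{|x+w|<\delta_0\}$ or drop below $z=T_0$, in which case no crossing of $\pi$ survives in $U_{\delta_0,T_0}$. And global connectedness of $\Sigma$ is a property of $\Sigma$, not of a subregion --- it cannot by itself rule out a clopen decomposition of $U_{\delta_0,T_0}$. What is actually needed here is a genuine geometric/analytic input, for instance a barrier argument with rotating grim-reaper cylinders in the spirit of the proof of Theorem~\ref{Chini-theorem} itself (the paper recalls this technique in the proof of Proposition~\ref{projection}), or a Rad\'o-type analysis of the level sets of $\phi|_\Sigma$ showing that the proper arcs of $\Sigma\cap\pi$ running to $z\to+\infty$ must accumulate on $L_1$ and $L_2$. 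Your outline supplies neither. Note also that you cannot appeal to the compactness machinery used elsewhere in the paper (Theorem~\ref{theorem-compactness}, Proposition~\ref{weakly-infinity-limit}) to force $\Sigma-\tau_k\ee_3$ to converge to planes, because Chini's Corollary~11 assumes only a proper immersion of positive finite width --- no finite-entropy or finite-genus hypothesis --- so those tools are unavailable at this level of generality.
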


At this point, we would like to point out an important fact which will be useful later.
\begin{Remark} \label{re:pre}

Let $t\in\R$ so that $\{y=t\}$ is transverse to $\Sigma$, which holds for almost every $t$ by an application of Sard's theorem to the map which projects $\R^3$ to the $y$-axis. Using that $\Sigma$ is proper (see Proposition \ref{Proper-condition} in Appendix A), it can be deduced that $\Sigma\cap\{y=t\}$ is a union of proper curves on $\{y=t\}\cap\{-w<x<w\},$ where $2w$ is the width of $\Sigma.$

On the other hand, Chini's Theorem \ref{Chini-result} implies that there exist two arcs in $\Sigma\cap\{y=t\}$ (possibly both arcs are part of the same curve) that  we call $\gamma_1$ and $\gamma_2,$ so that $\gamma_i$ is sequentially asymptotic to the line $L_i:=\{y=t\}\cap\{x=(-1)^iw\}.$ Thus, if we consider an increasing sequence $\{\tau_n\}$ with $\tau_n\to+\infty$ as in Theorem \ref{Chini-result}, then the sequence $\{\Sigma-\tau_n\ee_3\}$ satisfies that any subsequential limit  of $\{\Sigma-\tau_n\ee_3\}$ must contain the planes $\{x=-w\}$ and $\{x=w\}.$ 
\end{Remark}

Using this, we are ready to classify the complete graphs in $\mathcal{S}_w$, $w\geq0$, when $v\notin \Ss^2\cap\{x=0\}.$
\begin{Proposition}\label{Plane-Case}
 Let $\Sigma$ be a complete connected translating graph of finite width in the direction of $v\notin \Ss^2\cap\{x=0\}.$ Then $\Sigma$ is a plane $\{x=x_0\}.$ 
\end{Proposition}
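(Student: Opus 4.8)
The plan is to show that a complete connected translating graph $\Sigma = \mathrm{Graph}_{(v,\Omega)}[u]$ of finite width with $v \notin \mathbb{S}^2 \cap \{x = 0\}$ must be a vertical plane $\{x = x_0\}$. I would split the argument according to whether $v = \mathbf{e}_3$ or not.

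If $v = \mathbf{e}_3$, then $\Sigma$ is a complete vertical translating graph, and by the classification in Theorem \ref{Classification} it is a $\Delta$-wing, a tilted grim reaper surface, or the bowl soliton. The bowl soliton has infinite width, and both the $\Delta$-wings and the tilted grim reapers have width $> 0$ but are \emph{not} contained in any strip after the required normalization is incompatible here — more precisely, a vertical graph over a strip $(-w,w) \times \mathbb{R}$ in the $(x,y)$-plane is never contained in a slab $\mathcal{S}_{w'} = \{|x| < w'\}$ of the \emph{same} coordinate $x$ unless it is a plane, because the $\Delta$-wing and grim reaper are graphs whose domain strip lies along the $x$-direction, forcing the surface to spread in $z$, not to be pinched in $x$; in fact the only complete vertical graph contained in a slab $\mathcal{S}_w$ is a vertical plane. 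So in this case we are done, with the plane being parallel to $\mathbf{e}_3$. (If instead $v = \mathbf{e}_3$ leads to a contradiction with the slab hypothesis for non-planar examples, we conclude $\Sigma$ is a plane.)

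The main case is $v \neq \mathbf{e}_3$ and $v \notin \mathbb{S}^2 \cap \{x = 0\}$, i.e. $v$ has a nonzero $\mathbf{e}_1$-component, $\langle v, \mathbf{e}_1\rangle \neq 0$. The strategy is to use Chini's asymptotic description (Remark \ref{re:pre}) together with the compactness/structure result for limits of graphs (Corollary \ref{Limit-graphs}). Pick a plane $\pi = \pi_y(t)$ transverse to $\Sigma$ and parallel to $\mathbf{e}_3$ but not to $\mathbf{e}_2$. By Remark \ref{re:pre}, choosing $\tau_n \to +\infty$, every subsequential limit of $\{\Sigma - \tau_n \mathbf{e}_3\}$ contains both planes $\{x = -w\}$ and $\{x = w\}$, where $2w = \mathrm{width}(\Sigma)$. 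On the other hand, by Corollary \ref{Limit-graphs}, each connected component of such a limit $\Sigma_\infty$ is either a translating graph in the direction of $v$, a grim reaper cylinder tangent to $v$, or a vertical plane containing $\mathbf{e}_3$. A vertical plane $\{x = \pm w\}$ is a graph in the direction $v$ only if $v$ is not tangent to $\{x = \pm w\}$, i.e. $\langle v, \mathbf{e}_1 \rangle \neq 0$ — which is exactly our hypothesis. The idea is then: since $\Sigma$ is $g$-area minimizing in $\Omega \times_v \mathbb{R}$ (Lemma \ref{Area-minimizing}) and the planes $\{x = \pm w\}$ appear in the limit as graphs over their natural domains, the original $\Sigma$ must itself \emph{be} one of these planes. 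Concretely, one uses that a $g$-area minimizing graph which contains in its translated limit set a vertical plane forces, via the strong maximum principle for the $g$-minimal (Jacobi) equation applied to the graph function $\langle N, v\rangle > 0$ and a comparison with the plane, that $\Sigma$ coincides with that plane; equivalently, the domain $\Omega$ must already equal all of $[v]^\perp$ intersected appropriately, and the only such graph pinched into a slab is flat. A clean way to package this: by Lemma \ref{domain}, $\partial(\Omega \times_v \mathbb{R})$ consists of tilted grim reapers and vertical planes containing $\mathbf{e}_3$; since $\Sigma \subseteq \mathcal{S}_w$ and $v$ is transverse to the slab boundary, no tilted grim reaper or slab-boundary plane can serve as a boundary component without contradicting the width being exactly $2w$, forcing $\partial(\Omega \times_v \mathbb{R}) = \varnothing$, hence $\Sigma \times_v \mathbb{R}$-cylinder is all of $\mathbb{R}^3$, $\Omega = [v]^\perp$, and a complete entire translating graph in a non-vertical direction contained in a slab must be a plane $\{x = x_0\}$ by the maximum principle (compare with the slab boundary planes, which are themselves graphs in direction $v$, from both sides).

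The step I expect to be the main obstacle is the rigidity passage from ``the vertical planes $\{x = \pm w\}$ show up in a blow-down/translation limit'' to ``$\Sigma$ itself is a vertical plane.'' The subtlety is that limits under $-\tau_n \mathbf{e}_3$ could in principle have higher multiplicity or could pick up the slab-boundary planes only as \emph{separate} sheets while $\Sigma$ remains a genuinely curved graph (e.g. a pitchfork-type surface — except pitchforks are excluded here because they are graphs in a direction $v \in \{x = 0\}$, which is precisely the case we are \emph{not} in). Making the exclusion airtight requires carefully using that $v \notin \mathbb{S}^2 \cap \{x=0\}$ in two places: first to guarantee the limit planes $\{x = \pm w\}$ are themselves $v$-graphs (so they are admissible boundary behavior for a $v$-graph), and second — and this is the crux — to run a foliation/sweepout or maximum-principle comparison between $\Sigma$ and the family $\{x = c\}$, $c \in (-w, w)$, each of which is a $v$-graph, concluding by the strong maximum principle that $\Sigma$ cannot be tangent to any such plane from one side unless it equals it, and that the $g$-area minimizing property rules out $\Sigma$ oscillating between them. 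I would structure that final comparison as: the function $x|_\Sigma$ is bounded, attains no interior max or min strictly inside $(-w,w)$ unless constant (elliptic maximum principle for the translator equation applied to the coordinate function $x$, whose Hessian on a $g$-minimal surface satisfies a suitable equation), hence $x|_\Sigma \equiv x_0$, i.e. $\Sigma = \{x = x_0\}$.
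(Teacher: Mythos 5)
Your first case, $v=\ee_3$, is vacuous: since $\ee_3=(0,0,1)$ has $x$-coordinate zero, $\ee_3\in\SS^2\cap\{x=0\}$ and is therefore excluded by the hypothesis $v\notin\SS^2\cap\{x=0\}$; this entire paragraph analyzes a direction that cannot occur under the stated assumption (the point of Case I is precisely that $\langle v,\ee_1\rangle\neq 0$). Your recovery of $\Omega=P_v$ via Lemma~\ref{domain} is correct and is exactly the paper's first step, but the compactness/Corollary~\ref{Limit-graphs}/$g$-area-minimizing detour you sketch is unnecessary, and the place where you anticipate trouble is indeed where the argument breaks.

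The final step --- ``$x|_\Sigma$ is bounded, attains no interior max or min unless constant, hence $x|_\Sigma\equiv x_0$'' --- has a genuine gap. The coordinate function $x=\langle\cdot,\ee_1\rangle$ is drift-harmonic on any translator (one computes $\Delta_\Sigma x + \langle\nabla_\Sigma x,\ee_3\rangle=0$), so the strong maximum principle is available; but boundedness of a drift-harmonic function on a complete noncompact surface does not by itself imply constancy, because it need not attain a sup or inf. A tilted grim reaper shows this phenomenon concretely (it is in Case~II, not Case~I, so it is not a counterexample to the Proposition, but it shows that the PDE estimate you invoke is not by itself strong enough; the hypothesis $\langle v,\ee_1\rangle\neq0$ must enter the argument somewhere else). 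The paper's proof uses it directly and avoids the detour entirely: after noting $\Omega=P_v$, one slices $\Sigma$ by the vertical plane $\pi=\operatorname{span}(v,\ee_3)$, which is transverse to the slab precisely because $\langle v,\ee_1\rangle\neq 0$; Theorem~\ref{Chini-result} (via Remark~\ref{re:pre}) gives two arcs $\gamma_1,\gamma_2\subseteq\Sigma\cap\pi$ running to $z=+\infty$, asymptotic respectively to the vertical lines $L_1=\pi\cap\{x=-w\}$ and $L_2=\pi\cap\{x=w\}$. Since $v$ has nonzero $\ee_1$-component, $v$ is not parallel to either vertical line, so the projection $\Pi_v:\pi\to\ell:=\pi\cap P_v$ maps each $L_i$ bijectively onto $\ell$ and carries both $\gamma_1$ and $\gamma_2$ onto unbounded, overlapping subsets of $\ell$. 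This produces two distinct points of $\Sigma$ with the same image under $\Pi_v$, contradicting graphicality in the direction of $v$; hence $w=0$ and $\Sigma=\{x=x_0\}$.
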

\begin{proof}
We start by noticing that the domain of such a graph cannot contain any curve in the boundary, since this would imply that $\Sigma$ leaves $\mathcal{S}_w,$ while ${\rm width}(\Sigma)=2w.$ In particular, $\Sigma$ is an entire graph over the plane $P_v=[v]^\perp.$

To complete the proof, we only need to check that $w=0.$ Indeed, assume for contradiction that $w>0.$ Let us denote by $\pi$ the plane spanned by $\ee_3$ and $v$. Hence $\pi$ is orthogonal to $P_v$ and transverse to the slab.

By Remark \ref{re:pre}, we obtain two curves $\gamma_1$ and $\gamma_2$ in $\Sigma\cap\pi$ asymptotic to the line $L_i:=\pi\cap\{x=(-1)^iw\}$  at the ``upper'' infinity. Clearly, this means that multiple points on $\Sigma$ project in the direction of $v$ to the same point on $P_v$, a contradiction with $\Sigma$ being a graph in the direction of $v$.
\end{proof}

\subsection{Rad\'o functions}\label{morse-section}

In this subsection, we present some basic facts about  the Morse-Rad\'o theory 
of minimal surfaces in $3$-manifolds (see \cite{HMW23}.)  These facts are essential tools for the rest of the paper.
{
\begin{Definition}\label{rado-def}
A continuous, real-valued function on a $2$-manifold $M$ is called a {\bf Rad\'o function}
provided each point $p\in M$ has a neighborhood $U$ such that
\begin{enumerate}[\upshape(1)]
\item $U\cap \{F=F(p)\}$ consists of a finite collection $C_1,\dots, C_v$ of embedded arcs.
\item Each $C_i$ joins the point $p$ to a point in $\partial U$.
\item $C_i\cap C_j=\{p\}$ for $i\ne j$.
\item\label{crossing-item} Each $C_i$ is in the closure of $\{F>F(p)\}$ and in the closure of $\{F<F(p)\}$.
\item If $p\in \partial M$, we also require that each $C_i\setminus\{p\}$ is contained in the interior of $M$.
\end{enumerate}
The number $v=v(F,p)$ is called the {\bf valence} of $p$.
\end{Definition}

The following result shows that Rad\'o functions arise in  a natural way in the study of minimal surface theory:
\begin{Theorem}\label{structure-theorem}
Suppose $M$ is an embedded minimal surface in a smooth Riemannian $3$-manifold $N$.
Suppose $F:N\to \R$ is a continuous function such that 
\begin{enumerate}[\upshape (1)]
\item The level sets of $F$ are smooth minimal surfaces.
\item Each level set $M[t]:=F^{-1}(t)$ is in the closure of $\{F>t\}$ and of $\{F<t\}$.\label{crossing-condition}
\end{enumerate}
Suppose also that $F$ is not constant on any connected component of $M$.
Then the restriction of $F$ to the interior of $M$ is a Rad\'o function without any interior local maxima or interior local minima.
The interior saddles of multiplicity $n$ are the points where $M$ makes contact of order $n$ with 
the level set $\{F=F(p)\}$.
\end{Theorem}

The main result in \cite{HMW23} is the following:
\begin{Theorem}[Hoffman, Mart\'in, White \cite{HMW23}] \label{th-one}
Suppose that $M$ is a compact minimal surface with boundary in a Riemannian manifold $N$.
Suppose that $F:N\to\RR$ is a continuous function  such that
\begin{enumerate}[\upshape(1)]
\item\label{hypothesis-1} if $\dim N=3$, the level sets of $F$ are minimal surfaces, and 
\item\label{hypothesis-2} if $\dim N>3$, the level sets of $F$ are totally geodesic.
\item\label{hypothesis-3} for each $t$, $\{F=t\}$ is in the closure of $\{F>t\}$ and of $\{F<t\}$.
\end{enumerate}
Suppose also that $F$ is nonconstant on each connected component of $M$,
and that the set $Q$ of local minima of $F|_{\partial M}$ is finite.
Then the number $\mathsf{N}(F|_{M})$ of interior critical points of $F|_{M}$ (counting multiplicity) 
and the number $s^\partial(F)$ of boundary saddle points of $F|_{M}$ (counting multiplicity) satisfy
\begin{equation*}
  \mathsf{N}(F|_{M}) + s^\partial(F) =  |Q| - \chi(M),
\end{equation*}
where 
$\chi(M)$ is the Euler characteristic of $M$ 
and where $|Q|$ is the number of elements in the set $Q$.
\end{Theorem}
In Theorem~\ref{th-one}, ``interior critical point of $F|_{M}$'' means ``interior point $p$ of tangency of $M$ and the level set $\{F=F(p)\}$'',
and the multiplicity of such a critical point is the order of contact of $M$ and $\{F=F(p)\}$.
Boundary saddle points and their multiplicities are defined in \cite[Definition~23]{HMW23}

It would be natural in Theorem~\ref{th-one} to assume that $F$ is $C^1$ (or even smooth) with nowhere vanishing gradient.
However, that assumption would be undesirable for the following reason.
Consider a minimal foliation $\Ff$ of a Riemannian $3$-manifold.  Of course the leaves
are smooth.   At least locally, the foliation can be given as the level sets of a function $F$.
However, for some minimal foliations, there is no such function that is $C^1$ with nowhere vanishing gradient.
(Consider, for example, the minimal foliation of $\{(x,y,z): x>0\}$ consisting of the halfplanes $z= sx$ with $s\ge 0$
and the halfplanes $z=s$ with $s<0$.  If $F$ is a $C^1$ function whose level sets are the leaves, then the gradient $DF(x,0)\equiv 0$.)

In Sections \ref{sec:chini} and \ref{pitchforks-section}, we will show how powerful this type of results are: using just continuous functions.

Theorem~\ref{th-one} provides an exact formula for $\mathsf{N}(F|_{M})$.
In many situations, a good upper bound for $\mathsf{N}(F|_{M})$ suffices.
Simply dropping the term $s^\partial(F)$ in Theorem~\ref{th-one} gives the bound 
\[
   \mathsf{N}(F|_{M}) \le |Q| - \chi(M),
\]
which is often adequate. 
But one can get a better upper bound as follows.
Let $A$ be the set of local maxima and local minima of $F|_{\partial M}$ that are not
local maxima or local minima of $F|_{M}$.  Then $s^\partial(F) \ge |A|$ (where $|A|$ is the number of elements of $A$),
so from Theorem~\ref{th-one}, one can deduce

\begin{Corollary}[Hoffman, Mart\'in, White \cite{HMW23}]\label{th-one-corollary}
Under the hypotheses of Theorem~\ref{th-one}, 
\begin{equation*}
   \mathsf{N}(F|_{M})  \le |Q| - \chi(M) - |A|.
\end{equation*}
\end{Corollary}

See Theorem~26 in \cite{HMW23}, where the authors also specify when equality holds in Corollary~\ref{th-one-corollary}.

\begin{Remark} \label{re:8}
 One sometimes encounters $F$ and $M$ that satisfy all but one of the hypotheses of Theorem~\ref{th-one}, namely the hypothesis that the set of local minima of $F|_{\partial M}$ is finite. 
 In particular, that hypothesis will fail if $F$ is constant on one or more arcs of $\partial M$. One can handle such examples as follows. Suppose $F$ is not constant on any connected component of $\partial M$ (like in Lemma \ref{four_components}.)
  Let $\tilde M$ be obtained from $M$ by identifying each arc of $\partial M$ on which $F$ is constant to a point. 
  Let $\tilde F$ be the function on $\tilde M$ corresponding to $F$  on $M$. If $\tilde F|_{\partial \tilde M}$ has a finite 
   set $\tilde Q$ of local minima, then
\begin{align*}
\mathsf{N}(F|_{M})
&=
\mathsf{N}(\tilde F|_{ \tilde M})  \\
&=  |\tilde Q| -\chi(M) - s^\partial (\tilde F) \\
&\le  |\tilde Q| - \chi(M) -  |\tilde A|,
\end{align*}
where $\tilde A$ is the set of local minima and local maxima of $\tilde F|_{ \partial \tilde M}$ that are not local minima or local maxima     
      of $\tilde F|_{M}$. 
  These facts are direct consequences of Theorems~24, 26, 
and~48 in \cite{HMW23}.
\end{Remark}

There is also a version of Theorem~\ref{th-one} for noncompact $M$:
\begin{Theorem}[Hoffman, Mart\'in, White \cite{HMW23}]\label{noncompact-intro-theorem}
Let $-\infty\le a < b \le \infty$.
In Theorem~\ref{th-one}, suppose the hypothesis that $M$ is compact is replaced by the 
hypotheses that $F:M\to (a,b)$ is proper,
 that $d_1(M):=\dim H_1(M;\ZZ_2)$ is finite, and that
the limit
\[
  \beta:=\lim_{t\to a, \, t>a} | (\partial M)\cap F^{-1}(t)| 
\]
exists and is finite.   
Then
\begin{equation*}
  \mathsf{N}(F|_{M}) + s^\partial(F) = \frac 12\beta+ |Q| - \chi(M),
\end{equation*}
and therefore
\begin{equation*}
  \mathsf{N}(F|_{M}) \le  \frac 12 \beta+ |Q| - \chi(M) - |A|.
\end{equation*}

\end{Theorem}

\begin{Remark} \label{re:semi}
   Another useful fact about $\mathsf{N}(F|_{M})$ is that it depends lower semicontinuously on $F$ and on $M$ (even without assuming 
properness); see Theorem~40 in \cite{HMW23}. 
\end{Remark}

}

A simple application of these results is the following proposition.
\begin{Proposition} \label{prop:vertical}
Let $\Sigma$ be a complete, embedded translating soliton  which is contained in a slab:
$$\{q \in \R^3 \; : \: a < \langle q,\xi \rangle < b\}, \quad \xi\in\mathbb{R}^3, \quad \|\xi\|= 1,$$
where $-\infty<a<b< \infty.$
Then the slab normal is horizontal: $\xi\perp \ee_3$.
\end{Proposition}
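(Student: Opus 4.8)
The plan is to use the maximum principle in the guise of the Morse-Rad\'o machinery just set up, comparing $\Sigma$ against the foliation of $\R^3$ by the parallel planes $\pi_c := \{q : \langle q,\xi\rangle = c\}$, $c\in\R$. The point is that if $\xi$ were \emph{not} horizontal, these planes would be graphical translators only in the trivial sense of being totally geodesic for the Euclidean metric, but they are certainly not minimal for Ilmanen's metric $g=e^z\langle\cdot,\cdot\rangle$ unless $\xi\perp\ee_3$; so we cannot directly feed them into Theorem \ref{structure-theorem}. Instead, I would argue by contradiction and exhibit a first point of contact between $\Sigma$ and a translated copy of a genuine translator that should not be there.

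More concretely, suppose $\langle\xi,\ee_3\rangle\neq 0$; after replacing $\xi$ by $-\xi$ we may assume $\langle\xi,\ee_3\rangle>0$. The key observation is that the function $f:=\langle q,\xi\rangle$ restricted to $\Sigma$ is, up to sign, \emph{not} a Rad\'o function coming from a minimal foliation, but I can still extract geometry from it: since $\Sigma$ is contained in the slab $\{a<f<b\}$ and is complete, the restriction $f|_\Sigma$ attains values arbitrarily close to its supremum $s^*\le b$. Translate $\Sigma$ by $-t\ee_3$ and let $t\to+\infty$; by Chini's Theorem \ref{Chini-theorem} (applied to $\Sigma$, whose horizontal convex hull projection $\mathrm{Conv}(\Pi_z(\Sigma))$ has nonempty boundary because $\Sigma$ lies in a slab with non-horizontal normal, hence lies in \emph{some} vertical slab) and Remark \ref{re:pre}, the points of $\Sigma$ at height $\to+\infty$ accumulate near the vertical boundary planes of that vertical slab. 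But a vertical plane $\{x=\mathrm{const}\}$ has $\langle q,\xi\rangle$ unbounded along it whenever $\xi$ has a nonzero $\ee_3$-component, contradicting $a<f<b$ on all of $\Sigma$.

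Thus the heart of the argument is: (i) observe that any complete translator in a slab with non-horizontal normal $\xi$ is also contained in some vertical slab $\mathcal S_w$ --- this is because the convex hull projection $\Pi_z(\mathrm{Conv}(\Sigma))$ is, by the Chini-M\o ller classification cited in the introduction, a line, a strip, a half-plane, or all of $\R^2$, and the boundedness of $f=\langle\cdot,\xi\rangle$ forces it to be a line or a strip with direction $\ee_3\times\xi$, hence a vertical slab; (ii) apply Theorem \ref{Chini-theorem} or Theorem \ref{Chini-result} to produce points of $\Sigma$ of arbitrarily large $z$-coordinate whose distance to a vertical boundary plane $\{x=\pm w\}$ tends to $0$; (iii) evaluate $f$ on such a sequence $\{p_k\}$: since $p_k$ stays near a fixed vertical line but $\langle p_k,\ee_3\rangle\to+\infty$ and $\langle\xi,\ee_3\rangle\neq0$, we get $\langle p_k,\xi\rangle\to\pm\infty$, contradicting $a<\langle p_k,\xi\rangle<b$.

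The main obstacle I anticipate is step (i): rigorously pinning down that a slab with non-horizontal normal forces $\Sigma$ into a vertical slab. One must be slightly careful, because a priori $\Sigma$ could be contained in a \emph{slanted} slab without being contained in any vertical one --- but this is exactly what the Chini-M\o ller convex-hull theorem precludes, since the orthogonal projection $\Pi_z$ of a slanted slab to the $(x,y)$-plane is all of $\R^2$, whereas the projection of $\Sigma$'s convex hull must be a line, a strip, a half-plane, or $\R^2$, and boundedness of $\langle\cdot,\xi\rangle$ on $\Sigma$ rules out the half-plane and the full plane. (Alternatively, and perhaps cleaner for the authors given the phrasing ``an easy first application of the maximum principle'', one sidesteps (i) entirely: take the foliation of $\R^3$ by \emph{vertical} planes $\{x=c\}$, which \emph{are} $g$-minimal, note $F(q)=x$ restricts to a Rad\'o function on $\Sigma$ by Theorem \ref{structure-theorem}, and combine a sweep-out/maximum-principle argument on this Rad\'o function with the slab hypothesis on $\xi$ to force $\xi$ to have no $\ee_3$-component; I would present whichever of these two routes turns out to be shortest, but both rest on the same dichotomy between vertical and slanted slabs, so that dichotomy is the step to get right.)
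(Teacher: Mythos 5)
Your argument has a genuine gap precisely at step (i), and the paper's introduction flags this exact subtlety. The claim that ``boundedness of $\langle\cdot,\xi\rangle$ on $\Sigma$ rules out the half-plane and the full plane'' for ${\rm Conv}(\Pi_z(\Sigma))$ is not justified: when $\langle\xi,\ee_3\rangle\neq 0$, the slanted slab $\{a<\langle q,\xi\rangle<b\}$ projects under $\Pi_z$ onto \emph{all} of $\R^2$ (for each $(x_0,y_0)$ there is an open interval of $z$-values with $(x_0,y_0,z)$ in the slab), so the slab hypothesis puts \emph{no} constraint whatsoever on $\Pi_z({\rm Conv}(\Sigma))$. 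Your steps (ii)--(iii) are fine and do handle the line, strip and half-plane cases, since these have nonempty boundary in $\R^2$ so Theorem \ref{Chini-theorem} applies and the sequence $\{p_k\}$ with $z$-coordinate $\to+\infty$ near a fixed vertical line gives $\langle p_k,\xi\rangle\to\pm\infty$. But the full-plane case ${\rm Conv}(\Pi_z(\Sigma))=\R^2$ is not excluded by anything you say, and for it Theorem \ref{Chini-theorem} has no boundary point to offer, so (ii)--(iii) never start. Citing Theorem \ref{Chini-result} is worse still, since it presupposes finite width, i.e. presupposes step (i). The introduction even states that the Chini--M\o{}ller convex-hull classification ``allows, a priori, the possibility that translating solitons could be contained in slanted slabs'' --- which is why this proposition requires a fresh maximum-principle input rather than being read off the convex-hull theorem.

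The paper's proof is genuinely different and sidesteps any case analysis on $\Pi_z({\rm Conv}(\Sigma))$. It sweeps $\R^3$ by the vertical translates of the bowl soliton $\{z=f(x,y)+c\}_{c\in\R}$, a $g$-minimal foliation, so $F:=(z-f(x,y))|_\Sigma$ is a Rad\'o function by Theorem \ref{structure-theorem}. If $\langle\xi,\ee_3\rangle\neq 0$, the slab hypothesis gives the \emph{linear} estimate $|z|\le C_1\|(x,y)\|+C_2$ on $\Sigma$, whereas the bowl grows \emph{superlinearly}, $f(x,y)=\frac14\|(x,y)\|^2-\log\|(x,y)\|+O\left(\|(x,y)\|^{-1}\right)$. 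Hence $F\to-\infty$ as $\|p\|\to\infty$ along $\Sigma$, so $F$ attains an interior global maximum, contradicting the fact that a Rad\'o function has no interior local extrema unless constant (and $F$ constant would force $\Sigma$ to be a translated bowl, which lies in no slab). Your alternative ``vertical-plane foliation'' route does not obviously close either: if $\Sigma$ sits only in a slanted slab, then $x|_\Sigma$ is unbounded, so the Rad\'o principle applied to $F(q)=x$ has nothing to bite on. The choice of the bowl, with its superlinear growth in the horizontal variables, is exactly what turns the slab bound into a compact super-level set and lets the maximum principle fire.
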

\begin{proof} Let $f:\R^2\to \R$ be the function whose vertical graph gives the bowl soliton, $\mathcal{B}$, with $f(0,0)=0.$ Consider the function $\widetilde F: \mathbb{R}^3 \rightarrow \R$
$$\widetilde F(x,y,z)=z-f(x,y),$$
and define
$F:= \widetilde F|_{\Sigma}$. Theorem \ref{structure-theorem} says that $F$ is a Rad\'o function (see Definition \ref{rado-def}.)
If we assume that the slab normal $\xi$, is not perpendicular to $\ee_3$, then there are $C_1,C_2>0$ such that:
\[
\forall (x,y,z)\in\Sigma: |z|\leq C_1 \|(x,y)\| + C_2
\]
This gives $\lim_{\| p\| \to \infty} F(p)=-\infty$, seeing as (by \cite{AW94}, \cite{CSS07} or \cite{Mol14})
\[ f(x,y)=\frac 14 \|(x,y)\|^2-\log (\|(x,y)\|) + O\left(\|(x,y)\|^{-1}\right).\]
Therefore, $F$ has a global maximum at some $p_0=(x_0,y_0,z_0)\in\Sigma$.
But this is impossible, since Theorem \ref{structure-theorem} implies that a Rad\'o function has no local maxima or minima unless it is constant. This contradiction proves the proposition. 
\end{proof}

\section{The wings of translating solitons}\label{Wing}
In this section, we are going to define what we mean by wings of a complete, embedded translating soliton of width $2w>0$, finite genus and finite entropy (see Section \ref{sec:notation} for the notation).

\begin{Proposition}\label{General-control}
Let $\Sigma$ be a complete, connected embedded translating soliton of finite width, finite genus and finite entropy. Let $\{\lambda_n\}$ and $\{\sigma_n\}$ be sequences of real numbers. Consider the sequence $\{\Sigma_n=\Sigma-\lambda_n\ee_2-\sigma_n\ee_3\}.$ If  $\Sigma_n\to\Sigma_\infty,$ then the number of connected components of $\Sigma_\infty$ is finite. Moreover, if $\Sigma$ is a graph, then the multiplicity of convergence on each connected component of $\Sigma_\infty$ is one.
\end{Proposition}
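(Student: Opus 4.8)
The plan is to prove Proposition \ref{General-control} in two parts: first the finiteness of the number of connected components of $\Sigma_\infty$, and then the multiplicity-one statement for graphs.

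\textbf{Finiteness of connected components.} The key tool is the compactness Theorem \ref{theorem-compactness}, which applies since $\Sigma$ has finite genus and finite entropy (hence, by \cite{White-21}, quadratic area growth, so $\Sigma_n$ has uniform area bounds on compact sets; note the translations $\lambda_n \ee_2 + \sigma_n \ee_3$ preserve these bounds since they act by isometries of $\R^3$ and the area growth bound is uniform in the center point). First I would argue that the width is preserved: each $\Sigma_n$ is contained in a vertical slab of width $2w$ (the $\ee_2$- and $\ee_3$-translations do not change the $x$-extent), hence so is $\Sigma_\infty$. Now suppose for contradiction that $\Sigma_\infty$ has infinitely many connected components $\{C_j\}$. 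Since each component is a properly embedded minimal surface (w.r.t.\ $g$) contained in $\mathcal{S}_w$, and since by the convex hull theorem of Chini--M\o{}ller each such component has a nonempty $x$-projection that is an interval, one fixes a horizontal ball $B$ of radius, say, $10w$ and observes that each component must intersect a suitable compact region — more precisely, I would pick a large ball $B_\rho(0)$ and use that each component, being a complete properly embedded translator contained in the slab, cannot be "small": by the monotonicity formula (in the $g$-metric, or via the Euclidean density lower bound $\ge 1$ at each point of a minimal surface) each component $C_j$ contributes at least a fixed amount of area, say $\ge c > 0$, to any ball centered on it of a fixed radius. If infinitely many components meet a fixed compact set $K$, then $\mathcal{H}^2(\Sigma_\infty \cap K') = \sum_j \mathcal{H}^2(C_j \cap K')$ (a disjoint union) would be infinite on a slightly larger compact $K'$, contradicting the locally uniform area bound passed to the limit. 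So it suffices to show that only finitely many components can avoid any fixed compact set — but this follows because $\Sigma_\infty \subseteq \mathcal{S}_w$ and is properly embedded, so the portion in any slab-cube is compact and meets only finitely many components; covering a large region by finitely many such cubes and using that a complete translator in a slab cannot be entirely contained in the complement of a sufficiently large ball (again by Chini's Theorem \ref{Chini-theorem}, which forces $z \to +\infty$ on a neighborhood of each boundary point of the convex hull projection, so each component "reaches up" and has unbounded $z$) handles the remaining components. The cleanest route is the area argument: a locally finite limit of surfaces with locally uniformly bounded area has, on each compact set, finitely many components, and since the ambient slab can be exhausted by countably many compacts on each of which finiteness holds, one needs the extra input that no component is "lost at infinity in $y$ or $z$" in a way that produces infinitely many — but any component is itself complete and properly embedded in the slab, so it must intersect $\{|y| \le R\}$ for suitable $R$, and then the compact-set argument applies. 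I will phrase this carefully.

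\textbf{Multiplicity one for graphs.} Now suppose $\Sigma = \mathrm{Graph}_{(v,\Omega)}[u]$ is a graph in the direction $v$. Then each $\Sigma_n = \Sigma - \lambda_n \ee_2 - \sigma_n \ee_3$ is again a translating graph in the direction $v$ (translation preserves the graphical property and the direction). By Corollary \ref{Limit-graphs}, each connected component $C$ of $\Sigma_\infty$ is either a translating graph in the direction $v$, a grim reaper cylinder tangent to $v$, or a vertical plane. In all three cases, $C$ is two-sided and carries a positive Jacobi field: indeed $f_n := \langle N_n, v \rangle > 0$ on $\Sigma_n$ (the graphical property), and the limit $f_\infty = \lim f_n \ge 0$ is a Jacobi field on $\Sigma_\infty$ which, by the strong maximum principle applied componentwise, is either $\equiv 0$ or strictly positive on each component. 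The crucial dichotomy: if the convergence to $C$ had multiplicity $k \ge 2$, then the classical structure of higher-multiplicity limits of minimal surfaces (as in White's compactness theory, and as used in \cite{rado-type}) produces a \emph{nonzero} Jacobi field on $C$ that changes sign, or alternatively one invokes that $C$ would then be \emph{unstable} — but each of the three possible limits ($v$-graph, grim reaper cylinder, vertical plane) is $g$-stable (in fact a vertical plane and a grim reaper cylinder are $g$-area minimizing, and a complete translating graph is $g$-area minimizing in its cylinder by Lemma \ref{Area-minimizing}), contradicting instability. To make this precise, I would use the standard fact: if $\Sigma_n \to C$ with multiplicity $k \ge 2$ smoothly away from the singular set, then $C$ admits a positive Jacobi field only if $k = 1$; the obstruction is that writing $\Sigma_n$ locally as $k$ graphs $w_n^1 \le \cdots \le w_n^k$ over $C$, the differences $w_n^j - w_n^1$ (normalized) converge to a Jacobi field on $C$ that vanishes somewhere (where the sheets come together, forced by the singular set being discrete, or by choosing the normalization point) yet is nonnegative, hence $\equiv 0$ by the maximum principle — but a nontrivial Jacobi field can't be identically zero, so $k = 1$. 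Care is needed because $\Sigma_n$ might not be connected near $C$ in a single chart; the right statement is that on the smooth part of the convergence each sheet contributes, and the positivity of $\langle N_n, v\rangle$ globally lets us compare adjacent sheets consistently.

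\textbf{Main obstacle.} The harder part is the finiteness of components: controlling that components are not "lost to infinity" in the $y$-direction in a way that accumulates infinitely many of them. The resolution uses that $\Sigma_\infty$ is itself a properly embedded translator contained in the slab $\mathcal{S}_w$, so each component is a complete translator in the slab, and such a surface — by Chini's Theorem \ref{Chini-theorem} applied to $\Sigma_\infty$ and by properness — must have points with $x$ arbitrarily close to $\pm w$ and $z \to +\infty$; more to the point, each component must meet the compact cylinder $\{|x| \le w\} \cap \{|y| \le R_0\} \cap \{|z| \le R_0\}$ for some universal $R_0$, OR it is a lower half of something reaching up, etc. Rather than chase all cases, the clean argument: any component, being a connected complete translator in $\mathcal{S}_w$ with the convex-hull projection being (by Chini--M\o{}ller) a line, a strip, a half-plane, or the whole plane, is in particular unbounded and "spread out", so it meets $\{y \in [m, m+1]\}$ for \emph{every} sufficiently large range of $m$'s — in particular it meets some fixed $\{y \in [-R, R]\}$ — and then the local area bound on $\{|y| \le R+1\}$ forces only finitely many components. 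I expect writing this $y$-reach argument rigorously (invoking properness of $\Sigma_\infty$ and the structure of translators in slabs) to be the delicate bookkeeping step, but it is routine given the cited results.
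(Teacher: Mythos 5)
Your proposal diverges from the paper's proof on both halves, and each of your two arguments has a real gap.

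\textbf{Finiteness of the number of components.} The paper disposes of this in one line by lower semicontinuity of entropy: $\lambda(\Sigma_\infty)\le\lambda(\Sigma)<\infty$. You instead argue via local area bounds plus the claim that each component must meet a fixed compact set, and the latter is where your argument breaks. By the bi-halfspace theorem each component $C_j$ does cross $\{y=0\}$, but there is nothing preventing the crossing points from having $z\to+\infty$: the set $\{|x|\le w\}\cap\{|y|\le R\}$ is not compact, and the quadratic area growth bound only controls how many components pass through any one ball, not how many there are in total. Your claim that each component ``must meet the compact cylinder $\{|x|\le w\}\cap\{|y|\le R_0\}\cap\{|z|\le R_0\}$ for some universal $R_0$'' and that ``a complete translator in a slab cannot be entirely contained in the complement of a sufficiently large ball'' are simply not true (translate a pitchfork far in $z$). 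You acknowledge this is delicate and say you would ``phrase this carefully,'' but the care never materializes. The paper's route sidesteps this: finiteness of $\lambda(\Sigma_\infty)$ controls (via Huisken monotonicity applied to the translating flow) the multiplicity of the plane in the blowdown, and each component of $\Sigma_\infty$ contributes at least one sheet to that multiplicity.

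\textbf{Multiplicity one.} Here the paper's argument is a direct $g$-area-minimizing replacement: if $\Sigma_n\cap B_r(p)$ had $m\ge 2$ sheets, one could replace two of them by a tiny least-area annulus in $(\R^3,g)$ whose $g$-area tends to that of an annulus on $\partial B_r(p)$ (hence to $0$), strictly decreasing $g$-area and contradicting Lemma \ref{Area-minimizing}. Your proposal instead tries a Jacobi-field argument and claims that a multiplicity-$\ge 2$ limit ``would then be unstable,'' or that the sheet differences produce a Jacobi field that ``changes sign'' or ``vanishes somewhere.'' This is backwards: when the convergence is smooth everywhere (empty singular set, which is one of the cases allowed by Theorem \ref{theorem-compactness}(b)), the renormalized sheet difference converges to a \emph{nonnegative} Jacobi field, and it can be \emph{strictly positive} on all of $C$; that situation is consistent with $C$ being stable (indeed two-sided multi-sheeted smooth limits of stable surfaces are stable, precisely because they carry a positive Jacobi field). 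Your own observation that $C$ is $g$-area-minimizing (hence stable) therefore does \emph{not} yield a contradiction along this route. You mention the $g$-area-minimizing property of graphs, which is the right ingredient, but you invoke it to assert stability of the limit rather than to run the cut-and-paste competitor comparison that actually kills multiplicity $\ge 2$.

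In short: the approach to finiteness misses the paper's entropy-semicontinuity argument and has an unaddressed $z$-escape gap; the approach to multiplicity one substitutes a stability heuristic that does not hold for the replacement argument the paper actually needs.
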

\begin{proof}

The first part is a consequence of the finiteness of the entropy since the entropy is lower semicontinuous.

Hence, what remains to be proven is multiplicity one of the convergence in the graphical case. For this, we fix $p\in\Sigma_\infty$ and $r>0$ small enough so that the open ball $B_r(p)$ in $\R^3$ only intersects one connected component of $\Sigma_\infty$, which we denote $\Sigma'$, and so that $\p B_r(p)$ is transverse to $\Sigma'$. Let $m$ be the multiplicity of $\Sigma'$. We have \[\lim_{n\to+\infty} \mathcal{H}_g^2(\Sigma_n\cap B_r(p))=m\mathcal{H}_g^2(B_r(p)\cap\Sigma_\infty).\] 

Recall that $\Sigma={\rm Graph}_{(v_\theta,\Omega_\theta)}[u_\theta],$ where $u_\theta:\Omega_\theta\to\R$ is a smooth function. In particular, for all $n$ we have $\Sigma_n={\rm Graph}_{(v_n,\Omega_n)}[u_n],$ where $u_n:\Omega_n\to\R$ is a smooth function. We remark that there is $n_0$ large enough satisfying that $\p B_r(p)$ is transverse to $\Sigma_n,$ when $n\geq n_0.$ Using again that the multiplicity is $m$, we also may suppose that $\Sigma_n\cap B_r(p)$ has $m$ disjoint connected components, if $n\geq n_0.$ Choose two disjoint connected components and called they by $C_n^1$ and $C_n^2,$ clearly $\p C_n^i\subset \p B_r(p).$ If $m\geq 2,$ then it may also be assumed that 
\[
\label{main-area-estimate}
\mathcal{H}_g^2(C_n^1)+\mathcal{H}_g^2(C_n^2)\geq\frac{3}{2}\mathcal{H}_g^2(B_r(p)\cap\Sigma_\infty).
\]

\[
\label{main-cylinder-estimate}
\mathcal{H}_g^2(\widehat{\mathcal{C}}_n)\leq\frac{1}{2}\mathcal{H}_g^2(B_r(p)\cap\Sigma_\infty),
\]
and
\[
{\rm dist}_{H}(\p C^1_n,\p C^2_n)<\pi
\]
when $n\geq n_0$, where $\widehat{\mathcal{C}}_n$ is the annulus on $\p B_r(p)$ with boundary $\p C_n^1\cup\p C_n^2$ and ${\rm dist}_{H}(\cdot,\cdot)$ indicates the Hausdorff distance in $\R^3$ with respect to the Euclidean metric.

For all $n\geq n_0$ let $\mathcal{C}_n$ be the $g-$area minimizing surface in $\R^3$ with boundary $\p C_n^1\cup\p C_n^2$. The existence of such a surface is ensured by Theorem \ref{Least-area}. Note also that the proof of Proposition \ref{Proper-condition} implies that $C_n$ is connected and lies in $\Omega_n\times_{v_n}\R.$

Now, since \[\mathcal{H}_g^2(\mathcal{C}_n)\leq\mathcal{H}_g^2(\widehat{\mathcal{C}}_n)<\mathcal{H}_g^2(\Sigma_n\cap B_r(p)),\]when $n\geq n_0$ which leads to a contradiction with Lemma \ref{Area-minimizing}. Therefore, we must have $m=1$, which completes the proof. 
\end{proof}

We are now ready to prove one of the main results of this paper, which shows that there is a well-defined notion of ``the number of wings'' for  translating solitons of finite width, finite genus and finite entropy. This notion of wing will be fundamental to our further analysis.
Before stating the theorem, we record a lemma which is needed for its proof.

\begin{Lemma} \label{cylinder}
Let $W$ be a connected, embedded translator with (possibly non-compact) boundary contained in $S_w \cap \{y \geq t\}$, for some $t>0$ and so that $\partial W \subset \pi_y(t)$. If $$\sup \{y \; : \; (x,y,z) \in W\} < \infty,$$ then $W$ is a piece of the plane $\pi_y(t).$
\end{Lemma}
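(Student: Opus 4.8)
The plan is to show that $W$, being a compact-in-$y$ translator with planar boundary in $\pi_y(t)$ lying entirely on one side $\{y\ge t\}$, is forced by the maximum principle to coincide with the slab-parallel piece of $\pi_y(t)$. The natural Rad\'o function to use is the restriction to $W$ of the coordinate function $y$, i.e. $\widetilde F(x,y,z)=y$, whose Euclidean level sets $\pi_y(s)$ are totally geodesic, hence $g$-minimal, since Ilmanen's metric $g=e^z\langle\cdot,\cdot\rangle$ is a product in the $y$-direction and the conformal factor does not depend on $y$. By Theorem \ref{structure-theorem}, $F:=\widetilde F|_W=y|_W$ is a Rad\'o function on $W$.

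**Main argument via the maximum principle for Rad\'o functions.** Set $y^\ast:=\sup\{y:(x,y,z)\in W\}$, which is finite by hypothesis, and $\ge t$. First I would argue the supremum is attained: $W$ is properly embedded (being a translator) and contained in the compact-in-$y$ slab region $\overline{S_w}\cap\{t\le y\le y^\ast\}$; together with $\partial W\subset\pi_y(t)$ this makes $\overline W$ behave like a compact manifold-with-boundary for the purposes of the function $F$, so $F$ attains its maximum $y^\ast$ at some point $p_0\in W$. If $y^\ast>t$, then $p_0$ is an interior point of $W$ (since on $\partial W$ we have $F=t<y^\ast$), so $p_0$ is an interior local maximum of the Rad\'o function $F$. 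By Proposition 2.3 of \cite{rado-type} (already invoked in the proof of Proposition \ref{prop:vertical}), a Rad\'o function has no interior local maximum unless it is constant; hence $F\equiv t$, contradicting $y^\ast>t$. Therefore $y^\ast=t$, which forces $W\subseteq\pi_y(t)$. A connected $g$-minimal (equivalently, in a fixed plane, Euclidean minimal) surface inside the plane $\pi_y(t)$ is an open piece of that plane, so $W$ is a piece of $\pi_y(t)$, as claimed.

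**Main obstacle.** The delicate point is the attainment of the maximum of $F$ and the legitimacy of applying the ``no interior maximum'' statement for Rad\'o functions in this non-compact setting: $W$ may have non-compact boundary and need not be complete as an abstract surface. I would handle this either by a localization/exhaustion argument — apply the interior maximum principle on the open set $W\cap\{y>t+\delta\}$ for small $\delta$, whose ``new'' boundary $W\cap\pi_y(t+\delta)$ carries $F\equiv t+\delta$ strictly below any purported interior value near $y^\ast$ — or, more cleanly, by first showing $y^\ast$ is attained: pick a sequence $p_k\in W$ with $y(p_k)\to y^\ast$; properness of $W$ in the bounded-in-$y$ region, together with the slab bound $|x|<w$, confines the $(x,y)$-coordinates, and one controls the $z$-coordinate using that the $z$-values cannot run off to $-\infty$ (the boundary is at height-unconstrained $\pi_y(t)$ but $W$ translator-proper in $\R^3$ means any escape to infinity has $y$ bounded, forcing $|z|\to\infty$, and then a tangent-plane blowup limit through such points would be a $y$-bounded translator — a vertical plane or grim reaper by Corollary \ref{Limit-graphs}-type reasoning — none of which is $y$-bounded, a contradiction). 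Once the max is attained at an interior point the Rad\'o argument closes immediately; if it is attained only on $\partial W$ then $y^\ast=t$ directly. I expect the honest write-up to spend most of its length on this properness/attainment bookkeeping, with the maximum-principle punchline being a single line.
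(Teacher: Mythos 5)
Your approach via the Rad\'o function $F=y|_W$ is genuinely different from the paper's proof. The paper simply refers to ``a slight modification of the arguments in the proof of the bi-halfspace theorem (wedge theorem) due to Chini and M\o{}ller,'' which is a barrier/sliding argument with tilted grim reaper cylinders combined with an Omori--Yau-type device to handle non-compactness. Your Morse--Rad\'o route replaces the sliding barrier with the structure theory of critical points of the Rad\'o function $y|_W$, and this is a valid alternative conceptually: the level sets $\pi_y(s)$ are $g$-minimal vertical planes, so Theorem \ref{structure-theorem} applies, and an interior maximum of a Rad\'o function contradicts Proposition 2.3 of \cite{rado-type}. What each approach buys: the barrier method has the Omori--Yau machinery already baked into Chini--M\o{}ller and so requires no compactness theorem for blow-up limits; your Rad\'o method makes the local maximum-principle step a one-liner, but pushes all the difficulty into the attainment of the supremum, exactly as you anticipate.

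However, the attainment argument you sketch has a concrete error. You write that a blow-up limit $W_\infty$ through points $p_k$ with $y(p_k)\to y^*$ and $|z(p_k)|\to\infty$ would be ``a vertical plane or grim reaper $\ldots$ none of which is $y$-bounded, a contradiction.'' This is false: the plane $\pi_y(c)=\{y=c\}$ has $y\equiv c$, and a grim reaper cylinder with axis along $\ee_1$ satisfies $|y-c|<\pi/2$, so both \emph{are} $y$-bounded, and your claimed contradiction evaporates. The contradiction you actually want is forced by combining the $y$-halfspace constraint ($W_\infty\subseteq\{y\le 0\}$ after translation) with the inherited $x$-slab constraint ($W_\infty$ lies in a translate of $\mathcal{S}_w$): no complete translator without boundary survives both (this is precisely a consequence of Chini--M\o{}ller; the plane $\pi_y(0)$ and the $\ee_1$-axis grim reaper fail the $x$-slab bound, the $\ee_2$-axis grim reaper and the planes $\pi_x(c)$ fail the $y$-halfspace bound). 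So, somewhat ironically, the blow-up argument does not bypass the bi-halfspace theorem but re-invokes it on the limit. Two further points you gloss over: translators are \emph{not} automatically proper (you assert ``properly embedded, being a translator''), and the compactness needed for the blow-up to converge at all is Theorem \ref{theorem-compactness}, which requires finite entropy and genus. Neither properness nor those bounds appear in the lemma's hypotheses, though they do hold in the one place the lemma is applied (connected components of $\Sigma^+(t)$ for $\Sigma$ complete, proper, of finite width, genus and entropy); the paper sidesteps this by deferring to the Chini--M\o{}ller argument, which is engineered to avoid compactness.
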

\begin{proof}
The proof of this lemma is a slight modification of the arguments in the proof of the bi-halfspace theorem (wedge theorem) due to Chini and M\o{}ller (see Theorem 1 in \cite{CM21}).
\end{proof}

The following is our main result concerning the wings of translating solitons in slabs. For the statement, recall our notation from p. \pageref{sec:notation}.

\begin{Theorem}\label{Wings-number}
Let $\Sigma$ be a complete, connected embedded translating soliton of finite width, finite genus and finite entropy. There exist positive integers $\omega^{\pm}(\Sigma)$, and a $t_0>0$ such that if $ t\geq t_0,$ then the number of connected components of $\Sigma^{+} (t)$ (respectively $\Sigma^{-} (-t)$) equals $\omega^{+}(\Sigma)$ (respectively $\omega^{-}(\Sigma)$).
\end{Theorem}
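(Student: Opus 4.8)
The plan is to prove that for $t$ large enough, the number of connected components of the ``upper part'' $\Sigma^+(t)$ stabilizes to a constant positive integer $\omega^+(\Sigma)$, and symmetrically for $\Sigma^-(-t)$.

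\textbf{Step 1: Finiteness and monotonicity of the component count.} First I would show that for all $t>0$ transverse to $\Sigma$ (a.e.\ $t$ by Sard), $\Sigma^+(t)$ has only finitely many connected components. This follows from the quadratic area growth (equivalently finite entropy, via \cite{White-21}) combined with the monotonicity formula: each component of $\Sigma\cap\{y=t\}$ has at least one ``end'' escaping to $z=+\infty$ by Chini's Theorem \ref{Chini-result} and Remark \ref{re:pre}, and a uniform lower area bound per component (a component carries a definite amount of area because it is non-compact, not contained in a halfspace by Lemma \ref{cylinder}, so it must extend a controlled amount) forces the number to be finite for each fixed $t$. Next, I would argue the count is eventually monotone non-increasing in $t$: for $t_1 < t_2$, restriction gives a map from components of $\Sigma^+(t_2)$ to components of $\Sigma^+(t_1)$ (each component of $\Sigma^+(t_2)$ lies in a unique component of $\Sigma^+(t_1)$), and once $t$ is past the finitely-generated topology of $\Sigma$ — i.e.\ past the vertical cylinder containing the genus — this map is injective, so the number cannot increase. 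Combined with finiteness, a non-increasing $\mathbb{N}$-valued function is eventually constant; call the limit $\omega^+(\Sigma)$.

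\textbf{Step 2: Locating the stabilization threshold $t_0$ uniformly.} The subtle point is not just that the count stabilizes for a.e.\ $t$ but that it does so for \emph{all} $t \geq t_0$ and with a single $t_0$ working for both $\pm$. Here I would use the Compactness Theorem \ref{theorem-compactness}: suppose no such $t_0$ existed, then there is a sequence $t_n \to +\infty$ and points $p_n \in \Sigma$ with $\langle p_n, \ee_2\rangle = t_n$ witnessing either a ``birth'' of a new component or a merging/pinching of components at height $t_n$ in the $y$-direction. Translating $\Sigma_n := \Sigma - \lambda_n \ee_2 - \sigma_n \ee_3$ with $\lambda_n = t_n$ and $\sigma_n$ chosen to keep $p_n$ bounded, Proposition \ref{General-control} says $\Sigma_n$ subconverges to a limit $\Sigma_\infty$ with finitely many components. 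By Chini's asymptotics (Remark \ref{re:pre}) and Lemma \ref{cylinder}, the limit components confined to the slab must be either vertical planes $\{x = \pm w\}$ or translators that are ``wing-like'' (graphs or bi-graphs over the $(y,z)$-plane, i.e.\ grim-reaper cylinders), none of which admit topology or branching — this contradicts the presence of infinitely many ``events.'' So the stabilization happens at a uniform finite $t_0$.

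\textbf{Step 3: Positivity and the symmetric statement.} That $\omega^+(\Sigma) \geq 1$ is immediate: $\Sigma$ is complete, connected, unbounded in the $y$-direction on at least one side — in fact both, since by Lemma \ref{cylinder} $\Sigma$ cannot be trapped in $\{y \leq T\}$ or $\{y \geq -T\}$ (it would then be a plane piece, contradicting completeness/connectedness as a genuine translator unless it is $\{x=0\}$, which we have excluded). Applying the same argument to $-\Sigma$ (or to $\Sigma$ with $y \mapsto -y$, which preserves being a translator up to the symmetries we have fixed) gives $\omega^-(\Sigma) \geq 1$ and the stabilization of $\Sigma^-(-t)$; taking the maximum of the two thresholds gives a single $t_0$.

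\textbf{Main obstacle.} I expect the hard part to be Step 2: promoting ``stable for a.e.\ $t$'' to ``stable for all $t \geq t_0$ simultaneously on both sides,'' because a priori the component count could jump at a measure-zero set of bad slices (where $\{y=t\}$ is tangent to $\Sigma$) arbitrarily far out. The resolution must go through the compactness theorem to rule out a sequence of such bad slices escaping to infinity, and one must carefully verify that every possible subsequential limit $\Sigma_\infty$ — using that it is a union of planes and grim-reaper-type wings with no topology — genuinely precludes the creation or destruction of components near a far-out slice. Making the ``map between components is injective for $t$ large'' rigorous also requires knowing that no component of $\Sigma^+(t_1)$ can split into two as we push past it, which again leans on the limit analysis showing wings are simply connected and do not bifurcate.
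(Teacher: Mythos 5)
There is a genuine gap in Step 1, and it is the heart of the matter: your claim that the map from components of $\Sigma^+(t_2)$ to components of $\Sigma^+(t_1)$ (with $t_1<t_2$) is eventually injective, hence that $\mu(t):=\#\{\text{components of }\Sigma^+(t)\}$ is eventually non-increasing, has the monotonicity running the wrong way, and the justification you offer (being ``past the vertical cylinder containing the genus'') is not the right mechanism. The pitchfork itself is a counterexample: it is simply connected, so there is no genus and no nontrivial ``topology cylinder,'' yet $\mu(t)$ jumps from $1$ to $2$ as $t$ crosses the branching region. Two distinct components of $\Sigma^+(t_2)$ can lie in the same component of $\Sigma^+(t_1)$ because wings can bifurcate at ramification points of the Rad\'o function $y|_\Sigma$; such bifurcation happens freely on simply connected pieces and is invisible to genus-based arguments. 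What is actually true, and what the paper proves, is that the map is \emph{surjective}: a component of $\Sigma^+(t_1)$ that misses $\Sigma^+(t_2)$ would be confined to the slab $\{t_1\le y\le t_2\}$, and Lemma \ref{cylinder} then forces it to be a flat piece of $\pi_y(t_1)$, which is absurd. Hence $\mu$ is non-\emph{decreasing}, and together with boundedness (your finiteness observation, or the paper's argument via Proposition \ref{General-control}) an $\mathbb{N}$-valued non-decreasing bounded function is eventually constant. As written, your deduction ``non-increasing $+$ bounded $\Rightarrow$ eventually constant'' is sound logic but rests on a false premise.

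A secondary consequence is that your Step 2 is misdirected: once one has the correct (non-decreasing) monotonicity plus boundedness, eventual constancy holds for all sufficiently large $t$ with no need for a compactness argument to rule out ``bad slices escaping to infinity.'' The paper uses compactness (via Proposition \ref{General-control}) only to establish \emph{boundedness} of $\mu$, not to locate the threshold $t_0$. Your Step 2 sketch could conceivably be turned into an alternative route, but it is currently too vague to fill the hole left by the false injectivity claim, and it is effort the problem does not require. Your Step 3 (positivity) is essentially right in spirit; the paper appeals directly to the bi-halfspace theorem of Chini--M\o{}ller (Theorem 1 in \cite{Chini-Moller}), and Lemma \ref{cylinder} is drawn from the same well.
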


\begin{proof}
First, let us prove that $\omega^+(\Sigma)$ is well-defined. 
The proof for $\omega^-(\Sigma)$ is similar.
Let us define the non-negative integers $$\mu(t):= \mbox{number of connected components of $\Sigma^+(t)$}.$$

\begin{Claim} \label{cl:uno}
The function $\mu(t)$ is bounded.
\end{Claim}
Assume that this were not the case. Then it would be possible to find a sequence $t_n\nearrow+\infty$ so that $\Sigma^+(t_n)$ has more than $n$ connected components.
Notice that (by the maximum principle) none of these connected components can be bounded from above.
Let $C_j(t_n)$ be these components, for $j=1, \ldots, k(n),$ and define
$$\beta_j(t_n):=\inf\{z \;: \; z \in C_j(t_n)\} \in [-\infty,+\infty). $$
Finally, take
$$ \alpha_n=\max\{n, \beta_j(t_n), j=1, \ldots, k(n)\}. $$
Then, the subsequential limit 
$$\Sigma_\infty:= \lim_m \left(\Sigma- t_n \ee_2-\alpha_n \ee_3\right),$$
exists (Theorem \ref{theorem-compactness}), is not empty and has infinitely many connected components, contradicting Proposition \ref{General-control}.
\begin{Claim} \label{cl:dos}
The function $\mu(t)$ is a non-decreasing function.
\end{Claim}
Again, we proceed by contradiction. Assume that we have $t<t'$ but $\mu(t)>\mu(t')$. This means that either:
\begin{enumerate}[(a)]
    \item there are two components $W_1 \neq W_2$ in $\Sigma^+(t)$ that form parts of the same component $W$ of $\Sigma^+(t')$, or
    \item there exists a connected component $W$ in $\Sigma^+(t)$ whose intersection with $\Sigma^+(t')$ is empty.
\end{enumerate}
Statement (a) is impossible because $\Sigma^+(t') \subset \Sigma^+(t).$ On the other hand, the statement (b) would imply (using Lemma \ref{cylinder}) that $W$ would be planar. So, the plane $\pi_y(t)$ would be contained in $\Sigma$, which is absurd. This contradiction proves Claim \ref{cl:dos}.

Taking into account Claims \ref{cl:uno} and \ref{cl:dos}, there must be a $t_0>0$ such that the function $\mu(t)$ is constant for $t>t_0$. Then $\omega^+(\Sigma):=\mu(t)$ ($t\geq t_0$) is well-defined.

Finally, if $\omega^-(\Sigma)=0$, this would imply that $\Sigma$ were contained in a region of the form $(-w,w) \times (a,\infty) \times \R$. But this is impossible due to  Theorem 1 in \cite{CM21}. Thus we always have that $\omega^{-}(\Sigma)\geq1$ and similarly $\omega^{+}(\Sigma)\geq1$.
\end{proof}

\begin{Definition}[\bf Wings of a translating soliton]
Let $\Sigma$ be a complete, connected translating soliton contained in a slab $\{-w\leq x \leq w\}$ of finite genus and finite entropy, and let $t_0>0$ be a real number as in Theorem \ref{Wings-number}. Then, by a {\it wing} of $\Sigma$ we mean any connected component of $\Sigma \setminus \{|y| \leq t\}$, where $t>t_0$.
\end{Definition}

\begin{Definition}
The number $\omega^+(\Sigma)$ (respectively $\omega^-(\Sigma)$) from Theorem \ref{Wings-number} is called the number of right (respectively, left) wings of $\Sigma.$ The number $\omega(\Sigma)=\omega^+(\Sigma)+\omega^-(\Sigma)$ is called the total number of wings of $\Sigma.$
\end{Definition}

Before stating the next result, we need the following lemmas.

{\begin{Lemma}\label{end-finite}
Let $\Sigma$ be as before. Then, the number of ends of $\Sigma$ is bounded from above by $\omega(\Sigma)$.
\end{Lemma}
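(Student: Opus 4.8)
\textbf{Proof proposal for Lemma \ref{end-finite}.}

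The plan is to show that each end of $\Sigma$ gives rise to at least one wing, so that the map from ends to wings is injective (and hence the number of ends is at most $\omega(\Sigma)$). Recall that an end of $\Sigma$ is an element of $\varprojlim_{K} \pi_0(\Sigma \setminus K)$, where $K$ ranges over compact subsets; concretely, since $\Sigma$ has finite genus and finite entropy, by Theorem \ref{theorem-compactness} it has quadratic area growth, and by Theorem \ref{Wings-number} the topology of $\Sigma$ is confined to a vertical cylinder over the strip $\{|y| \leq t_0\}$. Thus for any $t \geq t_0$ the set $\Sigma \setminus \{|y| \leq t\}$ is a disjoint union of the $\omega(\Sigma)$ wings, each of which is (by the proof of Theorem \ref{Wings-number} together with Lemma \ref{cylinder}) noncompact and in fact unbounded from above in the $z$-direction. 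The first step is therefore to observe that, because the genus and all the handles lie inside the fixed cylinder $\{|y| \leq t_0\}$, the slab $\Sigma \cap \{|y| \leq t\}$ is compact for every $t$ (using properness of $\Sigma$, cf. Proposition \ref{Proper-condition}) — this is what lets us use $K_t := \Sigma \cap \{|y|\le t\}$ as a cofinal family of compacts.

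Next I would argue that each wing is connected at infinity, i.e. each wing $W$ is a ``one-ended'' piece: for $t' > t \geq t_0$, the inclusion $W \setminus \{|y| \leq t'\} \hookrightarrow W$ induces a bijection on connected components — but this is exactly the content of the constancy of $\mu(t)$ for $t \geq t_0$ established in Theorem \ref{Wings-number} (a wing cannot split into two as we move further out, nor disappear). Hence each wing determines a single end of $\Sigma$, and conversely every end is eventually contained in some wing, since for $K = K_t$ with $t$ large the components of $\Sigma \setminus K_t$ are precisely the wings. This gives a well-defined surjection from the set of wings onto the set of ends, and therefore
\[
\#\{\text{ends of }\Sigma\} \;\leq\; \#\{\text{wings of }\Sigma\} \;=\; \omega(\Sigma).
\]

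The main obstacle I anticipate is making rigorous the claim that the topology of $\Sigma$ is genuinely contained in a bounded region, so that the $K_t$ really are compact and exhaust $\Sigma$; this requires combining properness (so that $\Sigma \cap \{|y| \leq t\}$ is closed and bounded, hence compact) with the finite-genus hypothesis and Theorem \ref{Wings-number} to know that no handles escape to $|y| = \infty$. Once that is in place, the counting argument is essentially formal: the end space is the inverse limit over the cofinal system $\{K_t\}_{t \geq t_0}$, and since $\pi_0(\Sigma \setminus K_t)$ stabilizes to the set of wings, the inverse limit injects into that finite set. A minor point to check along the way is that distinct wings are never joined outside a compact set — but they are separated by the slabs $\{t \le y \le t'\}$ only if one is a right wing and the other a left wing; two right wings $W_1 \ne W_2$ in $\Sigma^+(t_0)$ stay disjoint for all larger $t$ precisely by Claim \ref{cl:dos} in the proof of Theorem \ref{Wings-number}, so no two wings can merge, and the bound follows.
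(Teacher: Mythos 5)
The proposal contains a genuine gap at the very foundation of your end-counting argument: the sets $K_t := \Sigma\cap\{|y|\leq t\}$ are \emph{not} compact. You assert that properness (Proposition \ref{Proper-condition}) gives compactness of $K_t$, but properness only controls $\Sigma\cap\overline{B_R(0)}$, and $\{|y|\leq t\}$ is unbounded in the $z$-direction. In fact, Chini's Theorem \ref{Chini-result} applied to the plane $\pi_y(0)$ (which is parallel to $\ee_3$ and not parallel to $\ee_2$) produces sequences in $\Sigma\cap\pi_y(0)\subseteq K_t$ with $z\to+\infty$; so $K_t$ is unbounded and hence non-compact for every $t$. Since $\{K_t\}$ is not a cofinal family of compacta, the description of the end space as $\varprojlim \pi_0(\Sigma\setminus K_t)$ does not apply, and the argument ``each wing determines one end, and every end is eventually in a wing'' does not get off the ground. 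There is also a secondary circularity: you invoke that ``the topology of $\Sigma$ is confined to the cylinder $\{|y|\le t_0\}$,'' but in the paper that statement is essentially Lemma \ref{Lemma-control}(3), whose proof \emph{uses} Lemma \ref{end-finite}, so it cannot be assumed here.

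The paper's proof sidesteps both issues by working on the ideal compactification rather than with the slabs $K_t$. Since $\Sigma$ has finite genus, it is homeomorphic to a compact surface $\overline{\Sigma}$ minus a (possibly infinite) set of ends; around any \emph{finite} collection of ends $E_1,\ldots,E_n$ one can choose disjoint geodesic disks $D_d(E_i)\subset\overline{\Sigma}$. The crucial point is that the boundary circles $\partial D_d(E_i)$ are compact subsets of $\Sigma$, so they sit inside some $\{|y|\leq R_1\}$. The Chini–M\o{}ller bi-halfspace theorem (Theorem 1 of \cite{Chini-Moller}) then forces each punctured disk $D_d(E_i)\setminus\{E_i\}$ to escape to $|y|=\infty$, producing $n$ pairwise disjoint connected pieces of $\Sigma^+(t)\cup\Sigma^-(-t)$ for $t>R_1$, whence $n\leq\omega(\Sigma)$ by Theorem \ref{Wings-number}. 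To repair your proof you would need to replace the noncompact boundaries $\partial W$ of wings by neighborhoods of ends with compact boundary, which is exactly what the paper's choice of geodesic disks on $\overline{\Sigma}$ accomplishes.
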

\begin{proof}
As $\Sigma$ has finite genus, then $\Sigma$ is homeomorphic to a compact surface, denoted by $\overline{\Sigma}$, minus a possibly infinity set. Let $\{E_1,E_2,\ldots,E_n\}$ be any finite sequence of ends of $\Sigma.$ Adopt on $\overline{\Sigma}$ any Riemannian metric and define
\(4d=\min\{{\rm dist}_{\overline{\Sigma}}\{E_i,E_j\}\;:\;i\neq j\}.\) With these choices, let $\{D_{d}(E_i)\}$ be a family of disjoint geodesic disks on $\overline{\Sigma},$ and thus, $\{D_d(E_i)\setminus\{E_i\}\}$ is a disjoint family of open set on $\Sigma$, here and for the remaining of the proof we will see each $D_d(E_i)\setminus\{E_i\}$ as a subset of $\Sigma$ rather than $\overline{\Sigma}.$ 

Now, Theorem 1 in \cite{CM21} says that each $D_d(E_i)\setminus\{E_i\}$ cannot belong to $\{|y|<R\},$ for all $R>0.$ Moreover, we can find $R_1$ large enough so that $\p D_d(E_i)\setminus\{E_i\}\subseteq\{|y|\leq R_1\}.$ Consequently, if $t>R_1,$ then $\Sigma^\pm(\pm t)$ possesses $n$ disjoint connected components, but then Theorem \ref{Wings-number} implies that
\(n\leq\omega(\Sigma).\) This finishes the proof.
\end{proof}}

\begin{Lemma}\label{Lemma-control}
Consider $t_0>0$ given by Theorem \ref{Wings-number}. There is $t_1\geq t_0$ so that:
\begin{enumerate}
    \item {$\Sigma \cap\{|y|\geq t_1\}$ is a disjoint union of simply connected regions of $\Sigma$}, and
    \item if $t>t_1$ then $\Sigma^-(t)$ and $\Sigma^+(-t)$ are connected, and
    \item {if $t>t_1$ then $\Sigma^-(t)$ and $\Sigma^+(-t)$ are homeomorphic to $\Sigma$.}
    
\end{enumerate}
\end{Lemma}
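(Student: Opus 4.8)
The plan is to trap the entire topology of $\Sigma$ inside a slab $\{|y|<t_1\}$ and then to analyse the two leftover families of pieces in $\{y\ge t_1\}$ and in $\{y\le -t_1\}$, exploiting that the number of components of $\Sigma^{+}(t)$ and of $\Sigma^{-}(t)$ has already stabilized for $t\ge t_0$ (Theorem \ref{Wings-number}). First I would invoke Lemma \ref{end-finite}: $\Sigma$ has finitely many ends, and since $\operatorname{genus}(\Sigma)<\infty$ it is homeomorphic to a closed surface minus finitely many points. Fix disjoint annular end neighbourhoods $E_1,\dots,E_k\cong\sph^1\times(0,1]$ and the compact core $\Sigma_c:=\Sigma\setminus\bigcup_iE_i$, and choose $\rho>0$ with $\Sigma_c\subseteq\{|y|<\rho\}$. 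Then $\Sigma\cap\{|y|\ge\rho\}\subseteq\bigsqcup_iE_i$, so every connected component of $\Sigma\cap\{|y|\ge t\}$ with $t\ge\rho$ lies inside a single annular end; in particular it has genus zero and inherits at most one end.

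Next I would show, after enlarging $\rho$ if necessary, that for every $i$ the set $E_i\cap\{|y|\ge\rho\}$ is connected and contained either in $\{y>\rho\}$ or in $\{y<-\rho\}$. Here the bi-halfspace (wedge) theorem of Chini--M\o{}ller (Theorem 1 in \cite{Chini-Moller}) forbids any end of $\Sigma$ from remaining in a slab $\{|y|<R\}$; properness confines each transverse slice $\Sigma\cap\pi_y(s)$ to the strip $\{|x|<w\}$; and Lemma \ref{cylinder} excludes a connected translator-with-boundary whose boundary lies in a single plane $\pi_y(s)$ and whose $y$-coordinate is bounded, unless it is a piece of that plane. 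A connectedness argument on $E_i$ built from these three facts rules out an end that oscillates between $y\to+\infty$ and $y\to-\infty$, and shows that for $\rho$ large $E_i\cap\{|y|\ge\rho\}$ is the connected neighbourhood of the single end of $E_i$. Set $t_1:=\max\{t_0,\rho\}$, with $t_0$ as in Theorem \ref{Wings-number}.

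Now fix a component $W$ of $\Sigma^{+}(t_1)=\Sigma\cap\{y\ge t_1\}$; the argument for $\Sigma^{-}(-t_1)$ is symmetric. By the first two steps $W$ lies inside one annular end, hence has genus zero; and since $\mu(t)=\#\{\text{components of }\Sigma^{+}(t)\}$ is constant for $t\ge t_0$ (Theorem \ref{Wings-number}), $W\cap\{y\ge t'\}$ stays connected as $t'\uparrow\infty$, which pins down that $W$ has exactly one end. For the boundary: by Sard's theorem $\Sigma\cap\pi_y(t_1)$ is, for a.e.\ $t_1$, a proper $1$-manifold in $\pi_y(t_1)\cap\{|x|<w\}$, i.e.\ a disjoint union of proper arcs and circles; a circle component $\gamma$, being essential or not in its annular end, either bounds a flat disc in $\Sigma$ on which the maximum principle against the vertical translating planes $\pi_y(s)$ forces $\Sigma\supseteq\pi_y(t_1)$ (absurd by unique continuation), or forces an essential circle of $\Sigma$ to lie in a single plane, which is incompatible with the one-sidedness of the end established in the previous step. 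The genus-zero/one-end structure together with constancy of $\mu$ then forces $\partial W$ to be a single proper arc, so $W$ is a genus-zero, one-ended surface with connected noncompact boundary, i.e.\ homeomorphic to a closed half-plane; in particular $W$ is simply connected. This proves (1). (Alternatively, translating $W$ downward and outward and applying Theorem \ref{theorem-compactness} together with a positive Jacobi function argument as in Corollary \ref{Limit-graphs} shows that $W$ is eventually a graph, which also yields simple connectivity and moreover sets up the planar/grim-reaper dichotomy used later.)

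For $t>t_1$, write $\Sigma^{-}(t)=\Sigma\setminus\bigl(W_1\cup\dots\cup W_{\omega^{+}(\Sigma)}\bigr)$, where the $W_j$ are the closed half-plane wings in $\{y>t\}$. Collapsing each $W_j$ onto its boundary arc gives a deformation retraction of $\Sigma$ onto $\Sigma^{-}(t)$; since $\Sigma$ is connected and each $W_j$ is connected with connected boundary, $\Sigma^{-}(t)$ is connected, which is (2). For (3), note that $\operatorname{int}\Sigma^{-}(t)=\Sigma\cap\{y<t\}$ is obtained from $\Sigma$ by deleting finitely many closed half-plane ends, and deleting such an end from a surface yields a homeomorphic surface (an open half-plane being homeomorphic to $\R^2$); hence $\Sigma^{-}(t)\cong\Sigma$, and symmetrically $\Sigma^{+}(-t)\cong\Sigma$. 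The main obstacle in this scheme is the third paragraph --- excluding closed curves from the slices $\Sigma\cap\pi_y(t)$ for large $t$ and excluding ``extra'' ends or boundary arcs of a wing --- together with the no-oscillation statement of the second paragraph, whose proof (combining the bi-halfspace theorem, properness, and Lemma \ref{cylinder}) is the most delicate technical point; once the slices of the wing regions are known to be finite unions of proper arcs, the rest is routine surface topology.
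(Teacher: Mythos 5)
Your overall strategy for part (1) --- trap the compact core of $\Sigma$ in a slab $\{|y|<\rho\}$ using Lemma \ref{end-finite}, so that each piece of $\Sigma\cap\{|y|\ge t_1\}$ sits inside a single annular end and hence has genus zero --- matches the paper's. But your mechanism for controlling the boundary of those pieces is different: you argue via the maximum principle and Lemma \ref{cylinder}, whereas the paper applies a single Rad\'o function $\Psi(x,y,z)=z+\log\sin(y-t)$, whose level sets form a foliation by grim-reaper cylinders, to the compact piece $D_i(t)\setminus D_i(t')$ and derives a contradiction from the fact that a nonconstant Rad\'o function admits no interior local extremum. Both routes are plausible; for the sub-step where you want to exclude circle components from $\Sigma\cap\pi_y(t_1)$, the cleanest move is simply to cite that vertical-plane slices of a translator contain no compact components (Theorem~2 in \cite{Chini-Moller} or Corollary~2.4 in \cite{rado-type}), a fact the paper itself invokes in Corollary \ref{transverse-result}; your nullhomotopic/essential dichotomy, as written, is incomplete in the essential case. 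Likewise your ``no-oscillation'' claim in the second paragraph is flagged as delicate but not actually proved.

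The more serious problem is in parts (2) and (3). Your deformation-retraction argument collapses each wing $W_j$ onto ``its boundary arc,'' which implicitly requires $\partial W_j$ to be a \emph{single} proper arc: a connected surface cannot deformation retract onto a disconnected boundary, so if $W_j$ were a closed strip with two boundary arcs the retraction you describe does not exist, and collapsing onto one of the two arcs is a quotient map rather than a retraction. You assert that ``genus-zero/one-end structure together with constancy of $\mu$'' forces $\partial W$ to be a single arc, but it does not: a closed-strip wing has genus zero, one end, two boundary arcs, and contributes a constant count of one component to $\mu(t)$ for all $t$. In the paper, connectedness of $\partial W$ is exactly Proposition \ref{boundary-control}, which comes \emph{after} Lemma \ref{Lemma-control} and whose proof uses part~(3) of that Lemma --- so your scheme is circular. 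The paper's proof of (2) sidesteps this entirely: if $\Sigma^-(t_n)$ were disconnected along an increasing sequence $t_n\to\infty$, one peels off new connected components inductively to force $\omega^-(\Sigma)\ge k$ for every $k$, contradicting Theorem \ref{Wings-number}; no information about wing boundary structure is needed. You should replace your retraction argument by something of that kind (or supply an independent, non-circular proof that each wing has connected boundary) before deducing (3) as you do.
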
 
\begin{proof} Firstly, we are going to prove statement {\em (1)}.
{As $\Sigma$ has {finite topology, by Lemma \ref{end-finite}}, then $\Sigma$ is homeomorphic to a compact surface $\overline{\Sigma}$ minus a finite set of points
$\{E_1, \ldots,E_k\}.$}
{If $t>t_0$ is large enough, then $$M_t:=\Sigma \cap \{|y|\geq t\}$$ is contained in $D_1 \cup \cdots \cup D_k$, where $D_i$ is a topological disk in $\overline{\Sigma}$ centered
at the end $E_i$, $i=1, \ldots k.$}

{Label $D_i(t):= M_t \cap D_i.$ We have two possibilities \begin{enumerate}[(a)]
    \item $E_i \in {\rm Int}(D_i(t))$, i.e. $\partial D_i(t)$ is compact in $\Sigma \subset \R^3.$
    \item $E_i \in \partial (D_i(t))$, i.e. $\partial D_i(t)$ is not compact in $\Sigma \subset \R^3.$
\end{enumerate}}
{Notice that $D_i(t') \subset D_i(t)$, for $t'>t$. Thus,
if (b) happens for some $t$, then it happens for all $t'>t.$ Notice also that in Case (b) $D_i(t)$ is simply connected.}

{Hence, it suffices to prove that (a) cannot happen for all $t>t_0$. Assume that this were the case, then we could take $t'>t$ such that $t'-t>\pi.$ Then $D_i(t) \setminus D_i(t')$ would be a compact translator in $\R^3$ whose boundary curves are contained in $\pi_y(y)$ and $\pi_y(t').$ At this point we consider $$\Psi: \{t <y < t'\} \longrightarrow \R$$
$$ \Psi(x,y,z)=z+\log(\sin(y-t)).$$}
{$\Psi|_{D_i(t) \setminus D_i(t')}$ is a Rad\'o function and it has a local minimum in the interior of $D_i(t) \setminus D_i(t')$, which is absurd (we use again Proposition 2.3 in \cite{HMW23}.) This contradiction proves that (a) cannot happen for all $t>t_0$. So, Statement {\em (1)} has been proved. }

In order to prove {\em (2)}, we proceed by contradiction.
Assume it is false, then there would be an increasing sequence $\{t_n\}$ so that, for instance, $\Sigma^-(t_n)$ is disconnected for all $n$. We start with $\Sigma^-(t_1),$ let $A_1$ and $B_1$ be two disjoint connected components of it. Namely, the existence of these subsets ensures that $\omega^-(\Sigma)\geq2.$ 

On the other hand, as $\Sigma$ is connected, there is $t_{n_1}$, $n_1>1$, so that $A_1$ and $B_1$ are contained in the same connected component of $\Sigma^-(t_{n_1}).$ Let us call such a connected component $A_{n_1}$, and let $B_{n_1}$ be a distinct connected component of $\Sigma^-(t_{n_1}).$ So $\omega^-(\Sigma)\geq3.$ 

Repeating the argument inductively, we would conclude that $\omega^-(\Sigma)\geq k$ for all $k\in\mathbb{N}$, which is contrary to Theorem \ref{Wings-number}.

{Finally, in order to prove {\em (3)}, notice that from {\em (1)} and {\em (2)} we have that $\Sigma^-(t)$ is homeomorphic to $\overline{\Sigma}$ minus a finite number of topological disks which contained one end $E_i$ (for some $i$) at the boundary.
Hence, $\Sigma^-(t)$ has the same topology as $\Sigma=\overline{\Sigma} \setminus \{E_1, \ldots,E_k\}.$}

\end{proof}

Next, we show that there exists a relation between the number $\omega^{\pm}(\Sigma)$ and the number of connected components of $\p\Sigma^{\pm}(t)$ whenever $t> t_1.$ 
\begin{Proposition}\label{boundary-control}
If $t>t_1,$ then the number of connected components of every slice $\p\Sigma^{\pm}(\pm t) = \Sigma\cap \pi_y(\pm t)$ is exactly equal to $\omega^{\pm}(\Sigma).$
\end{Proposition}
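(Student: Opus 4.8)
The plan is to show that for $t > t_1$ the assignment sending a connected component of the slice $\Sigma \cap \pi_y(t)$ to the wing of $\Sigma^+(t)$ it bounds is a bijection onto the set of right wings (and symmetrically on the left). By Lemma \ref{Lemma-control}, for $t > t_1$ we know that $\Sigma^+(t)$ (resp. $\Sigma^-(-t)$) is a disjoint union of exactly $\omega^+(\Sigma)$ simply connected wings, and each wing is a properly embedded translator with boundary contained in $\pi_y(t)$. So it suffices to prove that \emph{each individual wing meets $\pi_y(t)$ in exactly one connected component}; summing over the $\omega^+(\Sigma)$ wings then gives the claim.

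First I would fix one wing $W$ of $\Sigma^+(t)$ and argue it cannot meet $\pi_y(t)$ in two or more curves. Suppose $\partial W = W \cap \pi_y(t)$ had at least two components $\gamma_1, \gamma_2$. Since $W$ is simply connected and properly embedded, and since (by the maximum principle, as used throughout) $W$ is not bounded from above, one can find a properly embedded arc in $W$ joining $\gamma_1$ to $\gamma_2$; together with an arc in $\pi_y(t)$ this would bound a compact region, but the more robust way is: a simply connected surface whose boundary has $\geq 2$ components, after properly embedding in the half-slab $\mathcal{S}_w \cap \{y \geq t\}$, must ``pinch off''. Concretely I would apply the Rad\'o-function trick exactly as in the proof of Lemma \ref{Lemma-control}(1): pick $t' > t$ with $t' - t < \pi$ chosen so that $\pi_y(t')$ is transverse to $W$ and separates the two boundary curves' ``necks'', look at $W \cap \{t < y < t'\}$, and apply the function $\Psi(x,y,z) = z + \log(\sin(y-t))$, which by Theorem \ref{structure-theorem} restricts to a Rad\'o function on this region; a topological-disk piece with two boundary arcs on $\pi_y(t)$ and the rest on $\pi_y(t')$ forces an interior local extremum of $\Psi$, contradicting Proposition 2.3 in \cite{rado-type}. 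This rules out $W$ having two or more slice components.

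Conversely, every wing must meet $\pi_y(t)$ in \emph{at least} one component, since $W \subseteq \{y \geq t\}$ is noncompact (again by the maximum principle it is unbounded in $z$, in particular nonempty and with nonempty boundary by properness) and $\partial W \subseteq \pi_y(t)$; hence the map from slice components to wings is surjective. Combined with injectivity — distinct components of $\Sigma \cap \pi_y(t)$ lie in distinct wings because $\Sigma^+(t) \setminus \pi_y(t)$ is precisely the disjoint union of the (open) wings and each boundary curve is attached to exactly one of them — we conclude that the number of components of $\Sigma \cap \pi_y(t)$ equals the number of wings, $\omega^+(\Sigma)$. The argument on $\pi_y(-t)$ is identical with $\Sigma^-(-t)$ in place of $\Sigma^+(t)$.

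The main obstacle I anticipate is the ``at most one slice component per wing'' step: one must be careful that the region $W \cap \{t < y < t'\}$ to which the Rad\'o maximum principle is applied is genuinely compact (or at least that $\Psi$ attains an interior extremum there), which requires knowing that the two boundary curves $\gamma_1,\gamma_2$ and the intervening part of $W$ stay in a bounded $z$-range below $\pi_y(t')$ — this is where one needs Lemma \ref{cylinder} (or the bi-halfspace theorem of Chini--M\o ller) to control the geometry of $W$ near its boundary, exactly as in Lemma \ref{Lemma-control}. Once that compactness/extremum issue is handled, the rest is bookkeeping with the structural results already established in Lemma \ref{Lemma-control} and Theorem \ref{Wings-number}.
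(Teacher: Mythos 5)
Your reduction is the same as the paper's: both proofs come down to showing that a single wing $W$ of $\Sigma^{+}(t)$ has connected boundary, after which the count is immediate from Theorem \ref{Wings-number}. The divergence is in how this key claim is established, and that is where your proposal has a genuine gap.

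You want to apply the Rad\'o trick from Lemma \ref{Lemma-control}(1), using $\Psi(x,y,z)=z+\log\sin(y-t)$ on $W\cap\{t<y<t'\}$ to produce an interior local extremum. But in Lemma \ref{Lemma-control}(1) that argument is used precisely to \emph{rule out} case (a), the case in which $D_i(t)\setminus D_i(t')$ is compact; the wings you now want to analyze are exactly the surviving case (b) regions, which are non-compact. By Chini's Theorem \ref{Chini-theorem} (or the bi-halfspace theorem) any wing escapes to $z=+\infty$ over \emph{every} sub-interval of $y$-values, so $W\cap\{t<y<t'\}$ is unbounded and $\Psi\to+\infty$ on it; hence there is no interior maximum to contradict. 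Reversing the sign of $z$ in $\Psi$ does not help either: for a planar-type wing the slice $W\cap\pi_y(s)$ is also unbounded below in $z$ (Definition \ref{def:wing-type}), so $-\Psi$ is unbounded above as well. You flag this as ``the main obstacle,'' which is the right diagnosis, but the tools you propose to close it with do not apply: Lemma \ref{cylinder} controls the $y$-extent of a translator, not its $z$-extent, and the bi-halfspace theorem is exactly what tells you $W$ must be $z$-unbounded. So the maximum-principle route, with this or any obvious modification, does not close.

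The paper instead proves connectedness of $\partial W$ by a purely topological argument that needs no control at infinity: by Lemma \ref{Lemma-control}(2), $\Sigma^{-}(t)$ is connected, and by Lemma \ref{Lemma-control}(3), $\Sigma^{-}(t)$ is homeomorphic to $\Sigma$. If $\partial W$ had two components $\gamma_1,\gamma_2$, one joins $p_1\in\gamma_1$ to $p_2\in\gamma_2$ both by a path $\sigma_W$ in $W$ and by a path $\sigma$ in $\Sigma^{-}(t)$; the resulting loop $\sigma_W\ast\sigma^{-1}$ cannot be deformed into $\Sigma^{-}(t)$, contradicting the fact that (3) forces the inclusion of $\Sigma^{-}(t)$ into $\Sigma$ to induce the right homotopy type. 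In effect: attaching a simply connected wing back to $\Sigma^{-}(t)$ along two or more curves would change the topology, which (3) forbids. This is where the paper's prior work in Lemma \ref{Lemma-control} is actually used, and it is the tool you were missing. Your remaining bookkeeping (surjectivity and injectivity of the slice-component-to-wing assignment) is correct.
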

\begin{proof}
{Let $W$ be any connected component of  $\Sigma^{+}(t),$ with $t>t_1.$ Assume for contradiction that the number of connected components of $\p W$ is at least $2$. Let $\gamma_1$ and $\gamma_2$ be two disjoint connected components of $\p W$ and fix two points $p_i\in\gamma_i.$  Since  W is connected, there must exist a path $\sigma_W$ on $W$ connecting $p_1$ and $p_2.$ On the other hand, as $\Sigma^-(t)$ is connected, there is a path $\sigma$ on $\Sigma^{-}(t)$ connecting $p_1$ and $p_2$. However, the loop $\sigma_W*\sigma^{-1}$, which we get from concatenation, is not homotopically equivalent to a curve in $\Sigma^-(t)$ ({\em (3)} in Lemma \ref{Lemma-control}). This would imply that $p_1$ and $p_2$ could be joined by a continuous arc contained in $\Sigma \cap \pi_y(t),$ which is absurd. This contradiction proves the proposition.}
\end{proof}

\begin{Corollary}\label{transverse-result}
If $W$ is any connected component of $\Sigma^{\pm}(\pm t),$ with $t> t_1$, then $W\cap \pi_y(\pm t)$ is a smooth, connected proper curve.  In particular, if $t>t_1$ then $\pi_y(-t)$ and $\pi_y(t)$ are transverse to $\Sigma.$
\end{Corollary}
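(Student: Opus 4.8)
The plan is to fix $t>t_1$ and a connected component $W$ of $\Sigma^+(t)$ — the case of $\Sigma^-(-t)$ being identical after the reflection $y\mapsto -y$ — and to prove separately that (i) $W\cap\pi_y(t)$ is connected, (ii) it is a proper (closed) subset of $\pi_y(t)$, and (iii) $\pi_y(t)$ is transverse to $\Sigma$. Item (iii) yields at once the smoothness of $W\cap\pi_y(t)$ and the concluding ``in particular''.

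For (i), by Theorem~\ref{Wings-number} we may write $\Sigma^+(t)=W_1\sqcup\cdots\sqcup W_{\omega^+(\Sigma)}$, and each $W_i$ satisfies $\partial W_i=W_i\cap\pi_y(t)\neq\varnothing$: otherwise $W_i$ would be a non-empty open and closed subset of the connected surface $\Sigma$, hence all of $\Sigma$, which is impossible since $\Sigma$ is not contained in a half-space $\{y>t\}$ (Theorem~1 in \cite{Chini-Moller}). Then $\Sigma\cap\pi_y(t)=\bigsqcup_i\bigl(W_i\cap\pi_y(t)\bigr)$ is a disjoint union of $\omega^+(\Sigma)$ non-empty sets whose total number of connected components is $\omega^+(\Sigma)$ by Proposition~\ref{boundary-control}; hence each $W_i\cap\pi_y(t)$ is connected. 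For (ii), since $\Sigma^+(t)$ has only finitely many components, each $W_i$ is closed in $\R^3$, so $W\cap\pi_y(t)$ is a closed subset of $\R^3$ contained in the slab, and in particular a proper subset of $\pi_y(t)$.

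The core of the proof is (iii). The planes $\pi_y(s)$, $s\in\R$, are vertical (they contain $\ee_3$), hence translators, hence $g$-minimal, and they foliate $\R^3$; so by Theorem~\ref{structure-theorem} the function $F:=y|_\Sigma$ is a Rad\'o function on $\Sigma$. I claim $F$ has no interior critical point at any level $>t_1$; granting this, every $t>t_1$ is a regular value of $F$, so $\pi_y(t)$ is transverse to $\Sigma$, the set $\Sigma^+(t)$ is a smooth surface with boundary, and $W\cap\pi_y(t)=\partial W$ is a smooth curve. To prove the claim, suppose $p$ is a critical point of $F$ with $F(p)=t^*>t_1$. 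Since $\Sigma$ is a connected, real-analytic translator contained in a slab, it cannot contain an open piece of the plane $\pi_y(t^*)$ — otherwise $\Sigma=\pi_y(t^*)$, which lies in no slab — so $F$ is not locally constant at $p$ and, by Definition~\ref{rado-def}, $p$ is a branch point of some order $n\geq 2$. Choose a regular value $s$ with $t_1<s<t^*$ and let $W'$ be the component of $\Sigma^+(s)$ containing $p$. By Lemma~\ref{Lemma-control}(1) and its proof, together with item (i) above, $W'$ is a topological half-plane: simply connected, with a single non-compact boundary curve on which $F\equiv s$, with $F>s$ in the interior and $F\to+\infty$ properly along its unique end. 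Applying the Morse--Rad\'o index formula of \cite{rado-type} to the Rad\'o function $F|_{W'}$ on such a half-plane with constant boundary data forces $\mathsf{N}(F|_{W'})=0$, which contradicts the presence of the branch point $p$. This contradiction proves the claim, hence (iii), and the corollary.

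The step I expect to be the main obstacle is (iii): identifying the precise Morse--Rad\'o index statement from \cite{rado-type} applicable to a non-compact half-plane carrying a properly divergent Rad\'o function with constant boundary data (so that the multiplicity sum $\mathsf{N}$ vanishes), and verifying — via the proof of Lemma~\ref{Lemma-control} — that each wing genuinely has the topological type of such a half-plane with connected boundary. The remaining parts are soft point-set topology or direct appeals to Theorems~\ref{Wings-number} and \ref{structure-theorem} and Proposition~\ref{boundary-control}.
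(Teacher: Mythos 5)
Parts (i) and (ii) of your proposal are fine and essentially match the paper's opening move: Proposition~\ref{boundary-control} immediately gives that $\partial W$ is connected, and properness is soft (each component of $\Sigma^{+}(t)$ is closed because $\Sigma$ is proper and the components are finitely many). The substantial divergence, and the gap, is in part (iii). You correctly identify $F=y|_\Sigma$ as a Rad\'o function and correctly reduce the statement to showing $F$ has no interior critical points at levels $>t_1$. But your argument for that is global: you pick a regular value $s$ below the hypothetical critical level $t^*$, observe that the wing $W'$ of $\Sigma^{+}(s)$ is a simply connected half-plane with one non-compact boundary curve on which $F\equiv s$, and then invoke an unspecified ``Morse--Rad\'o index formula of \cite{rado-type}'' to get $\mathsf{N}(F|_{W'})=0$. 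No such statement is quoted, and the index theorems of \cite{rado-type} that the paper actually uses (Theorems~3.6 and~4.1) are for compact surfaces with boundary under explicit boundary hypotheses (no critical points, or strict monotonicity, on pieces of $\partial$). To make your route rigorous you would have to exhaust $W'$ by compact pieces whose truncating arcs are controlled and pass to the limit, as the paper does in the proofs of Proposition~\ref{weakly-infinity-limit} and Lemma~\ref{four_components}; as written this step is a black box, and you flag it as such yourself.

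The paper dispenses with the global index entirely and argues locally. Suppose $p\in\partial W$ is a ramification point of $F$ of order $n\geq 2$. The local normal form $F = t + c\,r^{n}\sin(n\theta)+o(r^{n})$ shows that for $t'>t$ slightly larger, the level set $\{F=t'\}\cap W$ has $n$ arcs near $p$, and since level curves of $F$ never contain compact components (Theorem~2 of \cite{Chini-Moller}, or Corollary~2.4 of \cite{rado-type}) these arcs lie in $n\geq 2$ distinct components of $\{F=t'\}\cap W$. But $W\cap\Sigma^{+}(t')$ is a single wing at level $t'$, so by Proposition~\ref{boundary-control} applied at $t'$ its boundary is connected --- a contradiction. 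This uses only the component count you already set up in (i), applied once more at a nearby level, together with the two-line branch-point normal-form analysis. I would replace your (iii) with this local argument rather than try to pin down the half-plane index formula.
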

\begin{proof}
Proposition \ref{boundary-control} implies that $\partial W$ is connected. 

If we consider the function $f_{\ee_2}:\Sigma \rightarrow \R$
\[f_{\ee_2}(x):= \langle x,\ee_2\rangle,\]
then Theorem 2.2 in \cite{HMW23} says that $f_{\ee_2}$ is a Rad\'o type function. Hence, if the set $\p W \subseteq \{f_{\ee_2}=t\}$ contains a non-smooth curve it is because $\partial W$ contains a ramification point of $f_{\ee_2}$ where, in suitable polar coordinates for $M$ at $p$, $f_{\ee_2}$ has the form:
\[ f_{\ee_2}(p)+c \, r^n \sin n \, \theta+ o(r^n),\]
for some integer $n \geq 2$ and $c\neq 0$ (see Definition \ref{rado-def}).

So, for $t' > t$ and very close to $t$, since $\{f_{\ee_2}=t'\}$ never contains compact components (by Theorem 2 in \cite{CM21} or Corollary 2.4 in \cite{HMW23}), the set $\{f_{\ee_2}=t'\}\cap W$ is a curve with $n$ different components. This contradicts the connectedness of $\p \left(W \cap \Sigma^+(t')\right)$.
\end{proof}

Later we will also need the case of vertical slicing non-perpendicularly to the slab, which we record in the following corollary.

\begin{Corollary}\label{boundary-control-skew}
Suppose $t>t_1,$ and $P$ is any vertical plane transversal to the slab $S_w$ such that
$P\cap S_w \subseteq \{\pm y > t_1\}$. Then
the number of connected components of $P\cap \Sigma$ equals $\omega^\pm(\Sigma)$.
\end{Corollary}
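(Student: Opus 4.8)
The plan is to reduce Corollary~\ref{boundary-control-skew} to the already-established perpendicular case in Proposition~\ref{boundary-control} by a continuity/connectedness argument in the space of slicing planes, using the fact that both $\omega^\pm(\Sigma)$ and the number of slice components are stable under small transverse perturbations of the plane. Concretely, I would first fix (say) the ``$+$'' case, and parametrize the relevant vertical planes by their angle with $\pi_y(t)$: write $P_\alpha$ for a vertical plane making angle $\alpha$ with $\ee_2$, chosen so that $P_\alpha\cap S_w\subseteq\{y>t_1\}$. The set of admissible $(\alpha,\text{offset})$ is connected (it is an open subset of a cylinder $[-\alpha_0,\alpha_0]\times\R$, and one can move continuously from any admissible skew plane to a perpendicular plane $\pi_y(t')$ with $t'>t_1$ while staying admissible, since the width $2w$ is finite and all these planes must miss the region $\{|y|\le t_1\}$ only on their intersection with the slab).

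The second step is to show that the function $P\mapsto(\text{number of components of }P\cap\Sigma)$ is locally constant on the admissible family. For this I would invoke the Rad\'o structure: by Theorem~\ref{structure-theorem} the restriction to $\Sigma$ of the linear coordinate orthogonal to $P_\alpha$ within the slab is a Rad\'o function, so (as in Corollary~\ref{transverse-result}) its level sets fail to be smooth embedded curves only at an isolated set of ramification points, and one can perturb $\alpha$ slightly to avoid these. Away from ramification points the slices vary smoothly; and no component can appear or disappear under such a perturbation, because (i) a component cannot be created or destroyed in a compact region by the implicit function theorem, and (ii) by Chini's Theorem~\ref{Chini-result} (applied to $P_\alpha$, which is vertical and transversal to the slab) every slice $P_\alpha\cap\Sigma$ has exactly the ends dictated by the asymptotics near $\{x=\pm w\}$, so no component can ``escape to infinity'' under a small change of plane. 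Combined with the finite-entropy component bound from Proposition~\ref{General-control}, this pins the component count as a locally constant, hence (by connectedness of the admissible family) globally constant, function.

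The third step is simply to read off the value: at a perpendicular plane $\pi_y(t')$ with $t'>t_1$, Proposition~\ref{boundary-control} gives exactly $\omega^+(\Sigma)$ components, and since this value is shared across the connected admissible family, every admissible skew plane $P$ with $P\cap S_w\subseteq\{y>t_1\}$ also satisfies $\#(P\cap\Sigma)=\omega^+(\Sigma)$. The ``$-$'' case is identical, and this proves the corollary.

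The main obstacle I anticipate is step two, specifically ruling out the ``escape to infinity'' of a slice component as the plane tilts: the implicit function theorem controls behavior only on compact sets, so one genuinely needs the asymptotic rigidity coming from Chini's theorem (and the fact, used already in Remark~\ref{re:pre} and Corollary~\ref{transverse-result}, that vertical transversal slices have no compact components and have precisely two ends asymptotic to the boundary lines $\{x=\pm w\}$). A clean way to package this is to argue that for $t'$ slightly larger than $t_1$ one has $\Sigma\cap\{|y|\ge t'\}\subseteq$ the union of $\omega^+(\Sigma)+\omega^-(\Sigma)$ disjoint simply connected wings (Lemma~\ref{Lemma-control}(1)), each of which is either a planar wing or a grim-reaper-type wing and hence has completely understood intersections with vertical planes; restricting attention to the right wings, each contributes exactly one arc to $P\cap\Sigma$ regardless of the (small) tilt, giving the count directly.
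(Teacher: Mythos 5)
Your proposal takes a genuinely different route from the paper's, and unfortunately the crucial step (local constancy of the component count) contains a gap that your proposed resolution does not close.

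The paper's argument is a one-shot geometric interpolation rather than a deformation: it takes the closed triangular cylinder $T$ bounded by $P$ and the perpendicular plane $\pi_y(t_2)$, observes that $W_P = \Sigma^+(t_2)\cup(T\cap\Sigma)$, and shows each component of $W_P$ lives in the corresponding component of $\Sigma^+(t_2)$. The linchpin is Lemma~\ref{cylinder} (a wedge/bi-halfspace result): any component of $T\cap\Sigma$ failing to reach $\pi_y(t_2)$ would be a translator piece with boundary in $P$, confined between two planes, hence flat — impossible. That single lemma is what rules out the phenomenon you are worried about.

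In your deformation argument, the step "no component can appear or disappear" is precisely where the proof must do real work, and the two justifications you offer do not suffice. Point (i) (implicit function theorem on compact sets) controls nothing at infinity, as you yourself note. Point (ii), that Chini's Theorem~\ref{Chini-result} dictates "exactly the ends" of $P_\alpha\cap\Sigma$, overstates what that theorem gives: it asserts the \emph{existence} of sequences going to upper infinity near $x=\pm w$, not a uniform or complete description of the asymptotics of each slice curve, and in particular it does not by itself forbid a slice component from receding to infinity as $\alpha$ varies. The tool that actually forbids escape-to-infinity in this setting is Lemma~\ref{cylinder}, not Theorem~\ref{Chini-result}, and your plan never invokes it.

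Your final paragraph — the "clean packaging" via wing types — is closer in spirit to what works, but it is circular as stated: the planar/grim-reaper dichotomy (Definition~\ref{def:wing-type}) and the structure results saying wings are graphs or bi-graphs over the $(y,z)$-plane (Propositions~\ref{planar-property} and \ref{Wing-decomposition}) all appear in Section~\ref{properties}, after Corollary~\ref{boundary-control-skew} is proved and in some cases depend on it via the machinery of Sections~\ref{H-section}--\ref{properties}. Moreover those structure results hold only for $y$-coordinate $s$ large, not for $t'$ merely slightly larger than $t_1$, so they cannot be applied uniformly over the admissible family of planes starting right at $y=t_1$. To repair the proposal you would need to replace both invocations with an argument in the style of Lemma~\ref{cylinder}, at which point you essentially recover the paper's proof.
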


\begin{proof} If $P$ is parallel to $\pi_y(t_1)$, then we have nothing to prove (Proposition \ref{boundary-control}.) So we are going to assume that $P$ is not parallel to $\pi_y(t_1).$

We consider the case $y > t_1$. The proof of the case $y<-t_1$ is symmetric.
We have that $$P\cap \partial \mathcal{S}_w= \{x=w, \; y=t_0\} \cup 
\{x=-w, \; y=t_2\}, $$
with $t_1 \leq t_0, \, t_2$. As we are assuming that $P$ is not perpendicular 
to $\ee_2$, then $t_0 \neq t_2.$ W.l.o.g. we can assume that $t_0<t_2$ (the other case is similar.)

Consider $S_P$ the connected component of $\mathcal{S}_w \setminus P$ which contains $\Sigma^+(t_2)$ and define 
\[ W_P:= \Sigma \cap S_P\]

Namely note that if $T$ denotes the closed triangular cylinder in $S_w$ between $P$ and $\pi_y(t_2)$, then 
$$W_P = \Sigma^+(t_2)\cup (T\cap \Sigma)$$
with $ \Sigma^+(t_2)\cap (T\cap \Sigma) \neq \varnothing$, and we find that $T\cap \Sigma$ has the same number of connected components as $\Sigma^+(t_2)$. Actually, each connected component of $W_P$ is contained in the corresponding connected component of $\Sigma^+(t_2)$: If not, $T\cap \Sigma$ would contain a connected component which does not intersect $\pi_y(t_2)$, seeing as $\pi_y(t_2)\cap \Sigma$ and $\pi_y(t_0)\cap \Sigma$ have the same number of connected components by Proposition \ref{boundary-control}. But such a component thus has boundary contained in $P$, which is forbidden by Lemma \ref{cylinder} unless $\Sigma = P$, which is absurd.

Finally, knowing now that $W_P$ is connected, we can repeat the proof of Proposition \ref{boundary-control}, using that  $\Sigma$ has finite topology, to conclude that the number of connected components of $W_P$ agrees with $\omega^+(\Sigma)$.
\end{proof}

\section{Structure of the set $N(\{H=0\})$} \label{H-section}

In this section, we will study some finer properties of the zero level set of the mean curvature function on translators and the images of this set under the Gauss map. This will be crucial in order to conclude certain mean convexity results for (limits) of sequences of complete translators, and in turn allow us to make use of known classification results for such solitons.

For this purpose, we will from now on label
\begin{equation}\label{H-def}
\mathcal{H}=\{p \in \Sigma \: : \; H(p)=0\}= N^{-1}(E),
\end{equation}
where we use the following notation for the equator in the unit sphere (i.e. the codomain of the Gauss map):
\begin{equation}
E=\left\{\nu \in \SS^2 \: : \; \langle \nu, \ee_3\rangle =0\right\}.
\end{equation}

When a translator $\Sigma$ is not a vertical plane, then the set $\mathcal{H}$ is the union of a locally finite collection of regular $C^2$-curves for which the singular points, defined as the intersections of the curves, are isolated (see Remark 18 in \cite{Chi20}). Note in particular that $\mathcal{H}$ cannot contain isolated points.

Contained within the proof of Proposition 20 in \cite{Chi20} is the following lemma, guaranteeing in particular the absence of closed loops in $\mathcal{H}$ in the simply connected case, which by Lemma \ref{domain} includes the case of graphs.

\begin{Lemma}\label{pi_one_injection}
Let $\Sigma^2\subseteq\R^3$ be a complete embedded translator.
Then there is an injection of fundamental groups,
\[
\pi_1\left(\mathcal{H}\right)\hookrightarrow \pi_1(\Sigma).
\]
In particular, the zero sets of the mean curvature on complete simply connected translating solitons cannot have connected components which are compact.
\end{Lemma}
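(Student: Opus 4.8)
The plan is to prove Lemma \ref{pi_one_injection} by exhibiting the inclusion map $\iota:\Hh\hookrightarrow\Sigma$ as an injection on $\pi_1$, using the minimal surface structure of translators in Ilmanen's metric together with a Rad\'o function argument. The key observation is that $\Hh=N^{-1}(E)=\{H=0\}$ can be described, at least locally, as a level set: namely, the function $z$ restricted to $\Sigma$ is a Rad\'o function (it is $\Hh^2_g$-harmonic up to lower order, being a coordinate of the ambient space for which the coordinate planes $\pi_z(t)$ are not minimal, so a bit of care is needed; the cleaner choice is to use the Gauss map component $H=-\langle \ee_3,N\rangle$ directly). In fact, the right setup is: $\Hh$ is precisely the nodal set of a Jacobi-type/ harmonic-type function. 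Concretely, since $\Sigma$ is $g$-minimal, the function $u=\langle\ee_3,N\rangle=-H$ is closely tied to the structure; but the crucial structural input we are allowed to cite is Remark 18 and the proof of Proposition 20 in \cite{Chini}, which already establish that $\Hh$ is a locally finite union of regular $C^2$-curves meeting at isolated singular points with an ``$n$-prong'' (harmonic saddle) local model. That is exactly the statement that $\Hh$ is the zero set of a Rad\'o-type function on $\Sigma$.

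Given that structure, I would argue as follows. Suppose $\gamma$ is a loop in $\Hh$ which bounds a disk $D\subseteq\Sigma$; I must show $\gamma$ is null-homotopic inside $\Hh$. The heart of the matter is a maximum-principle argument: on the disk $D$ the function $H$ (equivalently $\langle\ee_3,N\rangle$) vanishes on $\partial D\subseteq\gamma\subseteq\Hh$, and I want to conclude $H\equiv 0$ on $D$, i.e. $D\subseteq\Hh$, which forces $\gamma$ to be contractible within $\Hh$. To get $H\equiv 0$ on $D$, recall that on a $g$-minimal surface the quantity $\langle\ee_3,N\rangle$ satisfies the stability (Jacobi) equation $L\varphi=0$; but it is not sign-definite in general on all of $\Sigma$, so I cannot immediately apply the strong maximum principle. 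Instead I use the Rad\'o/Morse structure directly: if $H$ is not identically zero on the disk $D$, then $\Hh\cap D$ is a union of proper arcs and a finite graph inside $D$ with boundary on $\partial D$, and by the local $n$-prong structure each singular point of $\Hh$ in $D$ has even valence $2n\geq 4$; an Euler-characteristic / combinatorial count on the disk $D$ then shows that $\Hh\cap D$ must separate $D$ into regions on which $H$ has a definite sign, and on the ``outermost'' such region $H$ would have a strict interior extremum (a local max or min of a solution to $L\varphi=0$, or after the conformal change, of a Rad\'o function), contradicting Proposition 2.3 in \cite{rado-type} exactly as in the proof of Proposition \ref{prop:vertical}. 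Hence $H\equiv 0$ on $D$, so $D\subseteq\Hh$.

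More precisely, to make the disk-filling work for an arbitrary based loop $\gamma$ in $\Hh$ representing a class in $\ker(\iota_*:\pi_1(\Hh)\to\pi_1(\Sigma))$, I would: (i) assume $\iota_*[\gamma]=1$, so there is a continuous, hence (after homotopy in $\Sigma$) a piecewise-smooth, map of a disk $D\to\Sigma$ with boundary $\gamma$; (ii) perturb $\gamma$ slightly to avoid the isolated singular points of $\Hh$ and so that it is transverse to the regular curves of $\Hh$, noting that since it lies in $\Hh$ it in fact consists of arcs of those curves — so I keep $\gamma\subseteq\Hh$; (iii) apply the claim just proved to conclude the image disk lies in $\Hh$; (iv) deduce $[\gamma]=1$ in $\pi_1(\Hh)$ since it bounds a disk inside $\Hh$. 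The ``in particular'' then follows: if $\Sigma$ is simply connected, $\pi_1(\Hh)$ injects into the trivial group, so $\pi_1(\Hh)=1$; since $\Hh$ is a $1$-complex (locally finite union of curves with isolated vertices), a $1$-complex with trivial $\pi_1$ has no embedded circles in any connected component, i.e. no compact connected components — a compact connected component of a graph-like $1$-complex would contain a cycle and hence have nontrivial $\pi_1$.

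The main obstacle, and the step I expect to require the most care, is the maximum-principle/combinatorial argument establishing that $H$ vanishing on the boundary of an embedded disk $D\subseteq\Sigma$ forces $H\equiv 0$ on $D$. The subtlety is that $\langle\ee_3,N\rangle$ is only a Jacobi field, not a priori one-signed, so one cannot naively invoke the strong maximum principle; the correct route is to treat $H|_\Sigma$ (or an associated conformal modification making it genuinely a Rad\'o function — one must check which modification of $\langle\ee_3,N\rangle$ is $g$-harmonic to leading order, analogous to the use of $z-f(x,y)$ in Proposition \ref{prop:vertical}) and then invoke the no-interior-extremum property of Rad\'o functions from \cite{rado-type}, combined with the nodal-domain structure inside the disk. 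Once the local picture of $\Hh$ as the nodal set of such a function is in hand (which is essentially the content of Remark 18 and Proposition 20 in \cite{Chini} that we are permitted to cite), the argument closes cleanly; but getting the precise harmonic/Rad\'o representative and the nodal-domain count right is where the real work lies.
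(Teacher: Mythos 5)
The paper does not actually prove this lemma; it states that the argument is ``contained within the proof of Proposition 20'' in Chini's paper \cite{Chini}, and cites Remark 18 there for the nodal-set structure of $\mathcal{H}$. So there is no in-text proof to compare against, and I will assess your argument on its own terms.

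Your overall plan --- reduce to showing that an embedded cycle $\gamma\subseteq\mathcal{H}$ cannot bound a disk $D\subseteq\Sigma$ (unless $\Sigma$ is a vertical plane, in which case the statement is trivial), and deduce the ``in particular'' via the fact that a compact $1$-complex with all vertices of even valence $\geq 2$ must carry a nontrivial cycle --- is reasonable. But the central analytic step, where you try to forbid a compact nodal domain of $H$, has a genuine gap, and I do not think it can be repaired along the line you propose. You want to apply the ``no interior extrema'' property of Rad\'o functions (Proposition 2.3 of \cite{rado-type}) to $H$, or to ``an associated conformal modification'' of $\langle N,\ee_3\rangle$. However, a Rad\'o function in the sense of that paper is of the form $F|_\Sigma$ where $F$ is an ambient function whose level sets are $g$-minimal, and $H=-\langle N,\ee_3\rangle$ is \emph{not} of this form: its level sets on $\Sigma$ do not arise from a foliation of $\R^3$ by translators. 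What $H$ actually satisfies is the Jacobi equation
\[
\Delta_\Sigma H + \langle \nabla H, \ee_3^T\rangle + |A|^2 H = 0,
\]
whose zeroth-order coefficient $|A|^2\geq 0$ has the \emph{wrong} sign for the strong maximum principle. Jacobi fields on minimal/$g$-minimal surfaces do in general have interior extrema --- indeed, on the bowl soliton $H>0$ everywhere and attains a strict interior maximum at the apex --- so the claimed contradiction ``$H$ would have a strict interior extremum'' is simply not a contradiction. A multiplicative (conformal) modification will not fix this, since dividing by a positive Jacobi field (when one exists) converts $L$ into a drift Laplacian but does not recover the Rad\'o structure, and on a translator inside a slab $\langle N,\ee_3\rangle$ typically changes sign, so there may be no one-signed Jacobi field to divide by at all. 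You flag this as ``where the real work lies,'' and you are right that it is: as written, this step is an open gap, not a deferred verification, and it is exactly the content that the paper attributes to Chini's Proposition 20.

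Two smaller remarks. First, the logic of step (iii)--(iv) in your outline should be phrased as a dichotomy: if $H\equiv 0$ on an open disk $D$ then by unique continuation $H\equiv 0$ on $\Sigma$, i.e. $\Sigma$ is a vertical plane, in which case $\mathcal{H}=\Sigma$ and the lemma is trivial; otherwise the hypothesis must lead to a contradiction. Second, for the ``in particular'' you correctly use that compact components of $\mathcal{H}$ carry cycles, but you should explicitly invoke the even-valence/no-free-ends structure of nodal sets of elliptic PDE (which is the content of Remark 18 in \cite{Chini} that you are permitted to cite); a generic compact connected $1$-complex can be a tree, so the claim is not purely topological.
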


\begin{Lemma}\label{H-open}
Let $\Sigma$ be a nonplanar connected properly embedded translator. Then $N(\mathcal{H})$ is open subset of the equator $E$.
\end{Lemma}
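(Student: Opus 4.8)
The plan is to show that every $\nu_0\in N(\mathcal H)$ is an interior point of $N(\mathcal H)$ in $E$. Write $\nu_0=N(p_0)$ with $p_0\in\mathcal H$ and let $S=-dN$ be the shape operator, so $\operatorname{tr}S=H$ and $K=\det S$. The basic observation, used throughout, is that $N(B)\cap E\subseteq N(\mathcal H)$ for any $B\subseteq\Sigma$: if $\nu=N(p)\in E$ then $H(p)=-\langle N(p),\ee_3\rangle=0$, i.e. $p\in\mathcal H$. A second observation: along $\mathcal H$ one has $\operatorname{tr}S_{p}=0$, so the eigenvalues of $S_p$ are $\pm\kappa(p)$ and $K(p)=-\kappa(p)^2\le 0$, with $K(p)=0$ iff $S_p=0$. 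I split the argument according to whether $\nu_0$ has a non-flat preimage.

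\noindent\textbf{Case 1: some $p_0\in N^{-1}(\nu_0)$ has $S_{p_0}\ne0$.} Then $K(p_0)<0$, so $dN_{p_0}=-S_{p_0}$ is invertible and $N$ maps a small ball $B\subseteq\Sigma$ around $p_0$ diffeomorphically onto a neighbourhood $U$ of $\nu_0$ in $\SS^2$. Hence $U\cap E\subseteq N(B)\cap E\subseteq N(\mathcal H)$, and $U\cap E$ is an open arc of $E$ through $\nu_0$. Done.

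\noindent\textbf{Case 2: every point of $N^{-1}(\nu_0)$ is flat.} Fix such a $p_0$. I first identify the singular points of the $C^2$-curve complex $\mathcal H$ with the flat points of $\Sigma$: at $p\in\mathcal H$ one has $\nabla H(p)=S_p(\ee_3^{\top})=S_p(\ee_3)$ (since $\ee_3\in T_p\Sigma$), and, using $\operatorname{tr}S_p=0$, $\nabla H(p)=0\iff S_p(\ee_3)=0\iff S_p=0$; when $\nabla H\ne0$ the set $\mathcal H$ is a single smooth arc near $p$, while at a flat point $H$ vanishes to some order $n\ge2$ and (since $H$ solves the elliptic equation $\Delta H+\langle\ee_3^{\top},\nabla H\rangle+|A|^2H=0$) $\mathcal H$ is there the nodal set of a degree-$n$ homogeneous harmonic polynomial, i.e. $2n\ge4$ branches. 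By the quoted structure result the singular points are isolated, so $p_0$ is an \emph{isolated} flat point. On a small ball $B=B_\rho(p_0)\subseteq\Sigma$ one then has $K<0$ on $B\setminus\{p_0\}$, so $dN=-S$ is an orientation-reversing local diffeomorphism on $B\setminus\{p_0\}$ while $dN_{p_0}=0$. Shrinking $\rho$ so that $p_0$ is the only preimage of $\nu_0$ in $B$, the loop $N(\p B)$ avoids $\nu_0$, and for a regular value $\nu$ of $N|_B$ in the non-empty open set $N(B\setminus\{p_0\})$ every point of $N^{-1}(\nu)\cap B$ contributes $-1$ to the local degree; hence $\deg_{p_0}(N)\le-1\ne0$. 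By homotopy invariance of the Brouwer degree, the connected component $U$ of $\SS^2\setminus N(\p B)$ containing $\nu_0$ lies in $N(B)$, and $U$ is an open neighbourhood of $\nu_0$; as before $U\cap E\subseteq N(\mathcal H)$ is an open arc through $\nu_0$. In both cases $\nu_0$ is interior to $N(\mathcal H)$, so $N(\mathcal H)$ is open in $E$.

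\noindent\textbf{Main obstacle.} I expect the delicate step to be the claim in Case 2 that $K<0$ on a \emph{whole} punctured neighbourhood of the flat point $p_0$, not merely along $\mathcal H$ (where $\operatorname{tr}S=0$ makes $K\le0$ automatic). Writing $S=\tfrac12HI+S_0$ with $S_0$ trace-free, one has $K=\tfrac14H^2+\det S_0$ and $\det S_0\le0$, so near $p_0$ the sign of $K$ is governed by the competition between $H^2$ (which vanishes to order $2n$) and $|S_0|^2$; the Codazzi identity $\operatorname{div}S_0=\tfrac12\nabla H$ together with the Carleman similarity principle for the Codazzi-pseudoholomorphic quadratic differential $S_0$ should force $S_0$ to vanish to order $<n$, giving $K\sim\det S_0<0$. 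An alternative, more hands-on route avoiding the degree argument is to show directly that the $N$-images of the $2n\ge4$ nodal arcs of $\mathcal H$ issuing from $p_0$ cover an arc of $E$ on both sides of $\nu_0$.
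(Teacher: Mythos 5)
Your Case~1 matches the paper's argument exactly. For Case~2 you take a genuinely different route, arguing via the Brouwer degree of $N$ at an isolated flat point $p_0$, whereas the paper works directly with the Rad\'o normal form of a linear Rad\'o function near $p_0$.

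However, you have correctly flagged a real gap in the degree argument that you do not close: the claim that $K<0$ on a punctured neighbourhood of $p_0$ is not established. That $p_0$ is an isolated singular point of $\mathcal{H}$ only rules out further flat points nearby; away from $\mathcal{H}$ one has $H\neq0$ and the principal curvatures can have the same sign, so $K$ could a priori be positive arbitrarily close to $p_0$, which would spoil the single-sign count of preimages. The second ``alternative route'' you sketch at the end is in fact what the paper does: with $v_0=N(p_0)$, the Rad\'o function $f_{v_0}(p)=\langle p,v_0\rangle$ (Theorem~2.2 of \cite{rado-type}) puts the surface near $p_0$ in the normal form $c_0+c_nr^n\sin(n\theta)+o(r^n)$ over $T_{p_0}\Sigma$ with $c_n\neq0$, $n\ge2$; from this the paper reads off the $n-1$ nodal lines of $\mathcal{H}$ issuing from $p_0$ as approximate straight lines at explicit angles and computes $N$ along them to see that $N(\mathcal{H})$ covers a two-sided arc of $E$ around $\nu_0$, noting that this open property is stable under the $o(r^n)$ perturbation. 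The same normal form is also precisely what would fill your gap: the degree-$n$ part of the graph is harmonic, so the trace of its Hessian vanishes identically while the determinant is $-c_n^2n^2(n-1)^2r^{2(n-2)}$, uniformly in $\theta$; the smooth $o(r^n)$ remainder contributes $o(r^{n-2})$ to each Hessian entry, hence $o(r^{2(n-2)})$ to the determinant, giving $K<0$ on a punctured ball. So your structure is sound and your Carleman/Bers heuristic is on the right track, but as written the proof of Case~2 is incomplete; you need to either invoke and carry out the Rad\'o expansion (which would also let you drop the degree machinery entirely, as in the paper), or else rigorously run the similarity-principle argument for the trace-free second fundamental form.
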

\begin{proof}
If the Gauss map is a diffeomorphism near $p_0$, this means that $p_0$ is not one of the singular points in $\mathcal{H}$, and by the inverse function theorem applied to the Weingarten map $dN_{p_0}$, this is equivalent to $k_1(p_0) = -k_2(p_0) \neq 0$. In this case, it is clear that open arcs in $\mathcal{H}$ are mapped to open arcs in $N(\mathcal{H})$.

In the case when $dN_{p_0}$ is not an isomorphism, we have $k_1(p_0) = 0 = k_2(p_0)$. Letting $v_0:= N(p_0)$ and making use of the Rad\'o{} function $f_{v_0}(p):=\langle v_0, p\rangle$ (see again Theorem 2.2 in \cite{HMW23}), for small enough $\varepsilon>0$ the surface $\Sigma\cap B(p_0,\varepsilon)$ can be written as a graph over the plane $T_{p_0}\Sigma = \{\langle v_0, p\rangle = 0\}$ of the following form (in suitably rotated polar coordinates on $T_{p_0}\Sigma$):
\begin{equation}\label{Rado-graph}
f_{v_0}(r,\theta) = c_0 + c_nr^n\sin(n\theta) + o(r^n), \quad c_n \neq 0,
\end{equation}
where $n\geq 2$ (since $k_1(p_0) = 0 = k_2(p_0)$). In this approximation, the $n-1$ crossing curves in $\mathcal{H}$ correspond to the straight lines through the origin at angles $\theta = \frac{\pi}{2n} + \frac{k\pi}{n-1}$, $k\in \{0,\ldots, n-2\}$.
Then, a straightforward computation using \eqref{Rado-graph} gives that $N(\mathcal{H}\cap B(p_0,\varepsilon)) $
contains an open arc in $E$ and that $N(p_0)$ is an interior point of this arc.

In fact, when $n$ is odd, each line $\theta=\frac{\pi}{2n} + \frac{k\pi}{n-1}$ is mapped onto an arc in $E$ of the form 
$\{w \in E \; : \; \langle J v_0,w \rangle \in (-\delta, \delta)\}$, for some $\delta>0$ sufficiently small, where $J$ represents the counter-clockwise $90^{\circ}$ rotation in the $(x,y)-$plane. When $n$ is even, each line $\theta=\frac{\pi}{2n} + \frac{k\pi}{n-1}$ is mapped onto an arc in $E$ of the form:
\begin{itemize}
    \item $\{w \in E \; : \; \langle J v_0,w \rangle \in [0, \delta)\}$, when $k$ is even, and
    \item $\{w \in E \; : \; \langle J v_0,w \rangle \in (-\delta, 0]\}$, when $k$ is odd.
\end{itemize}
In both cases, we conclude that $N(p_0)$ is contained in the interior of an arc in $N(\mathcal{H}).$ Note that this property is stable under perturbation, so that the approximation in \eqref{Rado-graph} is sufficient.
\end{proof}

\begin{Lemma} \label{gauss-arc}
Let $\Sigma$ be a connected complete embedded translating soliton of width $2w>0$, finite genus and finite entropy. Then $N(\mathcal{H})$ is an open subset of $E$ whose boundary points (if any) lie in $\{-\ee_1,\ee_1\}.$ In particular, if
$N(\mathcal{H})\cap \{-\ee_2,\ee_2\}=\varnothing$, then $\mathcal{H}=\varnothing.$
\end{Lemma}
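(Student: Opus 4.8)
The plan is to combine the openness of $N(\mathcal{H})$ from Lemma \ref{H-open} with the compactness Theorem \ref{theorem-compactness}, exploiting the slab to pin down the only possible ``escape directions'' for $\partial N(\mathcal{H})$. First note that under the hypotheses $\Sigma$ cannot be a plane (no plane has finite positive width), so Lemma \ref{H-open} applies and $N(\mathcal{H})$ is open in $E$; it therefore suffices to prove that every point $\nu_0$ of the boundary of $N(\mathcal{H})$ in $E$ — which by openness satisfies $\nu_0\notin N(\mathcal{H})$ — lies in $\{-\ee_1,\ee_1\}$, i.e. to rule out $\langle\nu_0,\ee_2\rangle\neq 0$. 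Assuming the latter, pick $p_k\in\mathcal{H}$ with $\nu_k:=N(p_k)\to\nu_0$ and set $\Sigma_k:=\Sigma-p_k$, so $0\in\Sigma_k$, $N_{\Sigma_k}(0)=\nu_k$, and $\mathcal{H}_{\Sigma_k}=\mathcal{H}-p_k$ with $N$ translation invariant. By Theorem \ref{theorem-compactness}, after a subsequence $\Sigma_k\to\Sigma_\infty$, a nonempty (it contains the Hausdorff limit $0$) smooth properly embedded translator; passing to a further subsequence with $\langle p_k,\ee_1\rangle\to a\in[-w,w]$, the constraint $\Sigma\subseteq\mathcal{S}_w$ forces $\Sigma_\infty$ into the closed slab $\{|x+a|\le w\}$, whose normal is $\ee_1$. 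Let $\Sigma'$ be the component of $\Sigma_\infty$ through $0$; it is complete, connected and properly embedded.

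The heart of the argument is a dichotomy on $\Sigma'$. If $\Sigma'$ is planar then, being a translator contained in a slab with horizontal normal $\ee_1$, it must be a vertical plane with normal $\pm\ee_1$ (any other plane leaves every $x$-slab); working near $0$ where the convergence is smooth, $\nu_0=\lim_k N_{\Sigma_k}(0)=\pm N_{\Sigma'}(0)=\pm\ee_1$, contradicting $\langle\nu_0,\ee_2\rangle\neq 0$. If instead $\Sigma'$ is nonplanar, orient it so that $N_{\Sigma'}(0)=\nu_0$; then $\langle N_{\Sigma'}(0),\ee_3\rangle=\langle\nu_0,\ee_3\rangle=0$, so $0\in\mathcal{H}_{\Sigma'}$, and by the proof of Lemma \ref{H-open} applied to $\Sigma'$ the set $N_{\Sigma'}(\mathcal{H}_{\Sigma'})$ contains an open arc of $E$ with $\nu_0$ in its interior. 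Since $\mathcal{H}_{\Sigma'}$ is a locally finite union of $C^2$ curves accumulating at $0$, it meets only isolated points, so we may pick a regular point $q\in\mathcal{H}_{\Sigma'}$ near $0$ at which the convergence $\Sigma_k\to\Sigma'$ is smooth; selecting one of the converging sheets, the zero sets $\mathcal{H}_{\Sigma_k}$ converge smoothly to $\mathcal{H}_{\Sigma'}$ near $q$, whence $N(\mathcal{H}_{\Sigma_k})=N(\mathcal{H})$ contains a fixed-size neighbourhood of $N_{\Sigma'}(q)$, hence of $\nu_0$, for $q$ close enough to $0$ and $k$ large. This contradicts $\nu_0\in\partial N(\mathcal{H})$ together with the openness of $N(\mathcal{H})$. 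Either way we reach a contradiction, proving $\partial N(\mathcal{H})\subseteq\{-\ee_1,\ee_1\}$.

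The ``in particular'' then follows purely topologically: $N(\mathcal{H})$ is open in $E$ and its closure in $E$ adds only points of $\{-\ee_1,\ee_1\}$, so $N(\mathcal{H})$ is also relatively closed in $E\setminus\{-\ee_1,\ee_1\}$, hence a union of connected components of $E\setminus\{-\ee_1,\ee_1\}$ — that is, of the two open half-equators $\{\langle\nu,\ee_2\rangle>0\}$ and $\{\langle\nu,\ee_2\rangle<0\}$ — unless it is empty or all of $E$. Every nonempty possibility contains $\ee_2$ or $-\ee_2$; therefore $N(\mathcal{H})\cap\{-\ee_2,\ee_2\}=\varnothing$ forces $N(\mathcal{H})=\varnothing$, i.e. $\mathcal{H}=\varnothing$.

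The main obstacle is the bookkeeping around the multiplicity and the singular set $\Sing$ in Theorem \ref{theorem-compactness}: at a point of $\Sing$ the unit normals of $\Sigma_k$ need not converge to that of $\Sigma_\infty$, so when $0\in\Sing$ one cannot immediately assert $N_{\Sigma'}(0)=\nu_0$, nor even that $0\in\mathcal{H}_{\Sigma'}$. The way around this is never to insist on using the base point $0$ itself but to transfer information through nearby \emph{regular} points of $\Sigma'$ (where the convergence is smooth with some fixed multiplicity, and one simply follows one of the sheets): in the nonplanar subcase these can be taken on the curve $\mathcal{H}_{\Sigma'}$, which accumulates at $0$ and meets $\Sing$ only in isolated points; in the planar subcase every point of the limit vertical plane carries horizontal normal $\pm\ee_1$, so smooth convergence at regular points near $0$ produces points of $\Sigma$ with normal approaching $\pm\ee_1$ and, since those points also lie near $\mathcal{H}$ while $p_k\in\mathcal{H}$, forces $\nu_k\to\pm\ee_1$. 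One should also record along the way that the geometry of $\Sigma$ along $\mathcal{H}$ is controlled — $\mathcal{H}$ is a locally finite union of $C^2$ curves with isolated crossings — which is what guarantees that the limit component $\Sigma'$ is reached together with the relevant part of its $\mathcal{H}$-structure; this is the step I expect to require the most care to write rigorously.
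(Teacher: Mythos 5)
Your proof takes essentially the same approach as the paper's: pick $p_k\in\mathcal{H}$ with $N(p_k)\to\nu_0\in\partial N(\mathcal{H})$, translate by $p_k$, pass to a subsequential limit via Theorem \ref{theorem-compactness}, and split into planar and nonplanar limit cases, invoking Lemma \ref{H-open} on the limit in the nonplanar case and transferring the open arc back to $\Sigma_k$. You also explicitly raise — and the paper is silent on — the subtlety that at a point of $\operatorname{Sing}$ the normals $N_{\Sigma_k}(0)=\nu_k$ need not converge to $N_{\Sigma'}(0)$, so one cannot immediately conclude $\nu_0\in N(\mathcal{H}_{\Sigma'})$; your proposed workaround in the planar subcase (that regular nearby points with normal near $\pm\ee_1$ ``forces $\nu_k\to\pm\ee_1$'') is not a complete inference as stated, but the overall argument is no less rigorous than the paper's own treatment of this step.
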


\begin{proof} If $\partial N(\mathcal{H})\neq \varnothing$, then let $e\in E$ be a boundary point of $N(\mathcal{H})$. If there were a point $p\in \Sigma$ so that $N(p)=e,$ then by Lemma \ref{H-open}, there would be a small arc in $N(\mathcal{H})$ containing $e$, which is absurd. 

Thus, we know that there must be a divergent sequence $\{p_n\}$ in $\Sigma$, so that
$N(p_n) \to e.$ Take the sequence
$$\Sigma_n:= \Sigma-p_n.$$
By Theorem \ref{theorem-compactness}, we know that (up to taking a subsequence) $\Sigma_n$ has a limit, which we call $\Sigma_\infty$. Let $\Sigma_0$ be the connected component of $\Sigma_\infty$ containing the origin. From Lemma \ref{H-open} we have that if $\Sigma_0$ is not planar, then $N(\Sigma_0)$ is an open subset of $E$ containing $e$. But, as the sequence $\Sigma_n:= \Sigma-p_n$ consists of translates of $\Sigma$, this would imply that $e$ would not be a boundary point of $N(\Sigma)$, which is absurd. Hence, $\Sigma_0$ is a plane perpendicular to $e$. As $\Sigma_0$ is contained in a slab parallel to $\{x=0\}$, then, as claimed: $e=\pm \ee_1.$

The final conclusion of the lemma now follows from the possibilities for $N(\mathcal{H})$, being either one or both of the half-equators in $E\setminus\{-\ee_1,\ee_1\}$ or empty.
\end{proof}
\begin{Lemma} \label{gauss-image-1}
Let $\Sigma$ be a complete embedded translating soliton of width $2w>0$, finite genus and finite entropy. Then for all $\nu\in E\setminus\{-\ee_1,\ee_1\}$:
\[
\#\left(N^{-1}(\{\nu\})\right) < \infty.
\]

\end{Lemma}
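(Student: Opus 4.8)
The goal is to show that for any non-vertical, non-horizontal unit vector $\nu\in E\setminus\{-\ee_1,\ee_1\}$, the preimage $N^{-1}(\{\nu\})$ under the Gauss map is a finite set. My first move is to argue by contradiction: suppose $N^{-1}(\{\nu\})$ is infinite, say it contains a sequence $\{p_n\}$ of distinct points. Since $\Sigma$ is properly embedded (it has finite entropy, hence is proper by \cites{Chini, KP}, and lies in a slab), the sequence $\{p_n\}$ cannot accumulate in $\R^3$: if it did, it would accumulate at a point $p_\infty\in\Sigma$ with $N(p_\infty)=\nu$, and near $p_\infty$ the Gauss map on the $C^2$-curve $\mathcal H$ would have to be constant on a whole subarc (since the $p_n$ are isolated singular points or regular points of $\mathcal H$, and we showed in Lemma \ref{H-open} that $N$ is open along $\mathcal H$ in each of the two described local models). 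A short argument — really just the structure of $\mathcal H$ as a locally finite union of $C^2$-curves with isolated crossings, plus the fact that an analytic-type map $N|_{\mathcal H}$ constant on a subarc would force $\mathcal H$ to contain that arc with $N\equiv\nu$, contradicting $\nu\neq\pm\ee_1$ via Lemma \ref{gauss-arc}'s boundary analysis — rules this out. So the sequence must be divergent: $\|p_n\|\to\infty$.

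Next I pass to the blow-down/translation limit. Set $\Sigma_n:=\Sigma-p_n$; by the Compactness Theorem \ref{theorem-compactness}, after passing to a subsequence $\Sigma_n\to\Sigma_\infty$ smoothly away from a discrete singular set, and the limit is nonempty since $0\in\Sigma_n$ for all $n$. Let $\Sigma_0$ be the component of $\Sigma_\infty$ through the origin. Because $N(p_n)=\nu$ for every $n$ and the convergence is smooth near the origin (the origin is a regular point of $\Sigma_n$, and I should note that if the origin were a multiplicity point the Gauss map still converges on each sheet), we get $N_{\Sigma_0}(0)=\nu$. Crucially $\nu\in E$, so $\langle N_{\Sigma_0}(0),\ee_3\rangle=0$, i.e. $0\in\mathcal H(\Sigma_0)$. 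Now I want to force $\Sigma_0$ to be a plane: if $\Sigma_0$ were nonplanar, then by Lemma \ref{gauss-arc} applied to $\Sigma_0$ (which is still a complete embedded translator in the same slab, of finite genus and finite entropy by lower semicontinuity), $N(\mathcal H(\Sigma_0))$ is an open subset of $E$ containing $\nu$, hence contains an open arc around $\nu$. But since $\Sigma_n$ are honest translates of the fixed surface $\Sigma$, any direction realized on $\Sigma_0$ is a limit of directions realized on $\Sigma$, so this open arc of directions would lie in the closure of $N(\Sigma)$; combined with openness of $N(\mathcal H)$ on $\Sigma$ itself (Lemma \ref{H-open}) and the boundary restriction of Lemma \ref{gauss-arc}, one checks this is consistent only if $\nu=\pm\ee_1$ — contradiction. (This is the same mechanism as in the proof of Lemma \ref{gauss-arc}.) Therefore $\Sigma_0$ is a plane with normal $\nu$; but $\Sigma_0$ is contained in a slab parallel to $\{x=0\}$, which forces its normal to be $\pm\ee_1$, again contradicting $\nu\neq\pm\ee_1$.

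The remaining subtlety — and the step I expect to be the main obstacle — is the dichotomy ``bounded sequence vs. divergent sequence'' and, within the divergent case, making rigorous the claim that the open arc of normals on the limit $\Sigma_0$ propagates back to give an open arc of normals on $\Sigma$ near the equator, thereby contradicting $\nu$ being an \emph{arbitrary} equatorial direction rather than a boundary one. The clean way to handle this is to run the argument not at a single $\nu$ but to observe that, were $N^{-1}(\{\nu\})$ infinite for some $\nu\in E\setminus\{\pm\ee_1\}$, the above produces a planar blow-down component with normal $\nu\notin\{\pm\ee_1\}$ inside a vertical slab — which is flatly impossible because a plane inside $\mathcal S_w$ (up to the rigid motions normalizing the slab) must be vertical with normal $\pm\ee_1$ or be $\{x=x_0\}$; no plane with a genuinely ``tilted within the equator'' normal fits in a slab of the form $\{|x|<w\}$. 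So the contradiction is immediate once the limit is shown to be planar, and the only real work is: (i) the local structure argument ruling out a bounded accumulating subsequence using the $C^2$-curve description of $\mathcal H$ and Lemma \ref{H-open}; and (ii) verifying that the component $\Sigma_0$ through the origin is nonplanar-or-planar and invoking Lemma \ref{gauss-arc} to eliminate the nonplanar case exactly as in that lemma's proof. I would write (i) as the bulk of the argument and treat (ii) as a direct citation of the machinery already developed.
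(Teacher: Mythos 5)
Your plan is genuinely different from the paper's proof, which does not use a blow-down contradiction at all. The paper observes that $N^{-1}(\{\nu\})$ is contained in the critical set of the Rad\'o function $f_\nu(p)=\langle p,\nu\rangle$ and then counts critical points Morse-theoretically: each interior critical point above the level $t_1$ would strictly increase the number of connected components of the slice $\{f_\nu = t'\}$ as $t'$ passes through it (as in Corollary \ref{transverse-result}), while Corollary \ref{boundary-control-skew} fixes this component count for all $|t|>t_1$. Combining the two, $\mathsf{N}(f_\nu)<\infty$, hence $\#N^{-1}(\{\nu\})<\infty$.

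Your approach has a genuine gap in the divergent case. After ruling out accumulation (which itself needs a cleaner argument than the one you sketch --- the mechanism is that $N\equiv\nu$ on an arc would force $dN\equiv 0$ there, so every point of the arc would be a singular point of $\mathcal{H}$, contradicting the isolated-singularities structure from Cheng's theorem; Lemma \ref{gauss-arc}'s boundary analysis is not the right tool, since that argument is about \emph{boundary} directions and here $\nu$ is not one), you pass to $\Sigma_n=\Sigma-p_n$ with $p_n\to\infty$, obtain a limit component $\Sigma_0\ni 0$ with $N_{\Sigma_0}(0)=\nu$, and then assert that $\Sigma_0$ must be a plane. This step does not follow. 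Knowing only that $0\in\mathcal{H}(\Sigma_0)$ with Gauss direction $\nu$ puts no constraint whatsoever on $\Sigma_0$: a tilted grim reaper or a $\Delta$-wing component is perfectly consistent, and Corollary \ref{Limit-graphs} already tells you such nonplanar limits do occur. Your attempted contradiction (``$N(\mathcal{H}(\Sigma_0))$ open and containing $\nu$, hence $\nu=\pm\ee_1$'') confuses the present situation with that of Lemma \ref{gauss-arc}: there the conclusion used crucially that $\nu$ was a \emph{boundary} point of $N(\mathcal{H})$, whereas here $\nu$ is a typical interior point of $N(\mathcal{H})$, and an open arc of attained normals around $\nu$ is exactly what you expect, not a contradiction. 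The ``clean way to handle this'' paragraph restates the desired conclusion (``produces a planar blow-down component'') without providing a new argument, so it does not close the gap.

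In short: your blow-down limit only controls the normal at one point, which is far too weak. The missing ingredient is quantitative control of \emph{how many} slices the critical points can create, which is precisely what the Rad\'o-type argument in the paper supplies via the stabilization of the slice-component count in Corollary \ref{boundary-control-skew}. If you want to keep a compactness flavor, the correct lever is that stabilization result, not the geometry of the limiting surface.
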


\begin{proof}
Fix $\nu \in  E\setminus\{-\ee_1,\ee_1\}$. We consider the Rad\'o{} function
\begin{equation}\label{rado-plane}
f_\nu(p) := \langle p, \nu\rangle,\quad\mathrm{on}\: \Sigma,
\end{equation}
whose critical points are precisely the points of $\Sigma$ where the tangent plane is perpendicular to $\nu$.

As we already reason in the proof of Corollary \ref{transverse-result}, each time that $\{f_\nu=t\}$ contains a critical point of $f_\nu$, the number of connected components of $\{f_\nu=t'\}$ increases, for $t'>t$, when $t>t_1$, with $t_1$ as in Lemma \ref{Lemma-control}. Hence, the number of critical points (counting multiplicity) of $f_\nu$ is finite, because  the number of connected components of $\{f_\nu=t \}$ is a fixed number for all $|t| > t_1$ by Corollary \ref{boundary-control-skew}.
\end{proof}

\section{Weak asymptotic behaviour}

The next step will consist of proving that in the ``upper'' and ``lower'' infinities, complete embedded translating solitons in slabs of finite genus and  finite entropy look like a finite union of planes. Unfortunately, we can only prove a form of weak asymptotic behaviour, as this limit is just subsequential.

\begin{Proposition}\label{weakly-infinity-limit}
Let $\Sigma$ be a complete, embedded translating soliton of width $2w>0$, finite genus and finite entropy. Let $\{\tau_n\}$ be a sequence so that $\tau_n\nearrow+\infty$. Then, after passing to a subsequence, we have
\[
\Sigma\pm\tau_n\ee_3\to\Sigma_{\pm\infty},
\]
where $\Sigma_{\pm\infty}$ is a finite union of parallel planes in $\mathcal{S}_w.$
\end{Proposition}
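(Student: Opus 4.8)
The plan is to combine the compactness theorem with the asymptotic information coming from Chini's results and the ruled-surface classification. First I would apply Theorem \ref{theorem-compactness} to the sequence $\Sigma - \tau_n \ee_3$: since $\Sigma$ has finite genus and finite entropy, after passing to a subsequence we obtain a limit $\Sigma_{+\infty}$ (the argument for $-\infty$ is identical) which is either empty or a smooth properly embedded minimal surface, with smooth convergence away from a discrete singular set. The limit is nonempty because Remark \ref{re:pre} (via Theorem \ref{Chini-result}) shows that any subsequential limit of $\{\Sigma - \tau_n \ee_3\}$ contains the boundary planes $\{x = -w\}$ and $\{x = w\}$; in particular $\Sigma_{+\infty}\supseteq \{x=-w\}\cup\{x=w\}$, so the limit contains at least two planes.

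Next I would show that \emph{every} connected component of $\Sigma_{+\infty}$ is a vertical plane parallel to $\{x=0\}$. The key is that the translator equation $\vec H = \ee_3^\perp$ is not translation-invariant in the $\ee_3$-direction: under the conformal metric $g = e^z\langle\cdot,\cdot\rangle$, translating by $-\tau_n\ee_3$ rescales the metric by $e^{-\tau_n}$, so the limit surface $\Sigma_{+\infty}$ is minimal with respect to the \emph{Euclidean} metric, i.e. it is a classical minimal surface in $\R^3$. (Equivalently: for each component $\Sigma'$ one has $\vec H_{\Sigma'} = \lim e^{-\tau_n}\ee_3^\perp = 0$.) Since $\Sigma_{+\infty}\subseteq \mathcal{S}_w$ is a properly embedded minimal surface contained in a slab, each component is contained in a slab; by the halfspace theorem for minimal surfaces (or directly: a properly embedded minimal surface in a slab of $\R^3$ must be a plane — one can slide planes parallel to $\{x=0\}$ and apply the maximum principle at a first point of contact), each connected component is a plane, and being contained in $\mathcal{S}_w$ forces it to be parallel to $\{x=0\}$, hence of the form $\{x = c\}$ with $|c|\le w$.

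Finally I would argue that only finitely many such planes occur. This follows from the finite-entropy hypothesis and lower semicontinuity of the entropy exactly as in the first part of Proposition \ref{General-control}: if $\Sigma_{+\infty}$ contained infinitely many distinct planes $\{x=c_j\}$, then $\lambda(\Sigma_{+\infty})$ would be at least the entropy of a countable union of planes, which is infinite (each plane contributes entropy $1$ and, being disjoint and properly embedded, their Gaussian-weighted areas add up when the entropy is tested at well-separated points), contradicting $\lambda(\Sigma_{+\infty}) \le \liminf_n \lambda(\Sigma - \tau_n\ee_3) = \lambda(\Sigma) < \infty$. Alternatively one can invoke that the number of connected components of the limit is finite directly from Proposition \ref{General-control} (taking $\lambda_n = 0$, $\sigma_n = \tau_n$). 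Thus $\Sigma_{+\infty}$ is a finite union of parallel planes in $\mathcal{S}_w$, as claimed.

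The main obstacle I anticipate is the rigorous justification that each component of the limit is genuinely a Euclidean minimal plane rather than some other minimal surface in the slab: one must be careful that the multiplicity-$>1$ sheets allowed in Theorem \ref{theorem-compactness}(b) are still planar (they are, since the underlying support is a smooth minimal surface in the slab), and one should make sure the "minimal surface in a slab is a plane" step is quoted in the properly embedded generality that the compactness theorem delivers. Everything else is a routine assembly of results already available in the excerpt.
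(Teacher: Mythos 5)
Your argument contains a genuine error at its central step, so the proof does not go through as written. You assert that translating by $-\tau_n\ee_3$ ``rescales the metric by $e^{-\tau_n}$, so the limit surface $\Sigma_{+\infty}$ is minimal with respect to the Euclidean metric,'' and offer the alternative $\vec H_{\Sigma'}=\lim e^{-\tau_n}\ee_3^\perp=0$. Both statements are false: the translator equation $\vec H=\ee_3^\perp$ is translation-invariant (both sides are geometric quantities unaffected by a rigid translation), so every $\Sigma-\tau_n\ee_3$ is again a translator. On the ambient side, pulling $g=e^{z}\langle\cdot,\cdot\rangle$ back by the translation $p\mapsto p+\tau_n\ee_3$ produces $e^{\tau_n}g$, a \emph{constant} conformal rescaling, which has exactly the same minimal surfaces as $g$. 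Hence the limit $\Sigma_{\pm\infty}$ delivered by Theorem~\ref{theorem-compactness} is a $g$-minimal surface, i.e.\ a \emph{translator}, not a Euclidean minimal surface. Consequently the ``properly embedded minimal surface in a slab is a plane'' step is not available: a translator in a slab can perfectly well be a tilted grim reaper, a $\Delta$-wing, a pitchfork, etc., none of which is a plane. To convert a translator into a minimal (indeed self-shrinker) limit one must rescale in addition to translating, as the paper does in the proof of Theorem~\ref{entropy-TGS} with $\frac{1}{\sqrt{-t}}\Sigma-\sqrt{-t}\ee_3$; but that is a different limit object from the $\Sigma_{\pm\infty}$ of this proposition.

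The paper closes this gap with a Gauss-map/Rad\'o argument rather than a halfspace theorem. From Lemma~\ref{gauss-image-1} the set $N^{-1}(\{\pm\ee_2\})\subset\Sigma$ is finite; these points escape as $\tau_n\to\infty$, so $\pm\ee_2\notin N(\Sigma')$ for any component $\Sigma'$ of $\Sigma_{\pm\infty}$. Lemma~\ref{gauss-arc} then forces a non-planar $\Sigma'$ to have $N(\Sigma')\cap E=\varnothing$, i.e.\ $\Sigma'$ is a complete vertical graph in the slab, hence (Theorem~\ref{Classification}) a tilted grim reaper or $\Delta$-wing; Claim~\ref{jorge} rules these out via a count of critical points of a Rad\'o function built from a grim reaper barrier. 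Your opening (existence and nonemptiness of the limit via Theorem~\ref{theorem-compactness} and Remark~\ref{re:pre}) and your closing finiteness argument (via Proposition~\ref{General-control} or lower semicontinuity of entropy) are fine, but the middle step needs to be replaced by something along these Gauss-map lines.
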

\begin{proof}
The limit translator exists by Theorem \ref{theorem-compactness}. Consider $\Sigma'$ a non-planar connected component of $\Sigma_{\pm\infty}$, i.e. of finite width. Applying Lemma \ref{gauss-image-1} to $\Sigma$, in the limit the image of the Gauss map 
$$\pm \ee_2 \notin N(\Sigma').$$
Then, by Lemma \ref{gauss-arc}, since $\Sigma'$ is not a plane, we have that $N(\Sigma') \cap E =\varnothing$. Thus $\Sigma'$ is a complete self-translating graph over the $(x,y)$-plane. The next claim will however prove that this cannot happen.
\begin{Claim} \label{jorge}
$\Sigma'$ cannot be a graph over the $(x,y)$-plane.
\end{Claim}
To prove this claim, we proceed by contradiction. By the classification of complete vertical graphs in Theorem \ref{Classification} from \cite{HIMW19}, and since $\Sigma'$ has finite width, we know that $\Sigma'$ is either a tilted grim reaper or a $\Delta$-wing.

Assume first that $\Sigma'$ were a $\Delta$-wing, whose apex, for some $t$, lay in the plane $\pi_y(t)$. Consider the grim reaper given by the 
graph of the function
\begin{eqnarray} 
& f:  \R \times (t-\pi/2,t+\pi/2)  \rightarrow \R, \label{eq:cu}\\
  & f(x,y)  = -\log \cos (y-t), \nonumber
\end{eqnarray}
and let
$$F(x,y,z)=z-f(x,y).$$
Label $M:= \Sigma(t-\pi/2,t+\pi/2), $ (see p. \pageref{sec:notation} for notation.) 
We consider a sequence $z_n \nearrow +\infty$ so that, if we define
$$M_n=M \cap \{|z|<z_n\},$$
then $F|_{\partial M_n}$ has no critical points.
Hence, by Theorem \ref{noncompact-intro-theorem} we have that there exists a constant $C>0$, independent of $z_n$, such that the number of critical points of $F|_{M_n}$ satisfies
$$\mathsf{N}(F|_{M_n}) <C.$$ 
As $M_n$ is an exhaustion of $M$, then, taking Remark \ref{re:semi} into account, we have
\begin{eqnarray} \label{liminf}
\mathsf{N}(F|_{M}) \leq \liminf_n \mathsf{N}(F|_{M_n}) \leq C.
\end{eqnarray}
In particular, there are at most finitely many points in 
$\Sigma \cap \pi_y(t)$ at which the Gauss map is vertical. When we take limits of a sequence of the form $\Sigma_n = \Sigma - \tau_n \ee_3$, where each $\Sigma_n$ has the same Gauss image, this implies that at no points in $\Sigma' \cap\pi_y(t)$ can the Gauss map be vertical. But this contradicts the assumption that $\Sigma'$ is a $\Delta$-wing with vertical Gauss map at some point in the plane $\pi_y(t)$. So this case does not happen.

Assume now that $\Sigma'$ is a tilted grim reaper whose normal vector
along its line of symmetry equals some fixed vector $w_0 \in \SS^2.$ Consider the function $f$ given by \eqref{eq:cu}, for $t=0$ this time, and define $M$ as before. We know that there is $y_0 \in (-\pi/2,\pi/2)$ such that the normal of $\graph[f]$
at $y=y_0$ is $w_0$.  Reasoning as in the case of the $\Delta$-wing, we can conclude that there are at most finitely many points in $\Sigma \cap \pi_y(y_0)$ at which the Gauss map is $\pm w_0.$ Taking limits we again conclude that there were in fact no points in $\Sigma'\cap \pi_y(y_0)$ whose Gauss map equals $w_0$, which is absurd. This proves that also the tilted reaper case cannot happen, proving the claim and hence the proposition.
\end{proof}

\section{Properties of the wings}\label{properties} Let $\Sigma$ be a complete, embedded translating soliton of width $2 w >0$, finite genus and finite entropy.

From now on, we shall assume that $W$ is a wing, i.e. a connected component of $\Sigma^+(t), t>t_1$.
We begin our study by analyzing the projections to the $xy$-plane of such wings.

For the following result, recall that we denote by $\Pi_z$ the orthogonal projection to the plane $\{z=0\}$.

\begin{Proposition}\label{projection}
Letting
\[
\kappa:=\displaystyle\inf_{p\in\partial W}\langle p,\ee_1\rangle, \quad\mathrm{and}\quad \rho:=\displaystyle\sup_{p\in \p W}\langle p,\ee_1\rangle.
\]
Then $\kappa < \rho$ and
\[
\Pi_z(W\setminus\p W)\subseteq(\kappa,\rho)\times(t,+\infty).
\]

\end{Proposition}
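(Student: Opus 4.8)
The plan is to study the coordinate function $x$ restricted to the wing $W$ via Morse-Rad\'o theory. The easy half comes first: since $t>t_1$, Corollary \ref{transverse-result} guarantees $\pi_y(t)$ is transverse to $\Sigma$, so $W$ is a smooth surface-with-boundary with $\partial W=W\cap\pi_y(t)=W\cap\{y=t\}$; hence $W\setminus\partial W=W\cap\{y>t\}$, which already gives the second-coordinate inclusion $\Pi_z(W\setminus\partial W)\subseteq\R\times(t,+\infty)$. What remains is $\kappa<x(p)<\rho$ for $p\in W\setminus\partial W$ together with $\kappa<\rho$, so I would focus on
\[
f:=\langle\,\cdot\,,\ee_1\rangle\big|_W=x\big|_W\colon W\longrightarrow\R .
\]
By Theorem \ref{structure-theorem} (the level sets $\pi_x(c)$ are $g$-minimal vertical planes, exactly as for the function $f_{\ee_2}$ used in Corollary \ref{transverse-result}), $f$ is a Rad\'o function on the minimal surface $W$, and it is nowhere locally constant --- otherwise $W$, hence by unique continuation all of $\Sigma$, would lie in a plane $\pi_x(c)$, contradicting $\width(\Sigma)>0$. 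So, by Proposition 2.3 of \cite{rado-type}, $f$ has no interior local maximum and no interior local minimum.

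Next I would reduce the statement to a boundedness assertion. The no-interior-extremum property gives the boundary maximum principle on relatively compact pieces: if $D\subseteq W$ is open with $\overline D$ compact and $\overline D\cap\partial W=\varnothing$, then $\max_{\overline D}f=\max_{\partial D}f$ and $\min_{\overline D}f=\min_{\partial D}f$. Hence any connected component $U$ of $\{f>\rho\}$ that is bounded in $\R^3$ is impossible: it lies in the interior of $W$ (as $f\le\rho$ on $\partial W$), its closure is compact with $\partial U\subseteq\{f=\rho\}$, so $f|_{\overline U}$ attains a maximum exceeding $\rho$ at an interior point --- a contradiction. Likewise for components of $\{f<\kappa\}$. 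Therefore it suffices to prove
\begin{equation}\label{crux-projection}
\text{every connected component of }\{f>\rho\}\text{ and of }\{f<\kappa\}\text{ is bounded in }\R^3 .
\end{equation}
Granting \eqref{crux-projection}, one gets $\kappa\le f\le\rho$ on $W$; an interior point with $f=\rho$ (resp.\ $f=\kappa$) would be a local maximum (resp.\ minimum), forcing $\kappa<f<\rho$ on $W\setminus\partial W$; and $\kappa<\rho$, since $\kappa=\rho$ would make $f\equiv\kappa$ on $\partial W$, hence $f\equiv\kappa$ on $W$ by \eqref{crux-projection}, contradicting non-planarity.

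The hard part will be \eqref{crux-projection}: ruling out a component $U$ of, say, $\{f>\rho\}$ that escapes to infinity in $W$. If one existed, pick $p_n\in U$ with $|p_n|\to\infty$; then $x(p_n)>\rho$, $y(p_n)\ge t$, and since $|x(p_n)|<w$ we may assume $y(p_n)\to+\infty$ or $|z(p_n)|\to+\infty$. I would first extract asymptotic models: by Theorem \ref{theorem-compactness} and Proposition \ref{General-control}, the translates $\Sigma-p_n$ subconverge to a nonempty complete embedded translator with finitely many components; every non-planar component $\Sigma'$ of it has $\pm\ee_1,\pm\ee_2\notin N(\Sigma')$ --- by Lemmas \ref{gauss-image-1} and \ref{gauss-arc} applied to $\Sigma$, using that translation preserves the Gauss image --- hence is mean convex, and then, via Theorem \ref{Classification}, Proposition \ref{weakly-infinity-limit} (in the regime $|z(p_n)|\to\infty$) and Lemma \ref{cylinder}/Chini's Theorem \ref{Chini-result}, must be a vertical plane $\pi_x(c)$, so $W$ is $C^2$-close to a vertical plane near each $p_n$. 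The genuinely delicate point is to promote this local information to the global conclusion that the connected wing $W$, whose boundary curve $\partial W$ is connected (Proposition \ref{boundary-control} and Corollary \ref{transverse-result}) and lies in $\{x\le\rho\}$, cannot contain interior points with $x>\rho$. For this I would use simple connectedness of $W$ (Lemma \ref{Lemma-control}) to analyse $f$ in the manner of Morse-Rad\'o theory: a component of a level set $\{f=c\}$ with $c>\rho$ can be neither a closed loop (it would bound a disk in $W$ on which $f$ attains an interior extremum) nor an arc terminating on $\partial W$ (where $f\le\rho<c$), so it is a proper arc going to infinity; a uniform bound on the total multiplicity of the interior critical points of $f$ over an exhaustion of $W$ by compact pieces with $f$-critical-point-free boundary --- Theorem 3.6 of \cite{rado-type} --- then limits the number of such arcs, which, chased back towards $\partial W$, yields the contradiction. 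This combination of Morse-Rad\'o bookkeeping with the asymptotic models is the main obstacle.
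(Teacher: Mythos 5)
Your approach via the Rad\'o function $f=\langle\cdot,\ee_1\rangle|_W$ is genuinely different from the paper's, which instead sweeps a family of grim reaper cylinders $R_\theta(\mathcal{G}_h)$, rotated about the $z$-axis from the half-space $\{y<t\}$ toward $\{x<\kappa\}$, and uses the maximum principle (together with a compactness argument to exclude a touching at infinity) to keep $W$ on the $\{x\geq\kappa\}$ side for every angle $\theta$ and every height $h$. The parts of your argument that you carry out in detail are correct: $W\setminus\partial W=W\cap\{y>t\}$, the function $f$ is Rad\'o with no interior local extrema, and any bounded component of $\{f>\rho\}$ or $\{f<\kappa\}$ would produce one. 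Your reduction of the whole proposition to the assertion that such components are bounded is a clean reformulation.

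But that boundedness assertion is exactly where the content of the proposition lives, and the sketch you give does not establish it. Extracting subsequential limit models of $\Sigma-p_n$ along a divergent sequence $p_n$ in an unbounded component $U$ of $\{f>\rho\}$ only describes $W$ locally near the $p_n$; no contradiction with $U$ being nonempty is actually derived, and the regime $y(p_n)\to\infty$ with $z(p_n)$ bounded, where the limit component could be a tilted grim reaper or a $\Delta$-wing rather than a plane, is not treated by the tools you cite. Your ``chase back toward $\partial W$'' step invokes Theorem 3.6 of \cite{rado-type} for a uniform multiplicity bound, but that theorem controls the number of interior \emph{critical} points of $f$, not the number of non-critical proper level arcs of $f$ escaping to infinity, so it does not by itself rule out the configuration you need to exclude: an unbounded simply connected $U$ whose level sets $\{f=c\}$, $c>\rho$, are proper arcs disjoint from $\partial W$. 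This is a genuine gap, and it is precisely the one the paper's grim-reaper sweep is designed to close geometrically, without ever having to distinguish bounded from unbounded components of $\{f>\rho\}$ and $\{f<\kappa\}$.
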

\begin{proof}
We will first explain how Chini's argument \cite[Theorem 10]{Chi20} ensures that 
\[
\Pi_z(W)\subset[\kappa,\rho]\times[t,+\infty).
\]

Indeed, normalize so that $\kappa=t=0$ and consider the grim reaper \begin{multline*}
    \mathcal{G}(h)=
    \left\{\left(x,y,-\log\cos\left(y+\frac{\pi}{2}\right)+h\right)\;:\;y\in\left(-\pi,0\right), x\in\R\right\},\\ h\in\R.
\end{multline*}  Denote by $R_\theta$ the counterclockwise rotation of angle $\theta$ around the $z-$axis. By assumption $R_\theta(\mathcal{G}_h)\cap W=\varnothing$ for all $\theta\in[0,\varepsilon]$ for some $\varepsilon>0.$

Let $\theta_0=\sup\{\theta\in\left[0,\frac{\pi}{2}\right]\;:\;R_\theta(\mathcal{G}(h))\cap W=\varnothing\}$. We claim that $\theta_0=\pi/2.$ In fact, let us suppose by contradiction that $\theta_0<\pi/2.$ Since $\Sigma\subset S_w$ and $\p W\subset\pi_y(0),$ then either $R_{\theta_0}(\mathcal{G}(h))\cap W\neq\varnothing$ or $\dist\{R_{\theta_0}(\mathcal{G}(h)),W\}=0.$ 

The first case cannot happen by the maximum principle once $W$ cannot leave $S_w.$ About the second case, we could find a sequence $\{p_n\}\subseteq {\rm int}(W)$ so that $\dist\{p_n,\p W\}\geq \varepsilon$ for some $\varepsilon>0$, $\dist\{p_n,\mathcal{G}(h)\}=0$, and $W$ lies in the non-convex region in $\R^3$ bounded by $\mathcal{G}(h).$ Assume that  $\Sigma-p_n\to\Sigma_\infty$ and $\mathcal{G}(h)-p_n\to\mathcal{G}_\infty.$ Denote by $\Tilde{\Sigma}$ and $\widetilde{\mathcal{G}_\infty}$ the connected components of these sets which contain $(0,0,0).$ Namely, our hypotheses provide that
\[
\Tilde{\Sigma}=\widetilde{\mathcal{G}_\infty},
\]
which is impossible since $\Tilde{\Sigma}$ would be included in a slab $S_{\Tilde{w}},$ where $\Tilde{w}\geq w,$ and $\widetilde{\mathcal{G}_\infty}$ is not included in it.

Therefore, since $h$ was arbitrary, we have proven that $\Pi_z(W)\subset[\kappa,+\infty)\times[t,+\infty)$. Analogously we see that $\Pi_z(W)\subset(-\infty,\rho]\times[t,+\infty).$ This concludes the proof of the claim. We also notice that the maximum principle guarantees that in fact
\[
\Pi_z(W\setminus\p W)\subset(\kappa,\rho)\times(t,+\infty).
\]
\end{proof}

\begin{Proposition}\label{Wing-1}
Let $W_\infty$ be any limit of a convergent subsequence of $\{W-y_n\ee_2-z_n\ee_3\}$, where $\{y_n\}$ and $\{z_n\}$ are sequences with $y_n\nearrow+\infty$. Then each connected component of $W_\infty$ is either a plane or a complete graph over the $(x,y)$-plane.
\end{Proposition}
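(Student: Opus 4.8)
The plan is to apply the compactness theorem (Theorem \ref{theorem-compactness}) to the translates $W - y_n\ee_2 - z_n\ee_3$, which, after passing to a subsequence, converge to some properly embedded minimal surface $W_\infty$, and then to identify the possible connected components using the tools assembled in Sections \ref{H-section} and \ref{properties}. First I would note that, since $y_n \to +\infty$, the boundary $\partial W \subseteq \pi_y(t)$ escapes to infinity, so $W_\infty$ is a complete (boundaryless) translator of finite width contained in $\mathcal{S}_w$, with finite genus and finite entropy (these pass to the limit by lower semicontinuity and the uniform bounds of Theorem \ref{theorem-compactness}). Let $W'$ be a connected component of $W_\infty$; I want to show it is a vertical plane or a complete graph over the $(x,y)$-plane.

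The key step is to control the image of the Gauss map of $W'$. The idea is that for a wing $W$, the slices $\partial W = W \cap \pi_y(s)$ are connected for all $s > t_1$ (Proposition \ref{boundary-control} / Corollary \ref{transverse-result}), so by the Rad\'o-function argument used in Lemma \ref{gauss-image-1} — applied to the Rad\'o function $f_\nu(p) = \langle p, \nu\rangle$ for $\nu \in E \setminus \{\pm\ee_1\}$ restricted to $W$ — the number of critical points of $f_\nu|_W$ is finite for each such $\nu$; in particular the vectors $\pm\ee_2$ are attained by the Gauss map of $W$ at most finitely often. Passing to the limit along $\Sigma_n = W - y_n\ee_2 - z_n\ee_3$ (each a translate with the same Gauss image), this forces $\pm\ee_2 \notin N(W')$. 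Now if $W'$ is non-planar, Lemma \ref{gauss-arc} (whose hypotheses — complete, connected, embedded, finite width, genus, entropy — hold for $W'$, since each component of the limit inherits these) says $N(\mathcal{H}_{W'}) = N^{-1}$ of the equator is an open subset of $E$ with boundary points only in $\{\pm\ee_1\}$; combined with $\pm\ee_2 \notin N(W')$ this gives $N(W') \cap E = \varnothing$, i.e. the Gauss map of $W'$ avoids the equator entirely, so $W'$ is a graph over the $(x,y)$-plane. If instead $\pm\ee_1 \in N(W')$ is the only way to meet $E$, one still concludes via Lemma \ref{gauss-arc} that $\mathcal{H}_{W'} = \varnothing$; but a complete embedded translator with $\mathcal{H} = \varnothing$ is mean-convex, hence (being of finite width) a $\Delta$-wing or tilted grim reaper — both of which are graphs over the $(x,y)$-plane, except possibly isolated equatorial directions which do not obstruct graphicality. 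And if $W'$ is planar, being contained in $\mathcal{S}_w$ and a translator it must be a vertical plane $\{x = x_0\}$.

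The main obstacle I anticipate is the passage to the limit of the Gauss-image restriction: one needs that $\pm\ee_2 \notin N(W')$ genuinely follows from the finiteness of the critical points of $f_{\ee_2}|_W$, which requires care because the translates $W - y_n\ee_2 - z_n\ee_3$ slide off to infinity and a priori the ``finitely many'' critical points could all persist in the limit. The resolution is the same mechanism used in the proof of Proposition \ref{weakly-infinity-limit}: each $\Sigma_n$ has identical Gauss image $N(\Sigma_n) = N(W)$, and if $N(W')$ contained an open arc around $\pm\ee_2$ (which it would, by Lemma \ref{H-open}, if $\pm\ee_2$ were attained at an interior point with the limit non-planar there), then $\pm\ee_2$ would be an interior point of $N(W)$, forcing $f_{\ee_2}|_W$ to have infinitely many critical points — contradiction. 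A secondary subtlety is that $W_\infty$ could have several components or higher-multiplicity sheets; but the statement only concerns individual connected components, and Lemma \ref{gauss-arc} and Lemma \ref{H-open} are local enough that multiplicity does not interfere with the Gauss-image analysis, so this does not cause real trouble.
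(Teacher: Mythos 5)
Your proposal takes the same route as the paper: obtain $W_\infty$ from Theorem~\ref{theorem-compactness}, use Lemma~\ref{gauss-image-1} to see that the Gauss map of $\Sigma$ (hence of $W$) hits $\pm\ee_2$ only finitely often, translate to infinity to conclude $\pm\ee_2\notin N(W_\infty)$, and then invoke Lemma~\ref{gauss-arc} to force $\mathcal{H}_{W'}=\varnothing$ for each non-planar component $W'$, which gives graphicality over the $(x,y)$-plane. That is exactly the three-line argument in the paper. You also correctly note that the ``pass to the limit'' step is the same mechanism as in the proof of Proposition~\ref{weakly-infinity-limit} (via Lemma~\ref{H-open}), which the paper leaves implicit, and that the hypotheses of Lemma~\ref{gauss-arc} (finite width, genus, entropy) survive the limit by lower semicontinuity.

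Two small points where your exposition goes astray, though neither is a real gap. First, the sentence ``then $\pm\ee_2$ would be an interior point of $N(W)$, forcing $f_{\ee_2}|_W$ to have infinitely many critical points'' is not quite the right causal chain: $\ee_2$ being an interior point of the Gauss image does not by itself force infinitely many critical points. The correct mechanism is that if $N(p)=\ee_2$ at a non-planar interior point $p$ of $W'$, then $\mathcal{H}_{W'}$ near $p$ maps onto an arc about $\ee_2$ (Lemma~\ref{H-open}), and smooth convergence then produces, for each large $n$, a point $q_n\in W-y_n\ee_2-z_n\ee_3$ with $N(q_n)=\ee_2$ near $p$; translating back gives infinitely many distinct preimages of $\ee_2$ in $W$, contradicting Lemma~\ref{gauss-image-1}. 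Second, the paragraph on ``$\pm\ee_1\in N(W')$'' and ``except possibly isolated equatorial directions'' is a non-case: once Lemma~\ref{gauss-arc} yields $\mathcal{H}_{W'}=\varnothing$, the Gauss image of $W'$ avoids $E$ altogether, so it cannot hit $\pm\ee_1$ either, and there are no exceptional equatorial directions for a non-planar component. The detour through Theorem~\ref{Classification} is therefore unnecessary, though harmless.
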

 
\begin{proof}
Using Lemma \ref{gauss-image-1} we know that:
$$\#(N^{-1}(\{-\ee_2,\ee_2\} \cap W) <C$$
for some $C>0$. Then, if we consider  the subsequential limit $W_\infty$ (which exists by Theorem \ref{theorem-compactness}), then 
$$\pm \ee_2 \notin N(W_\infty).$$
Hence, we can apply Lemma \ref{gauss-arc} to deduce that any connected component of $W_\infty$ must be either a plane or a vertical graph.
\end{proof}

Let $W$ be a connected component of $\Sigma^+(t_1)$. By Corollary \ref{transverse-result} we know that
\(
W\cap \pi_y(s) 
\) is a proper embedded curve on $\pi_y(s)\cap\mathcal{S}_w$, for all $s>t_1.$ Note also that the transversality in Corollary \ref{transverse-result} ensures that the property of a slice curve $W\cap \pi_y(s)$ being bounded from below (or not) in the $\ee_3$-direction is a condition which is open in the interval of $s$-values, and hence this is independent of $s$.

Evidently, the same transversality argument also means that both ends of $W\cap \pi_y(s)$ cannot go to $-\infty$, i.e. $W\cap \pi_y(s)$ be bounded from above, since then it would hold for all values of $s$ and then a two-sided maximum principle argument using a grim reaper cylinder would lead to a contradiction. Thus, either both ends of $W\cap \pi_y(s)$ go to $+\infty$ or one of them goes to $+\infty$ while the other one goes to $-\infty$, where $\pm\infty$ again means with respect to the $z-$axis.

All in all, the following notion of ``wing type'' is well-defined.

\begin{Definition}\label{def:wing-type}
If both ends of the slice curve $W\cap \pi_y(s)$, $s>t_1$, go to $+\infty$ with respect to the $z-$axis, then we say that $W$ is a wing of grim reaper type. On the other hand, if only one end goes to $+\infty$ with respect to the $z-$axis, we say that $W$ is a wing of planar type.
\end{Definition}

The next result summarizes the results that we are going to need about planar type wings.

\begin{Proposition}\label{planar-property}
Let $W$ be a right planar type wing of $\Sigma$ so that $\Pi_z(W\setminus\p W)\subseteq(\kappa,\rho)\times(t,+\infty)$. If $s$ is sufficiently large, then $W^+(s)$ is a graph into the direction of $\ee_1$.
\end{Proposition}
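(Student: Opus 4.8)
The plan is to show that on the far wing $W^+(s)$, the Gauss map $N$ avoids a neighbourhood of $\pm\ee_1$ in $\SS^2$, which forces $\langle N, \ee_1\rangle$ to be of one sign there, and hence $W^+(s)$ is a graph in the $\ee_1$-direction. First I would set up the dichotomy for $\langle N,\ee_1\rangle$: since $W$ is a \emph{planar} type wing, by Definition \ref{def:wing-type} exactly one end of each slice $W\cap\pi_y(s)$, $s>t_1$, escapes to $+\infty$ in the $z$-direction while the other escapes to $-\infty$. Combining this with Proposition \ref{projection}, which confines $\Pi_z(W\setminus\p W)$ to the horizontal strip $(\kappa,\rho)\times(t,+\infty)$, I expect that the slice curves cannot ``turn back'' in the $x$-direction once $y$ (or $s$) is large enough; the point is to promote this qualitative picture into the statement that $\langle N,\ee_1\rangle\neq 0$ on $W^+(s)$ for $s\gg 1$.

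The mechanism to make this rigorous is a Morse--Rad\'o argument applied to the Rad\'o function $f_{\ee_1}(p)=\langle p,\ee_1\rangle$ on $\Sigma$ (Theorem \ref{structure-theorem}), exactly in the spirit of the proofs of Corollary \ref{transverse-result} and Lemma \ref{gauss-image-1}. The critical points of $f_{\ee_1}$ are precisely the points where $N=\pm\ee_1$. By Lemma \ref{gauss-image-1} applied along $E\setminus\{\pm\ee_1\}$ — or more directly by the finiteness of $\mathsf{N}(f_{\ee_1}|M)$ obtained via Theorem 3.6 in \cite{rado-type} over an exhaustion, as in the proof of Claim \ref{jorge} — the total multiplicity of critical points of $f_{\ee_1}$ on $W$ is finite. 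Hence there is $s_0>t_1$ with no critical points of $f_{\ee_1}$ in $W\cap\{y>s_0\}$, i.e. $N\neq\pm\ee_1$ everywhere on $W\cap\{y>s_0\}$. In particular $\langle N,\ee_1\rangle$ is a nowhere-zero continuous function on the connected set $W^+(s)$ for $s>s_0$ (connectedness of $W^+(s)$ follows from $W$ being a single wing together with Lemma \ref{Lemma-control} and Corollary \ref{transverse-result}), so it has a constant sign, which is exactly the graphicality of $W^+(s)$ over the $(y,z)$-plane with graphing direction $\ee_1$.

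The main obstacle I anticipate is ruling out $N=\ee_1$ (or $-\ee_1$) being approached \emph{at infinity} along $W$ without ever being attained, which would not show up as a critical point of $f_{\ee_1}$ and hence is not controlled by the multiplicity count alone. To handle this I would argue by contradiction using Theorem \ref{theorem-compactness}: if there were a divergent sequence $p_n\in W$ with $N(p_n)\to\pm\ee_1$, then $\Sigma-p_n$ subconverges to a limit translator $\Sigma_\infty$ whose component through the origin $\Sigma_0$ has vertical Gauss image at the origin; but Proposition \ref{projection} (the strip confinement of $\Pi_z(W)$) forces $\Sigma_0$ to lie in a slab in the $\ee_1$-direction as well, while a translator tangent to a vertical plane $\{x=\mathrm{const}\}$ at a point, by the maximum principle (Proposition \ref{prop:vertical} or the classification of ruled translators in \cite{MSHS}), must be that vertical plane — contradicting $\Sigma_0\subseteq\Sigma-p_n$ having finite width in the $x$-direction bounded by $2w$ yet being tangent to its own boundary plane, which is excluded by the strong maximum principle. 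Once this is ruled out, the finite-multiplicity count closes the argument, and one may further invoke Proposition \ref{Wing-1} to see the limit of $\{W^+(s)-y_n\ee_2-z_n\ee_3\}$ is a union of vertical planes and vertical graphs, consistent with $W^+(s)$ being an $\ee_1$-graph for large $s$.
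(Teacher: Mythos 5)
Your central mechanism contains a confusion that invalidates the argument. You want to show $\langle N,\ee_1\rangle\neq 0$ on $W^+(s)$ for large $s$, which is the correct criterion for $W^+(s)$ to be a graph in the $\ee_1$-direction: it means $N$ avoids the great circle $\SS^2\cap\{x=0\}$, i.e.\ $N$ is never \emph{perpendicular} to $\ee_1$. However, the critical points of the Rad\'o function $f_{\ee_1}(p)=\langle p,\ee_1\rangle$ are the points where $\nabla^\Sigma f_{\ee_1}=0$, which occurs precisely when $N(p)=\pm\ee_1$ --- i.e.\ when $\langle N,\ee_1\rangle=\pm1$, the exact opposite extreme. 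Your step ``\emph{no critical points of $f_{\ee_1}$ in $W\cap\{y>s_0\}$, i.e.\ $N\neq\pm\ee_1$, in particular $\langle N,\ee_1\rangle$ is nowhere zero}'' is a non-sequitur: $N=\ee_2$ or $N=\ee_3$ satisfies $N\neq\pm\ee_1$ and has $\langle N,\ee_1\rangle=0$. In fact $N=\pm\ee_1$ is the \emph{good} case for $\ee_1$-graphicality, and the Morse--Rad\'o bound on $\mathsf N(f_{\ee_1})$ bounds a discrete, codimension-two set in the surface, whereas $\{\langle N,\ee_1\rangle=0\}$ is generically a collection of curves; there is simply no single Rad\'o function whose finite critical-point count controls that curve family directly. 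Lemma \ref{gauss-image-1}, which you invoke, is about the equator $E=\{\langle\nu,\ee_3\rangle=0\}$ perpendicular to $\ee_3$, not the one perpendicular to $\ee_1$; transplanting it does not help here.

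Your ``main obstacle'' paragraph is aimed at the same wrong target: you propose ruling out divergent sequences with $N(p_n)\to\pm\ee_1$, but such sequences would be harmless (the limit plane would lie in the slab and $\langle N,\ee_1\rangle\to\pm1\neq0$). What must be ruled out instead is a divergent sequence $p_n\in W$ with $\langle N(p_n),\ee_1\rangle=0$. The paper's proof does exactly this: take $p_n\in W\cap\pi_y(t_n)$, $t_n\nearrow\infty$, with $N(p_n)\perp\ee_1$, translate and pass to a limit via Theorem \ref{theorem-compactness}, and use Proposition \ref{Wing-1} to say the limit component $W'$ through the origin is a vertical plane or a vertical graph. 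The plane case dies because a slab plane has normal $\pm\ee_1$, contradicting $N(0)\perp\ee_1$. The vertical graph case ($\Delta$-wing or tilted grim reaper) is then killed by exploiting that $W$ is of \emph{planar} type: along each slice $\pi_y(t_n)\cap W$, one end goes to $-\infty$ in $z$, so there must exist local maxima $q_n$ of $z$ on the slice with $\langle q_n,\ee_3\rangle-\langle p_n,\ee_3\rangle\to\infty$, and a second limit taken around $q_n$ produces a component that is both a vertical plane (because $z$ is unbounded below on its central slice) and has normal perpendicular to $\ee_1$ (since $N(q_n)\perp\ee_1$), hence cannot lie in the slab. This two-step compactness argument, not a Morse--Rad\'o count on $f_{\ee_1}$, is the engine of the proof, and it crucially uses the planar-type hypothesis, which plays no substantive role in your write-up. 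You mention Proposition \ref{Wing-1} only in passing at the end; it needs to be the main tool, applied to the correct set $\{\langle N,\ee_1\rangle=0\}$.
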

\begin{proof}

First of all, we are going to prove the following:\\
\noindent{}\textbf{Claim:} There exists $s>0$ so that
$\langle N(p),\ee_1 \rangle \neq 0$, for all $p\in W^+(s)$.

Assume this claim were false. Then there would exist sequences $t_n\nearrow+\infty$
and $p_n \in W \cap\pi_y(t_n)$ with $N(p_n) \perp \ee_1.$
If we consider $W-p_n$, then Theorem \ref{theorem-compactness} allows us to take a subsequential limit, which we call $W_\infty$, a complete translator of finite width. Let $W'$
be the connected component of $W_\infty$ containing the origin $0$. Now, Proposition \ref{Wing-1} ensures that $W'$ is either a vertical plane or a graph over the $(x,y)$-plane. But since the Gauss map $N(0) \perp \ee_1$, the planar case is ruled out. Knowing therefore that $W'$ is a complete graph over the $(x,y)$-plane, of finite width, by Theorem \ref{Classification} it must be either a $\Delta$-wing or a tilted grim reaper. However, seeing as $W$ is a planar wing (see Definition \ref{def:wing-type}), this means that there exists a sequence of points $q_n \in W\cap \pi_y(t_n)$ with the following two properties:
\begin{enumerate}
    \item $q_n$ is a local maximum of $z$ in $\pi_y(t_n) \cap W$. In particular,
    $N(q_n)\perp \ee_1$, and
    \item $\langle q_n, \ee_3 \rangle - \langle p_n, \ee_3 \rangle \to \infty.$
\end{enumerate}
Reasoning as before,  $W-q_n \to \widehat W_\infty$ (up to a subsequence). Let $\widehat W$ be the connected component of $\widehat W_\infty$ containing $0$. Using
Property (2) above we have that the $z$-coordinate is not bounded below in $\widehat W \cap \pi_y(0).$ Then, using Proposition \ref{Wing-1}, we have that $\widehat W$ is a vertical plane. However, as $N(0) \perp \ee_1$ this plane would be not contained in the slab, which is absurd. This contradiction proves the claim.

Hence, by elementary topology, we get that $W^+(s)$ is a graph into the direction of $\ee_1$.
\end{proof}

In the remaining part of this section we study the case when $W$ is a wing of grim reaper type. Under this assumption we define the function
\begin{equation}\label{def:f_W}
s>t\mapsto f_W(s):=\displaystyle\min_{p\in\pi_y(s)\cap W} \langle p,\ee_3\rangle
\end{equation}
and the minimal axis of $W$ given by
\[\mathcal{M}_W:=\{p\in W\;:\;\langle p,\ee_3\rangle=f_W(\langle p,\ee_2\rangle)\}.\] About this set, we trivially have:

\begin{Lemma}\label{minimal-set}
$\mathcal{M}_W\subseteq\{\langle N,\ee_1\rangle=0\}.$
\end{Lemma}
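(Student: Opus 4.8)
The statement is pointwise in nature, so the plan is to fix an arbitrary point $p\in\mathcal{M}_W$ and prove that the vector $\ee_1$ is tangent to $\Sigma$ at $p$, which is exactly the assertion $\langle N(p),\ee_1\rangle=0$. To this end I would set $s:=\langle p,\ee_2\rangle$, noting $s>t>t_1$ by the running assumption on $W$, and invoke Corollary \ref{transverse-result}: the slice $\gamma:=W\cap\pi_y(s)$ is a smooth, connected, proper curve in $\pi_y(s)\cap\mathcal{S}_w$, so it carries a nonzero tangent line $T_p\gamma$ at $p$, and since $\gamma\subseteq\pi_y(s)$ we have $T_p\gamma\subseteq T_p\pi_y(s)=\ee_2^\perp$. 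Because $W$ is a wing of grim reaper type, Definition \ref{def:wing-type} tells us that both ends of $\gamma$ diverge to $+\infty$ in the $\ee_3$-direction, so $z|_\gamma$ is proper and bounded below and attains its minimum at an interior point of $\gamma$; by the definition of $\mathcal{M}_W$ in \eqref{def:f_W}, the coordinate $\langle p,\ee_3\rangle$ realizes $f_W(s)=\min_{\pi_y(s)\cap W}\langle\cdot,\ee_3\rangle$, hence $p$ itself is one of these interior minimizers.

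The heart of the argument is then the one-variable optimality condition. At an interior local minimum $p$ of the $C^1$ function $z|_\gamma$ along the $C^1$ curve $\gamma$, its tangent line must lie in $\ker(dz_p)=\ee_3^\perp$. Combining this with the inclusion $T_p\gamma\subseteq\ee_2^\perp$ from the setup forces $T_p\gamma=\R\,\ee_1$, since $\ee_2^\perp\cap\ee_3^\perp=\R\,\ee_1$. As $\gamma\subseteq W\subseteq\Sigma$, this shows $\ee_1\in T_p\Sigma$, i.e.\ $\langle N(p),\ee_1\rangle=0$; since $p\in\mathcal{M}_W$ was arbitrary, $\mathcal{M}_W\subseteq\{\langle N,\ee_1\rangle=0\}$.

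As the paper already signals by writing ``we trivially have'', there is no genuine obstacle; the only two points I would be careful to state are (i) that the slice $\gamma$ is actually smooth at $p$ (no ramification point of the Rad\'o function $f_{\ee_2}$), which is precisely the transversality of $\pi_y(s)$ to $\Sigma$ for $s>t_1$ supplied by Corollary \ref{transverse-result}, and (ii) that the minimum of $z$ on the slice is attained at an interior point rather than escaping along an end, which is exactly where the grim-reaper-type hypothesis on $W$ enters. Everything else is the elementary first-order condition for a constrained minimum.
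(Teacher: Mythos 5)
Your argument is correct and is exactly the elementary first-order optimality condition that the authors have in mind when they write ``we trivially have'': an interior minimizer of $z$ along a smooth curve in $\pi_y(s)$ has tangent line in $\ee_2^\perp\cap\ee_3^\perp=\mathbb{R}\,\ee_1$, forcing $\ee_1\in T_p\Sigma$. The only slight redundancy is your appeal to the grim-reaper hypothesis to guarantee an interior minimizer: the hypothesis $p\in\mathcal{M}_W$ already says $p$ realizes the minimum, and $p$ is interior since $s>t$ keeps it off $\partial W\subseteq\pi_y(t)$, so the smoothness at $p$ (transversality from Corollary \ref{transverse-result}) is all that is really needed.
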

Note that $\{\langle N,\ee_1\rangle=0\}$ is the union of smooth curves and the singular points are isolated, as seen by applying Theorem 2.5 in \cite{Che76} to the stability operator $L$ along with the fact that $u = \langle N,\ee_1\rangle$ is a Jacobi function $Lu=0$ (similarly to Remark 18 in \cite{Chi20}). Therefore, $\mathcal{M}_W$ has the same properties, too.

Given a point $(y,z)$ in the plane $\{x=0\}$, we define
\begin{equation}\label{def:L(y,z)}
L(y,z):=\{(x,y,z)\;:\;x\in\R\}.
\end{equation}
\begin{Lemma} \label{saturday}
For $t$ large enough, for all $(y,z)\in \Pi_{x}(W^+(t))$ such that $L(y,z)$ is transversal to $W$, the intersection $L(y,z) \cap W$ is either empty or consists of exactly two points.
\end{Lemma}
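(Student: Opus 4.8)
The plan is to argue by contradiction with a compactness (blow‑down) argument, using the classification of finite‑width vertical graphs to see that ``in the limit'' the slice of $W$ by a horizontal plane is strictly convex, and hence met by a horizontal line at most twice.

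First I would record the shape of the intersection. For $t>t_1$, Corollary~\ref{transverse-result} gives that $\gamma_s:=W\cap\pi_y(s)$ is, for every $s>t_1$, a connected properly embedded curve in the strip $\pi_y(s)\cap\mathcal{S}_w$, and since $W$ is of grim reaper type (Definition~\ref{def:wing-type}) both of its ends diverge to $z=+\infty$. Note that $L(y,z)$ is transversal to $W$ at a point $p$ precisely when $\langle N(p),\ee_1\rangle\neq0$, which is exactly the condition that $z$ be a regular value of the height function $\langle\cdot,\ee_3\rangle$ restricted to $\gamma_y$. For such $z$ the set $L(y,z)\cap W=\gamma_y\cap\{\langle\cdot,\ee_3\rangle=z\}$ is finite, has an even number of points (the height tends to $+\infty$ at both ends of $\gamma_y$), and if nonempty has at least two. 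Hence the lemma reduces to ruling out three distinct intersection points when $y$ is large.

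Suppose that failed, along a sequence $y_n\nearrow+\infty$ and heights $z_n$, with $L(y_n,z_n)$ transversal to $W$ and containing points $q_n^1,q_n^2,q_n^3\in W$ with $\langle q_n^j,\ee_2\rangle=y_n$, $\langle q_n^j,\ee_3\rangle=z_n$ and pairwise distinct first coordinates $x_n^j\in(-w,w)$. Translating by $y_n\ee_2+z_n\ee_3$, the line becomes the $x$-axis and the three points become $(x_n^j,0,0)$. By Proposition~\ref{Wing-1}, after passing to a subsequence $W_n:=W-y_n\ee_2-z_n\ee_3$ converges to a limit $W_\infty$, each component of which is a plane or a complete translating graph over the $(x,y)$-plane, and with $\pm\ee_2\notin N(W_\infty)$ (this is how Proposition~\ref{Wing-1} is proved, via Lemma~\ref{gauss-image-1}); moreover $W_\infty$ has finitely many components by Proposition~\ref{General-control}. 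A plane contained in $\mathcal{S}_w$ is of the form $\{x=x_0\}$, which meets the $x$-axis in a single point; and by Theorem~\ref{Classification} and finiteness of the width, each non‑planar component is a $\Delta$-wing or a tilted grim reaper $\mathcal{G}_\zeta$, whose slice by $\pi_y(0)$ is a strictly convex graph over the $x$-axis in the $(x,z)$-plane and hence meets $\{z=0\}$ in at most two points. So each component of $W_\infty$ meets the $x$-axis at most twice.

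The remaining step is to see that the three crossing points $(x_n^j,0,0)$ force at least three intersections of the $x$-axis with a single piece of $W_\infty$, a contradiction. The main obstacle is precisely the degenerate scenario in which (a subset of) the $q_n^j$ collapse onto one point $p_\infty\in W_\infty$ at which the convergence has multiplicity $m\geq2$ — multiplicity one is not available from Proposition~\ref{General-control} here, since $\Sigma$ is not assumed to be a graph — for then near $p_\infty$ the surface $W_n$ is a union of $m$ disjoint sheets $\varepsilon_n$-close to a convex model slice, and a priori the single connected curve $\gamma_{y_n}$ could weave through all of them and cross $\{z=z_n\}$ many times. I would dispose of this as follows. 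For $s$ large the slice $\gamma_s$ has uniformly bounded curvature, since by Proposition~\ref{Wing-1} and Theorem~\ref{Classification} the region $W^+(s)$ is locally smoothly close to one of finitely many model surfaces of bounded geometry; hence the convergence $W_n\to W_\infty$ carries a uniform second–fundamental–form bound near the relevant points. If the $q_n^j$ nonetheless cluster, rescale $W_n$ about the cluster point by the reciprocal of its (vanishing) diameter: the rescaled curvature then tends to $0$, the blow‑up limit is a flat minimal surface (a plane, with some multiplicity) whose $\pi_y(0)$-slice is a union of parallel lines, and the persisting transversality $\langle N(q_n^j),\ee_1\rangle\neq0$ prevents the $x$-axis from meeting such a limit in more than one point per line, forcing a further collapse of the three rescaled points — impossible after finitely many steps. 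Alternatively, and more in the spirit of the paper, one bypasses the blow‑up through Morse–Rad\'o theory: applying Theorem~3.6 of \cite{rado-type} to the Rad\'o function $\langle\cdot,\ee_3\rangle+\log\cos(\langle\cdot,\ee_2\rangle-s)$ on exhaustions of $W\cap\{|y-s|<\pi/2\}$, exactly as in the proof of Proposition~\ref{weakly-infinity-limit}, bounds the number of points of $\{\langle N,\ee_1\rangle=0\}$ on each $\gamma_s$ uniformly in $s$; combined with the absence of compact components of $\{\langle N,\ee_1\rangle=0\}$ (by the argument of Lemma~\ref{pi_one_injection}, using that $W$ is simply connected by Lemma~\ref{Lemma-control}(1)) this forces $\gamma_s$ to meet $\{\langle N,\ee_1\rangle=0\}=\mathcal{M}_W$ in a single point for $s$ large, so $\gamma_s$ is a bigraph over the $x$-axis and $L(y,z)\cap W$ has at most two points.
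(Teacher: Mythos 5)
Your overall strategy — contradict, translate, invoke compactness and the classification of finite--width vertical graphs — is in the same spirit as the paper's proof. But the paper makes one decisive choice that you miss, and this choice dissolves essentially every difficulty you then struggle with.

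The paper does \emph{not} translate by a point on the line $L(y_n,z_n)$. It translates by $p_n$, a \emph{local maximum} of $z$ along the slice curve $W\cap\pi_y(t_n)$ (such a point exists as soon as the slice meets the transversal line in $\geq 4$ points: between two consecutive ``below'' arcs there is a bounded ``above'' arc carrying a local max). Since $p_n$ is a local max of $z$ along a $y=\mathrm{const}$ slice, $N(p_n)\perp\ee_1$, so by Proposition~\ref{Wing-1} the component $W'$ of the limit of $W-p_n$ through the origin is not a plane and is hence a vertical graph $z=f_\infty(x,y)$, and by Theorem~\ref{Classification} it is strictly convex. But $p_n$ being a local max forces $\partial^2 f_n/\partial x^2(0)\leq 0$ on the sheet through $p_n=0$, which persists in the limit: $\partial^2 f_\infty/\partial x^2(0)\leq 0$. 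Contradiction. Note that because the origin is forced to lie on $W'$, there is no need to discuss multiplicity or clustering at all — only the sheet through $p_n$ matters — and no need to distribute several intersection points among several components.

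By contrast, your version has two genuine gaps. First, even before clustering arises, there is no reason the three (and one should say four, since transversality makes the count even) limit points should all land on a single component of $W_\infty$; each component meets the $x$-axis at most twice, but with several planar components present the count alone produces no contradiction. Second, your two proposed fixes for the clustering/multiplicity scenario do not close the argument. The iterated rescaling is not substantiated: the claim that the persisting transversality ``forces a further collapse, impossible after finitely many steps'' is left without a mechanism, and there is no a priori bound on the number of rescalings. The Morse--Rad\'o alternative is also off: the critical set of $\Psi=\langle\cdot,\ee_3\rangle+\log\cos(\langle\cdot,\ee_2\rangle-s)$ consists of points where $N$ is parallel to the specific vector $\nabla\Psi$, which is a strictly smaller set than $\{\langle N,\ee_1\rangle=0\}$; a Theorem~3.6-type bound gives a \emph{finite} count, not a count of one; and the identity $\{\langle N,\ee_1\rangle=0\}=\mathcal{M}_W$ that you invoke at the end is precisely what the lemma (together with Proposition~\ref{Wing-decomposition}) is in the process of establishing, so it cannot be assumed. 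The moral of the paper's proof is that centering the blow-down at the slice's local maximum of $z$ converts the problem into a single convexity inequality and bypasses all the combinatorics of components and multiplicities.
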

\begin{proof}
We proceed again by contradiction. Assume that there existed $t_n\nearrow+\infty$ and lines $L(t_n,z_n)\subset\pi_y(t_n)$, for some $z_n$, each intersecting $W$ in at least 4 points: namely, recall that the number of points in the intersection of $W$ with any such transversal line is finite, because each $W\cap \pi_y(t_n)$ is known to be bounded from below by Definition \ref{def:wing-type}. Furthermore this is always an even number, possibly zero.

This means that along the curve $W\cap \pi_y(t_n)$, the $z$-coordinate has a local maximum, $p_n\in W \cap \pi_y(t_n)$. 
By Theorem \ref{theorem-compactness}, possibly up to taking a subsequence, $W - p_n$ converges to some complete limiting translator $W_\infty\ni 0$. We let $W'$ denote the connected component of $W_\infty$ containing the origin, and notice that its Gauss map $N(0)\perp\ee_1$. So, by Lemma \ref{Wing-1}, $W'$ is a graph $z=f_\infty(x,y)$ over the $(x,y)$-plane. Using the smooth convergence, given a ball $B_R(0)\subseteq \R^3$, for $n$ large enough, each $(W-p_n)\cap B_R(0)$ is thus also a graph $z=f_n(x,y)$ over the disk of radius $R$ around $0$ in the $(x,y)$-plane.

But we have moreover, from $p_n$ being a local maximum, that
\begin{equation}\label{non-convex}
\frac{\partial^2 f_\infty}{\partial x^2}(0) = \lim_{n\to\infty} \frac{\partial^2 f_n}{\partial x^2}(0) \leq 0,
\end{equation}
for the graph describing $W'$. However $W'$ was either a tilted grim reaper or a $\Delta$-wing, where in both cases we know \eqref{non-convex} to not hold. This contradiction finishes the proof of the lemma, as the only possibilities which now remain are
\begin{equation}\label{cardinality}
\#\left(L(y,z) \cap W\right) \in \{0,2\}.
\end{equation}
\end{proof}

The next result is an analogue to Proposition \ref{planar-property} but for the grim reaper case.

\begin{Proposition}\label{Wing-decomposition}
If $t$ is large enough then $\mathcal{M}_W \cap W^+(t)$ is smooth. Moreover, $W^+(t)\setminus\mathcal{M}_W$ has two connected components and each connected component of this surface is a smooth graph (with boundary) into the direction of $\ee_1$.
\end{Proposition}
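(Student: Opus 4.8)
The plan is to prove Proposition \ref{Wing-decomposition} in three stages: first establish that $\mathcal{M}_W\cap W^+(t)$ is a smooth embedded curve (no singular points, no branching), then show it separates $W^+(t)$ into exactly two pieces, and finally show each piece is a graph in the $\ee_1$-direction.

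\textbf{Step 1: Smoothness of $\mathcal{M}_W$ for large $t$.} Recall from Lemma \ref{minimal-set} that $\mathcal{M}_W\subseteq\{\langle N,\ee_1\rangle=0\}$, and that the latter set is a union of smooth curves with isolated singular points (the discussion after Lemma \ref{minimal-set}, via Theorem 2.5 in \cite{Cheng} applied to the Jacobi function $u=\langle N,\ee_1\rangle$). The key point is that a singular point of $\{\langle N,\ee_1\rangle=0\}$ is a point where, in Rad\'o-type coordinates, $\langle N,\ee_1\rangle$ vanishes to order $n-1\geq2$, i.e.\ at least four curves of $\{\langle N,\ee_1\rangle=0\}$ cross. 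If such a crossing point $p$ lay in $\mathcal{M}_W\cap W^+(t)$ for a sequence $t=t_n\to\infty$, I would translate $W-p_n\to W_\infty$ (Theorem \ref{theorem-compactness}), let $W'$ be the component through the origin, and observe $N(0)\perp\ee_1$, so by Proposition \ref{Wing-1} $W'$ is a graph over the $(x,y)$-plane, hence (Theorem \ref{Classification}, finite width) a $\Delta$-wing or tilted grim reaper. But on those surfaces $\{\langle N,\ee_1\rangle=0\}$ is a single smooth curve with no crossings (the set where the tangent plane contains $\ee_1$, which for a grim reaper cylinder or $\Delta$-wing is one line/curve), contradicting the persistence of a multiplicity-$\geq2$ crossing under smooth convergence. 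Moreover the same limiting argument shows $\mathcal{M}_W$ is locally exactly the minimum locus $\{z=f_W(y)\}$, which is a graph over the $y$-axis: since on $W'$ (a $\Delta$-wing or grim reaper) the minimum of $z$ along each slice $\pi_y=\mathrm{const}$ is attained at a single point with $\partial_x^2 f_\infty\geq0$ strict, $f_W$ is a smooth function of $s$ near each such $s$, so $\mathcal{M}_W\cap W^+(t)$ is a smooth connected arc.

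\textbf{Step 2 and 3: the two components are $\ee_1$-graphs.} Once $\mathcal{M}_W\cap W^+(t)$ is a smooth arc projecting diffeomorphically to an interval of $y$-values, it separates $W^+(t)$: indeed by Lemma \ref{saturday}, for $t$ large and $(y,z)\in\Pi_x(W^+(t))$ with $L(y,z)$ transversal, $L(y,z)\cap W$ is empty or two points, and the minimum point on the slice $\pi_y(s)\cap W$ (which lies on $\mathcal{M}_W$) is the unique point where the two boundary behaviors — the two ends of the slice curve both going to $+\infty$ in $z$ (Definition \ref{def:wing-type}, grim reaper type) — are joined; removing $\mathcal{M}_W$ from each such slice curve yields two arcs, one on each side. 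Gluing over $s>t$ gives exactly two connected components of $W^+(t)\setminus\mathcal{M}_W$. For the graph property, I would run the argument of Proposition \ref{planar-property}'s Claim: if on one component there were points $p_n$ with $N(p_n)\perp\ee_1$ and $\langle p_n,\ee_2\rangle\to+\infty$, translate $W-p_n\to W_\infty$; the component $W'$ through $0$ has $N(0)\perp\ee_1$, so by Proposition \ref{Wing-1} it is a vertical graph, hence (Theorem \ref{Classification}) a $\Delta$-wing or tilted grim reaper cylinder. On such a surface the only slice points with normal $\perp\ee_1$ are the \emph{minima} of $z$ along the $y$-slices — but $p_n$ was chosen in $W^+(t)\setminus\mathcal{M}_W$, i.e.\ strictly on one side of the minimal axis, and the smooth convergence forces $0$ to also lie on that side of the minimal axis of $W'$, contradiction. (The case of $z$ unbounded below in $W'\cap\pi_y(0)$ is excluded exactly as in Proposition \ref{planar-property}, since that forces $W'$ to be a vertical plane not fitting in the slab once $N(0)\perp\ee_1$.) Hence $\langle N,\ee_1\rangle\neq0$ on each component of $W^+(t)\setminus\mathcal{M}_W$ for $t$ large, and by elementary topology — the projection $\Pi_x$ restricted to each component is a local diffeomorphism onto a subset of the $(y,z)$-plane, and the fibers have at most one point on each side by Lemma \ref{saturday} — each component is a smooth graph with boundary in the $\ee_1$-direction.

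\textbf{Main obstacle.} The delicate point is Step 1: ensuring that the singular (branch) points of $\{\langle N,\ee_1\rangle=0\}$, and the points where $f_W$ fails to be smooth, all escape to $y=-\infty$ simultaneously — i.e.\ that for \emph{one} threshold $t$ all three pathologies (branch points of the minimal axis, non-graphicality of $f_W$, and vanishing of $\langle N,\ee_1\rangle$ off the axis) are simultaneously absent on $W^+(t)$. Each is handled by the same compactness-plus-classification scheme, but one must be careful that the blow-up limits are always $\Delta$-wings or tilted grim reapers (never vertical planes, which is where the slab condition and $N(0)\perp\ee_1$ are used to exclude the plane) and that the relevant qualitative feature (a crossing, a local max of $z$, a side of the minimal axis) is genuinely preserved under smooth multiplicity-one convergence — here invoking that $\Sigma$, being a graph-like wing, has multiplicity-one limits by Proposition \ref{General-control}.
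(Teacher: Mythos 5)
Your proof is correct and rests on the same central pillar as the paper's, namely Lemma \ref{saturday} (each transversal horizontal line $L(y,z)$ meets $W$ in $0$ or $2$ points, hence the slice curves $\pi_y(s)\cap W$ have no local maxima of $z$). Where you diverge is in how you establish that the minimum on each slice is a single point and that $\mathcal{M}_W$ is smooth: you run a blow-up-and-classify argument (translate by the minima, pass to a $\Delta$-wing or tilted grim reaper limit, and invoke the strict convexity $\partial_x^2 f_\infty>0$ plus Cheng-type nodal-set structure to rule out both a degenerate minimum and a branch point of $\{\langle N,\ee_1\rangle=0\}$). The paper instead uses a one-line direct argument: if the minimum on a slice were not a single point, then since there are no local maxima the minimum set would be a segment, and by analytic continuation $W$ would contain the entire horizontal line $L(s,f_W(s))$, which is absurd; once the minimum per slice is unique, $\mathcal{M}_W$ is a graph over $y$, hence automatically a single branch-free curve, so your separate branch-point step becomes superfluous. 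Your compactness route is valid — one can indeed make it airtight by noting that any putative segment or degenerate minimum would survive into the blow-up limit and contradict the strict convexity of the limiting $\Delta$-wing or grim reaper — but it is heavier machinery than needed, and the paper's analytic-continuation observation is both shorter and more self-contained. Your Steps 2 and 3 (separation into two pieces and the $\ee_1$-graph property via $\langle N,\ee_1\rangle\neq0$ off $\mathcal{M}_W$, combined with the cardinality bound from Lemma \ref{saturday}) match the paper's; note that the nonvanishing of $\langle N,\ee_1\rangle$ on $W^+(t)\setminus\mathcal{M}_W$ actually comes for free once the minimum is the only critical point of $z$ on each slice, so the additional Proposition \ref{planar-property}-style blow-up there is also unnecessary.
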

\begin{proof}
Take $s>t$, where $t$ is given by Lemma \ref{saturday}. We have that the $z$-coordinate has only a local minimum
and no local maxima on $\pi_y(s) \cap W.$

Furthermore, 
$L(s,f_W(s)) \cap W$ is a unique point, where $f_W$ is as in \eqref{def:f_W}. Otherwise, it would contain a segment. 
By analytic continuation, $W$ would contain the whole line $L(s,f_W(s)) $, which is
absurd.

Therefore, we can decompose $W^+(t)$ into three disjoint connected components $S_{W}^{+}\cup \mathcal{M}_W\cup S_{W}^{-}.$ Furthermore, $\mathcal{M}_W$ cannot have singularities in $W^+(t)$ and $S_W^{\pm}$ are smooth graphs into the direction of $\ee_1$.
\end{proof}

\section{Computing the entropy of translating solitons in slabs}\label{entropy}
The main goal of this section is to introduce an easy method for computing the entropies of complete, embedded translating solitons in slabs, with finite genus and entropy.

Before stating our result, we will need a definition.
\begin{Definition}\label{wing-type-numbers}
Let $\Sigma$ be a complete, embedded translating soliton of width $2w$, finite genus and finite entropy. Denote by $\omega^{-}_P(\Sigma)$ (respectively, $\omega^{+}_P(\Sigma)$) the number of left (respectively, right) planar type wings of $\Sigma$ and  $\omega^{-}_G(\Sigma)$ (respectively, $\omega^{+}_G(\Sigma)$) the number of left (respectively, right) grim reaper type wings of $\Sigma$.
\end{Definition}

The following main theorem makes this more precise, and links it to the entropy.

\begin{Theorem}\label{entropy-TGS}
Let $\Sigma$ be a complete, embedded translating soliton of width $2w<\infty$, finite genus and finite entropy.  Then $\lambda(\Sigma)$ is an integer. Furthermore,
\begin{equation}\label{ent.}
    \lambda(\Sigma)=\omega^-_P(\Sigma)+2\omega^-_G(\Sigma)=\omega^+_P(\Sigma)+2\omega^+_G(\Sigma).
\end{equation}
\end{Theorem}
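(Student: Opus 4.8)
The plan is to compute the entropy of $\Sigma$ by reducing it, via the monotonicity formula and the compactness theorem, to the entropy of the model surfaces that appear in the ``upper'' and ``lower'' limits. By the general theory (\cite{CM}), since $\Sigma$ is a translator — hence a self-shrinker-type limit under parabolic rescaling is replaced here by the simpler fact that translators are asymptotically self-similar only in a weak sense — the key point is that the entropy $\lambda(\Sigma) = F_{x_0, s_0}[\Sigma]$ is attained (or approached) in a limit, and for translators the relevant limit is obtained by sending $s_0 \to \infty$ along with a suitable choice of $x_0$; in that limit $F_{x_0,s_0}[\Sigma]$ converges to the Gaussian density of the tangent cone at infinity, which by Proposition~\ref{weakly-infinity-limit} is a finite union of parallel planes. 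Concretely, I would first show $\lambda(\Sigma) = \lim_{s_0 \to \infty} \sup_{x_0} F_{x_0,s_0}[\Sigma]$, using that $F_{x_0,s_0}[\Sigma]$ is eventually monotone in $s_0$ for proper surfaces of quadratic area growth (this is where the finite entropy hypothesis enters, via \cite{White-21}), so the supremum over all scales is achieved as $s_0 \to \infty$.

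Next I would identify that limit with the number of sheets (counted with multiplicity) of a subsequential limit $\Sigma_{+\infty}$ of $\Sigma - \tau_n \ee_3$, or $\Sigma_{-\infty}$ of $\Sigma + \tau_n \ee_3$. Indeed, rescaling the Gaussian integrand and translating, one sees $F_{0, s_0}[\Sigma] \approx \mathcal{H}^2\big(\text{limit planes} \cap B_{O(\sqrt{s_0})}\big) / (4\pi s_0)$, and since $\lambda(\text{plane}) = 1$, a union of $k$ parallel planes (with multiplicities summing to $k$) has entropy exactly $k$. So $\lambda(\Sigma)$ equals the total number of planar sheets, with multiplicity, in $\Sigma_{+\infty}$ — and by symmetry also in $\Sigma_{-\infty}$. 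The heart of the matter is then to count these sheets in terms of wings: each wing $W$ of $\Sigma^+(t)$, after translating up by $\tau_n$ and taking a subsequential limit, must become one or more planar components of $\Sigma_{+\infty}$ (it cannot become a vertical graph by Claim~\ref{jorge} inside the proof of Proposition~\ref{weakly-infinity-limit}, nor a grim reaper cylinder by the same argument). A planar type wing, being a graph into the $\ee_1$-direction near infinity by Proposition~\ref{planar-property}, contributes a single sheet; a grim reaper type wing, decomposing as $S_W^+ \cup \mathcal{M}_W \cup S_W^-$ with two graphical halves by Proposition~\ref{Wing-decomposition}, contributes two sheets (the two ``sides'' limit onto two distinct parallel planes, or onto one plane with multiplicity two — either way the count is $2$). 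Summing, the number of sheets with multiplicity equals $\omega_P^+(\Sigma) + 2\omega_G^+(\Sigma)$, and likewise with ``$-$'', which gives \eqref{ent.} and integrality at once.

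The main obstacle I anticipate is making rigorous the claim that each wing contributes exactly the expected number of sheets, with the right multiplicity, and that no sheets are ``lost'' or ``doubled'' in the subsequential limit. There are two subtleties: first, the limit $\Sigma_{+\infty}$ is only subsequential and a priori depends on the sequence $\tau_n$, so one must verify the sheet count is independent of the choice (this follows because the entropy $\lambda(\Sigma)$ is an honest supremum, hence all subsequential limits yield the same value $\lambda(\Sigma)$); second, the multiplicity of convergence could in principle be greater than one on some components, and one must argue that the total multiplicity-weighted sheet count still equals the wing-weighted sum. For this I would use that the wings for $|y| > t_1$ are disjoint (Theorem~\ref{Wings-number}, Lemma~\ref{Lemma-control}) and that $\Sigma \cap \pi_y(t)$ has exactly $\omega^+(\Sigma)$ components for $t > t_1$ (Proposition~\ref{boundary-control}), together with Corollary~\ref{boundary-control-skew} applied to slanted planes, to track how each wing's slice curves behave as one translates upward: a planar wing's slice is a single curve with one end going up, which limits to one line (one planar sheet), while a grim reaper wing's slice is a single curve with both ends going up, which by Lemma~\ref{saturday} meets transversal horizontal lines in exactly two points near infinity, forcing exactly two planar sheets in the limit. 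The bookkeeping that the multiplicities add up correctly — i.e. that a grim reaper wing cannot collapse to a single multiplicity-one plane, nor a planar wing inflate to multiplicity two — is the delicate part, and I would handle it by a local argument near a point of $\pi_y(t) \cap \Sigma$ comparing the number of preimage sheets before and after the limit, exactly as in the multiplicity-one argument of Proposition~\ref{General-control}.
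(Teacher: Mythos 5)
Your proposal follows essentially the same strategy as the paper: use Huisken's monotonicity to relate $\lambda(\Sigma)$ to the number of planar sheets in the ``upper/lower'' limits, and count those sheets via the wing decomposition (planar wing $\to$ one sheet, grim reaper wing $\to$ two sheets, using Propositions~\ref{planar-property} and~\ref{Wing-decomposition}). The multiplicity bookkeeping you flag as the ``delicate part'' is indeed where most of the work lies, and the tools you cite are the right ones.

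Two points where your formulation differs from, and is less clean than, the paper's. First, the claim that ``$F_{x_0,s_0}[\Sigma]$ is eventually monotone in $s_0$'' (with $x_0$ apparently held fixed) is not what Huisken's monotonicity gives: the correct statement is that $F_{x_0+s\ee_3,\,s}[\Sigma]$ is non-decreasing in $s>0$, because $\Sigma_t=\Sigma+t\ee_3$ is a mean curvature flow. From this one deduces that $\sup_{x_0}F_{x_0,s_0}[\Sigma]$ is non-decreasing in $s_0$, which does yield $\lambda(\Sigma)=\lim_{s_0\to\infty}\sup_{x_0}F_{x_0,s_0}[\Sigma]$, but the center point $x_0$ must move along $\ee_3$ with the scale. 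Second, you treat the ``limit'' in this computation and the translation limit $\Sigma\pm\tau_n\ee_3\to\Sigma_{\mp\infty}$ as interchangeable. They are related but not identical: the parabolic rescaling $\tfrac{1}{\sqrt{-t}}\Sigma-\sqrt{-t}\ee_3$ collapses the whole slab onto the single plane $\pi_x(0)$ as a varifold with multiplicity, whereas the translation limit is a smooth limit consisting of several distinct parallel planes inside the slab. The paper sidesteps the need to rigorously identify these two sheet counts by splitting the argument: for the upper bound it counts intersections of the parabolically rescaled surface with horizontal lines (giving multiplicity $\le\min\{\omega^{\pm}_P+2\omega^{\pm}_G\}$, hence $\lambda(\Sigma)\le\min$ by Huisken), and for the lower bound it counts connected components of the translation limit and uses lower semicontinuity of entropy (giving $\lambda(\Sigma)\ge\max\{\omega^{\pm}_P+2\omega^{\pm}_G\}$). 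The two bounds sandwich $\lambda(\Sigma)$ and force the equality \eqref{ent.} plus integrality in one stroke, without ever having to prove directly that the varifold multiplicity of the rescaled limit equals the smooth-sheet count of the translation limit. Your direct computation can be made rigorous, but you would effectively be re-proving this equivalence; the paper's two-sided squeeze avoids that step entirely.
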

\begin{proof}
Consider the family $\left\{\frac{1}{\sqrt[ ]{-t}}\Sigma-\sqrt[]{-t}\ee_3\right\}_{\{t<0\}}.$ Taking into account that $\Sigma$ has finite entropy, we get that for any sequence $\{t_n\}$ with $t_n\searrow-\infty$ the sequence $\left\{\frac{1}{\sqrt[ ]{-t_n}}\Sigma-\sqrt[]{-t_n}\ee_3\right\}$, up to passing to a subsequence, converges as varifolds to a self-shrinker. Actually, the self-shrinker limit will be the static plane $\pi_x(0)$ with finite multiplicity, because the boundary of the set $\mathcal{S}_w$ has $\pi_x(0)$ as limit. 

In order to get an upper bound for the multiplicity of this plane, we just need to remark that any horizontal line $L(p)$, where $p\in\pi_x(0)\setminus\{-s\leq y\leq s\}$, where $s$ is a fix number so that Proposition \ref{planar-property} and Proposition \ref{Wing-decomposition} hold, intersects the surface $\frac{1}{\sqrt[ ]{-t_n}}\Sigma-\sqrt[]{-t_n}\ee_3$ in at most $\omega^-_P(\Sigma)+2\omega^-_G(\Sigma)$ points if $p\in\pi_x(0)\cap\{y\geq s\}$ and  $\omega^+_P(\Sigma)+2\omega^+_G(\Sigma)$ points if $p\in\pi_x(0)\cap\{y\leq -s\}$, providing $t_n$ is sufficiently large. Thus, the multiplicity of $\pi_x(0)$ is less than $\min\{\omega^-_P(\Sigma)+\omega^-_G(\Sigma),\omega^+_P(\Sigma)+\omega^+_G(\Sigma)\}.$ Therefore, by using Huisken's monotonicity formula \cite{Hui90}, we deduce that (see Lemma 25 in \cite{Chi20} for more details)
\[\lambda(\Sigma)\leq \min\{\omega^-_P(\Sigma)+2\omega^-_G(\Sigma),\omega^+_P(\Sigma)+2\omega^+_G(\Sigma)\}.
\]

On the other hand,  we also aim to show that the entropy is bounded from below by  \[\max\left\{\omega^-_P(\Sigma)+2\omega^-_G(\Sigma),\:\omega^+_P(\Sigma)+2\omega^+_G(\Sigma)\right\}.
\] To see this, we intersect the left (respectively, right) wings with a plane $\pi_y(t)$ (respectively, $\pi_y(-t)$) where $t$ is taken large enough so that the planar type wings are graphs over the $yz-$plane and the grim reaper type wings are bi-graphs over the $yz-$plane. Namely Proposition \ref{saturday} and Proposition \ref{Wing-decomposition} ensure that we can choose such a $t$. Now, in this intersection, we have exactly, $\omega^+_P(\Sigma)+2\omega^+_G(\Sigma)$ (respectively, $\omega^-_P(\Sigma)+2\omega^-_G(\Sigma)$) arcs going to $+\infty$ with respect to the $z-$axis on the right (respectively, left) side of the graph. Consequently, when taking the limit
\(\Sigma-\tau_n\ee_3\to\Sigma_\infty,\) we see that the number of connected components is bounded from below by
\[
\max\{\omega^+_P(\Sigma)+2\omega^+_G(\Sigma),\: \omega^-_P(\Sigma)+2\omega^-_G(\Sigma)\}.
\]
At this point, we use the invariance under translations of the entropy to conclude that \[\max\{\omega^-_P(\Sigma)+2\omega^-_G(\Sigma),\omega^+_P(\Sigma)+2\omega^+_G(\Sigma)\}\leq\lambda(\Sigma).\] This completes the proof.
\end{proof}

Next, we will derive a few useful applications from this result. 

\begin{Corollary}\label{Wings-counting}
Let $\Sigma$ be as in Theorem \ref{entropy-TGS}. The wing numbers satisfy:
\begin{equation}
\begin{split}
&\omega^-_P(\Sigma)=\omega^+_P(\Sigma),\\
&\omega^-_G(\Sigma)=\omega^+_G(\Sigma),\\
&\omega(\Sigma)\equiv 0\pmod{2}.
\end{split}
\end{equation}
\end{Corollary}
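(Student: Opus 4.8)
The plan is to read off all three identities directly from the entropy formula \eqref{ent.} in Theorem \ref{entropy-TGS}, together with the basic facts about wings already established. The key observation is that the same non-negative integer $\lambda(\Sigma)$ admits the two a priori different expressions
\[
\lambda(\Sigma)=\omega^-_P(\Sigma)+2\omega^-_G(\Sigma)=\omega^+_P(\Sigma)+2\omega^+_G(\Sigma),
\]
so that subtracting gives
\[
\bigl(\omega^-_P(\Sigma)-\omega^+_P(\Sigma)\bigr)+2\bigl(\omega^-_G(\Sigma)-\omega^+_G(\Sigma)\bigr)=0.
\]
This immediately forces $\omega^-_P(\Sigma)\equiv\omega^+_P(\Sigma)\pmod 2$, but to get equality on the nose I need an additional input: I would invoke the fact, already proven in Proposition \ref{boundary-control} (and reinforced by Corollary \ref{boundary-control-skew}), that for $t>t_1$ the slice $\Sigma\cap\pi_y(t)$ has exactly $\omega^+(\Sigma)$ connected components and $\Sigma\cap\pi_y(-t)$ has exactly $\omega^-(\Sigma)$ components. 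The point is that when we run the shrinker blow-down argument from the proof of Theorem \ref{entropy-TGS} at both the upper and the lower ends, a horizontal line far out on either side of the limiting plane $\pi_x(0)$ meets the rescaled surface in a number of points controlled symmetrically; more directly, the limit $\Sigma-\tau_n\ee_2\to$ (a finite union of vertical planes containing $\ee_3$, plus possibly graphical pieces) shows that the two ``wing bundles'' have compatible combinatorics. Concretely, I would argue that a right planar wing and a left planar wing must be matched up by the structure of $\Sigma$ at infinity, and likewise for grim reaper wings, giving $\omega^-_P=\omega^+_P$ and $\omega^-_G=\omega^+_G$ separately.

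The cleanest route, and the one I would actually carry out, avoids any delicate matching argument. First I would establish $\omega^-_G(\Sigma)=\omega^+_G(\Sigma)$: a grim reaper type wing $W$, by Proposition \ref{Wing-decomposition}, splits along its minimal axis $\mathcal{M}_W$ into two graphical sheets over the direction $\ee_1$, and each such sheet is an unbounded simply connected translator with boundary; correspondingly, a single grim reaper wing contributes two ``sheets'' to the asymptotic picture. Then I would use the weak asymptotic limit from Proposition \ref{weakly-infinity-limit}: taking $\Sigma-\tau_n\ee_3\to\Sigma_{+\infty}$, a finite union of parallel vertical planes in $\mathcal{S}_w$, whose number is exactly $\omega^+_P(\Sigma)+2\omega^+_G(\Sigma)=\lambda(\Sigma)$ by the argument in the proof of Theorem \ref{entropy-TGS}; and similarly $\Sigma-\tau_n\ee_3\to\Sigma_{-\infty}$ contributes $\omega^-_P(\Sigma)+2\omega^-_G(\Sigma)=\lambda(\Sigma)$ planes. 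These two counts being equal is already in hand, so the genuinely new content is the refined bookkeeping: a left grim reaper wing, being a bi-graph, must limit (under $-\tau_n\ee_2-z_n\ee_3$ rescaling as in Proposition \ref{Wing-1}) onto a configuration of planes distinguishable from that arising from a planar wing, because a planar wing is asymptotic to a single plane $\{x=x_0\}$ while a grim reaper wing is asymptotic to a pair $\{x=\pm w\}$. I would make this rigorous by noting that the boundary slice count $\#(\Sigma\cap\pi_y(t))=\omega^+$ counts each wing once regardless of type, whereas the horizontal-line count $\omega^+_P+2\omega^+_G$ weights grim reaper wings by two; comparing with the left side then pins down both $\omega^-_P=\omega^+_P$ and $\omega^-_G=\omega^+_G$, since $\omega^+=\omega^+_P+\omega^+_G$ and $\omega^-=\omega^-_P+\omega^-_G$ and we already have the equality $\omega^+_P+2\omega^+_G=\omega^-_P+2\omega^-_G$ together with (to be shown) $\omega^+_P+\omega^+_G=\omega^-_P+\omega^-_G$.

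So the crux reduces to proving $\omega^+(\Sigma)=\omega^-(\Sigma)$, i.e.\ the total number of right wings equals the total number of left wings. Here I would use that $\Sigma$ has finite genus, hence finite topology by Lemma \ref{end-finite}, so $\Sigma$ is homeomorphic to a compact surface $\overline\Sigma$ with $k$ punctures; by Lemma \ref{Lemma-control}(1), for $t>t_1$ each end $E_i$ determines exactly one unbounded simply connected component of $\Sigma\cap\{|y|\geq t\}$, and by Corollary \ref{transverse-result} that component meets $\pi_y(t)$ in a single proper arc. Thus each end of $\Sigma$ is asymptotically either a ``right'' end (escaping to $y\to+\infty$) or a ``left'' end ($y\to-\infty$) — by Theorem 1 in \cite{Chini-Moller} no end stays in a bounded $y$-range — and $\omega^+(\Sigma)$, $\omega^-(\Sigma)$ are precisely the numbers of right and left ends. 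Once $\omega^+=\omega^-$ is combined with $\omega^+_P+2\omega^+_G=\omega^-_P+2\omega^-_G$, subtracting gives $\omega^+_G=\omega^-_G$ and hence $\omega^+_P=\omega^-_P$. The third identity is then immediate: $\omega(\Sigma)=\omega^+(\Sigma)+\omega^-(\Sigma)=2\omega^+(\Sigma)\equiv0\pmod 2$.

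The main obstacle I anticipate is justifying $\omega^+(\Sigma)=\omega^-(\Sigma)$ cleanly, since a priori the number of ends escaping in the $+y$ direction need not match the number escaping in $-y$; the resolution is that $\lambda(\Sigma)$ is computed symmetrically from both sides in Theorem \ref{entropy-TGS}, but turning ``same $\lambda$'' into ``same total wing count'' genuinely requires invoking the weak limit of Proposition \ref{weakly-infinity-limit} and distinguishing planar from grim reaper contributions at infinity — that is, observing that both $\lambda(\Sigma)=\omega^\pm_P+2\omega^\pm_G$ \emph{and} that the multiplicity-weighting by plane count versus slice count forces $\omega^+_P=\omega^-_P$ and $\omega^+_G=\omega^-_G$ individually rather than merely the weighted combination. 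If one prefers to sidestep this, an alternative is a purely topological argument: by Lemma \ref{Lemma-control}(3), $\Sigma^-(t)$ and $\Sigma^+(-t)$ are both homeomorphic to $\Sigma$, and gluing these two homeomorphic pieces along the connected compact region $\Sigma\cap\{|y|\leq t\}$ forces the number of boundary components on the two sides, namely $\omega^+$ and $\omega^-$, to agree by an Euler characteristic count. I would present the Euler characteristic argument as the primary proof of $\omega^+=\omega^-$, as it is self-contained and robust.
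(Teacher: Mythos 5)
Your algebraic reduction is correct: the entropy identity \eqref{ent.} combined with $\omega^+(\Sigma)=\omega^-(\Sigma)$ does yield all three claims by subtraction. But the Euler characteristic argument you offer as your primary proof of $\omega^+(\Sigma)=\omega^-(\Sigma)$ has two real gaps. First, the region $\Sigma\cap\{|y|\leq t\}$ is \emph{not} compact: by Theorem \ref{Chini-theorem} the surface climbs to $z=+\infty$ as it approaches the slab boundary $\{x=\pm w\}$, and this happens at every $y$-value, so the middle piece is unbounded in $z$. Second, and more fundamentally, the number of boundary arcs of $\Sigma^-(t)$ is not recoverable from its homeomorphism type. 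Lemma \ref{Lemma-control}(3) only says that $\Sigma^-(t)$, $\Sigma^+(-t)$ and $\Sigma$ are all homeomorphic to the same finite-topology surface $\overline\Sigma\setminus\{E_1,\ldots,E_k\}$, hence share the same Euler characteristic $2-2g-k$; applying Mayer--Vietoris to the cover $\Sigma=\Sigma^-(t)\cup\Sigma^+(-t)$ then merely returns that $\chi(\Sigma[-t,t])$ also equals $2-2g-k$, and places no constraint on $\omega^\pm$. The slices $\Sigma\cap\pi_y(\pm t)$ are proper non-compact arcs (not circles), so a surface with $m$ such boundary arcs is homeomorphic, boundary included, to one with $m'$ such arcs; the counts $\omega^\pm$ are extrinsic data of the embedding and the slicing, not invariants of the abstract surface, and cannot be forced to agree by topology alone.

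The paper does something shorter and never needs $\omega^+=\omega^-$ as an intermediate step. By Proposition \ref{weakly-infinity-limit}, the upward translates $\Sigma+\tau_n\ee_3$ subconverge to $\Sigma_\infty$, a finite union (with multiplicity) of vertical planes parallel to $\ee_2$; the grim reaper wings, being bounded below in $z$, escape in this limit, so by Definition \ref{def:wing-type} only the planar wings persist. Since each such plane $\{x=x_0\}$ runs over all $y$, the planes counted with multiplicity over $\{y<-t\}$ and over $\{y>t\}$ are literally the same collection, giving $\omega^-_P(\Sigma)=\omega^+_P(\Sigma)$ directly. Subtracting this from \eqref{ent.} yields $\omega^-_G(\Sigma)=\omega^+_G(\Sigma)$, and the evenness of $\omega(\Sigma)$ follows. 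This is the ``matching at infinity'' route you sketched but set aside; note that it leads with $\omega^\pm_P$, not with the total counts $\omega^\pm$, and it rests on the geometric structure of the limit rather than on topology alone — which is unavoidable, since the evenness of the wing number is not a purely topological fact.
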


\begin{proof}
We notice that Proposition \ref{weakly-infinity-limit} guarantees that for $\tau_n\nearrow +\infty$, the sequence $\Sigma+\tau_n \ee_3$ converges (up to a subsequence) to $\Sigma_\infty$ which is a finite family of vertical planes in $\mathcal{S}_w$ parallel to $\ee_2$, possibly with multiplicity.  For $t$ large enough, by Definition \ref{def:wing-type}, the number of planes in
$\Sigma_\infty \cap \{y<-t\}$ equals $\omega^-_P(\Sigma)$ while the number of planes in $\Sigma_\infty \cap \{y>t\}$ equals $\omega^+_P(\Sigma)$. Then, it is obvious that $\omega^+_P(\Sigma)=\omega^-_P(\Sigma).$

Now, finally, \eqref{ent.} means that also $\omega^-_G(\Sigma)=\omega^+_G(\Sigma),$ and so the number of wings $\omega(\Sigma)$ is always even.
\end{proof}

\begin{Definition}
The number \(
\omega_P(\Sigma):=\omega^-_P(\Sigma)+\omega^+_P(\Sigma)\) is called the number of planar type wings of $\Sigma$ and \( \omega_G(\Sigma):=\omega^-_G(\Sigma)+\omega^+_G(\Sigma)\) is called the number of grim reaper type wings of $\Sigma$.
\end{Definition}

\begin{Corollary}\label{components-estimate}
Let $\Sigma$ be a complete, embedded translating soliton of width $2w$, finite genus and finite entropy. Let $\{\tau_n\}$ be a sequence so that $\tau_n \nearrow+\infty$.
\begin{itemize}
    \item[(i)]If $\Sigma-\tau_n\ee_3\to\Sigma_{+\infty}$, then the number of connected components of $\Sigma_{+\infty}$ counted with multiplicity is exactly $\lambda(\Sigma);$
    \item[(ii)] If $\Sigma+\tau_n\ee_3\to\Sigma_{-\infty},$ then the number of connected components of $\Sigma_{-\infty}$ counted with multiplicity is exactly $\frac{1}{2}\omega_P(\Sigma).$
\end{itemize}
 
\end{Corollary}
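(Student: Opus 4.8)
The plan is to deduce both statements directly from the machinery already assembled, in particular Theorem \ref{entropy-TGS}, Proposition \ref{weakly-infinity-limit}, and Corollary \ref{Wings-counting}, together with the counting of slice curves of the wings. For part (i), I would argue exactly as in the proof of the entropy formula in Theorem \ref{entropy-TGS}: by Proposition \ref{weakly-infinity-limit}, after passing to a subsequence, $\Sigma-\tau_n\ee_3$ converges to a finite union of parallel vertical planes $\Sigma_{+\infty}\subseteq\mathcal{S}_w$, counted with multiplicities $m_1,\dots,m_k$, and the total count with multiplicity is $\sum_j m_j$. On the other hand, fixing $t>t_1$ large enough that Proposition \ref{planar-property} and Proposition \ref{Wing-decomposition} apply, the slice $\Sigma\cap\pi_y(-t)$ consists of $\omega^+_P(\Sigma)$ arcs going to $+\infty$ in $z$ from planar wings and $2\omega^+_G(\Sigma)$ such arcs from grim-reaper wings (each grim-reaper wing contributing a bi-graph, hence two upward ends). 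Since $\pi_y(-t)$ is transverse to every translate $\Sigma-\tau_n\ee_3$ and the number of upward-going ends of the slice is preserved in the limit, the number of connected components of $\Sigma_{+\infty}$ counted with multiplicity equals $\omega^+_P(\Sigma)+2\omega^+_G(\Sigma)$, which is $\lambda(\Sigma)$ by \eqref{ent.}. (One uses here that no two distinct upward ends can merge into one plane and no plane can carry an end that is not accounted for, which follows from the transversality in Corollary \ref{transverse-result} together with the fact that a single plane meets $\pi_y(-t)$ in exactly one line with exactly one upward end.)

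For part (ii), I would run the same scheme but now with $\Sigma+\tau_n\ee_3\to\Sigma_{-\infty}$. By Proposition \ref{weakly-infinity-limit} the limit is again a finite union of parallel vertical planes in $\mathcal{S}_w$ with multiplicities. The key difference from part (i) is the behavior of the wings near $z=-\infty$: a planar type wing, for $s$ large, is a graph in the $\ee_1$-direction over a half-strip and its slice $W\cap\pi_y(s)$ has exactly one end going to $-\infty$ in $z$ (Definition \ref{def:wing-type} and Proposition \ref{planar-property}), whereas a grim-reaper type wing has its slice bounded below in $z$ (Definition \ref{def:wing-type}), so it contributes no downward-going end at all. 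Therefore the number of downward-going ends of $\Sigma\cap\pi_y(-t)$, which is preserved under the transverse limit $\Sigma+\tau_n\ee_3$, equals $\omega^+_P(\Sigma)=\omega_P(\Sigma)$ (using Corollary \ref{Wings-counting}). This gives that the number of connected components of $\Sigma_{-\infty}$ counted with multiplicity is exactly $\omega_P(\Sigma)$, matching the description already recorded in the proof of Corollary \ref{Wings-counting} that $\Sigma_{-\infty}$ consists of $\omega^-_P(\Sigma)$ planes on the left and $\omega^+_P(\Sigma)$ on the right — wait, that double-counts; more carefully, the asymptotic planes downward come only from planar wings, and a left planar wing and a right planar wing may or may not be asymptotic to the same plane, but counted with multiplicity the total is $\omega_P(\Sigma)$, since each planar wing contributes exactly one sheet with multiplicity one by Proposition \ref{General-control}.

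The main obstacle I anticipate is the bookkeeping in identifying ``number of connected components counted with multiplicity of the limit'' with ``number of upward- (resp. downward-) going ends of a fixed transverse slice,'' i.e. making rigorous that ends are neither created nor destroyed nor merged in the limit. The clean way to handle this is: fix $t>t_1$, so $\pi_y(-t)$ is transverse to $\Sigma$ and hence to all $\Sigma+\tau_n\ee_3$ for all $n$; by the smooth-away-from-finite-set convergence of Theorem \ref{theorem-compactness} restricted to a large slab $\{|z|\le R\}$, the slice $(\Sigma+\tau_n\ee_3)\cap\pi_y(-t)\cap\{|z|<R\}$ converges smoothly to $\Sigma_{-\infty}\cap\pi_y(-t)\cap\{|z|<R\}$, which for $R$ large is a union of $\sum_j m_j$ points (one per sheet counted with multiplicity), since each limit plane is vertical and parallel to $\ee_2$ and so meets $\pi_y(-t)$ in a single vertical line. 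Matching this with the count of ends of the pre-limit slices that escape to $z=-\infty$ (for (ii)) resp. $z=+\infty$ (for (i)) — which is where Propositions \ref{planar-property} and \ref{Wing-decomposition} pin down the exact numbers $\omega_P$ and $\omega_P+2\omega_G$ — then closes the argument. Everything else is a direct appeal to results already proved above.
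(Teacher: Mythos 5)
Your proposal is correct and follows the same line of reasoning the paper intends; the corollary is stated without an explicit proof precisely because both (i) and (ii) are read off from the slicing argument in the proof of Theorem \ref{entropy-TGS} (for the count $\omega_P+2\omega_G=\lambda(\Sigma)$) and the plane-counting observation in the proof of Corollary \ref{Wings-counting} (for the count $\omega_P$). Your scheme of fixing a transverse slice $\pi_y(\pm t)$, counting upward- (resp.\ downward-) going ends of the slice curve using Propositions \ref{planar-property} and \ref{Wing-decomposition}, and then passing to the limit of slices to read off the multiplicities of the limit planes is exactly the mechanism underlying the paper's implicit argument.

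Two small cautions. First, you invoke Proposition \ref{General-control} to conclude ``each planar wing contributes exactly one sheet with multiplicity one,'' but the multiplicity-one conclusion in that proposition is only stated under the additional hypothesis that $\Sigma$ is a graph, which the corollary does not assume; it is also unnecessary here, since the statement counts components \emph{with} multiplicity and the slice-counting delivers that count directly (each sheet of multiplicity $m$ near a limit plane $P$ produces $m$ nearby arcs of $(\Sigma\pm\tau_n\ee_3)\cap\pi_y(\pm t)$, so the multiplicities are automatically absorbed). Second, you notice but do not quite resolve the apparent double-count in part (ii): the correct framing is that only \emph{one-sided} wings are counted --- the $\omega_P^-$ left planar wings supply the downward ends of the slice $\Sigma\cap\pi_y(-t)$ and the $\omega_P^+$ right planar wings supply those of $\Sigma\cap\pi_y(t)$, and each limit plane (with multiplicity) must be accounted for from \emph{both} sides, so the total with multiplicity is $\omega_P^-=\omega_P^+=\omega_P$, not $\omega_P^-+\omega_P^+$. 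Your first sentence of the (ii) paragraph already uses the one-sided count correctly, so the closing remark is harmless but could simply be deleted.
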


\begin{Corollary}\label{entropy-estimate}
Let $\Sigma$ be a complete translating graph of width $2w.$ Then
\[
\lambda(\Sigma)\leq\left\lfloor \frac{w}{\pi}\right\rfloor+1,
\]
where $\left\lfloor x\right\rfloor$ denotes the largest integer smaller than or equal to $x\in\R$.
\end{Corollary}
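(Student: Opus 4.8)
The plan is to reduce the bound on the entropy to a count of the connected components of a subsequential limit of vertical translates $\Sigma-\tau_n\ee_3$, using Corollary \ref{components-estimate}(i), and then to bound the number of such components geometrically using the width. First I would recall that by Proposition \ref{finiteentropy} a complete translating graph of width $2w$ automatically has finite (indeed quadratic area growth) entropy, and by Lemma \ref{domain} it is simply connected, so all the machinery of Sections \ref{Wing}--\ref{entropy} applies; in particular $\lambda(\Sigma) = \omega_P(\Sigma) + 2\,\omega_G(\Sigma)$ by Theorem \ref{entropy-TGS}. So it suffices to bound $\omega_P(\Sigma)+2\,\omega_G(\Sigma)$ by $\lfloor w/\pi\rfloor+1$.

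The key step is the following: take $\tau_n\nearrow+\infty$ and pass to a subsequence so that $\Sigma-\tau_n\ee_3\to\Sigma_{+\infty}$, which by Proposition \ref{weakly-infinity-limit} is a finite union of vertical planes in $\mathcal{S}_w$ parallel to $\ee_2$, and by Corollary \ref{components-estimate}(i) the number of these planes \emph{counted with multiplicity} is exactly $\lambda(\Sigma)$. Now I would intersect with a generic horizontal line $L(y,z)=\{(x,y,z):x\in\R\}$ as in \eqref{def:L(y,z)}, with $(y,z)$ chosen so that the line is transverse to $\Sigma$ and (via Proposition \ref{planar-property} and Proposition \ref{Wing-decomposition}) lies in the regime where planar wings are single graphs and grim-reaper wings are bi-graphs in the $\ee_1$-direction. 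Each such line then meets $\Sigma-\tau_n\ee_3$, for $n$ large, in exactly $\lambda(\Sigma)$ points, all lying in the slab $\{|x|<w\}$ of width $2w$. On the other hand, since $\Sigma={\rm Graph}_{(v,\Omega)}[u]$ is a graph in \emph{some} fixed direction $v$, by Proposition \ref{Plane-Case} (Case I) we may assume $v\in\Ss^2\cap\{x=0\}$, i.e. $v$ lies in the $(y,z)$-plane; hence the horizontal lines $L(y,z)$ are \emph{parallel to $[v]^\perp\cap\{x=0\}$} — more precisely, each $L(y,z)$ projects injectively to $\Omega\subseteq[v]^\perp$ — so consecutive intersection points with $\Sigma-\tau_n\ee_3$ along such a line must be separated by a definite amount, and this is where the number $\pi$ enters.

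The main obstacle — and the heart of the argument — is obtaining the quantitative separation ``consecutive sheets of $\Sigma$ along a horizontal line are at $x$-distance $\geq\pi$ (in the limit)''. I would extract this from the known geometry of the wings: a grim-reaper-type wing, rescaled/translated, sub-converges to a tilted grim reaper cylinder, whose two sheets (the bi-graph) over the $x$-axis are exactly $\pi$ apart in the slab direction in the flat limit; a planar wing contributes a single sheet which is a limit plane, and two distinct limit planes of $\Sigma_{+\infty}$, being disjoint graphs obtained as limits of a connected embedded surface trapped between them, must likewise be separated — in the worst case by the width of a grim reaper, $\pi$ — because between two such planes the surface $\Sigma$ must ``turn around'' and a grim-reaper cylinder can be used as a barrier (a sweep-out / maximum-principle argument with the family $\mathcal{G}_\zeta$, exactly as in the proof of Proposition \ref{projection}). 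Counting: with $\lambda=\omega_P+2\omega_G$ sheets along a transverse horizontal line, the outermost and innermost sheets span the open interval $(-w,w)$, giving $\lambda-1$ gaps each of length $\geq\pi$ in the closed slab, hence $(\lambda-1)\pi\leq 2w$ would be too weak; instead, counting only the gaps \emph{strictly inside} one half of the slab, or equivalently using that the $\lambda$ limit planes/sheets lie in $\{|x|\le w\}$ with consecutive ones $\ge\pi$ apart, gives $(\lambda-1)\pi \le 2w$, i.e. $\lambda\le 2w/\pi+1$; to get the stated $\lfloor w/\pi\rfloor+1$ one must use instead that the sheets on \emph{each side} of $\{x=0\}$, or the bi-graph structure, forces the finer estimate $\lambda\le w/\pi+1$, whence $\lambda\le\lfloor w/\pi\rfloor+1$ since $\lambda\in\mathbb{Z}$. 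I expect the delicate point to be making this last ``halving'' rigorous — i.e. showing the $\lambda$ sheets are squeezed into an effective width $w$ rather than $2w$ — which should follow from Proposition \ref{Wing-decomposition} (each grim-reaper wing is symmetric about its minimal axis $\mathcal{M}_W$, contributing a width exactly $\pi$ centered on that axis) together with the fact that distinct axes/planes are themselves $\ge\pi$ apart, analyzed via the grim-reaper barrier sweep-out.
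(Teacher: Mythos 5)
Your overall outline matches the paper's: pass to a subsequential limit $\Sigma - \tau_n \ee_3 \to \Sigma_\infty$, invoke Corollary \ref{components-estimate}(i) together with multiplicity one (Proposition \ref{General-control}) to conclude that $\Sigma_\infty$ consists of exactly $\lambda(\Sigma)$ distinct vertical planes inside $\mathcal{S}_w$, and then extract a quantitative $\pi$-separation between consecutive planes. Where you diverge is in how that separation is obtained. The paper does not argue by barriers at all: it observes that $\Sigma$, being a graph over (a subdomain of) a strip, is $g$-area minimizing in $\R^3$ by Lemma \ref{Area-minimizing}, hence so is the limit $\Sigma_\infty$, and Corollary \ref{Non-area-minimizing-1} (resting on the explicit competitor of Lemma \ref{Non-area-minimizing}) then forbids any two planes of $\Sigma_\infty$ from being closer than $\pi$. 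Your proposed sweep-out with the family $\mathcal{G}_\zeta$ in the spirit of Proposition \ref{projection} is a genuinely different route and plausibly works, but it is only sketched, and it is considerably more delicate than quoting the $g$-minimizing fact; there is no reason here to avoid the machinery the paper already set up for exactly this purpose.

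The more important point concerns the last paragraph of your proposal. Your own unease is well placed: the argument you, and the paper, actually run gives $(\lambda - 1)\pi \leq 2w$, i.e.\ $\lambda \leq \left\lfloor 2w/\pi\right\rfloor + 1$, and this cannot be sharpened to $\left\lfloor w/\pi\right\rfloor + 1$. The grim reaper already refutes the sharper bound: it has $\width(\Sigma) = 2w = \pi$ and $\lambda(\Sigma) = 2$, with its two limit planes spaced exactly $\pi$ apart filling the entire open slab, whereas $\left\lfloor w/\pi\right\rfloor + 1 = 1$. Consequently your attempt to ``squeeze the sheets into effective width $w$'' via the minimal-axis/bi-graph structure of the grim reaper wings is doomed; the phenomenon you are trying to rule out is realized on the nose. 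The formula in the printed corollary appears to carry a slip and should read $\left\lfloor 2w/\pi\right\rfloor + 1$, equivalently $\left\lfloor \width(\Sigma)/\pi\right\rfloor + 1$; indeed the only downstream use, in Corollary \ref{Classification-slab-3pi} where $2w<3\pi$ is used to deduce $\lambda(\Sigma)\leq 3$, is consistent with $\left\lfloor 2w/\pi\right\rfloor + 1$ and inconsistent with $\left\lfloor w/\pi\right\rfloor + 1$. You should have trusted your own $2w$ computation rather than reached for a halving that cannot exist.
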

\begin{proof}
Consider a sequence $\Sigma-\tau_n\ee_3 \to \Sigma_\infty$ as in Corollary \ref{components-estimate}. Recall that by Corollary \ref{Non-area-minimizing-1}, the distance between any two planes in $\Sigma_\infty$ is at least $\pi$. Hence the corollary follows since the multiplicity is one in the graphical case.
\end{proof}

\section{A gap for the width of translating\\ solitons with one end}\label{Structure}

This section is devoted to giving a gap theorem for the width of complete, connected translating solitons with one end. More precisely, we prove that if the soliton is not flat, then its width must be bigger than or equal to $\pi.$ This result extends a result due to Spruck and Xiao \cite{SX20} for vertical graphs. 

\begin{Proposition}\label{gap-width}
 Let $\Sigma$ be a complete, embedded connected translating soliton of width $2w>0$, finite genus and finite entropy with one end. Then ${\rm width}(\Sigma)\geq\pi.$
\end{Proposition}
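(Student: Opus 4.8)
## Proof Proposal

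\textbf{Setup and strategy.} The plan is to argue by contradiction: suppose $\Sigma$ is a complete, embedded, connected translating soliton with one end, finite genus and finite entropy, and $0<\mathrm{width}(\Sigma)=2w<\pi$. Since $\Sigma$ has only one end, Theorem \ref{Wings-number} together with Lemma \ref{end-finite} forces $\omega(\Sigma)\leq 1$ on each side, hence $\omega^+(\Sigma)=\omega^-(\Sigma)=1$; here one must also note that having a single end is compatible with exactly one wing on each side. Then Corollary \ref{one-wing} applies and tells us that $\Sigma$ is either a tilted grim reaper surface or a $\Delta$-wing. But both of these have width $\geq\pi$: the tilted grim reaper $\mathcal{G}_\zeta$ has width $\pi/|\cos\zeta|\geq\pi$, and the $\Delta$-wing $u^w$ exists only for $w>\pi/2$, i.e. width $>\pi$. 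This contradiction proves $\mathrm{width}(\Sigma)\geq\pi$.

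\textbf{Key steps in order.} First I would verify that a one-ended $\Sigma$ of finite genus and finite entropy which is not flat satisfies $\omega^+(\Sigma)=\omega^-(\Sigma)=1$. By Theorem \ref{Wings-number} we always have $\omega^\pm(\Sigma)\geq 1$; the upper bound comes from Lemma \ref{end-finite}, which gives (number of ends) $\leq\omega(\Sigma)=\omega^+(\Sigma)+\omega^-(\Sigma)$. This alone does not immediately pin down $\omega^\pm=1$, so the second step is the heart of the argument: use Lemma \ref{Lemma-control}, specifically that $\Sigma$ is homeomorphic to $\Sigma^\pm(\mp t)$ for $t>t_1$ and that $\Sigma\cap\{|y|\geq t_1\}$ is a disjoint union of simply connected regions, to conclude that a one-ended surface cannot have two wings on the same side — two simply connected wings on the right, say, would each carry one of the (at most one) ends or else be capped off topologically, contradicting either connectedness of $\Sigma$ or one-endedness. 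Third, invoke Corollary \ref{one-wing} (which requires $\Sigma$ simply connected — this follows since one-ended finite genus plus the wing structure in Lemma \ref{Lemma-control} forces genus zero; alternatively one restricts to the simply connected case, which is the setting where the classification is available). Finally, read off the width lower bounds for the grim reaper and $\Delta$-wing from Section \ref{Examples} and Theorem \ref{Classification}.

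\textbf{Main obstacle.} The delicate point is the topological bookkeeping connecting "one end" to "$\omega^+=\omega^-=1$", and ensuring the hypotheses of Corollary \ref{one-wing} (simple connectivity) are genuinely met rather than merely assumed. The counting in Lemma \ref{end-finite} gives ends $\leq\omega^++\omega^-$ but not the reverse, so I must rule out configurations like $\omega^+=2$, $\omega^-=0$ (impossible by Theorem \ref{Wings-number} since $\omega^\pm\geq 1$) and $\omega^+=2$, $\omega^-=1$ with one end. For the latter I would use that each wing, being a connected component of $\Sigma\setminus\{|y|\leq t\}$ and simply connected by Lemma \ref{Lemma-control}(1), contributes at least one end to $\Sigma$ unless its boundary slice curve bounds a disk — but by Lemma \ref{Lemma-control}(3) every such slice curve is non-separating relative to the homotopy type, so in fact the number of ends equals $\omega(\Sigma)$ when genus is zero, giving $\omega(\Sigma)=1$, which contradicts $\omega(\Sigma)\equiv 0\pmod 2$ from Corollary \ref{Wings-counting} unless we are in a degenerate case; the clean resolution is that one end forces $\omega^\pm=1$ directly because a single wing on each side joined through the compact part $\Sigma\cap\{|y|\leq t_1\}$ yields exactly one end, and two wings on a side would yield at least two. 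Getting this argument airtight — rather than hand-wavy — is where the real work lies; the geometric conclusion afterward is immediate from the classification.
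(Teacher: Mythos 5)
Your reduction to $\omega^+(\Sigma)=\omega^-(\Sigma)=1$ is incorrect, and this is where the proposal breaks down. The number of ends of $\Sigma$ can be strictly less than the number of wings, because $\Sigma\cap\{|y|\le t\}$ is \emph{not} a compact piece of $\Sigma$: it contains arcs escaping to $z\to+\infty$. Consequently several wings on a given side can all belong to a single end of the surface. The pitchfork of Section \ref{subsec:pitchfork} is a concrete counterexample to the step you want: it is simply connected (hence one-ended), has finite width and finite entropy $\lambda(\Sigma)=3$, and yet $\omega^+(\Sigma)=\omega^-(\Sigma)=2$ (one planar and one grim-reaper type wing on each side). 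So ``one end'' does not force $\omega^{\pm}=1$, Corollary \ref{one-wing} does not apply, and the final appeal to the vertical-graph classification has nothing to act on. Lemma \ref{end-finite} only gives $(\text{number of ends})\le\omega(\Sigma)$; nothing in Lemma \ref{Lemma-control} reverses it, and your sentence ``a single wing on each side joined through the compact part $\Sigma\cap\{|y|\le t_1\}$ yields exactly one end'' is exactly where the error enters, since that central piece is non-compact.

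The paper's actual proof proceeds along entirely different lines and shows instead that $\omega_G(\Sigma)\ge1$, i.e.\ there must be at least one grim-reaper type wing, whose width is already at least $\pi$ (its limit as $y\to\pm\infty$, via Proposition \ref{Wing-1} and Theorem \ref{Classification}, is a tilted grim reaper or a $\Delta$-wing). Arguing by contradiction, if all wings were planar then Proposition \ref{planar-property} makes each wing eventually a graph in the $\ee_1$-direction, so $\{\langle N,\ee_1\rangle=0\}$ is confined to $\Sigma[-s,s]$, and Proposition \ref{weakly-infinity-limit} further bounds its $z$-coordinate; hence $\{\langle N,\ee_1\rangle=0\}$ lies in a bounded solid cylinder $C_{R_0}$ about the $x$-axis. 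The one-end hypothesis is used precisely here: for $R\ge R_0$ the slice $\Sigma\cap\partial C_R$ is a single Jordan curve, and $\Pi_x$ restricted to it is a $\lambda(\Sigma)$-sheeted covering of a circle, which for $\lambda(\Sigma)\ge2$ forces a self-intersection of the embedded $\Sigma$, while $\lambda(\Sigma)=1$ forces $\Sigma$ to be a plane. So the one-end hypothesis controls the connectivity of a slice of $\Sigma$, not the number of wings.
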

\begin{proof}
As a tilted grim reaper type wing has width $\geq \pi$, then it suffices to prove $\omega_G(\Sigma)\geq1.$  Suppose this is not the case, then  $\omega_G(\Sigma)=0,$ which means that $\Sigma$ only possesses planar type wings. 

Then, by Proposition \ref{planar-property}, there exists $s$ large enough so that each connected component of $\Sigma^{\pm}(\pm s)$ is a graph with respect to $\ee_1.$ Consequently, the set $\{\langle N,\ee_1\rangle=0\}$ lies on $\Sigma[-s,s].$ In turn, Proposition \ref{weakly-infinity-limit} implies that cannot exist any arc of $\{\langle N,\ee_1\rangle=0\}$ which goes to $\pm\infty$ with respect to $z-$axis. Therefore, 
$$\{z \;: \; (x,y,z) \in \Sigma , \; \,\langle N(x,y,z),\ee_1\rangle=0\}$$
is a bounded set. Hence, there exists $R_0> 0$ large enough so that $\{\langle N,\ee_1\rangle=0\}$ lies in the horizontal solid cylinder $C_{R_0}:=\overline{B}_{R_0}(0)\times_{\ee_1}\R,$ where $\overline{B}_{R_0}$ is the closed ball centered at the origin in the $(y,z)$-plane and of radius $R_0.$ Notice that the intersection of the cylinder with the slab is bounded.

Our hypothesis of $\Sigma$ having one end ensures that  $\Sigma\setminus \p C_R$ is connected. Moreover $\partial C_R$ is transversal to $\Sigma$, for all $R>R_0$. Then,  $\Sigma\cap \p C_R$ is a Jordan curve, for all $R\geq R_0$. Moreover, we have that the projection 
$$\Pi_x: \Sigma \cap \partial C_R \rightarrow \{x=0\} \cap C_R$$
is a covering map of $\lambda(\Sigma)$-sheets. But this is impossible if $\lambda(\Sigma)\geq2,$ indeed in this case we would have a self-intersection. Therefore, we would have $\lambda(\Sigma)=1,$ and so, $\Sigma$ would be a plane, which is absurd.  This contradiction proves the proposition.
\end{proof}
It is an immediate consequence of the proof that
\begin{Corollary}
    \label{exist-g}
 Any complete, embedded connected translating soliton with one end of width $2w>0$, finite genus and finite entropy possesses at least two grim reaper type wings.
\end{Corollary}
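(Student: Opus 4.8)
The plan is to observe that the proof of Proposition \ref{gap-width} already delivers the stronger assertion $\omega_G(\Sigma)\geq 1$, and then to combine this with the left--right symmetry of the wing counts recorded in Corollary \ref{Wings-counting}.

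First I would revisit the contradiction argument inside Proposition \ref{gap-width}. There one assumes $\omega_G(\Sigma)=0$, i.e.\ that \emph{every} wing of $\Sigma$ is of planar type, and derives an impossibility: Proposition \ref{planar-property} forces all of $\{\langle N,\ee_1\rangle=0\}$ into a vertical slab $\Sigma[-s,s]$; Proposition \ref{weakly-infinity-limit} then prevents any arc of $\{\langle N,\ee_1\rangle=0\}$ from escaping to $z=\pm\infty$, so this set lies in a bounded horizontal solid cylinder $C_{R_0}$ in the $\ee_1$-direction; for $R>R_0$ the boundary $\p C_R$ is transverse to $\Sigma$, the one-end hypothesis makes $\Sigma\cap\p C_R$ a single Jordan curve, and $\Pi_x$ restricts to a $\lambda(\Sigma)$-sheeted covering of $\{x=0\}\cap C_R$; since $\lambda(\Sigma)\geq 2$ would produce a self-intersection, one is reduced to $\lambda(\Sigma)=1$, forcing $\Sigma$ to be a plane, which is excluded. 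Hence the hypothesis $\omega_G(\Sigma)=0$ cannot hold, that is, $\omega_G(\Sigma)\geq 1$.

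Next I would invoke Corollary \ref{Wings-counting}, which gives $\omega^-_G(\Sigma)=\omega^+_G(\Sigma)=\omega_G(\Sigma)$. Therefore $\Sigma$ has at least one grim reaper type wing among its left wings and at least one among its right wings, so the total number of grim reaper type wings is $\omega^-_G(\Sigma)+\omega^+_G(\Sigma)=2\,\omega_G(\Sigma)\geq 2$, which is the claim.

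I do not expect any real obstacle here; the only point deserving care is to check that the contradiction in Proposition \ref{gap-width} genuinely uses nothing beyond $\omega_G(\Sigma)=0$ (in particular it never assumes grim reaper wings occur on only one side), so that its negation is precisely $\omega_G(\Sigma)\geq 1$ and not something weaker. Granted that, the symmetry $\omega^-_G=\omega^+_G$ from Corollary \ref{Wings-counting} upgrades ``at least one'' to ``at least two'' at no cost.
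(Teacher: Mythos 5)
Your proposal is correct and matches the paper exactly: the paper proves this corollary with the single remark that it is an immediate consequence of the proof of Proposition~\ref{gap-width}, since that proof shows $\omega_G(\Sigma)\geq 1$ and (by the symmetry $\omega^-_G=\omega^+_G$ already built into the definition of $\omega_G$ via Corollary~\ref{Wings-counting}) this gives at least one grim reaper wing on each side, hence at least two in total.
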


If one assumes instead a lower bound on the $z$-coordinate, as well as width $\leq \pi$, then without assumptions on the topology, the (non-tilted) grim reaper cylinder $\mathcal{G}_0$ is known to be the unique finite entropy example \cite{IMR25}.

\section{Translators with finite width and $\lambda (\Sigma)<3$} \label{sec:chini}
As we have mentioned before, F. Chini \cites{Chi19, Chi20} classified the properly embedded translators with finite width and entropy less than $3$. In this section we are going to show how the techniques introduced in this paper allow us to provide a shorter proof of this theorem:
\begin{Theorem}[Chini \cites{Chi19, Chi20}] \label{l<3}
    Let $\Sigma$ be a simply connected translating soliton with finite width and $\lambda(\Sigma)<3.$ Then $\Sigma$ is one of the following solitons:
    \begin{enumerate}
        \item a vertical plane;
        \item a grim reaper cylinder (possibly tilted);
        \item a $\Delta$-wing.
    \end{enumerate}
\end{Theorem}
\begin{proof}
    From Theorem \ref{entropy-TGS}, we know that $\lambda(\Sigma)$ is a positive integer, so either $\lambda(\Sigma)=1$ or $\lambda(\Sigma)=2$.

    If $\lambda(\Sigma)=1$, then $\Sigma$ is a vertical plane.

    If $\lambda(\Sigma)=2$, then $\Sigma-z \ee_3$ converges (subsequentially) to $2$ planes, as $z \to +\infty.$ Hence, we have two possibilities for the number of wings (recall Definition \ref{def:wing-type}):
    \begin{enumerate}[(a)]
        \item $\omega(\Sigma)=4$ and all the wings are planar.
        \item $\omega(\Sigma)=2$ and both wings are of grim reaper type.
    \end{enumerate}
From Corollary \ref{exist-g}, Case (a) is not possible.

Next, we are going to prove that $\Sigma$ is mean convex. To do this, consider $F: \R^3 \rightarrow \R $
$$F(p)= \langle p, \ee_2 \rangle.$$
Theorem \ref{structure-theorem} implies that that $F|_\Sigma$ is a Rad\'o function in the sense of Definition \ref{rado-def}.

Consider $a<b$ regular values of $F|_\Sigma$ and a sequence  $z_n\nearrow +\infty$. Let us define
$$\Sigma(a,b):= \{ p \in \Sigma \; : \; a<F(p) <b\}. $$
  and
  $$\Sigma_n(a,b):= \Sigma(a,b) \cap \{z \leq z_n\}.$$
By Proposition \ref{weakly-infinity-limit}, we know that a subsequence of $\Sigma\pm z_n \ee_3$ converges (smoothly on compact sets) to vertical planes parallel to $\{x=0\}$.

Hence, up to a subsequence, we can assume that $F|_{\Sigma_n(a,b)}$ does not have critical points along
$\partial \Sigma_n(a,b)$. Moreover, we can also assume that
$$ F|_{\Sigma_n(a,b)} : \Sigma_n(a,b) \rightarrow (a,b)$$
is proper and 
it  is strictly monotone along
$\partial \Sigma_n(a,b). $
Hence, we can use Theorem~\ref{noncompact-intro-theorem} and Remark~\ref{re:8} to deduce that the number of critical points counted with multiplicity satisfies
\[
\mathsf{N}\left(F|_{\Sigma_n(a,b)}\right) \leq \beta+|Q|-\chi(\Sigma_n(a,b))-|A|=1+0-1-0=0.
\]
Taking into account that $\Sigma_n[a,b]$ represents an exhaustion of $\Sigma(a,b)$ we can apply Remark~\ref{re:semi} to deduce that
\[
\mathsf{N}\left(F|_{\Sigma(a,b)}\right) \leq \liminf_n\mathsf{N}\left(F|_{\Sigma_n(a,b)}\right) = 0,
\]
Taking limits as $a\to -\infty$ and $b\to +\infty$, and using Remark~\ref{re:semi} again, we get that
\[
\mathsf{N}\left(F|_{\Sigma}\right)=0
\]

Thus, Lemma \ref{gauss-arc}
would imply that $N(\mathcal{H})=\varnothing$ and then $\Sigma$ is mean convex.   As $\Sigma$ has finite width, then Theorem \ref{Classification} implies that $\Sigma$ is either a tilted grim reaper, or a $\Delta$-wing.
  
\end{proof}

\begin{Corollary}\label{one-wing}
Let $\Sigma$ be a complete, embedded, simply connected translating soliton of width $2w>0$ and finite entropy. Assume that $\omega^{+}(\Sigma) = 1$ (or equivalently $\omega^{-}(\Sigma) = 1$). Then, $\Sigma$ is either a tilted grim reaper surface or a $\Delta-$wing. 
\end{Corollary}

\begin{proof} First, notice that, as the width is positive, $\Sigma$ is not flat. Furthermore, from Theorem \ref{entropy-TGS} we have that $\lambda(\Sigma) \leq 2.$ Then, we only have to use Theorem \ref{l<3} to conclude the result.
\end{proof}

\begin{Corollary}\label{Classification-slab-3pi}
The unique complete connected translating graphs in $\R^3$ of width $2w$, where $2w\in[\pi,2\pi)$, are tilted grim reaper surfaces, and $\Delta$-wings.
\end{Corollary}
\begin{proof}
By Corollary \ref{entropy-estimate} we know that
$\lambda(\Sigma)\leq2.$ Hence, $\lambda(\Sigma)=2$, but then Theorem \ref{l<3} implies the result.
\end{proof}

\section{Uniqueness of the pitchforks}\label{pitchforks-section}

The main goal of this final section consists in characterizing pitchforks as the unique examples of complete, embedded, simply connected translating soliton, $\Sigma \subset \R^3$ of finite width, entropy $\lambda(\Sigma) = 3$ under additional hypothesis.

We begin with the study of some geometric properties of the set $\mathcal{H}$ that will help us to derive important results about $\Sigma$.

\begin{Lemma}\label{four_components}
Let $\Sigma$ be a complete embedded simply connected translating soliton of finite entropy and finite width with $\omega(\Sigma) = 4$. Then
\[
N|_{\mathcal{H}}: \mathcal{H}\to E
\]
is a diffeomorphism onto its image. Furthermore, for $q\in\mathcal{H}$ arbitrary, $\Sigma\setminus T_q\Sigma$ has precisely four connected components, two on each side of $T_q\Sigma.$
\end{Lemma}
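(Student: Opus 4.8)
The plan is to combine the wing analysis of the previous sections with Morse--Rad\'o theory for the ``vertical-plane'' Rad\'o functions $f_\nu(p)=\langle p,\nu\rangle$, $\nu\in E$, which are Rad\'o functions on $\Sigma$ by Theorem~\ref{structure-theorem} (their level sets are the vertical planes perpendicular to $\nu$) and whose critical set on $\Sigma$ is exactly $N^{-1}(\{\nu,-\nu\})\subseteq\mathcal{H}$. \textbf{Step 1 (wing configuration; $\mathcal{H}\neq\varnothing$).} Being simply connected, $\Sigma$ is homeomorphic to $\R^2$ and so has exactly one end; hence Corollary~\ref{exist-g} gives $\omega_G(\Sigma)\geq 1$, and together with $3=\lambda(\Sigma)=\omega_P(\Sigma)+2\,\omega_G(\Sigma)$ (Theorem~\ref{entropy-TGS}) this forces $\omega_P(\Sigma)=\omega_G(\Sigma)=1$: on each side $\Sigma$ has one planar and one grim reaper wing. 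Also $\Sigma$ is not mean convex, for otherwise $\langle N,\ee_3\rangle$ would have a fixed sign, $\Sigma$ would be a complete vertical graph of finite width, hence (Theorem~\ref{Classification}) a $\Delta$-wing or a tilted grim reaper, both of entropy $2$. So $H$ changes sign and $\mathcal{H}\neq\varnothing$; by Lemma~\ref{gauss-arc}, $N(\mathcal{H})$ is a non-empty open subset of $E$ that is a union of the two open half-equators of $E\setminus\{-\ee_1,\ee_1\}$, and by Lemma~\ref{pi_one_injection} no component of $\mathcal{H}$ is compact.

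\textbf{Step 2 ($N|_{\mathcal{H}}$ is a diffeomorphism onto its image).} The key estimate is that $\mathsf{N}(f_\nu)\leq 1$ for every $\nu\in E\setminus\{-\ee_1,\ee_1\}$. I would obtain it by applying the Morse--Rad\'o inequality (Theorem~3.6 of \cite{rado-type}) along an exhaustion of the disk $\Sigma$ by compact sub-disks $M_k=\Sigma\cap B_{R_k}(0)$ with $\partial M_k$ transverse to $f_\nu$: by Propositions~\ref{planar-property} and~\ref{Wing-decomposition}, each of the two wings on the side where $f_\nu\to+\infty$ contributes exactly one local maximum of $f_\nu|_{\partial M_k}$, while the rest of $\partial M_k$ — the ``bulk'' and the two wings on the other side, where by Proposition~\ref{weakly-infinity-limit} $\Sigma$ is modelled on vertical planes $\{x=\mathrm{const}\}$ on which $f_\nu$ is monotone in $y$ — contributes none; thus $f_\nu|_{\partial M_k}$ has at most $\omega^+(\Sigma)=2$ local maxima, so Theorem~3.6 of \cite{rado-type} gives $\mathsf{N}(f_\nu|M_k)\leq 1$ uniformly in $k$, whence $\mathsf{N}(f_\nu)\leq 1$. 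This single bound gives everything: since on $\mathcal{H}$ one has $k_1=-k_2$, the exclusion of multiplicity-$\geq 2$ critical points means $\mathcal{H}$ has no flat and no singular points and $dN$ is invertible along it, so $N|_{\mathcal{H}}$ is an immersion; and for $\nu\in N(\mathcal{H})$ the unique critical point of $f_\nu$ must be the (then unique) preimage $N^{-1}(\nu)$ with $N^{-1}(-\nu)=\varnothing$, so $N|_{\mathcal{H}}$ is injective and $N(\mathcal{H})$, being antipode-free and a union of full half-equators, is a single open half-equator; hence $\mathcal{H}$ is one properly embedded line. Finally $N|_{\mathcal{H}}\colon\mathcal{H}\to N(\mathcal{H})$ is proper: a divergent sequence $p_n\in\mathcal{H}$ with $N(p_n)\to\nu\in N(\mathcal{H})$ (so $\nu\neq\pm\ee_1$) would, as in the proofs of Lemma~\ref{gauss-arc} and of the Claim in Proposition~\ref{weakly-infinity-limit}, yield a non-planar limit component through the origin carrying $0\in\mathcal{H}$ yet missing $\pm\ee_2$ from its Gauss image, hence a complete vertical graph of finite width ($\Delta$-wing or tilted grim reaper) on which $\mathcal{H}=\varnothing$ — a contradiction. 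An injective proper immersion of a $1$-manifold is a diffeomorphism onto its image, so $N|_{\mathcal{H}}$ is a diffeomorphism onto $N(\mathcal{H})$.

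\textbf{Step 3 (exactly four components).} Fix $q\in\mathcal{H}$, put $\nu=N(q)\in E$, $F=f_\nu|_\Sigma$, $c=F(q)=\langle q,\nu\rangle$, so that $\Sigma\setminus T_q\Sigma=\{F>c\}\sqcup\{F<c\}$ and $Z:=\Sigma\cap T_q\Sigma=\{F=c\}$. By Step~2, $q$ is a critical point of $F$ of multiplicity $1$, so in suitable coordinates $F=c+a\,r^2\sin 2\theta+o(r^2)$ near $q$ for some $a\neq 0$: near $q$, $Z$ is a transverse pair of arcs and the four sectors alternate between the two sides of $T_q\Sigma$, so each side has at least two components. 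For the reverse inequality I would show $Z$ is a single tree whose only branch point is $q$ (of degree $4$). It has no loops, since a loop would bound a disk in the simply connected $\Sigma$ on which the Rad\'o function $F$ would attain an interior extremum (Proposition~2.3 of \cite{rado-type}), forcing $F$ constant there and contradicting real-analyticity of the non-flat $\Sigma$. It has no further branch point, since these would be critical points of $F$ lying on $\{F=c\}$, while by Step~2 the only critical point of $F$ is $q$. And it has no component disjoint from $q$: such a component would be a properly embedded line along one side of which $F$ has no critical point, which is incompatible with the behaviour of $F$ at infinity along the four wings (Propositions~\ref{planar-property},~\ref{Wing-decomposition},~\ref{weakly-infinity-limit}), in the spirit of Lemma~\ref{cylinder} and of the proof of Proposition~\ref{gap-width}. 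Hence $Z$ consists of four properly embedded arcs issuing from $q$, and cutting the disk $\Sigma$ along $Z$ yields, by a standard Euler-characteristic count, exactly $1+4-1=4$ components, necessarily two on each side of $T_q\Sigma$ by the alternation of sectors at $q$.

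\textbf{Main difficulty.} The local Rad\'o analysis at $q$ and the ``no loops'' argument are routine; the genuine work is the global input — the uniform Morse--Rad\'o bound $\mathsf{N}(f_\nu)\leq 1$, which needs the correct exhaustion together with the precise count of boundary maxima, and the exclusion of level-set components escaping to infinity — all of which rest on the detailed description of the four wings (Propositions~\ref{planar-property} and~\ref{Wing-decomposition}), on Proposition~\ref{weakly-infinity-limit}, and on the compactness theorem.
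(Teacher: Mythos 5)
Your proposal follows the same overall strategy as the paper: establish the Morse--Rad\'o bound $\mathsf{N}(f_\nu)\leq 1$ for the Rad\'o functions $f_\nu(p)=\langle p,\nu\rangle$, $\nu\in E\setminus\{\pm\ee_1\}$, then conclude first that $N^{-1}(\nu)$ is at most a singleton (hence $N|_{\mathcal{H}}$ is a diffeomorphism onto its image) and second that the level set $\{f_\nu=c\}$ through a point $q\in\mathcal{H}$ is the cone of exactly four arcs, so that $\Sigma\setminus T_q\Sigma$ has two components on each side. The core idea is identical.

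The genuine divergence, and the place where your argument has a gap, is the exhaustion in Step 2. The paper exhausts by $M_n[a,b]=\{a\leq f_\nu\leq b\}\cap\{|z|\leq z_n\}$, which is engineered so that $\partial M_n[a,b]$ decomposes into arcs where $f_\nu$ is identically $a$ or $b$ and arcs in $\{z=\pm z_n\}$ where, by Proposition~\ref{weakly-infinity-limit}, $\Sigma$ is $C^1$-close to vertical planes parallel to $\{x=0\}$ and hence $f_\nu$ is strictly monotone; this is exactly the hypothesis required to apply Theorem 4.1 of \cite{rado-type} directly and get $\mathsf{N}(f_\nu|M_n[a,b])=1$, so $\mathsf{N}(f_\nu)\leq 1$. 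You instead exhaust by $M_k=\Sigma\cap B_{R_k}(0)$ and invoke Theorem 3.6 with the claim that $f_\nu|\partial M_k$ has at most $\omega^+(\Sigma)=2$ local maxima, coming one from each right wing. This is the step that is not established: the curve $\Sigma\cap\partial B_{R_k}(0)$ passes through all four wings \emph{and} through the region of large $|z|$ and small $|y|$ where $\Sigma$ is asymptotically a union of several vertical planes; along that ``bulk'' portion $f_\nu$ is not a priori monotone as a function of the arc parameter, so it is not clear that no extra local maxima occur. (Also note that you cite Proposition~\ref{weakly-infinity-limit} to control the wings at $y\to -\infty$, but that proposition describes asymptotics as $z\to\pm\infty$, not $y\to\pm\infty$; the latter is covered by Propositions~\ref{planar-property}, \ref{Wing-decomposition} and \ref{Wing-1}.) You would also need to check that $M_k$ is a disk, or alternatively use the more flexible Theorem 4.1 as the paper does; $\Sigma\cap B_{R_k}$ is not automatically simply connected even though $\Sigma$ is. The paper's exhaustion sidesteps both concerns.

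Your Step 3 is written with more topological detail than the paper's (which disposes of the component count in a single sentence), and the tree/Euler-characteristic framing is a clean way to organize it. The one piece you should tighten is the exclusion of a component of $Z=\{F=c\}$ disjoint from $q$: you assert this is ``incompatible with the behaviour of $F$ at infinity along the four wings'' without saying why. A cleaner way: such a component would bound, together with the end of $\Sigma$, a region $U$ with $\partial U\subset\{F=c\}$ and no critical points of $F$ in its interior; if $U$ is precompact one gets an interior extremum, contradicting Proposition~2.3 of \cite{rado-type}, and if $U$ is unbounded one must still rule it out using the fact (from the $\mathsf{N}(f_\nu)\leq 1$ bound and the weak asymptotics) that the level set of any regular value has a controlled, finite number of ends. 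Modulo that fix and the exhaustion issue above, the rest of your argument — Step 1, the properness/injectivity of $N|_{\mathcal{H}}$, and the local analysis at $q$ — is sound and matches the paper.
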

\begin{proof}
Given $\nu\in E \setminus \{-\ee_1, \ee_1\}$, we consider the Rad\'o{} function
\begin{eqnarray}
&f_\nu: \Sigma \to \R,\\
&p\mapsto \langle p,\nu\rangle.
\end{eqnarray}
Consider two regular values, $a < b$, of $f_\nu$ and define
\[\Sigma_\nu(a,b):=\{ p \in \Sigma \; : \; f_\nu(p) \in(a,b) \} \]
and 
\[\Sigma_\nu^n(a,b):=M(a,b) \cap \{|z|\leq z_n\}, \quad z_n \nearrow \infty\]
By Proposition \ref{weakly-infinity-limit}, we know that a subsequence of $\Sigma\pm z_n \ee_3$ converges (smoothly on compact sets) to vertical planes parallel to $\{x=0\}$.
Then, up to a subsequence, we can assume that $f_\nu$ does not have critical points along
$\partial \Sigma_\nu^n(a,b)$. Moreover, we can also assume $f_\nu$ is strictly monotone along
$\partial \Sigma_\nu^n(a,b). $
Hence, we can use Theorem~\ref{th-one} and Remark~\ref{re:8} to estimate  the number of critical points (counted with multiplicity.) As $f_\nu$ is strictly monotone along
$\partial \Sigma_\nu^n(a,b), $ then the number of minima and maxima of $f_\nu$ along $\partial \Sigma_\nu^n(a,b) $ is zero. So $|Q|=|A|=0$. On the other hand, the number of points in $\{f_\nu=t\} \cap \partial \Sigma_\nu^n(a,b)$, for $t>a$ is $4$. So, the number $$\beta=\lim_{t\to a, t>a} \sharp (\{f_\nu=t\} \cap \partial \Sigma_\nu^n(a,b))=4.$$
Hence,
\[
\mathsf{N}\left(f_{\nu}|_{\Sigma_\nu^n(a,b)}\right) \leq \frac 12 \beta+|Q|+|A|-\chi(\Sigma_\nu^n(a,b))=1.
\]
Notice that $\Sigma_\nu^n(a,b)$ is always a disk, and so $\chi(\Sigma_\nu^n(a,b))=1.$

Taking into account that $\Sigma_\nu^n(a,b)$ represents an exhaustion of $\Sigma_\nu(a,b)$ we can apply Remark~\ref{re:semi} to deduce that
\[
\mathsf{N}\left(f_{\nu}|_{\Sigma_\nu(a,b)}\right) \leq \liminf_n\mathsf{N}\left(f_{\nu}|_{\Sigma_\nu^n(a,b)}\right) = 1,
\]
Taking limits as $a\to -\infty$ and $b\to +\infty$, and using Remark~\ref{re:semi} again, we get that
\[
\mathsf{N}\left(f_{\nu}\right) \leq 1,
\]
If $\mathsf{N}\left(f_{\nu}\right)=0,$ then Lemma \ref{gauss-arc}
would imply that $N(\mathcal{H})=\varnothing$ and then $\Sigma$ would be a vertical graph, which is impossible by Theorem \ref{Classification} if we recall that we are assuming $\omega(\Sigma) = 4$. Thus:

\begin{equation}\label{one_preimage}
\forall \nu\in E\setminus \{-\ee_1, \ee_1\}: \#\left(N^{-1}(\nu)\right) \in \{0,1\},
\end{equation}
proving injectivity of the Gauss map when restricting to $\mathcal{H} = N^{-1}(E)$. Then if $q\in\mathcal{H}$, equivalently if $\nu:=N(q)\in E\setminus \{-\ee_1, \ee_1\}$, we conclude that
\[
\Sigma \setminus \pi = \Sigma \setminus T_q\Sigma.
\]
has two connected components on each side of $\pi$, as otherwise there would be another point $q'$ with $N(q') = \pm N(q)$, which is impossible by the count of critical points (i.e. preimages) in \eqref{one_preimage}.
\end{proof}
\begin{figure}[htbp]
\begin{center}
\includegraphics[width=.66\textwidth]{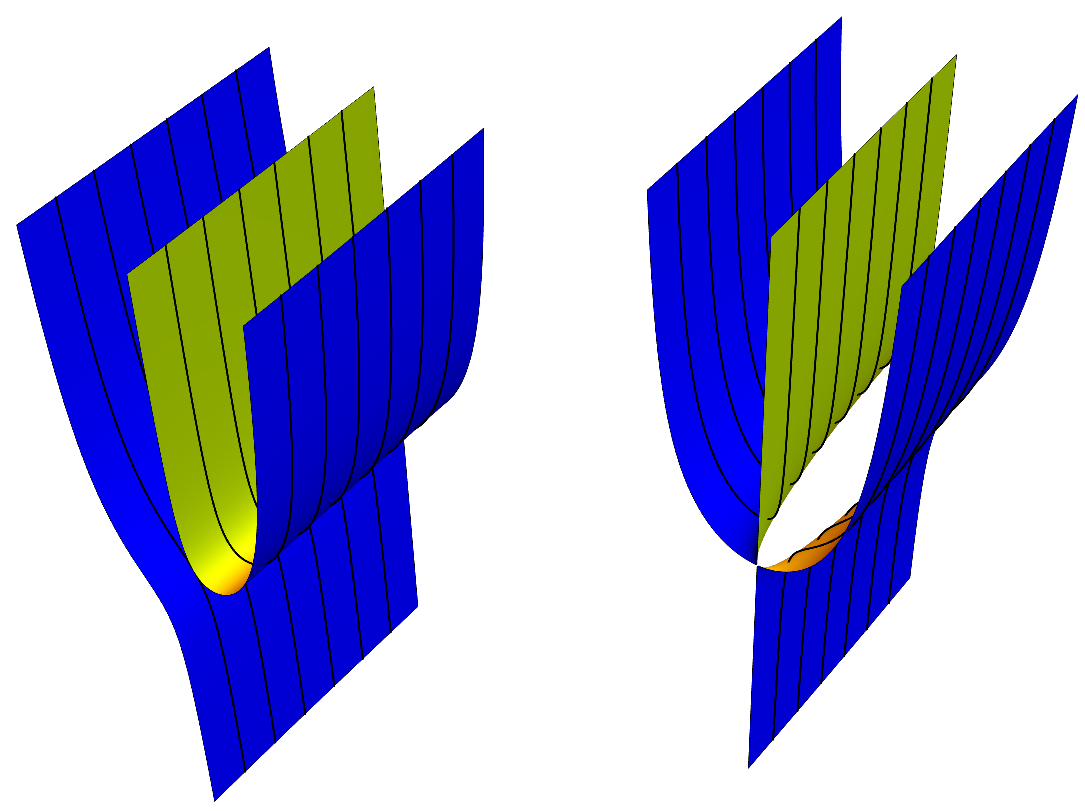}
\caption{\small On solitons as in Lemma \ref{four_components}, for each $q\in\mathcal{H}$ arbitrary, $\Sigma\setminus T_q\Sigma$ has precisely four connected components, two on each side of $T_q\Sigma.$}
\label{uncapped}
\end{center}
\end{figure}

\begin{Corollary}\label{half-arc}
Let $\Sigma$ be as in Lemma \ref{four_components}. Then, up to a possible change of orientation of the surface,
\[
N(\mathcal{H}) = E\cap \{y>0\}.
\]
\end{Corollary}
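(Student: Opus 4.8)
The plan is to determine the orientation-dependent sign of the half-equator $N(\mathcal{H})$ by analyzing the asymptotic geometry of the wings. By Lemma \ref{gauss-arc}, $N(\mathcal{H})$ is open in $E$ with boundary contained in $\{-\ee_1,\ee_1\}$, and by Lemma \ref{four_components} it is nonempty; combined with the injectivity of $N|_\mathcal{H}$, the only possibilities are that $N(\mathcal{H})$ is one of the two open half-equators $E\cap\{y>0\}$ or $E\cap\{y<0\}$, or all of $E\setminus\{-\ee_1,\ee_1\}$. First I would rule out the last case: if $N(\mathcal{H})=E\setminus\{-\ee_1,\ee_1\}$, then $\pm\ee_2\in N(\mathcal{H})$, so there would be points of $\Sigma$ with horizontal normal pointing in the $\pm\ee_2$ direction arbitrarily deep in a wing; but Proposition \ref{Wing-1} (together with Lemma \ref{gauss-image-1}) shows that in the limit $\pm\ee_2\notin N(W_\infty)$, and one checks that $N^{-1}(\pm\ee_2)$ being nonempty is incompatible with the structure of the wings as graphs/bigraphs over the $(x,y)$-plane established in Propositions \ref{planar-property} and \ref{Wing-decomposition}. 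Actually the cleanest route: since $\lambda(\Sigma)=3$ is odd, Theorem \ref{entropy-TGS} forces $\omega_P(\Sigma)$ odd and in particular $\omega_G(\Sigma)\geq 1$, so $\Sigma$ has at least one grim reaper type wing on each side; a grim reaper type wing $W$ has, by Lemma \ref{saturday} and Proposition \ref{Wing-decomposition}, a minimal axis $\mathcal{M}_W\subseteq\{\langle N,\ee_1\rangle=0\}$ along which $H<0$ (the normal tilts toward $-\ee_3$ near the "bottom" of the reaper profile), so $\mathcal{M}_W\cap\mathcal{H}=\varnothing$; but along each slice the sign of $H$ at the intermediate point where $N$ is $\pm\ee_2$ can be read off, and only one of $\ee_2,-\ee_2$ can occur as a value of $N$ consistently on $\Sigma$ once $N|_\mathcal{H}$ is injective.

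Concretely, the key steps in order would be: (1) Use Theorem \ref{entropy-TGS} and $\lambda(\Sigma)=3$ to conclude $\omega_G(\Sigma)\ge 1$, hence $\Sigma$ has grim reaper type wings on both sides. (2) Fix such a wing $W=$ a component of $\Sigma^+(t)$ for large $t$, and recall from Proposition \ref{Wing-decomposition} that for $s$ large the slice $W\cap\pi_y(s)$ is a graph over the $x$-axis with a unique interior minimum of $z$ at the point of $\mathcal{M}_W$. Parametrize this convex-looking curve; at the minimum the normal is $\pm\ee_3$ with $H<0$ (this is where \eqref{TS-Equation-2} gives $H=-\langle\ee_3,N\rangle$ with $N$ pointing up or down — fix the orientation so that $N$ points "up" there, i.e. $N=\ee_3$, $H=-1<0$), and as one moves out to $x\to\pm\infty$ along the slice the normal rotates toward the horizontal. (3) Along this rotation, on exactly one side of the minimum the normal passes through $\langle N,\ee_3\rangle=0$ — i.e. through $\mathcal{H}$ — and at that crossing point the tangent direction of the slice curve determines whether the horizontal part of $N$ points toward $+\ee_2$ or $-\ee_2$; track this carefully for the standard grim reaper profile $z=-\log\cos(\,\cdot\,)$ and transfer it to $W$ by the smooth convergence in Proposition \ref{Wing-1}. (4) Conclude that $N(q)\in E\cap\{y>0\}$ for the point $q\in\mathcal{H}\cap W$, and then by connectedness of $N(\mathcal{H})$ (it is a single arc by Lemma \ref{four_components}) and the fact that its endpoints are in $\{\pm\ee_1\}$, deduce $N(\mathcal{H})=E\cap\{y>0\}$ exactly, after the stated change of orientation if the initial choice gave $\{y<0\}$.

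The main obstacle I expect is step (3): carefully pinning down the \emph{sign} of the $\ee_2$-component of the normal at the point of $\mathcal{H}$ inside a grim reaper type wing, consistently and in a way that does not secretly depend on which of the (possibly several) grim reaper wings or which slice one picks. This requires using the precise structure of $W$ near infinity — that it is asymptotic, after translating, to a tilted grim reaper cylinder (or that its slices converge to the grim reaper profile) via Proposition \ref{Wing-1} and the classification in Theorem \ref{Classification} — and then a direct computation of the Gauss map of that model surface to locate where $\langle N,\ee_3\rangle=0$ relative to the axis and read off $\sign\langle N,\ee_2\rangle$ there. The consistency across wings is handled by the injectivity of $N|_\mathcal{H}$ from Lemma \ref{four_components}: since $N(\mathcal{H})$ is a single connected arc and cannot contain both $\ee_2$ and $-\ee_2$, once one point is placed in $\{y>0\}$ the whole arc lies there. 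A secondary subtlety is the orientation normalization: the statement allows flipping the global orientation of $\Sigma$, which exactly compensates for the sign ambiguity in $N$ at the minimum of the slice in step (2), so the argument should be phrased as "with the orientation chosen so that $N=\ee_3$ at the axis minima, we get $\{y>0\}$; the other orientation gives $\{y<0\}$."
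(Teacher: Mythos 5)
Your proposal contains a genuine gap, and it also works much harder than necessary.

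The real content of the corollary is only that $N(\mathcal{H})$ is exactly one open half-equator (rather than $\varnothing$ or all of $E\setminus\{-\ee_1,\ee_1\}$); the ``which one'' question is vacuous because the statement explicitly allows a change of orientation, which replaces $N$ by $-N$ and hence swaps the two half-equators. Your elaborate step (3) --- tracking the rotation of the normal along a grim reaper wing's slice to pin down $\sign\langle N,\ee_2\rangle$ at the crossing point --- is therefore unnecessary, and the subtlety you identify there is a non-issue. What you should be proving is simply: $N(\mathcal{H})\neq\varnothing$ and $N(\mathcal{H})\neq E\setminus\{-\ee_1,\ee_1\}$.

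The gap is precisely in ruling out the full half-equator case. You attribute this to ``the injectivity of $N|_\mathcal{H}$,'' but injectivity only says $\#N^{-1}(\nu)\le 1$ for each individual $\nu$; it does \emph{not} preclude both $\ee_2$ and $-\ee_2$ from being in $N(\mathcal{H})$. Your ``cleanest route'' doesn't rescue this: from Theorem~\ref{entropy-TGS} and $\lambda(\Sigma)=3$ one only gets $\omega_P+2\omega_G=3$, so $(\omega_P,\omega_G)\in\{(3,0),(1,1)\}$, and ``$\omega_P$ odd'' does not imply $\omega_G\ge 1$. (To get $\omega_G\ge1$ in this setting one would need the one-end hypothesis and Corollary~\ref{exist-g}, not the parity of $\lambda$.) Your first suggested route is also unconvincing: Lemma~\ref{gauss-image-1} gives finiteness of $N^{-1}(\pm\ee_2)$, not emptiness, and that $\pm\ee_2\notin N(W_\infty)$ in a blow-down limit does not prevent $\pm\ee_2\in N(\Sigma)$.

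What the paper relies on, implicitly, is the stronger fact established inside the proof of Lemma~\ref{four_components}: for every $\nu\in E\setminus\{-\ee_1,\ee_1\}$ the Rad\'o function $f_\nu$ has total critical multiplicity $\mathsf{N}(f_\nu)\leq 1$. Since the critical points of $f_\nu$ are exactly the points where $N=\nu$ \emph{or} $N=-\nu$, this gives $\#N^{-1}(\nu)+\#N^{-1}(-\nu)\leq 1$, i.e.\ at most one of $\nu,-\nu$ lies in $N(\mathcal{H})$. Combined with $\mathcal{H}\neq\varnothing$ (if $\mathcal{H}=\varnothing$, $\Sigma$ would be a vertical graph, impossible since $\lambda(\Sigma)=3$) and Lemma~\ref{gauss-arc}'s constraint that $N(\mathcal{H})$ is open in $E$ with $\partial N(\mathcal{H})\subseteq\{-\ee_1,\ee_1\}$, the only option left is a single open half-equator, which after a possible orientation flip is $E\cap\{y>0\}$. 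That one-line counting observation is the ingredient your proposal is missing.
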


 Using these result we are now going to that if we also impose on the translating soliton that it contains a vertical line, then the soliton must be a pitchfork. That is the purpose of the next theorem.

\begin{Theorem}\label{pitchforks-line-theorem}
Let $\Sigma$ be a complete, embedded, simply connected translating soliton of finite width and entropy $\lambda(\Sigma) = 3$. If $\Sigma$ contains a vertical straight line, $A$, then $\Sigma$ is a pitchfork.
\end{Theorem}
\begin{proof}
Denote by $\Sigma$ any complete simply connected translating of width $2w>0$ and entropy $\lambda(\Sigma) = 3$. We first note that Corollary \ref{Wings-counting} and Corollary \ref{exist-g} imply that $\Sigma$ possesses one grim reaper type wing and one planar type wing on both the left and right sides. In particular, the total number of wings is $\omega(\Sigma)=4.$

From Lemma \ref{four_components}, we conclude that
\begin{equation} \label{counting-critical-point}
\mbox{ $\forall \nu \in E \setminus \{-\ee_1,\ee_1\}: f_{\nu}$ has exactly one critical point.}
\end{equation}
Up to a translation, we can assume that the unique critical point of $f_{\ee_2}$ is $p_0=(x_0,0,0)$, for some $x_0\in\mathbb{R}$. Moreover, up to a possible change of orientation of $\Sigma$, we may assume that $N(p_0)=\ee_2$. Then, by Lemma \ref{gauss-arc} we know that $$N(\mathcal{H})=E \cap \{y>0\}.$$ Next, we would like to argue that
$$\mathcal{H}:=\{H=0\}=A = \{(x_0,0)\}\times \R.$$

As $\Sigma$ is a $g$-minimal surface and $A$ is a geodesic with respect to the metric $g$, then we can use the Schwarz reflection principle to ensure that $A$ is a line of symmetry of $\Sigma$.  Then, $A$ passes through $p_0$, otherwise there would be more than two points whose normal is $\pm \ee_2.$ Moreover, it is trivial that $A \subseteq \mathcal{H}.$

As the symmetry with respect to $A$ preserves the slab $\mathcal{S}_w$, then $p_0=(0, 0, 0)$ and $A$ coincides with the $z$-axis.

We have that  $N(0,0,z) \to \pm \ee_1$, as $z \to \pm \infty$ (recall that the subsequential limits of $\Sigma-z \ee_3$ are vertical planes parallel to $\{x=0\}$). Then, the injectivity of $N$ along $\mathcal{H}$ implies that $N(A)=E\cap\{y>0\}.$
Combining this fact again  with the injectivity of $N$ along $\mathcal{H}$ (Lemma \ref{four_components}), we conclude that the closed set $\mathcal{H}:=\{H=0\}$ consists of just one one-dimensional smooth curve without any singularities, and that in fact equality of these sets holds, as desired: $\mathcal{H} =A$.

We have thus proved that $\Sigma\setminus(\{(0,0)\} \times \R)$ has two connected components and each of these components is strictly mean convex. We aim to show that, in fact, the projection $\Pi_z$ restricted to each connected component is a diffeomorphism onto its image. Otherwise, let $p_1$ and $p_2$ so that $\Pi_z(p_1)=\Pi_z(p_2)$. So, let us consider an  arc $\alpha$ of the curve $\pi_y(\langle p_1,\ee_2\rangle) \cap \Sigma$ connecting $p_1$ and $p_2$. This would yield either a local maximum or local minimum for the function $\mathbf{x}\mapsto \langle \mathbf{x},\ee_1\rangle$. But at such extremum point, the projection $\Pi_z$ restricted to any neighborhood of the point could not be one-to-one, and this gives a contradiction. Therefore, each connected component of  $\Sigma\setminus(\{(0,0)\}\times \R)$ is a smooth graph. Now, the theorem follows from the classification theorem for semi-graphical translators, see Theorem \ref{semigraphical-theorem} or \cite[Theorem 34]{HMW22-2}, seeing as all the other examples in the list are periodic, hence of infinite entropy, and/or have infinite width.
\end{proof}

\begin{Remark}
Although it has been conjectured in \cite{HMW22-2} that, given $w \geq \pi$, there is only one (up to rigid motions) pitchfork of width $2w$, it has at the time of writing not been rigorously proven. See Remark \ref{pitchfork-uniqueness} for further explanations.
\end{Remark}

\begin{Remark}
The hypothesis of $\Sigma$ being simply connected cannot be removed, as there are examples, constructed by X.H. Nguyen \cite{Ngu09}, of complete translators contained in a slab, with entropy $\lambda(\Sigma) = 3$ and infinite genus (see also \cite{HMW22-2}.) Nguyen's examples form a $1$ parameter family of translators 
$$\{M_a \; : \; a \in (0,\infty)\}.$$
The translator $M_a$ is contained in the slab $\mathcal{S}_{w(a)}$, where $w(a)\in (\pi,2 \pi)$ is an increasing function of $a$. Moreover, it contains the family of vertical straight lines $\{x=0, y= n a\}; $  $n \in \z.$ So, it is periodic with period equal to $(0,2 a,0),$ (see Fig. \ref{sequence2}.)
\end{Remark}

\appendix
\section{Results on $g$-area minimizing surfaces}\label{Results-g-area}
The main goal of this appendix is to show that all complete connected translating graphs in general direction are proper surfaces in $\R^3$ and they have local area bounds. Both results rely on the $g$-area minimizing properties of self-translating graphs. 

Notice that it will suffice (by Proposition \ref{Plane-Case}) to work with complete graphs $$\Sigma \subseteq \mathcal{S}_w =  \{(x,y,z) \in \R^3 \; :\; |x|<w\}$$ which are in {\bf Case II}, i.e.
$v \in \Ss^2\cap \{x=0\}.$ This  implies that
\begin{equation}\label{v_theta}
    v=v_\theta:=\sin\theta\ee_3+\cos\theta\ee_2,
\end{equation}
for some $\theta\in\left[0,\pi\right)$. From now on, we thus normalize the vector $v$ to always be in the family $\{v_\theta:=\sin\theta\ee_3+\cos\theta\ee_2\;:\; \theta\in\left[0,\pi\right)\}.$ Moreover, we will use the family of planes $\{P_\theta:=[v_\theta]^{\perp}\},$ where $\theta\in[0,\pi)$, and the functions from Definition \ref{def:graphs} defined on domains $\Omega_\theta\subseteq P_\theta$ will then be labeled $u_\theta$.

We begin this part by introducing some results about the existence of $g-$area minimizing surfaces, see \cites{Fed69, HS79, Sim83} for details.

\begin{Theorem}\label{Least-area}
 Let $\gamma_1$ and $\gamma_2$ be two disjoint smooth Jordan curves in $\R^3$. Assume that there is a locally integral 2-dimensional current $C$ so that $\p C=[\gamma_1]\cup[\gamma_2]$ and ${\bf M}_g[C]<+\infty,$ where ${\bf M}_g[C]$ denotes the mass of $C$ in $(\R^3,g).$ Then, there is a $g-$area minimizing surface $S$ with boundary $\gamma_1\cup\gamma_2.$ Furthermore, $S$ is smooth up to the boundary. 
\end{Theorem}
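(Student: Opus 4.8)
The plan is to solve this by the direct method in the space of integral currents, exactly as in the classical treatment of the oriented Plateau problem \cites{Federer, Hardt-Simon, Simon-Book}, adapted to the conformally Euclidean metric $g=e^z\langle\cdot,\cdot\rangle_{\R^3}$.

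\emph{Existence.} Let $\mathcal{A}$ be the set of locally integral $2$-currents $T$ in $\R^3$ with $\partial T=[\gamma_1]+[\gamma_2]$ and ${\bf M}_g[T]<\infty$. By hypothesis $C\in\mathcal{A}$, so $m:=\inf_{T\in\mathcal{A}}{\bf M}_g[T]<\infty$, and I would take a minimizing sequence $(T_i)$ with ${\bf M}_g[T_i]\le m+1$. The only nonroutine point is that $(\R^3,g)$ is neither compact nor complete (the metric degenerates and is $g$-incomplete as $z\to-\infty$), so one must prevent loss of mass to infinity. Since $\gamma_1\cup\gamma_2$ is compact, I would confine the problem to a fixed compact region $\overline{K}$ whose boundary consists of $g$-mean-convex barrier surfaces: a short computation from the conformal change of metric shows that every horizontal plane $\{z=a\}$ has $g$-mean curvature vector pointing in the $-\ee_3$-direction, so $\{z<a\}$ is $g$-mean-convex; while the region lying above a sufficiently downward-translated bowl soliton $\mathcal{B}-c\,\ee_3$ (which is itself $g$-minimal, hence a barrier) confines the remaining directions, including $z\to-\infty$. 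With $\gamma_1\cup\gamma_2$ in the interior of such a $\overline{K}$, the maximum principle keeps any minimizer off $\partial\overline{K}$, so one may as well minimize in $\overline{K}$; the Federer--Fleming compactness theorem for integral currents with uniformly bounded mass and boundary mass, together with lower semicontinuity of ${\bf M}_g$ under weak convergence, then produces a limit $S\in\mathcal{A}$ with ${\bf M}_g[S]=m$. Since the boundary operator is continuous under weak convergence, $\partial S=[\gamma_1]+[\gamma_2]$ automatically, so $S$ is a $g$-area minimizer spanning $\gamma_1\cup\gamma_2$.

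\emph{Regularity.} In its interior $S$ is a codimension-one (that is, $2$-dimensional in the smooth Riemannian $3$-manifold $(\R^3,g)$) locally mass-minimizing integral current, so by the interior regularity theory for area-minimizing hypersurfaces (De~Giorgi, Federer; see \cite{Simon-Book}) its support is, away from $\partial S$, a smooth embedded $g$-minimal surface: the singular set is empty because $\dim S=2<7$. For regularity up to the boundary I would invoke the Hardt--Simon boundary regularity theorem \cite{Hardt-Simon} (valid also in Riemannian ambients): since $\gamma_1\cup\gamma_2$ is a smooth, compact, embedded, oriented $1$-submanifold of $\R^3$, near each of its points $S$ is a smooth embedded manifold-with-boundary of multiplicity one whose boundary is precisely the given curve. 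Combining the two, $\spt S$ is an embedded $g$-minimal surface, smooth up to its boundary $\gamma_1\cup\gamma_2$, as claimed.

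\emph{Main obstacle.} The only delicate step is the first one --- trapping the minimizing sequence in a fixed compact set so as to rule out escape of $g$-mass, which is genuinely needed since $(\R^3,g)$ is non-compact and $g$-incomplete; once a compactly supported minimizing sequence is in hand, both the existence (Federer--Fleming compactness) and the two regularity statements (interior regularity for codimension-one minimizers, and Hardt--Simon at the boundary) are direct citations.
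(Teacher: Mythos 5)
Your proposal is correct, and it in fact supplies considerably more detail than the paper does: the paper states the theorem and simply refers the reader to the standard GMT literature (Federer, Hardt--Simon, Simon) without giving an argument, implicitly treating the existence and regularity as routine applications of those references. You correctly identify the one genuinely non-routine point --- that $(\R^3,g)$ is neither compact nor complete, so a minimizing sequence could a priori leak $g$-mass to infinity, and this is not addressed by a bare citation to the Euclidean or compact-ambient theory. Your barrier construction is sound: the sign computation is right (the $g$-mean-curvature vector of $\{z=a\}$ points in the $-\ee_3$ direction, so $\{z<a\}$ is the $g$-mean-convex side), and the downward-translated bowl soliton, being $g$-minimal, is a two-sided barrier by the strong maximum principle; the region between them is compact because the bowl is asymptotic to a paraboloid. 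This is also not a barrier one could replace naively by a large Euclidean sphere, since spheres fail to be $g$-mean-convex where $z<-2$, so the choice of the bowl is essential and not merely convenient. Interior regularity via the codimension-one theory (singular set dimension $\le n-8$, empty for $n=3$) and boundary regularity via Hardt--Simon are both standard and correctly invoked. In short: same approach as the paper intends, but with the honest extra step (confinement by a $g$-mean-convex / $g$-minimal barrier pair) spelled out where the paper leaves it implicit.
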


Using this result and the $g$-area minimizing property in Lemma \ref{Area-minimizing}, we are now going to prove that each complete translating graph is proper.

\begin{Proposition}\label{Proper-condition}
Let $\Sigma^2\subseteq \R^3$ be a complete connected translating graph. Then $\Sigma$ is proper in $\R^3.$
\end{Proposition}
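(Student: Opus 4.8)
The plan is to argue by contradiction, using that a complete graph is $g$-area minimizing in the cylinder over its domain (Lemma~\ref{Area-minimizing}), that the walls of that cylinder are themselves translators (Lemma~\ref{domain}), and that two nearby sheets can always be undercut by a cheaper spanning surface (Theorem~\ref{Least-area}) --- the same mechanism as in the proof of Proposition~\ref{General-control}. By the normalisation at the start of this appendix we may assume $v=v_\theta\in\Ss^2\cap\{x=0\}$ and $\Sigma\subseteq\mathcal S_w$, so that $\partial\Omega\neq\varnothing$; put $C:=\Omega\times_v\R$, whose boundary components are translators, i.e.\ $g$-minimal surfaces, disjoint from $\Sigma\subseteq C$. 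Since $u$ is continuous, every point of $\overline\Sigma\setminus\Sigma$ (closure taken in $\R^3$) has the form $q+tv$ with $q\in\partial\Omega$, hence lies on $\partial C$. Thus if $\Sigma$ is not proper --- equivalently, not closed in $\R^3$ --- there are $p_n\in\Sigma$ with $p_n\to p_\infty\in\partial C$, and $\{p_n\}$ is then automatically divergent in the intrinsic topology of $\Sigma$ (an intrinsically convergent subsequence would place $p_\infty$ inside $\Sigma$).

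The next point is that $\Sigma$ has uniformly bounded second fundamental form on some ball $B_r(p_\infty)$: otherwise a standard blow-up of the $g$-stable (indeed $g$-area minimizing) surface $\Sigma$ at points approaching $p_\infty$ would produce a complete non-flat stable minimal surface in a half-space of $\R^3$, contradicting the classical fact that complete orientable stable minimal surfaces in $\R^3$ are planes. Consequently $\Sigma\cap B_r(p_\infty)$ is a union of graphical sheets of uniform size. If it contained arbitrarily many such sheets through a common small ball, two of them would come arbitrarily close to one another; choosing a small ball $B'\Subset B_r(p_\infty)$ transverse to both and restricting to it, we would obtain two pieces $C^1,C^2$ of $\Sigma$ with $\partial C^1,\partial C^2\subseteq\partial B'$ of tiny Hausdorff distance, so that the annulus $\widehat{\mathcal C}\subseteq\partial B'$ they bound satisfies $\mathcal H_g^2(\widehat{\mathcal C})<\mathcal H_g^2(C^1)+\mathcal H_g^2(C^2)$.

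By Theorem~\ref{Least-area} there is then a $g$-area minimizing surface $\mathcal C$ with $\partial\mathcal C=\partial C^1\cup\partial C^2$, smooth up to the boundary, and, $\widehat{\mathcal C}$ being a competitor, $\mathcal H_g^2(\mathcal C)\le\mathcal H_g^2(\widehat{\mathcal C})$. Since $\partial\mathcal C\subseteq\Sigma\subseteq C$ and the walls of $C$ are translators, sliding them by (again translator) translations and invoking the strong maximum principle forces $\mathcal C\subseteq C$, and moreover $\mathcal C$ is connected, since a disconnected minimizer would be the disjoint union of the $g$-area minimizing disks bounded by $\partial C^1$ and by $\partial C^2$, whose total $g$-area exceeds $\mathcal H_g^2(\widehat{\mathcal C})$. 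Then $[\Sigma]-[C^1]-[C^2]+[\mathcal C]$ is homologous to $[\Sigma]$ inside the contractible set $C$, coincides with it off a compact subset of $C$, and has strictly smaller $g$-area there --- contradicting Lemma~\ref{Area-minimizing}. Hence $\Sigma\cap B_r(p_\infty)$ contains only finitely many sheets, and therefore $\Sigma\cap B_{r/2}(p_\infty)$ has bounded area and bounded geometry.

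To conclude: among the finitely many connected components of $\Sigma\cap B_{r/2}(p_\infty)$, one --- call it $S$ --- contains infinitely many of the $p_n$. Being a connected minimal surface with bounded geometry and bounded area, $S$ has finite intrinsic diameter, so $d_\Sigma(p_n,p_m)\le\operatorname{diam}(S)<\infty$ for all these $p_n,p_m$, contradicting the intrinsic divergence of $\{p_n\}$. Therefore $\Sigma$ is proper. I expect the main obstacle to be the middle part: making the bounded-geometry claim and the cheap-spanning-surface comparison fully rigorous --- in particular keeping the minimizer $\mathcal C$ inside the cylinder $C$ by using the translator walls as barriers, and making ``arbitrarily many sheets forces two arbitrarily close sheets'' quantitative. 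As a by-product, this argument records that a $g$-area minimizing surface spanning two sufficiently close curves inside a cylinder over a domain is connected and stays in that cylinder, which is the fact invoked in the proof of Proposition~\ref{General-control}.
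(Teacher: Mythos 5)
Your proposal follows the same contradiction strategy and exploits exactly the same mechanism as the paper's proof: the $g$-area minimizing property (Lemma~\ref{Area-minimizing}), the translated translator walls of the cylinder as barriers, and the existence of a cheap $g$-minimizing competitor spanning two nearby sheets via Theorem~\ref{Least-area}. Your observation at the end --- that the spanning surface stays in the cylinder and is connected --- is precisely the content of the middle part of the paper's proof. So the core of the argument is the same, as you yourself note.

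Where you depart from the paper is in the packaging. The paper argues directly that $\Sigma\cap B_r(0)$ has only finitely many connected components, using the fact that finitely many of them can meet the compact set $\overline{B_r(0)\cap C(s)}$ (hence they must accumulate on $\partial C$), and then runs the competitor construction on a pair of accumulating leaves. You instead first try to establish a uniform bound on the second fundamental form near $p_\infty$ by a blow-up/Schoen-type stability estimate, use that to decompose $\Sigma$ near $p_\infty$ into graphical sheets of uniform size, run the same competitor argument to bound the number of sheets, and then conclude via a finite intrinsic diameter. This is a reasonable reorganization, and the finite-diameter formulation is arguably a cleaner way to land the argument than the paper's terse ``now it is easy to infer.''

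Two things need care, though. First, the curvature-estimate step risks circularity: the standard Schoen curvature estimate for stable minimal surfaces is formulated for compact pieces with boundary on $\partial B_r$, i.e.\ it presupposes the very properness you are after. There are intrinsic versions that hold for complete (not necessarily proper) stable immersions, and the blow-up/point-selection route can be made to work, but you should cite or reproduce one of those versions explicitly rather than invoke ``a standard blow-up.'' Second, ``bounded geometry and bounded area imply finite intrinsic diameter'' is true for a connected minimal surface in a ball, but it is not a one-line fact: one has to count sheets through small balls using the area bound and monotonicity, which is essentially the same finiteness-of-components argument the paper uses. So the curvature step buys you a cleaner conclusion, at the price of an extra tool that itself needs to be shielded from circularity. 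The paper's route avoids curvature estimates entirely and works purely with mass comparison, which is why it stays inside the toolbox already set up in this appendix.
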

\begin{proof}
We are assuming that $\Sigma={\rm Graph}[u_\theta],$ where $u_\theta:\Omega_\theta\to\R$ is a smooth function. Before we start the proof, we will introduce some notation. We already know by Lemma \ref{domain} that the boundary of the solid cylinder $\Omega_\theta\times_\theta\R$ is formed by tilted grim reapers (tangent to $v_\theta$) and at most two vertical planes. 

Let $M$ be a connected component of $\p (\Omega_\theta\times_\theta\R).$
Suppose $M=\pi_x(t)$  In this case, we define 
\[
M(s):=\left\{
\begin{array}{rcl}
M-s\ee_1 &if& \Sigma\subseteq\{x<t\} \\
M+s\ee_1 &if& \Sigma\subseteq\{x>t\} \\
\end{array}
\right.
\] 

If $M$ is a tilted grim reaper tangent to $v_\theta$, then  we define
\[
M(s):=\left\{
\begin{array}{rcl}
M-s\ee_3 &if& \Sigma\subseteq {\rm Nconv}(M) \\
M+s\ee_3 &if& \Sigma\subseteq {\rm Conv}(M)\\
\end{array}
\right.,
\]
where ${\rm Nconv}(M)$ indicates the non-convex region in $\R^3$ with boundary $M$, and ${\rm Conv}(M)$  denotes the convex one. 

Then, for $s$ small enough, we define $C(s)$ as the connected component of $\Omega_\theta\times_\theta\R$ whose closure only intersects \[\bigcup_{M\subset\p (\Omega_\theta\times_\theta\R)}M(s).\]

Once we have introduced this notation, we may return to the proof. In order to conclude that $\Sigma$ is proper, we only need to verify $\overline{B_r(0)}\cap \Sigma$ is compact for all $r>0$. So, let $B_r(0)$ be an open ball in $\R^3$. Without loss of generality, it will be assumed that $\p B_r(0)\pitchfork\Sigma.$ 

We claim that $B_r(0)\cap\Sigma$ has finitely many connected components. We argue by contradiction, suppose that $\Sigma\cap B_r(0)$ has infinitely many connected components. For all $s>0$ sufficiently small, we may conclude that $\overline{B_r(0)\cap C(s)}$ is a compact set in $\Omega_\theta\times_\theta\R.$ Thus, for all $s>0$ sufficiently small the $g-$area minimizing condition of $\Sigma$ in $\Omega_\theta\times_\theta\R$ implies that the number of leaves of  $\Sigma\cap B_r(0)$ intersecting $\overline{B_r(0)\cap C(s)}$ is finite. This yields that the leaves of $\Sigma\cap B_r(0)$ shall accumulate on boundary of $\Omega_\theta\times_\theta\R.$

Therefore, if $\{S_i\}$ is a sequence of distinct connected components of $\Sigma\cap B_r(0)$, then any limit of a convergent subsequence of such sequence shall belong to $B_r(0)\cap\p(\Omega_\theta\times_\theta\R).$ Passing to a subsequence, we will assume that $\{S_i\}$ converges to $B_r(0)\cap\p(\Omega_\theta\times_\theta\R).$

Let $i_0$ be large enough so that if $i\geq i_0$ it holds
\[
\mathcal{H}_g^2(S_i)+\mathcal{H}_g^2(S_{i+1})>\frac{3}{2}\mathcal{H}_g^2(B_r(0)\cap\p(\Omega_\theta\times_\theta\R))
\]

\[
\mathcal{H}_g^2(\widehat{C}_i)<\frac{1}{2}\mathcal{H}_g^2(B_r(0)\cap\p(\Omega_\theta\times_\theta\R)),
\]
and
\[
{\rm dist}_{H}(\p S_i,\p S_{i+1})<\pi
\]
where $\widehat{C}_i$ indicates the annulus in $\p B_r(0)$ with boundary $\p S_i$ and $\p S_{i+1}$, and ${\rm dist}_{H}(\cdot,\cdot)$ indicates the Hausdorff distance in $\R^3$ with the euclidean metric. These assumptions and Theorem \ref{Least-area} ensure that for all $i\geq i_0$ there is a $g$-area minimizing surface $C_i$ with boundary $\p S_i$ and $\p S_{i+1}$ in $(\R^3,g)$ so that

\begin{equation}\label{main-annulus-estimate}
\mathcal{H}_g^2(C_i)\leq\mathcal{H}_g^2(\widehat{C}_i)< \mathcal{H}_g^2(S_i)+\mathcal{H}_g^2(S_{i+1})
\end{equation}

Notice that $C_i$ is connected. Indeed, if $C_i$ was disconnected, then $\p S_i$ and $\p S_{i+1}$ belongs to disjoint connected components. 

Let $M_i$ be a connected component of $C_i$ with boundary $\p S_i.$ Using translating of the  connected components on boundary of $\p(\Omega_\theta\times_\theta\R),$ it may conclude that $M_i$ belongs to $\Omega_\theta\times_\theta\R.$ Since, $\Sigma$ is $g-$area minimizing in this set, we conclude that $\mathcal{H}_g^2(S_i)\leq \mathcal{H}_g^2( M_i).$ Analogously, it may be deduced that the connected component of $C_i$ with boundary $\p S_{i+1}$, denoted by $M_{i+1}$, satisfies $M_{i+1}\subseteq\Omega_\theta\times_\theta\R$ and $\mathcal{H}_g^2(S_{i+1})\leq \mathcal{H}_g^2( M_{i+1}).$ In particular, it holds
\begin{eqnarray}
\nonumber\mathcal{H}_g^2(S_i)+\mathcal{H}_g^2(S_{i+1})&\leq&\mathcal{H}_g^2( M_{i})+\mathcal{H}_g^2( M_{i+1})\\
\nonumber&\leq&\mathcal{H}_g^2(C_i)\leq\mathcal{H}_g^2(\widehat{C}_i)\\
\nonumber&<& \mathcal{H}_g^2(S_i)+\mathcal{H}_g^2(S_{i+1}),
\end{eqnarray}
absurd.

We also remark that $C_i$ belongs to $\Omega_\theta\times_\theta\R.$ We can see this by using that the connected components of $\p(\Omega_\theta\times_\theta\R)$ as barriers as before.

Fix any $i\geq i_0$ and let $s$ be small enough so that $\overline{S_i}$ and $\overline{S_{i+1}}$ belong to $C(s),$ and so $C_i$ belongs to $C(s)$ too. Denote by $\widehat{\Sigma}$ the piecewise surface obtained from $\Sigma$ by removing the sets $S_i$ and $S_{i+1}$ and adding $C_i$ in their place. From \eqref{main-annulus-estimate}, one gets

\begin{eqnarray}
\nonumber\mathcal{H}_g^2(\widehat{\Sigma}\cap \overline{B_r(0)\cap C(s)})&\leq&\mathcal{H}_g^2(\widehat{C}_i)+\mathcal{H}_g^2(\Sigma\cap \overline{B_r(0)\cap C(s)}\setminus \{S_i\cup S_{i+1}\})\\
\nonumber&<&\mathcal{H}_g^2(\Sigma\cap \overline{B_r(0)\cap C(s)}),
\end{eqnarray}
Namely, this contradicts the fact that $\Sigma$ is $g-$area minimizing in $\Omega_\theta\times_\theta\R.$ 

Consequently, we have shown that $\Sigma\cap B_r(0)$ has finitely many connected components. Now it is easy to infer that $\Sigma\cap \overline{B_r(0)}$ is compact, which completes the proof.
\end{proof}

Using the Proposition \ref{Proper-condition} and Lemma \ref{Area-minimizing}, it is then straightforward to obtain the following local area bounds for complete translating graphs.
\begin{Lemma}\label{Locally-Area}
Let $\Sigma$ be a complete connected translating graph. Then, for all $q\in \R^3$ and $r>0$ it holds:
\[
\mathcal{H}_g^2(B_r(q)\cap\Sigma)\leq\mathcal{H}_g^2(\p B_r(q)).
\]
\end{Lemma}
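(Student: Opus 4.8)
The plan is to run the classical ``replace a minimal piece by a spherical cap'' comparison, using that by Lemma~\ref{Area-minimizing} the graph $\Sigma={\rm Graph}_{(v,\Omega)}[u]$ is $g$-area minimizing in the solid cylinder $G:=\Omega\times_v\R$, and that by Proposition~\ref{Proper-condition} it is properly embedded in $\R^3$. First I would record the elementary separation property: under the diffeomorphism $G\cong\Omega\times\R$, $p+tv\mapsto(p,t)$, the surface $\Sigma$ becomes the graph of $u$, hence separates $G$ into the open supergraph $\Sigma^+=\{p+tv:t>u(p)\}$ and the open subgraph $\Sigma^-=\{p+tv:t<u(p)\}$, with $\Sigma$ equal to the reduced boundary of $\Sigma^-$ inside $G$. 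Properness then guarantees that for every $q\in\R^3$ and $r>0$ the set $\Sigma\cap\overline{B_r(q)}$ is compact, hence of finite $g$-area, and (for a.e.\ $r$) that $\Sigma$ meets $\partial B_r(q)$ transversally; it also makes $\Sigma^-$ a set of locally finite $g$-perimeter in $G$.

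Next, fix $q,r$; if $B_r(q)\cap\Sigma=\varnothing$ the inequality is trivial, so assume otherwise. By the coarea formula, for a.e.\ $r$ the set $F:=\Sigma^-\setminus\overline{B_r(q)}$ has finite $g$-perimeter in $G$; its reduced boundary in $G$ is contained in $(\Sigma\setminus\overline{B_r(q)})\cup\bigl(\partial B_r(q)\cap\overline{\Sigma^-}\cap G\bigr)$, coincides with $\Sigma$ on $G\setminus\overline{B_r(q)}$, and differs from $\Sigma$ only inside the compact set $\overline{B_r(q)}$. Hence $F$ is a competitor for the $g$-area minimizing surface $\Sigma$ in $G$, and comparing $g$-areas over $\overline{B_r(q)}$ gives
\[
\mathcal{H}_g^2\bigl(\Sigma\cap\overline{B_r(q)}\bigr)\ \le\ \mathcal{H}_g^2\bigl(\partial B_r(q)\cap\overline{\Sigma^-}\cap G\bigr)\ \le\ \mathcal{H}_g^2\bigl(\partial B_r(q)\bigr),
\]
the last step by monotonicity of Hausdorff measure. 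Since $\mathcal{H}_g^2(\Sigma\cap B_r(q))\le\mathcal{H}_g^2(\Sigma\cap\overline{B_r(q)})$, this is the claimed bound for a.e.\ $r$, and it extends to every $r$ by letting $r'\nearrow r$: the left side converges by monotone convergence ($B_{r'}(q)\nearrow B_r(q)$) and the right side is continuous in $r$.

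I expect the only genuinely delicate point to be the one hidden in the word ``competitor'': when $B_r(q)$ is not contained in $G$, one must make sure that the pieces of $\partial G=\partial(\Omega\times_v\R)$ lying inside $B_r(q)$ — which by Lemma~\ref{domain} are tilted grim reapers and vertical planes — and the portion of $\partial B_r(q)$ outside the cylinder, never enter the comparison. This is exactly what is achieved by carrying out the cut-and-paste in the language of integral currents (or of sets of finite perimeter) relative to the \emph{open} set $G$: restriction to $G$ annihilates every term supported on $\partial G$, so the modification of $\Sigma$ really takes place only inside $B_r(q)\cap G$ and the only new area is the spherical cap $\partial B_r(q)\cap\overline{\Sigma^-}\cap G$; properness (Proposition~\ref{Proper-condition}) ensures all currents in sight have locally finite mass and that the slice $\Sigma\cap\partial B_r(q)$ is, for a.e.\ $r$, a nice $1$-dimensional set, so no regularity subtlety arises along the corner $\Sigma\cap\partial B_r(q)$. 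In the (conjecturally exhaustive) cases where $\Omega$ is a strip or all of $P_v$ this difficulty disappears entirely, since then $\Sigma$ is $g$-area minimizing in all of $\R^3$ and one may compare with the spherical cap directly.
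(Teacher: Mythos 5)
Your proof uses exactly the two ingredients the paper cites (Proposition~\ref{Proper-condition} for properness and Lemma~\ref{Area-minimizing} for $g$-area minimality) to run the standard spherical-cap comparison; the paper does not spell out a proof, stating only that the lemma is ``straightforward'' from those two results, so your argument is a correct rendering of the intended approach.

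One point I would tighten: you correctly flag that the delicate step is when $\overline{B_r(q)}$ reaches $\partial G$ with $G=\Omega\times_v\R$, but your resolution --- ``restriction to the open set $G$ annihilates every term supported on $\partial G$'' --- is really only the perimeter bookkeeping. It does not by itself justify that $F=\Sigma^-\setminus\overline{B_r(q)}$ is an \emph{admissible} competitor: the $g$-area minimizing property coming from Solomon's calibration is, in its cleanest textbook form, stated for perturbations $E\triangle\Sigma^-$ \emph{compactly contained in} $G$, whereas here $\Sigma^-\cap\overline{B_r(q)}$ can accumulate on $\partial G$. To close this you should either (i) observe that the calibration form of the foliation $\{\Sigma+tv\}_{t\in\R}$ extends continuously to $\overline{G}$ (its normal field converges to that of the boundary ruled translators from Lemma~\ref{domain}), so Stokes still annihilates $Q(d\omega)$ for $Q=[\Sigma^-\cap B_r(q)]$ and the minimality applies to competitors supported merely in $\overline{G}$; or (ii) run the comparison on the exhaustion $C(s)\nearrow G$ used in the proof of Proposition~\ref{Proper-condition} and send $s\to 0$. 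Either fix is short, and for the conjecturally exhaustive strip/full-plane cases (as you note) the whole issue evaporates since minimality then holds in all of $\R^3$. With that addition the argument is complete.
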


Next, we would like to conclude that some surfaces cannot be $g-$area minimizers. For that, we need the following result.

\begin{Lemma}\label{Non-area-minimizing}
Consider the planes $\pi_x(-c)$ and $\pi_x(c)$, where $c>0$. If $2c<\pi$, then $\pi_{x}(-c)\cup\pi_{x}(c)$ is not $g-$area minimizing in $\R^3.$
\end{Lemma}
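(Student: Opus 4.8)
The plan is to contradict $g$-area minimality directly: I would exhibit a compact piece $K$ of $\pi_x(-c)\cup\pi_x(c)$ together with a competitor surface $K'$ with $\partial K'=\partial K$ and $\mathcal{H}^2_g(K')<\mathcal{H}^2_g(K)$. The competitor will be assembled from a piece of the standard grim reaper surface $\mathcal G_0=\{(x,y,-\log\cos x):\ |x|<\tfrac{\pi}{2},\ y\in\R\}$, and this is exactly where the hypothesis $2c<\pi$ is used: since then $c<\pi/2$, the interval $[-c,c]$ is compactly contained in $(-\pi/2,\pi/2)$, so $\mathcal G_0$ (and any vertical translate of it) restricts to a smooth graph over $[-c,c]\times\R$.

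To avoid delicate boundary matching, I would realize $K$ and $K'$ as the two "halves" of the boundary of one solid region. Fix $m\in\R$ and put $h:=m-\log\cos c$, the height of $\mathcal G_0+m\,\ee_3$ over the walls $\{x=\pm c\}$. For parameters $b<m$ and $L>0$ (to be chosen later), consider the solid "Quonset hut"
\[
\mathcal R:=\{(x,y,z)\in\R^3:\ |x|\le c,\ |y|\le L,\ b\le z\le -\log\cos x+m\}.
\]
Its piecewise smooth boundary sphere splits as $\partial\mathcal R=K\cup K'$, where $K:=\partial\mathcal R\cap\{|x|=c\}$ is the union of the two flat rectangles $\{\pm c\}\times[-L,L]\times[b,h]$ — so that $K\subseteq\pi_x(-c)\cup\pi_x(c)$ — while $K':=\overline{\partial\mathcal R\setminus K}$ is the union of the flat floor $\{z=b\}$, the two vertical end-walls $\{y=\pm L\}$, and the grim-reaper "roof" $\{z=-\log\cos x+m,\ |x|\le c\}$. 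Since $K\cup K'$ bounds $\mathcal R$, one has $\partial K=\partial K'$; hence replacing $K$ by $K'$ produces an admissible competitor to $\pi_x(-c)\cup\pi_x(c)$ which agrees with it outside the compact set $\mathcal R$.

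It then remains to compare $g$-areas, recalling that the $g$-area element is $e^{z}\,dA$. A direct computation gives $\mathcal H^2_g(K)=4L(e^{h}-e^{b})=4L(e^{m}\sec c-e^{b})$; for the roof, from $e^{-\log\cos x+m}\sqrt{1+\tan^2 x}=e^{m}\sec^2 x$ one gets $g$-area $4L\,e^{m}\tan c$; the floor contributes $4cL\,e^{b}$; and the two end-walls together contribute a quantity bounded independently of $L$. Therefore
\[
\mathcal H^2_g(K')-\mathcal H^2_g(K)=4L\big((c+1)e^{b}+e^{m}(\tan c-\sec c)\big)+O(1)\qquad(L\to\infty),
\]
and since $\tan c-\sec c=-\tfrac{1-\sin c}{\cos c}<0$ for $c\in(0,\pi/2)$, the coefficient of $L$ becomes negative as soon as $e^{b-m}<\tfrac{1-\sin c}{(c+1)\cos c}$. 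Choosing $b$ sufficiently negative and then $L$ sufficiently large yields $\mathcal H^2_g(K')<\mathcal H^2_g(K)$, the desired contradiction. The one genuinely nontrivial point is engineering a competitor whose boundary matches that of a planar piece while keeping its "roof" at bounded height — handled by the closed-region construction above — after which the inequality reduces to the elementary fact that the grim-reaper roof, of $g$-area $\sim 4L\,e^{m}\tan c$, undercuts the two flat walls, of $g$-area $\sim 4L\,e^{m}\sec c$, because $\tan c<\sec c$; this gap is what disappears precisely as $c\uparrow\pi/2$.
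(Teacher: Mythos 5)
Your proof is correct, and the construction is essentially in the same spirit as the paper's (exhibit an explicit competitor of strictly smaller $g$-area spanning the same boundary), but it is more self-contained: the paper delegates the construction of the competitor cylinder to Proposition~3.11 of [GHLM], whereas you build it explicitly from a translated grim reaper roof, a flat floor, and two flat end-walls.

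What your version buys is transparency: the $g$-areas of each piece are elementary exponential integrals, and the gap is visibly governed by the inequality $\tan c<\sec c$, which degenerates exactly as $c\uparrow\pi/2$, matching the hypothesis $2c<\pi$. The same hypothesis also ensures that $[-c,c]$ sits compactly inside the domain $(-\pi/2,\pi/2)$ of the grim reaper, so the roof is a smooth graph over the strip. Your observation that $K$ and $K'$ are the two halves of the boundary sphere of the solid $\mathcal R$, hence automatically $\partial K=\partial K'$ up to orientation, cleanly handles the boundary-matching that would otherwise require care; and pushing $b\to-\infty$ to suppress the floor cost, then $L\to\infty$ to dominate the $O(1)$ contribution from the end-walls, is exactly the right order of limits. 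One could streamline slightly by taking $b$ fixed and absorbing the floor into the $O(1)$ constraint, but this is cosmetic. The only thing to keep in mind when presenting this as a current-theoretic statement is to orient $K$ and $K'$ oppositely so that $\partial K=\partial K'$ (rather than $\partial K=-\partial K'$); this does not affect masses. Overall this is a correct and arguably cleaner argument than the one sketched in the paper, since it avoids the external reference.
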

\begin{proof}
Consider for $h>0$ the rectangles
\[D_\pm:=\{(\pm c,y,z)\;:\;y\in[0,h]\ {\rm and}\ z\in[-\log\cos c,\sigma-\log\cos c]\},\]
where $\sigma$ satisfies
\(
\sin c<\tanh \sigma.
\) In \cite[Proposition 3.11]{GHLM20}, it was proved that if $h$ is sufficiently large, then there is a piecewise (in fact, smooth) cylinder $\mathcal{C}$ in the set between $\pi_{x}(-c)$ and $\pi_{x}(c)$ such that $\p \mathcal{C}=\p D_-\cup\p D_+$ and $\mathcal{H}_g^2(\mathcal{C})<\mathcal{H}_g^2(D_-)+\mathcal{H}_g^2(D_+),$ where $\mathcal{H}^2_g(\cdot)$ denotes the two-dimensional Hausdorff measure associated to Ilmanen's metric on $\R^3$. Thus, if $\Sigma':=(\pi_{x}(-c)\setminus D_-)\cup\mathcal{C}\cup(\pi_{x}(c)\setminus D_+),$ then 
\[
\mathcal{H}_g^2(\Sigma'\cap B_R)<\mathcal{H}_g^2((\pi_{x}(-c)\cup\pi_{x}(c))\cap B_R), \forall\ R\ {\rm large\ enough}, 
\] 
where $B_R=B_R(0)=\{p\in\R^3\;:\;|p|<R\}$. Therefore, $\pi_{x}(-c)\cup\pi_x(c)$ is not $g-$area minimizing in $\R^3.$
\end{proof}

\begin{Corollary}\label{Non-area-minimizing-1}
Let $\Sigma$ be a complete translating soliton that possesses two disjoint parallel planes with distance less than $\pi$ as connected components. Then, $\Sigma$ is not $g-$area minimizing in $\R^3$.
\end{Corollary}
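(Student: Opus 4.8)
The plan is to deduce this directly from Lemma \ref{Non-area-minimizing} by a normalization followed by a cut-and-paste (surgery) argument. First I would record that the two parallel planes occurring as connected components of $\Sigma$ are necessarily \emph{vertical}: a plane $P$ has $\vec{H}\equiv 0$, so the translator equation \eqref{TS-Equation-1} forces $\ee_3^\perp\equiv 0$ along $P$, i.e.\ $\ee_3\in T_pP$ for every $p\in P$. Thus $P_1,P_2$ are mutually parallel vertical planes at Euclidean distance $d<\pi$. Since rotations about the $z$-axis and horizontal translations preserve both the class of translators and Ilmanen's metric $g=e^{z}\langle\cdot,\cdot\rangle_{\R^3}$ — and hence the property of being $g$-area minimizing — after applying such a rigid motion we may assume $P_1=\pi_x(-c)$ and $P_2=\pi_x(c)$ with $2c=d<\pi$.

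Next I would invoke the construction underlying Lemma \ref{Non-area-minimizing}, taken from \cite[Proposition 3.11]{GHLM}: for $h$ large there are compact rectangles $D_\pm\subseteq\pi_x(\pm c)$ and a compact surface $\mathcal{C}$ contained in the closed slab $\{|x|\le c\}$, with $\partial\mathcal{C}=\partial D_-\cup\partial D_+$, such that
\[
\mathcal{H}_g^2(\mathcal{C})<\mathcal{H}_g^2(D_-)+\mathcal{H}_g^2(D_+).
\]
Since $D_-\cup D_+\subseteq P_1\cup P_2\subseteq\Sigma$, I form the competitor $\widehat{\Sigma}:=\bigl(\Sigma\setminus(D_-\cup D_+)\bigr)\cup\mathcal{C}$, equivalently the integral current $[\Sigma]-[D_-]-[D_+]+[\mathcal{C}]$ for compatibly chosen orientations. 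This agrees with $\Sigma$ outside the compact set $D_-\cup D_+\cup\mathcal{C}$, so for any $R$ large enough that $B_R(0)$ contains this set, $\widehat{\Sigma}$ has the same boundary as $\Sigma$ inside $B_R(0)$ (using $\partial\mathcal{C}=\partial D_-\cup\partial D_+$), while
\[
\mathcal{H}_g^2\bigl(\widehat{\Sigma}\cap B_R(0)\bigr)\le\mathcal{H}_g^2\bigl(\Sigma\cap B_R(0)\bigr)-\mathcal{H}_g^2(D_-)-\mathcal{H}_g^2(D_+)+\mathcal{H}_g^2(\mathcal{C})<\mathcal{H}_g^2\bigl(\Sigma\cap B_R(0)\bigr).
\]
Hence $\Sigma$ is not $g$-area minimizing in $B_R(0)$, and therefore not $g$-area minimizing in $\R^3$.

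The argument is essentially immediate once Lemma \ref{Non-area-minimizing} is in hand; the only point requiring a little care is the current-theoretic bookkeeping — namely, that excising the two discs $D_\pm$ from $\Sigma$ and gluing in $\mathcal{C}$ produces an admissible competitor with unchanged boundary inside a large ball, and that a possible overlap of $\mathcal{C}$ with other connected components of $\Sigma$ can only decrease the mass (so the displayed strict inequality survives). This is standard, so there is no genuine obstacle here: the substance of the statement is entirely contained in Lemma \ref{Non-area-minimizing}.
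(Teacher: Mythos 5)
Your argument is correct. The paper states this corollary without proof, treating it as an immediate consequence of Lemma~\ref{Non-area-minimizing}, and your route — observing the planes are vertical, normalizing via $g$-isometries (rotations about the $z$-axis and horizontal translations), and then performing the GHLM cut-and-paste inside a large ball — is exactly the argument the authors leave implicit.
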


\bibliographystyle{amsplain, amsalpha}

\begin{thebibliography}{60}

\bibitem{AW94} S. J. Altschuler and  L.F. Wu, Translating surfaces of the non-parametric mean curvature flow with prescribed contact angle, Calc. Var. Partial Differential Equations 2 (1994), no. 1, 101--111.

\bibitem{Che76} S. Y. Cheng, Eigenfunctions and nodal sets, Comment. Math. Helv. 51, 43--55 (1976).

\bibitem{Chi19} F. Chini,
Some classification results for translating solitons and ancient mean curvature flows. PhD thesis, University of Copenhagen (2019).

\bibitem{Chi20} F. Chini, Simply connected translating solitons contained in slabs. Geom. Flows 5.1 (2020): 102--120.

\bibitem{CM19} F. Chini and N. M. M{\o}ller,
Ancient mean curvature flows and their spacetime tracks, arXiv:1901.05481 (2019).

\bibitem{CM21} F. Chini and N. M. M{\o}ller, Bi-Halfspace and Convex Hull Theorems for Translating Solitons. International Mathematics Research Notices, Volume 2021, Issue 17, September 2021, 13011--13045, doi.org/10.1093/imrn/rnz183

\bibitem{CSS07} J. Clutterbuck, O. Schn{\"u}rer and F. Schulze, Stability of translating solutions to mean curvature flow. Calc. Var. and Partial Differential Equations 2 (2007):281--293.

\bibitem{CM12}T. H. Colding and W. P. Minicozzi II, Generic mean curvature flow I: generic singularities. Ann. of Math.(2) 175.2 (2012): 755--833.

\bibitem{DDN17} J.  D\'avila, , M. Del Pino, and X. H. Nguyen, Finite topology self-translating surfaces for the mean curvature flow in $\R^3$. Advances in Mathematics, 320, 674-729 (2017). https://doi.org/10.1016/j.aim.2017.09.014

\bibitem{EM16} M. Eichmair and J. Metzger, Jenkins-Serrin-type results for the Jang equation. J. Differential Geom. 102.2 (2016): 207--242.

\bibitem{Fed69} H. Federer, Geometric measure theory, Die Grundlehren der mathematischen Wissenschaften, Band 153, Springer-Verlag New York Inc., New York, 1969.

\bibitem{GHLM20} E. S. Gama, E. Heinonen, J. H. Lira, and F. Mart\'in, The Jenkins-Serrin problem for translating horizontal graphs in $M\times\R.$ Rev. Mat. Iberoam. 37.3 (2020): 1083--1114.

\bibitem{GMM25} E. S. Gama, F. Mart\'{\i}n and N. M. M\o{}ller, Uniqueness of tangent planes at infinite time for collapsed self-translating solitons. Work in progress.

\bibitem{HS79} R. Hardt and L. Simon. Boundary Regularity and Embedded Solutions for the Oriented Plateau Problem. Ann. of Math.(2) 110.3 (1979): 439--486.

\bibitem{HIMW19} D. Hoffman, T. Ilmanen, F. Mart\'in and B. White, Graphical translators for the mean curvature flow.  Calc. Var. Partial Differential Equations. 58.117 (2019): https://doi.org/10.1007/s00526-019-1560-x.

\bibitem{HMW22-1} D. Hoffman, F. Mart\'in and B. White, Scherk-like translators for mean curvature flow. J. Differential Geom. vol. 122 (2022), no. 3, 421--465.

\bibitem{HMW22-2} D. Hoffman, F. Mart\'in and B. White, Nguyen's Tridents and the Classification of semigraphical translators for mean curvature flow. J. Reine Angew. Math. vol. 786 (2022), 79--105.

\bibitem{HMW23} D. Hoffman, F. Mart\'in and B. White, Morse-Rad\'o Theory for Minimal Surfaces. J. Lond. Math. Soc. (2) vol. 108 (2023), no. 4, 1669--1700.

\bibitem{HMW24} D. Hoffman, F. Mart\'in and B. White, Translating annuli for mean curvature flow. Adv. Math. vol. 455 (2024), Paper No. 109875, 109 pp.

\bibitem{Hui90} G. Huisken, Asymptotic behavior for singularities of the mean curvature flow. J. Differential Geom. 31.1 (1990): 285--299.

\bibitem{Ilm94} T. Ilmanen, Elliptic regularization and partial regularity for motion by mean curvature. Mem. Am. Math. Soc. vol. 108 (1994): 1--94.

\bibitem{IMR25}D. Impera, N. M. M\o{}ller and M. Rimoldi, Rigidity and non-existence results for collapsed translators, Int. Math. Res. Not. 2025 (2025), no.7. https://doi.org/10.1093/imrn/rnaf076

\bibitem{IR19}D. Impera, and M. Rimoldi,
Quantitative index bounds for translators via topology. Math. Z. 292, 513--527 (2019). https://doi.org/10.1007/s00209-019-02276-y

\bibitem{KKM18} N. Kapouleas, S. J. Kleene and N. M. M{\o}ller, Mean curvature self-shrinkers of high genus: non-compact examples. 
J. Reine Angew. Math. 739 (2018), 1--39. 

\bibitem{KP22} D. Kim and J. Pyo, Properness of translating solitons for the mean curvature flow. Internat. J. Math. {33} (2022), no. 4, Paper No. 2250032, 6 pp. 

\bibitem{KS19} K. Kunikawa and S. Saito, Remarks on topology of stable translating solitons. Geometriae Dedicata (2019) 202:1--8.

\bibitem{MSTW24} F. Mart{\'i}n, M. S\'aez, R. Tsiamis and B. White, Classification of semigraphical translators.  Preprint arXiv:2411.16889, 2024.

\bibitem{MSHS15} F. Mart{\'i}n, A. Savas-Halilaj and K. Smoczyk, On the topology of translating solitons of the mean curvature flow. Calc. Var. Partial Differential Equations. 54.3 (2015): 2853--2882.

\bibitem{Mol14} N. M. M\o{}ller,  Non-existence for self-translating solitons, arxiv:1411.2319 (2014).

\bibitem{Ngu09} X. H. Nguyen, Translating tridents, Comm. Partial Differential Equations 34 (2009), no. 1-3, 257-280.

\bibitem{Sha15} L. Shahriyari, Translating graphs by mean curvature flow. Geom. Dedicata. 175.1 (2015): 57--64.

\bibitem{Sim83} L. Simon, Lectures on geometric measure theory. Proc. Centre Math. Analysis, Austral.Nat. Univ., 3, Austral. Nat. Univ., Canberra, 1983.

\bibitem{Sol89} B. Solomon, On foliations of $\R^{n+1}$ by minimal hypersurfaces. Comment. Math. Helv. 61 (1989): 67--83.	

\bibitem{SX20} J. Spruck and L. Xiao, Complete translating solitons to the mean curvature flow in $\R^3$ with nonnegative mean curvature. Amer. J. Math. 142.3 (2020): 993--1015.

\bibitem{Wan11} X.- J. Wang, Convex solutions of the mean curvature flow, Ann. Math. (2) 173 (2011), 1185--1239.

\bibitem{Whi18} B. White, On the Compactness Theorem for Embedded Minimal Surfaces in 3-manifolds with Locally Bounded Area and Genus. Communications in Analysis and Geometry, Volume 26, Number 3, 659--678, 2018.

\bibitem{Whi21}  B. White, Mean curvature flow with boundary. Ars Inveniendi Analytica (2021), Paper No. 4, 43 pp. DOI 10.15781/vks5-5e33.

\bibitem{Whi22} B. White, The Boundary Term in Huisken's Monotonicity Formula and the Entropy of Translators. Comm. Anal. Geom. vol. 33 (2025), no. 1, 1--16.

\end{thebibliography}

\end{document}